\documentclass[reqno]{amsart}

\usepackage{amssymb, amsmath}
\usepackage{mathrsfs}
\usepackage{amscd}
\usepackage{verbatim}
\usepackage{nicefrac}
\usepackage{enumitem}
\usepackage[utf8]{inputenc}
\usepackage[usenames, dvipsnames]{color}


\usepackage{todonotes}



\theoremstyle{plain}
\newtheorem{theorem}{Theorem}[section]
\theoremstyle{remark}
\newtheorem{remark}[theorem]{Remark}

\newtheorem{example}[theorem]{Example}

\theoremstyle{plain}
\newtheorem{corollary}[theorem]{Corollary}
\newtheorem{lemma}[theorem]{Lemma}
\newtheorem{proposition}[theorem]{Proposition}
\newtheorem{definition}[theorem]{Definition}

\newtheorem{setting}[theorem]{Setting}
\numberwithin{equation}{section}
\usepackage{dsfont}



\def\F{{\mathbb F}}
\def\G{{\mathbb G}}
\def\N{{\mathbb N}}

\def\R{{\mathbb R}}
\def\C{{\mathbb C}}


\newcommand{\E}{{\mathbb E}}
\renewcommand{\P}{{\mathbb P}}


\newcommand{\eps}{\varepsilon}
\renewcommand{\l}{\lambda}
\newcommand{\om}{\omega}

\newcommand{\Ito}{{\hbox{\rm It\^o}}}


\newcommand{\maxsym}{\vee}
\newcommand{\minsym}{\wedge}


\newcommand{\calL}{{\mathcal L}}
\newcommand{\n}{\Vert}
\newcommand{\one}{{{\mathds 1}}}

\newcommand{\s}{^*}
\newcommand{\lb}{\langle}
\newcommand{\rb}{\rangle}

\newcommand{\limn}{\lim_{n\to\infty}}
\newcommand{\limk}{\lim_{k\to\infty}}

\newcommand{\sumn}{\sum_{n\in \N}}
\newcommand{\sumk}{\sum_{k\in \N}}

\newcommand{\wh}{\widehat}
\newcommand{\supp}{\text{\rm supp\,}}







\newcommand{\cont}{\ensuremath{\mathcal{C}}}


\newcommand{\abs}[1]{\ensuremath{\lvert #1 \rvert}}
\newcommand{\Abs}[1]{\ensuremath{\big\lvert  #1 \big\rvert}}
\newcommand{\nrm}[1]{\ensuremath{\lVert #1 \rVert}}

\newcommand{\ssgnrm}[1]{\ensuremath{\bigg\lVert #1 \bigg\rVert}}		
		
\newcommand{\nnrm}[2]{\ensuremath{\lVert #1 \rVert_{#2}}}			
\newcommand{\gnnrm}[2]{\ensuremath{\big\lVert #1 \big\rVert_{#2}}}	
	
\newcommand{\ssgnnrm}[2]{\ensuremath{\bigg\lVert #1 \bigg\rVert_{#2}}}	
\newcommand{\rnnrm}[2]{\ensuremath{\Bigg\lVert #1 \Bigg\rVert_{#2}}}

\newcommand{\const}{\ensuremath{C}}


\newcommand{\rklam}[1]{\left(#1\right)}					
\newcommand{\grklam}[1]{\big(#1\big)}
\newcommand{\sgrklam}[1]{\Big(#1\Big)}
\newcommand{\ssgrklam}[1]{\bigg(#1\bigg)}
\newcommand{\rrklam}[1]{\Bigg(#1\Bigg)}						
\newcommand{\ggklam}[1]{\big\{#1\big\}}						

\newcommand{\ssggklam}[1]{\bigg\{#1\bigg\}}

\newcommand{\geklam}[1]{\big [#1 \big ]}					
\newcommand{\sgeklam}[1]{\Big [#1 \Big ]}
\newcommand{\ssgeklam}[1]{\bigg [#1 \bigg ]}					
\newcommand{\reklam}[1]{\Bigg [#1 \Bigg ]}					


\newcommand{\dt}{\mathrm d t}


\newcommand\bP{\mathbb{P}}
\newcommand\bR{\mathbb{R}}

\newcommand\bN{\mathbb{N}}

\newcommand\cA{\mathcal{A}}
\newcommand\cB{\mathcal{B}}
\newcommand\cC{\mathcal{C}}
\newcommand\cD{\mathcal{D}}
\newcommand\cF{\mathcal{F}}
\newcommand\cH{\mathcal{H}}
\newcommand\cL{\mathcal{L}}
\newcommand\cP{\mathcal{P}}

\newcommand\cO{\mathcal{O}}
\newcommand\cQ{\mathcal{Q}}


\newcommand{\domain}{\ensuremath{\mathcal{O}}}   



\newcommand{\linop}[2]{\ensuremath{\mathcal{L}(#1,#2)}}
\newcommand{\gsum}{\ensuremath{\gamma}}
\newcommand{\gradon}[2]{\ensuremath{\gamma(#1,#2)}}


\DeclareFontFamily{U}{matha}{\hyphenchar\font45}
\DeclareFontShape{U}{matha}{m}{n}{
      <5> <6> <7> <8> <9> <10> gen * matha
      <10.95> matha10 <12> <14.4> <17.28> <20.74> <24.88> matha12
      }{}
\DeclareSymbolFont{matha}{U}{matha}{m}{n}
\DeclareFontSubstitution{U}{matha}{m}{n}
\DeclareFontFamily{U}{mathx}{\hyphenchar\font45}
\DeclareFontShape{U}{mathx}{m}{n}{
      <5> <6> <7> <8> <9> <10>
      <10.95> <12> <14.4> <17.28> <20.74> <24.88>
      mathx10
      }{}
\DeclareSymbolFont{mathx}{U}{mathx}{m}{n}
\DeclareFontSubstitution{U}{mathx}{m}{n}
\DeclareMathDelimiter{\vvvert}{0}{matha}{"7E}{mathx}{"17}
\newcommand{\nnnrm}[1]{\ensuremath{\left\vvvert #1 \right\vvvert}}


\usepackage[colorlinks,linkcolor={blue},citecolor={blue},urlcolor={black}]{hyperref}

\begin{document}

\title{Stochastic integration in quasi-Banach spaces}

\author{Petru A. Cioica-Licht}
\address{Petru A. Cioica-Licht (n\'e Cioica)\\ Department of Mathematics and Statistics\\ University of Otago\\ P.O.~Box~56\\ Dunedin~9054\\ New Zealand}
\email{pcioica@maths.otago.ac.nz}

\author{Sonja G. Cox}
\address{Sonja G. Cox, Korteweg-de Vries Instituut\\ University of Amsterdam\\P.O.~Box~94248\\1090 GE Amsterdam\\ the Netherlands} \email{s.g.cox@uva.nl}

\author{Mark C. Veraar}
\address{Mark C. Veraar, Delft Institute of Applied Mathematics\\
Delft University of Technology \\ P.O.~Box~5031\\ 2600 GA Delft\\the
Netherlands} \email{m.c.veraar@tudelft.nl}

\thanks{The first author has been supported by the Deutsche Forschungsgemeinschaft (DFG, grant
DA 360/20-1) and partially by the Marsden Fund Council from Government funding, administered by the Royal Society of New Zealand.}
\thanks{The second author is supported by the research program VENI Vernieuwingsimpuls with project number $ 639.031.549 $, which is financed by the Netherlands Organization for Scientific Research (NWO)}
\thanks{The third named author is supported by the Vidi subsidy $639.032.427$ of the Netherlands Organization for Scientific Research (NWO) }

\keywords{stochastic integration, quasi-Banach space, $\gamma$-radonifying operator, decoupling, Besov spaces, stochastic heat equation}

\subjclass[2010]{Primary: 60H05; Secondary: 46A16, 60B11}

\begin{abstract}
In this paper we develop a stochastic integration theory for processes with values in a quasi-Banach space. The integrator is a cylindrical Brownian motion. The main results give sufficient conditions for stochastic integrability. They are natural extensions of known results in the Banach space setting. We apply our main results to the stochastic heat equation where the forcing terms are assumed to have Besov regularity in the space variable with integrability exponent $p\in (0,1]$. The latter is natural to consider for its potential application to adaptive wavelet methods for stochastic partial differential equations.
\end{abstract}

\maketitle

\setcounter{tocdepth}{1}
\tableofcontents


\section{Introduction}

In this paper we extend the stochastic integration theory developed in~\cite{NeeVerWei2007, NeervenWeis2005a} from Banach spaces to \emph{quasi-}Banach spaces.
This means that we assume that the complete metric $\rho$ on the state space $E$ of the integrands is not necessarily induced by a norm but by an \emph{$r$-norm} $\nrm{\cdot}$ for some $r\in (0,1]$, i.e., $\rho(x,y)=\nrm{x-y}^r$, $x,y\in E$, where $\nrm{\cdot}\colon E\to [0,\infty)$ is a function obeying all the properties of a norm except the triangle inequality; instead, the \emph{$r$-triangle inequality}
\[
\nrm{x+y}^r\leq \nrm{x}^r+\nrm{y}^r,\quad x,y\in E,
\]
holds. The Banach space setting corresponds to $r=1$. See Section~\ref{sec:QBrB} for a precise definition of quasi-Banach spaces and their relationship to $r$-Banach spaces.

%

Our main motivation comes from the numerical analysis of stochastic partial differential equations (SPDEs, for short), where quasi-Banach spaces occur naturally in form of Besov spaces:
It is well-known from approximation theory that, 
under suitable assumptions, the optimal convergence rate of wavelet or finite element approximation methods is determined by the regularity $\alpha>0$ of the target function $f\colon\domain\to\bR$ in the scale
\begin{equation}\label{eq:Besov:scale}
B^\alpha_{\tau,\tau}(\domain),\qquad\frac{1}{\tau}=\frac{\alpha}{d}+\frac{1}{p}, \qquad \alpha>0,
\end{equation}
of Besov spaces~\cite{BinDahDeV+2002,GasMor2014,GasMor2017, DeV1998}; here, $\domain\subseteq\bR^d$ is a Lipschitz domain and $p>1$ describes the $L_p(\domain)$-norm that is used to measure the error. 
Thus, to answer the question of optimal convergence rates,
an analysis of the regularity in the scale~\eqref{eq:Besov:scale} is required.
However, for $\alpha>d(p-1)/p$ (note that the focus lies on large values of $\alpha$), the Besov spaces $B^\alpha_{\tau,\tau}(\domain)$ from~\eqref{eq:Besov:scale} are not Banach spaces anymore, but merely quasi-Banach spaces.
In the setting of SPDEs, this means that the regularity analysis in the Besov spaces from this scale cannot be carried out simply by using one of the abstract approaches to SPDEs, like, for instance, the semigroup approach or the variational approach, as they are all developed in the Banach space framework. 
By constructing a stochastic integral in quasi-Banach spaces we deliver the first building block towards a semigroup approach to SPDEs in quasi-Banach spaces, thus, in particular, to a regularity analysis in Besov spaces from the scale~\eqref{eq:Besov:scale}.
For more background information on the relationship between regularity, optimal convergence rates and adaptivity in the context of SPDEs, we refer to the introduction of~\cite{CioDahKin+2011} and the survey article~\cite{CDDKLRS}, see also~\cite{Cio2015}.

The main challenges for the extension of \Ito's stochastic integral from Banach to quasi-Banach spaces come from the fact that the latter are not locally convex.
As a consequence, we cannot use any convexity and duality arguments. In particular, it may happen that $E$ has trivial dual (e.g.\ if $E=L^r(0,1)$ for some $r\in (0,1)$, see e.g.~\cite[Section~1.47]{Rud1991}).
On the other hand there are important cases (e.g.\ the Besov spaces from the scale~\eqref{eq:Besov:scale} with $\alpha>d(p-1)/p$) where the dual space is not norming for $E$, but still separates the points of $E$ (see, e.g.~\cite[Theorem 2.11.3]{Tri1983}). 
In order to keep things as general as possible we will avoid duality arguments where possible. In several instances we will present further results in case the dual space is separating.

Another difficulty caused by the lack of convexity is that there is no analogue of the Riemann integral, of the Bochner integral, or of the Pettis integral for quasi-Banach space valued functions, see \cite{AlbAns2013} for more details on this topic. 
Some integration theory especially developed for $r$-Banach spaces can be found in \cite{Vog1967}. 
It is worth mentioning that, as a consequence of  Corollary~\ref{cor:lattice}, we obtain some sufficient conditions for integrability in $r$-Banach function spaces as well, see also Remark~\ref{rem:QBFS:integral:det}.

Our results show that, other than the deterministic integrals mentioned above, large parts of the stochastic integration theory developed in~\cite{NeeVerWei2007, NeervenWeis2005a} do not rely on the local convexity of the space. One may wish to go even further and study stochastic integration in metric vector spaces in general. However, although many important developments on the geometry of metric spaces have been made in recent years in the Ribe program (see the survey \cite{Naor12} and references therein), at the moment it seems that too little is known about the specific part of metric geometry which we need.

There is an extensive literature on stochastic integration in Banach spaces, see e.g.~\cite{Brz1995, BrzNee00, BrzNeeVer+2008, CoxGeiss, CoxVer2011, Dettweiler83, Dirksen14, DMvN, DirkYar, Garl86, ManRud, MarRock, McC1989, NeeVerWei2007, NeervenWeis2005a, Nh, Ondr, RieGa, vNRi, Ros87, RosSuc1980, VerYar}.
In this article we deal with Gaussian noise, more specifically, we aim to define the stochastic integral of an $\calL(H,E)$-valued process with respect to a cylindrical Wiener process $W_{H}$, where $H$ is a separable Hilbert space and $E$ is a separable quasi-Banach space.
As in the Banach space case, a deterministic function is stochastically integrable if (and only if) it is $\gamma$-radonifying,
see~\cite{NeeVerWei2007,NeervenWeis2005a} and Section~\ref{sec:stoch_int_functions} below.
Regarding stochastic integrands, as already mentioned above, our approach is similar in spirit to the stochastic integration theory in Banach spaces with the UMD property that was developed in \cite{NeeVerWei2007} (a survey can be found in \cite{NeeVerWei2015}).
However, as explained in Section~\ref{ssec:UMD} below, the UMD property does not make sense in the quasi-Banach space setting. A different (one-sided) type of decoupling is used, which results in one-sided estimates for the stochastic integral.
This approach was previously explored in~\cite{CoxVer2011} in the Banach space setting.

Most of the results in this paper are formulated in terms of $r$-Banach spaces instead of quasi-Banach spaces (see Section~\ref{sec:QBrB} for precise definitions).
However, due to the Aoki-Rolewitz theorem, see \cite{Aok1942} and \cite{Rol1957}, this is not a restriction. It only makes our presentation easier to read and all relevant results can be translated back to the quasi-Banach space setting if required. We refer to Section~\ref{sec:QBrB} for more details.

%
%
%
%
This paper is organized as follows:
In Section~\ref{sec:prelim} we discuss necessary preliminaries on quasi-Banach spaces and $r$-Banach spaces, measurability, random sums and Gaussian random variables. 
We give a rather detailed account in this first part of the paper, emphasizing conceptual and notational clarity. 
In Section~\ref{sec:gamma} we extend the theory of $\gamma$-radonifying operators to the $r$-Banach space setting. 
For this we will follow the presentation in \cite[Chapter~9]{HytNeerVerWeis2017} and indicate those instances where changes are required. An important step is to obtain the (right) ideal property for $\gamma$-radonifying operators in $r$-Banach spaces. The key trick is a simple lemma on Gaussian sums which can be proved without convexity and duality arguments (see Proposition~\ref{prop:f-r}).

Section \ref{sec:stoch_int_functions} is devoted to the characterization of stochastic integrability for non-random elements in terms of $\gamma$-radonifying operators. 
Further equivalences are obtained in the case that $E$ has a separating dual in Theorem~\ref{thm:weak_char_stoch_int} and Proposition~\ref{prop:Pettis_Ito}. 
In this case we obtain an $r$-Banach space analogue of \cite[Theorem~2.5]{NeervenWeis2005a}; in particular, the stochastic integral of deterministic integrands can be seen as a stochastic Pettis integral.
In Section \ref{sec:stoch_int_processes} we consider the more complicated setting of random integrands. After discussing the UMD property and one-sided decoupling inequalities, we show how decoupling can be used to obtain sufficient conditions for stochastic integrability. $L^p$-estimates for stochastic integrals can be found in Theorems~\ref{thm:stochInt_BDG} and~\ref{thm:stochInt_local}. The special case where $E$ has a separating dual is considered in Corollary~\ref{cor:waekconv} and this result is the natural analogue of \cite[Theorems~3.6 and~5.9]{NeeVerWei2007}.

In Section \ref{sec:SPDE} we illustrate how our integration theory can be used to study the stochastic heat equation on $\R^d$. The forcing terms are assumed to have Besov regularity in space and the integrability exponents are allowed to be any number in $(0,\infty)$. The main result of this section is Theorem \ref{thm:stoch_heat_eqn} which is a space-time regularity result for the stochastic heat equation. Some of the technical estimates are proved in an appendix. In particular, in Section \ref{sec:AppB} we present several point-wise estimates for convolutions with heat kernels which are required for estimating the Besov norm.

\smallskip

Before we start our presentation, let us fix some notation.

\smallskip

\noindent \textbf{Notation.} 
All vector spaces in this article are assumed to be real unless stated otherwise. 
%
Given a parameter set $\cP$ 
and mappings $A,B\colon \cP \rightarrow \R$, we write `$A(p) \lesssim B(p)$' to express 
$ 
  `\exists C\in (0,\infty)\ \forall p\in \cP\colon A(p) \leq C B(p)
$',
and given a further parameter set $\cQ$ and mappings $A,B\colon \cP\times \cQ \rightarrow \R$ 
we write `$A(p,q) \lesssim_{q} B(p,q)$' to express 
`$ \forall q \in \cQ\ \exists C_q\in (0,\infty)\ \forall p \in \cP\colon  A(p,q) \leq C_q B(p,q) $'.
Moreover, `$A(p)\eqsim B(p)$' means `$A(p) \lesssim B(p)$ and $B(p) \lesssim A(p)$', and `$A(p,q) \eqsim_q B(p,q)$'
means `$A(p,q) \lesssim_q B(p,q)$ and $B(p,q) \lesssim_q A(p,q)$'.
%

If $E,F$ are (quasi-)Banach spaces, then we write $E\hookrightarrow F$ if $E$ embeds isomorphically into $F$, and we write $E\eqsim F$ if $E$ and $F$ are isomorphic as (quasi-)Banach spaces.
 We use the common notation $\langle\cdot,\cdot\rangle\colon E\times E^*\to\bR$ for the dual form, i.e., $\langle x,x^*\rangle :=x^*(x)$, for $x^*\in E^*$ and $x\in E$.
The notation $\langle \cdot,\cdot\rangle_H$ is used to denote the inner product on a Hilbert space $H$.

In addition,  throughout this article, $(\Omega, \cF,\bP)$ denotes a probability space and $\E$ is the corresponding expectation operator. Moreover,  $(\varepsilon_n)_{n\in\N}$ denotes a \emph{Rade\-ma\-cher sequence}, i.e., a sequence of independent and identically distributed $\{-1,1\}$-valued random variables with $\P(\varepsilon_n=1)=\P(\varepsilon_n=-1)=\frac{1}{2}$, for $n\in\bN$. Furthermore, $(\gamma_n)_{n\in\bN}$ denotes a \emph{Gaussian sequence}, i.e., a sequence of independent real-valued centered standard Gaussian random variables.  $\cB(S)$ denotes the Borel $\sigma$-algebra on a topological space $S$.


\medskip

\section{Preliminaries}\label{sec:prelim}

\subsection{Quasi-Banach spaces and \texorpdfstring{$r$}{r}-Banach spaces}\label{sec:QBrB}
As explained in the introduction, in this article we develop  a  stochastic integration theory for
processes taking values in a quasi-Banach space. More precisely, we
consider processes taking values in an $r$-Banach space, $r\in (0,1]$,
but this is essentially the same thing---see Remark~\ref{rem:quasiBS} below. In
order to guarantee conceptual clarity for the reader familiar only with the Banach space setting,
we recall some basic facts about $r$- and quasi-Banach spaces in this section.
Details can be found, e.g., in \cite{Kal2003,KalPecRob1984}.

\subsubsection{$r$-Banach spaces}

\begin{definition}\label{def:r-space}
Let $E$ be an $\R$-vector space and let $r\in (0,1]$. A mapping $\nrm{\cdot}\colon E \rightarrow \R$
is called an \emph{$r$-norm} if for all $\lambda \in \R$, $x,y\in E$ it holds that
\begin{enumerate}[label=\textup{(\arabic*)}]
 \item $\left\| \lambda x \right\| = |\lambda| \left\| x \right\|$;
 \item $\left\| x \right\| = 0$ if and only if $x=0$;
 \item $ \left\| x + y \right\|^r \leq \left\| x \right\|^r + \left\| y \right\|^r$ ($r$-triangle inequality).
\end{enumerate}
In this case we call $(E,\left\| \cdot \right\|)$ an \emph{$r$-normed space}, and if $E$ is complete with
respect to the metric $\rho_{\nrm{\cdot}}\colon E\times E \rightarrow \R$ induced by $\left\| \cdot \right\|$ (i.e.,
$\rho_{\nrm{\cdot}}(x,y)= \| x - y \|^r$ for $x,y\in E$), we call $(E,\left\| \cdot \right\|)$ an \emph{$r$-Banach space}.
\end{definition}

Note that a $1$-norm is in fact a norm. In general, $r$-normed spaces are not locally convex. As a consequence, fundamental theorems such as the Hahn-Banach theorem do not hold in general.
However, results relying mainly on completeness arguments still hold in the $r$-Banach space setting. These are, e.g., the uniform boundedness theorem, the open mapping theorem, and the closed graph theorem together with their numerous consequences, see, e.g., \cite{KalPecRob1984} for details.

For $(E_1,\nnrm{\cdot}{1})$ an $r_1$-Banach space and $(E_2,\nnrm{\cdot}{2})$ an $r_2$-Banach space, $r_1,r_2\in (0,1]$, we write $\mathcal{L}(E_1,E_2)$ for the space of bounded linear operators from $E_1$ to $E_2$, endowed with the $r_2$-norm
\[
\nnrm{R}{\mathcal{L}(E_1,E_2)}
:=
\sup_{x\in E_1\setminus\{0\}} \frac{\nnrm{Rx}{2}}{\nnrm{x}{1}},
\qquad
R\in \mathcal{L}(E_1,E_2).
\]
Note that, as in the Banach space setting, boundedness of linear operators is equivalent to their continuity.
We write $E^*:=\mathcal{L}(E,\R)$ for the topological dual of an $r$-Banach space $(E,\nrm{\cdot})$ endowed with the norm $\nnrm{\cdot}{E^*}$.
Note that the adjoint $R^*\in\mathcal{L}(E_2^*,E_1^*)$ of a linear bounded operator $R\in\mathcal{L}(E_1,E_2)$ can be defined in the usual way. However, since the Hahn-Banach theorem fails to hold, in general, we only have $\| R^* \|_{\calL( E^*_2,E^*_1)}\leq \| R \|_{\calL(E_1,E_2)}$.

Given a metric space $(M,d)$ and a set $\Gamma\subset \cont(M;\bR)$, we say that \emph{$\Gamma$ separates the points of $M$} if for every pair $x,y\in M$ with $x\neq y$ there exists a $\varphi\in \Gamma$ such that $\varphi(x)\neq \varphi(y)$.
We also say that $\Gamma$ \emph{is separating} (for $M$). If $E$ is a Banach space, then $E^*$ separates the points of $E$ by the Hahn-Banach theorem. For $r$-Banach spaces the situation is more complicated, as the following examples demonstrate.

Let  $(S,\cA,\mu)$  be a measure space and $r\in (0,\infty)$, then
 the space $L^r(S, \cA,\mu)$ of all equivalence classes of $\cA/\cB(\bR)$-measurable functions $f\colon S\to\bR$ such that
\[
\nnrm{f}{L^r(S)}^r:=\int_S \abs{f}^r\,\mathrm \,\mathrm{d}\mu < \infty,
\]
is a prominent example of a $\min\{r,1\}$-Banach space (see \cite{Day1940}, where these $r$-Banach spaces were analyzed in detail for the first time  for $0<r<1$).
 We simply write $L^r(\domain)$ if $\mu$ is the Lebesgue measure on a Borel-measurable subset $\domain$ of $\bR^d$ and $\ell^r(\bN)$ if $\mu$ is the counting measure on $\bN$. 
For $0<r<1$, the topological duals of these spaces differ dramatically, depending on the structure of the underlying measure space $(S,\cA,\mu)$.
Indeed, for  $0<r<1$, the dual of $L^r(0,1)$ is trivial, whereas the sequence space $\ell^r(\bN)$ has a very rich dual that is isomorphic to  the space of all bounded sequences, 
see \cite{Day1940, KalPecRob1984}. In particular, $\ell^r(\bN)^*$ separates the points of $\ell^r(\bN)$.

Another example of an $r$-Banach space, particularly relevant from the point of view of approximation theory, has been already mentioned in the introduction:
for $d\in \N$, $\domain\subseteq\bR^d$ and $p\in (1,\infty)$ the
scale  of Besov spaces $\{ B^\alpha_{r,r}(\domain) : \alpha,r\in (0,\infty), \frac{1}{r} = \frac{\alpha}{d}+ \frac{1}{p}\}$  appears naturally in the context of non-linear approximation. If $\alpha>d(p-1)/p$ and $r\in (0,\infty)$ satisfy $\frac{1}{r} = \frac{\alpha}{d}+ \frac{1}{p}$, then $r<1$, whence $B^{\alpha}_{r,r}(\domain)$ is an $r$-Banach space but not a Banach space. Note however that for such $\alpha,r$  the dual  $B^\alpha_{r,r}(\domain)^*$
separates the points of $B^\alpha_{r,r}(\domain)$. This follows from the fact that $B^\alpha_{r,r}(\domain)$ is continuously embedded in $L^p(\domain)$ (see, e.g., \cite[Theorem~1.73(i)]{Tri2006} for the case $\domain=\bR^d$, which immediately implies the general case if one uses the definition by restriction, see \cite[Section~1.11]{Tri2006}).

\subsubsection{Quasi-Banach spaces}

\begin{definition}\label{def:quasi}
Let $E$ be an $\R$-vector space. A mapping $\left\| \cdot \right\|\colon E \rightarrow \R$
is called a \emph{quasi-norm} if there exists a $C\in (0,\infty)$ such that for all $\lambda \in \R$, $x,y\in E$, it holds that
\begin{enumerate}[label=\textup{(\arabic*)}]
 \item $\left\| \lambda x \right\| = |\lambda| \left\| x \right\|$;
 \item $\left\| x \right\| = 0$ if and only if $x=0$;
 \item $ \left\| x + y \right\| \leq C(\left\| x \right\| + \left\| y \right\|)$ (quasi-triangle inequality).
\end{enumerate}
In this case we call $(E,\left\| \cdot \right\|)$ a \emph{quasi-normed space}.
Moreover, we write $\mathscr{O}_{\|\cdot\|}$ for the \emph{standard topology induced by $\nrm{\cdot}$}, i.e.,  $\mathscr{O}_{\|\cdot\|}$ is the collection of all subsets $U\subseteq E$ such that for all $x\in U$ there exists an $\eps \in (0,\infty)$ such that
$B_{x,\eps}^{\nrm{\cdot}}:=\{y\in E : \| x-y \|<\eps \} \subseteq U$.
If $E$ is complete with respect to $\mathscr{O}_{\|\cdot\|}$, we call $(E,\left\| \cdot \right\|)$ a \emph{quasi-Banach space}.
\end{definition}
%

\begin{remark}
Note that, other than $r$-norms, a quasi-norm $\nrm{\cdot}$ is not necessarily continuous and an `open ball' $B_{x,\varepsilon}^{\nrm{\cdot}}$ is not necessarily contained in the standard topology $\mathscr{O}_{\nrm{\cdot}}$, see, for instance, \cite{Aok1942}.
\end{remark}

\begin{remark}\label{rem:quasiBS}
 Every $r$-normed space is a quasi-normed space as an $r$-norm fulfills all the requirements from Definition~\ref{def:quasi} with $C=2^{\frac{1-r}{r}}$.
Conversely,  the Aoki-Rolewitz theorem, see \cite{Aok1942} and \cite{Rol1957}, guarantees that, given a quasi-normed space $(E,\nrm{\cdot})$, there exists an $r\in (0,1]$ and an $r$-norm $\nnnrm{\cdot}$ that is equivalent to $\nrm{\cdot}$.
 In particular, $\mathscr{O}_{\nrm{\cdot}}=\mathscr{O}_{\nnnrm{\cdot}}$, where the latter coincides with the standard topology on the metric space $(E,\rho_{\nnnrm{\cdot}})$, see also Definition~\ref{def:r-space}.
Moreover,  $\mathscr{O}_{\nrm{\cdot}}$ is Hausdorff, compatible with the vector space operations and the unit ball $B_{0,1}^{\nnnrm{\cdot}}$ is a bounded neighborhood of the origin.
Thus, from a topological point of view, a quasi-Banach space endowed with its standard topology is a locally bounded $F$-spaces, i.e., a complete locally bounded metrizable topological vector space.
Conversely, on any locally bounded topological vector space $(E,\mathscr{O})$, there exists a quasi-norm $\nrm{\cdot}$, so that $\mathscr{O}=\mathscr{O}_{\nrm{\cdot}}$ \cite{KalPecRob1984}.
Moreover, if $(E,\nrm{\cdot})$ is separable, then $(E,\mathscr{O}_{\nrm{\cdot}})$ is a Polish space, i.e., a complete metrizable topological space containing a dense countable subset.
\end{remark}


\subsection{Measurability}\label{ssec:measurability}

In this section we recall some standard results on measurability of functions with values in separable metric spaces $(M,d)$. Clearly, all these results apply to separable $r$-Banach spaces.

Let $\mathcal{B}(M)$ denote the Borel $\sigma$-algebra on $(M,d)$. Furthermore, let $(S,\cA)$ be an arbitrary measure space.
A function $f\colon S \rightarrow M$ is called \emph{$\cA/\mathcal{B}(M)$-measurable} (or, simply, \emph{measurable}) if $f^{-1}(B)\in\mathcal{A}$ for all $B\in\mathcal{B}(M)$.
It is called \emph{strongly $\mathcal{A}$-measurable} (or simply \emph{strongly measurable}) if is the point-wise limit of  a sequence of  $\mathcal{A}$-simple functions; recall that a function is called \emph{$\cA$-simple} if it is $\mathcal{A}/\mathcal{B}(M)$-measurable and attains at most finitely many values.
On separable metric spaces, these notions of measurability coincide, see, e.g., \cite[Proposition~I.1.9]{VakTarCho1987}, where the following statement can be found.

\begin{proposition}\label{prop:mb}
Let $(S,\cA)$ be a measure space and let $(M,d)$ be a separable metric space. Given a function $f\colon S \to M$, the following are equivalent:
\begin{enumerate}[label=\textup{(\roman*)}]
\item $f$ is $\mathcal{A}/\mathcal{B}( M)$-measurable.
\item $f$ is strongly $\mathcal{A}$-measurable.
\item There exists a sequence $(f_n)_{n\in\bN}$ of countably valued  measurable  functions such that $f=\lim_{n\to\infty}f_n$ uniformly in $S$.
\end{enumerate}
\end{proposition}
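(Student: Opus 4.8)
The plan is to prove the equivalence of the three measurability notions by establishing the implication chain (iii) $\Rightarrow$ (ii) $\Rightarrow$ (i) $\Rightarrow$ (iii), using separability of $(M,d)$ as the key structural hypothesis. First I would dispose of the easy implications. For (iii) $\Rightarrow$ (ii), note that each countably valued measurable function $f_n$ can itself be written as a pointwise limit of $\cA$-simple functions (truncate the countable range to its first $k$ values on the preimages, sending everything else to a fixed basepoint), so a diagonal argument produces a single sequence of simple functions converging pointwise to $f$; alternatively, uniform convergence of the $f_n$ together with a pointwise-approximation of each $f_n$ by simple functions gives the claim directly. For (ii) $\Rightarrow$ (i), I would use that each $\cA$-simple function is $\cA/\cB(M)$-measurable by definition, and that a pointwise limit of measurable functions into a \emph{metric} space is again measurable: indeed, for any open $U\subseteq M$ one writes $U$ as an increasing union of the sets $U_m:=\{y\in M:\dist(y,M\setminus U)>1/m\}$, and the preimage $f^{-1}(U)=\bigcup_m\bigcup_{N}\bigcap_{n\ge N}f_n^{-1}(U_m)$ expresses it through countably many measurable sets.

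The substantive implication is (i) $\Rightarrow$ (iii), and this is where separability is essential. Let $\{y_k:k\in\bN\}$ be a countable dense subset of $M$. For each $n\in\bN$ I would partition $M$ into Borel pieces of diameter at most $1/n$ by the standard nearest-point construction: set $A_1^{(n)}:=B(y_1,1/n)$ and inductively $A_k^{(n)}:=B(y_k,1/n)\setminus\bigcup_{j<k}A_j^{(n)}$, where $B(y,\varepsilon)$ denotes the open ball. These $A_k^{(n)}$ are Borel, pairwise disjoint, and cover $M$ since the $y_k$ are dense. Now define
\[
f_n(s):=y_k \quad\text{whenever } f(s)\in A_k^{(n)},
\]
which is well defined because the $A_k^{(n)}$ partition $M$. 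Each $f_n$ is countably valued, and it is $\cA/\cB(M)$-measurable because $f_n^{-1}(\{y_k\})=f^{-1}(A_k^{(n)})\in\cA$ by the assumed measurability of $f$ (the $A_k^{(n)}$ being Borel). By construction $d(f_n(s),f(s))<1/n$ for every $s\in S$, since $f(s)$ lands in some $A_k^{(n)}\subseteq B(y_k,1/n)$ and $f_n(s)=y_k$. Hence $f_n\to f$ uniformly on $S$, which is precisely (iii).

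The main obstacle I anticipate is purely bookkeeping rather than conceptual: one must verify carefully that the inductively defined sets $A_k^{(n)}$ remain Borel (they are, being finite Boolean combinations of open balls) and that the enumeration genuinely covers $M$ so that $f_n$ is defined everywhere. A subtlety worth flagging is that the argument uses only that the $A_k^{(n)}$ are Borel of small diameter and that $f$ pulls Borel sets back into $\cA$; no completeness of $M$ is needed for this direction. Since the paper cites \cite[Proposition~I.1.9]{VakTarCho1987} for the statement, I would in practice simply reference that source, but the nearest-point partition above is the self-contained argument underlying it. The uniformity in (iii) is automatic from the diameter bound $1/n$, so no extra care is required there.
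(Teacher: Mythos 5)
Your implications (ii) $\Rightarrow$ (i) and (i) $\Rightarrow$ (iii) are correct, and the limit lemma you use for (ii) $\Rightarrow$ (i) applies verbatim to the functions in (iii), so you also have (iii) $\Rightarrow$ (i); the nearest-ball partition for (i) $\Rightarrow$ (iii) is the standard construction. (The paper itself gives no proof, only the citation to Vakhania--Tarieladze--Chobanian, so a self-contained argument is appropriate.) The genuine gap is your step (iii) $\Rightarrow$ (ii). Pointwise convergence does not admit a diagonal argument: from $g_{n,k}\to f_n$ pointwise as $k\to\infty$ and $f_n\to f$ (even uniformly), one cannot in general select $k(n)$ so that $g_{n,k(n)}\to f$ pointwise, and for your specific truncate-to-a-basepoint scheme the failure is not a matter of bookkeeping but is unavoidable. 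Concretely, take $S=M=\ell^2$, $\cA=\cB(\ell^2)$, $f=\mathrm{id}$, and let $f_n$ be any countably valued approximants with $\sup_s d(f_n(s),s)\le 1/n$. If $g_n$ agrees with $f_n$ on finitely many fibers of $f_n$ and equals a basepoint $b_n$ elsewhere, then every $s$ at which $g_n(s)=f_n(s)$ eventually lies in $\bigcup_{N}\bigcap_{n\ge N}T_n$, where $T_n$ is the union of finitely many balls of radius $1/n$ around the kept values; each $\bigcap_{n\ge N}T_n$ is totally bounded, so this set is $\sigma$-compact, hence a proper (indeed meager) subset of $\ell^2$. Every remaining $s$ must instead satisfy $b_n\to s$ along the infinite set $\{n:\,g_n(s)=b_n\}$, and a Baire category argument shows a single sequence of basepoints cannot do this for all $s$ in the complement of a $\sigma$-compact set: for each $\eps>0$ one gets $\ell^2=\bigcup_N\bigcap_{n\ge N}\bigl(T_n\cup B(b_n,\eps)\bigr)$, and localizing Baire's theorem to two balls with centers more than $6\eps$ apart forces all $b_n$ with $n$ large to lie within $\eps$ of two points at distance greater than $2\eps$, which is absurd. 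So no choice of enumerations and truncation levels makes your argument reach (ii), and (ii) is never established.

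The repair is simple and standard: prove (i) $\Rightarrow$ (ii) directly by a \emph{nearest-point} construction rather than truncation. Let $(y_k)_{k\in\N}$ be dense in $M$ and, assuming (i), define $g_n(s):=y_{k_n(s)}$ with $k_n(s):=\min\bigl\{k\le n:\ d(f(s),y_k)=\min_{j\le n}d(f(s),y_j)\bigr\}$. Each $g_n$ attains at most $n$ values and is measurable, since $g_n^{-1}(\{y_k\})$ is a finite Boolean combination of the sets $f^{-1}\bigl(\{y\in M:\ d(y,y_k)\le d(y,y_j)\}\bigr)\in\cA$; moreover $d(g_n(s),f(s))=\min_{j\le n}d(f(s),y_j)\to 0$ for every $s$ by density, so $g_n\to f$ pointwise, which is exactly what (ii) requires. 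The essential difference from your scheme is that the approximant is always at least as close to $f(s)$ as the best of the first $n$ dense points, instead of defaulting to a fixed basepoint off finitely many fibers; this is why it converges pointwise everywhere, uniformly separated ranges notwithstanding. With this piece in place your own lemmas close the cycle: (ii) $\Rightarrow$ (i) and (iii) $\Rightarrow$ (i) by the limit lemma, (i) $\Rightarrow$ (iii) by your partition, and (i) $\Rightarrow$ (ii) as above.
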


If the metric space is complete, then measurability can also be characterized in the following way, see, e.g., \cite[Theorem~I.1.2 and Proposition~I.1.10]{VakTarCho1987}.

\begin{proposition}[Pettis' theorem]\label{prop:Pettis}
Let $(S,\cA)$ be a measure space, let $(M,d)$ be a Polish space and let $\Gamma \subseteq \cC(M,\R)$ separate  the points of  $M$. Then
\[
\mathcal{B}(M)=\sigma(\varphi:\varphi\in\Gamma),
\]
and for a given function $f\colon S\to E$, the following are equivalent:
\begin{enumerate}[label=\textup{(\roman*)}]
\item $f$ is measurable.
\item For all $\varphi\in\Gamma$, $\varphi\circ f$ is measurable.
\end{enumerate}
\end{proposition}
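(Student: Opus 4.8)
The plan is to prove the set identity $\mathcal{B}(M)=\sigma(\Gamma)$ first; granting this, the equivalence of (i) and (ii) reduces to a short good-sets argument. One inclusion is free: since $\Gamma\subseteq\cC(M,\R)$, each $\varphi\in\Gamma$ is continuous, hence $\mathcal{B}(M)/\mathcal{B}(\R)$-measurable, so that $\sigma(\Gamma)\subseteq\mathcal{B}(M)$. The content lies in the reverse inclusion $\mathcal{B}(M)\subseteq\sigma(\Gamma)$, where completeness (Polishness) of $M$ is used in an essential way.

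The first step would be to replace $\Gamma$ by a \emph{countable} separating subfamily. Consider the diagonal $\Delta=\{(x,x):x\in M\}$, which is closed in $M\times M$ since $M$ is metric, and for each $\varphi\in\Gamma$ set $U_\varphi=\{(x,y):\varphi(x)\neq\varphi(y)\}$, an open subset of $M\times M$ by continuity of $\varphi$. That $\Gamma$ separates points means exactly that $\{U_\varphi\}_{\varphi\in\Gamma}$ is an open cover of $M\times M\setminus\Delta$. As $M$ is separable metric, $M\times M$ is second countable, hence so is its open subspace $M\times M\setminus\Delta$, which is therefore Lindel\"of; extracting a countable subcover yields a countable $\Gamma_0=\{\varphi_n:n\in\N\}\subseteq\Gamma$ that still separates the points of $M$. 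Since $\sigma(\Gamma_0)\subseteq\sigma(\Gamma)\subseteq\mathcal{B}(M)$, it suffices to prove $\sigma(\Gamma_0)=\mathcal{B}(M)$.

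The second step is the descriptive-set-theoretic heart of the argument. The map $j\colon M\to\R^{\N}$, $j(x)=(\varphi_n(x))_{n\in\N}$, is continuous (being coordinatewise continuous) and injective (as $\Gamma_0$ separates points), and both $M$ and $\R^{\N}$ are Polish. By the Lusin--Souslin theorem, a continuous injection between Polish spaces maps Borel sets to Borel sets and is a Borel isomorphism onto its (Borel) image; consequently $\mathcal{B}(M)=j^{-1}(\mathcal{B}(\R^{\N}))$. Since $\R^{\N}$ carries a countable product structure, $\mathcal{B}(\R^{\N})$ is generated by the coordinate projections $\pi_n$, whence $j^{-1}(\mathcal{B}(\R^{\N}))=\sigma(\pi_n\circ j:n\in\N)=\sigma(\varphi_n:n\in\N)=\sigma(\Gamma_0)$. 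This gives $\sigma(\Gamma_0)=\mathcal{B}(M)$ and hence $\sigma(\Gamma)=\mathcal{B}(M)$.

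Finally, for the equivalence: (i)$\Rightarrow$(ii) is immediate, since each $\varphi\in\Gamma$ is Borel and therefore $\varphi\circ f$ is measurable whenever $f$ is. For (ii)$\Rightarrow$(i), the collection $\{B\in\mathcal{B}(M):f^{-1}(B)\in\cA\}$ is a $\sigma$-algebra that, by hypothesis, contains every set $\varphi^{-1}(C)$ with $\varphi\in\Gamma$ and $C\in\mathcal{B}(\R)$, because $f^{-1}(\varphi^{-1}(C))=(\varphi\circ f)^{-1}(C)\in\cA$; as such sets generate $\sigma(\Gamma)=\mathcal{B}(M)$, this $\sigma$-algebra is all of $\mathcal{B}(M)$, so $f$ is measurable. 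The main obstacle is the second step: the reverse inclusion genuinely requires the Lusin--Souslin (equivalently Blackwell) theorem, and it is precisely here that Polishness---rather than mere separability---enters; the countability reduction of the first step is what makes that theorem applicable.
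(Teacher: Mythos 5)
Your proof is correct, and every step checks out: the inclusion $\sigma(\Gamma)\subseteq\mathcal{B}(M)$ by continuity, the Lindel\"of extraction of a countable separating subfamily $\Gamma_0$ (using that $M\times M\setminus\Delta$ is second countable, hence Lindel\"of, and that the sets $U_\varphi$ cover it), the identification $\mathcal{B}(M)=j^{-1}(\mathcal{B}(\R^{\N}))=\sigma(\Gamma_0)$ via the Lusin--Souslin theorem applied to the continuous injection $j=(\varphi_n)_{n\in\N}$, and the good-sets argument for (ii)$\Rightarrow$(i). Note that the paper itself does not prove this proposition at all: it simply cites \cite[Theorem~I.1.2 and Proposition~I.1.10]{VakTarCho1987}, so there is no in-paper argument to compare against; your write-up is essentially the standard proof underlying that citation (countable reduction plus the Borel-isomorphism theorem for injective Borel maps between Polish spaces), made self-contained. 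Your closing remark is also on target: the reduction to a countable family is exactly what makes the target space $\R^{\N}$ Polish so that Lusin--Souslin applies, and completeness of $M$ is what this step genuinely uses. One cosmetic point: the statement in the paper has a typo ($f\colon S\to E$ should read $f\colon S\to M$), which you silently and correctly repaired.
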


\begin{example}\label{ex:Boreldual}
Let $E$ be a  separable $r$-Banach space with separating dual $E^*$. Then
\[
\mathcal{B}(E)=\sigma(x^*: x^*\in E^*)
\]
and a function $f\colon S\to E$ is (strongly) measurable if and only if $\xi\mapsto \langle f(\xi),x^*\rangle$ is measurable for every $x^*\in E^*$.
\end{example}


Let $(E,\left\| \cdot \right\|_E)$ be a separable $r$-Banach space for some $r\in(0,1]$, and let $(S,\cA,\mu)$ be a $\sigma$-finite measure space. We let $L^0(S;E)$ be the space of all equivalence classes of $\cA/\cB(E)$-measurable functions.
For $p\in (0,\infty)$ we define
$\left\| \cdot \right\|_{L^p(S;E)} \colon L^0(S;E) \rightarrow [0,\infty]$
by $\nnrm{f}{L^p(S;E)} = \left( \int_{S} \nnrm{f }{E}^p \,\mathrm{d}\mu \right)^{\frac{1}{p}}$,
and we set
\begin{align*}
 L^p(S;E) = \ggklam{ f \in L^0(S;E) \colon \| f \|_{L^p(S;E)} < \infty }.
\end{align*}
Note that  for $p\in (0,\infty)$ it holds that $L^p( S;E)$ is a $\min\{p,r\}$-Banach space.
However, a definition of a Bochner integral in analogy with the Banach space setting is not possible on general $r$-Banach spaces, see \cite{AlbAns2013}.
In particular, in general the fact that $f\in L^1(S;E)$ does not mean that $\int_S f \,\mathrm{d}\mu$ makes sense in the usual way of Lebesgue/Bochner type integrals.

\subsection{Random sums}

Throughout this section $E$ denotes a separable $r$-Banach space, where $r\in (0,1]$ is fixed.
We present results on the coincidence of different convergence types for sums of independent (symmetric) $r$-Banach space valued random variables.
In the finite dimensional case, these results are often referred to as L\'{e}vy's theorem, whereas the generalization to Banach spaces is attributed to Kiyoshi \Ito\  and Makaki Nisio due to their seminal paper \cite{ItoNis1968}.

Given a probability space $(\Omega,\mathcal{F},\P)$,
we say that a mapping $X\colon \Omega \rightarrow E$ is an \emph{$E$-valued random variable} (or, simply, a \emph{random variable}) if $X$ is strongly measurable,
and we let $\P_X \colon \mathcal{B}(E)\rightarrow \R$ denote the distribution of $X$ (i.e., $\P_X(B)=\P(X\in B)$, $B\in\mathcal{B}(E)$).

We start with some auxiliary results, which we frequently use in the sequel.
The first one says that independence is preserved under  convergence in  distribution.

\begin{lemma}\label{lem:Conv:pres:ind}
Let $(X,Y)$ be an $E\times E$-valued random variable, and for all $n\in \N$ let $(X_n, Y_n)$ be an $E\times E$-valued random variable
such that $X_n$ and $Y_n$ are independent.
If $(X_n,Y_n)\rightarrow (X,Y)$ in distribution as $n\rightarrow \infty$, then $X$ and $Y$ are independent.
\end{lemma}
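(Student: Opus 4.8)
The plan is to exploit the definition of convergence in distribution by testing against product functions and then to use the independence of each pair $(X_n,Y_n)$ to factorize. Recall that $(X_n,Y_n)\to (X,Y)$ in distribution means $\E[\phi(X_n,Y_n)]\to\E[\phi(X,Y)]$ for every bounded continuous $\phi\colon E\times E\to\R$. Since products of bounded continuous functions on $E$ are bounded continuous on $E\times E$ with the product topology, I would first fix bounded $g,h\in \cC(E,\R)$ and apply convergence in distribution to $\phi(x,y)=g(x)h(y)$, to $(x,y)\mapsto g(x)$, and to $(x,y)\mapsto h(y)$. Combined with the independence of $X_n$ and $Y_n$, which gives $\E[g(X_n)h(Y_n)]=\E[g(X_n)]\,\E[h(Y_n)]$, passing to the limit on both sides (the interchange being justified by boundedness of the test functions) yields
\[
\E[g(X)h(Y)]=\E[g(X)]\,\E[h(Y)]\qquad\text{for all bounded }g,h\in\cC(E,\R).
\]

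The second part is to upgrade this factorization of continuous test functions to a factorization of the joint law. Here I would use that a separable $r$-Banach space is Polish (Remark~\ref{rem:quasiBS}), so that finite signed Borel measures on $E$ are uniquely determined by their integrals against bounded continuous functions. Fixing a bounded $h$ and viewing the two sides of the displayed identity as integrals of the variable $g$ against the finite signed measures $A\mapsto\E[\one_A(X)h(Y)]$ and $A\mapsto \P_X(A)\,\E[h(Y)]$ on $E$, I conclude that these measures coincide, i.e., $\E[\one_A(X)h(Y)]=\P_X(A)\,\E[h(Y)]$ for every $A\in\cB(E)$. Fixing now $A\in\cB(E)$ and repeating the argument in the variable $h$ gives $\P((X,Y)\in A\times B)=\P_X(A)\,\P_Y(B)$ for all $A,B\in\cB(E)$. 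Since $\{X\in A:A\in\cB(E)\}$ and $\{Y\in B:B\in\cB(E)\}$ are $\pi$-systems generating $\sigma(X)$ and $\sigma(Y)$ respectively, the $\pi$-$\lambda$ theorem shows that these $\sigma$-algebras are independent, which is exactly the independence of $X$ and $Y$ (equivalently $\P_{(X,Y)}=\P_X\otimes\P_Y$, using $\cB(E\times E)=\cB(E)\otimes\cB(E)$ as $E$ is separable).

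The main obstacle I anticipate is the passage from equality of integrals against product test functions to equality of the laws. This is where the metric structure is essential: the conclusion relies on bounded continuous functions being separating for finite Borel measures on a metric space, which one proves by approximating the indicator of an open set $U$ by the bounded continuous functions $x\mapsto \min\{1,k\,\rho(x,E\setminus U)\}$ as $k\to\infty$ and then extending from open sets to all Borel sets by a monotone class argument. Everything else reduces to routine interchanges of limits and a double application of the measure-determining property, and crucially uses no convexity or duality, so that the classical Banach space argument transfers verbatim to the $r$-Banach setting.
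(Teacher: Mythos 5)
Your proof is correct and follows essentially the same route as the paper's: both pass to the limit in $\E[g(X_n)h(Y_n)]=\E[g(X_n)]\,\E[h(Y_n)]$ for bounded continuous $g,h$, and then upgrade to Borel sets by approximating indicators of open sets with bounded continuous (Lipschitz-type) functions, finishing with a standard $\pi$-$\lambda$/uniqueness argument. Your packaging of the upgrade step via measure-determining signed measures is just a more explicit rendering of the paper's one-line approximation remark, so the two arguments coincide in substance.
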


\begin{proof}
It suffices to prove that for arbitrary open subsets $F,G\subseteq E$,
\[
\E\geklam{\one_{F}(X)\one_G(Y)}
=
\E\geklam{\one_F(X)}
\E\geklam{\one_G(Y)}.
\]
To this end, since the characteristic function on an open set can be  approximated from below by a sequence of continuous bounded functions, we only need to check that for arbitrary $f,g\in\cont_b(E)$,
\[
\E\geklam{f(X)g(Y)}
=
\E\geklam{f(X)}
\E\geklam{g(Y)}.
\]
This follows immediately from our assumption.
\end{proof}

The following result shows that adding a symmetric independent random variable increases the $L^p$-norm (up to a constant).
Recall that an $E$-valued random variable $X$ is \emph{symmetric} if $X$ and $-X$ have the same distribution.

\begin{lemma}\label{lem:qb-ind-symm}
Let $X$ and $Y$ be two $E$-valued random variables. If $Y$ is symmetric and independent of $X$, then for all $p\in (0,\infty)$
it holds that
\begin{equation}\label{eq:qb-ind-symm}
\nnrm{X}{L^p(\Omega;E)}
\leq
2^{\frac{1-(r\minsym p)}{r\minsym p}}
\nnrm{X+Y}{L^p(\Omega;E)}.
\end{equation}
\end{lemma}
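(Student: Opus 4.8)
The plan is to exploit the symmetry of $Y$ rather than any convexity, since the usual conditioning argument (Jensen's inequality applied to the convex map $\nrm{\cdot}^p$, which in the Banach case yields $\E\nrm{X}^p\leq\E\nrm{X+Y}^p$) is unavailable in the absence of local convexity. First I would set $q:=r\minsym p$ and recall from the discussion preceding the lemma that $L^p(\Omega;E)$ is a $q$-Banach space. In particular, its quasi-norm satisfies the $q$-triangle inequality
\[
\nnrm{U+V}{L^p(\Omega;E)}^q \leq \nnrm{U}{L^p(\Omega;E)}^q + \nnrm{V}{L^p(\Omega;E)}^q,\qquad U,V\in L^0(\Omega;E).
\]

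The crucial step is the distributional observation that $X+Y$ and $X-Y$ have the same law. Indeed, since $Y$ is independent of $X$, the joint law of $(X,Y)$ is the product $\P_X\otimes\P_Y$; by symmetry $\P_Y=\P_{-Y}$, so $(X,Y)$ and $(X,-Y)$ are identically distributed. Pushing this equality forward through the continuous map $(x,y)\mapsto x+y$ gives $X+Y\stackrel{d}{=}X-Y$, and hence $\nnrm{X-Y}{L^p(\Omega;E)}=\nnrm{X+Y}{L^p(\Omega;E)}$. (The random variables $X\pm Y$ are strongly measurable because addition is continuous on the separable $r$-Banach space $E$, so $\nrm{\cdot}$-measurability is preserved under sums.)

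Finally I would write $2X=(X+Y)+(X-Y)$, apply the $q$-triangle inequality above together with the homogeneity $\nnrm{2X}{L^p(\Omega;E)}=2\,\nnrm{X}{L^p(\Omega;E)}$, and use the equality of laws:
\[
2^q\,\nnrm{X}{L^p(\Omega;E)}^q \leq \nnrm{X+Y}{L^p(\Omega;E)}^q + \nnrm{X-Y}{L^p(\Omega;E)}^q = 2\,\nnrm{X+Y}{L^p(\Omega;E)}^q.
\]
Dividing by $2^q$ and taking $q$-th roots yields $\nnrm{X}{L^p(\Omega;E)}\leq 2^{(1-q)/q}\nnrm{X+Y}{L^p(\Omega;E)}$, which is precisely~\eqref{eq:qb-ind-symm} with $q=r\minsym p$. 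If $\nnrm{X+Y}{L^p(\Omega;E)}=\infty$ the estimate is trivial, and otherwise the chain above shows every term is finite, so no integrability issue arises.

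I do not expect a genuine obstacle here: the only point requiring care is the correct bookkeeping of the exponent $q=r\minsym p$ in the $q$-norm structure of $L^p(\Omega;E)$ so as to land on the stated constant $2^{(1-(r\minsym p))/(r\minsym p)}$, and justifying the equality in law (rather than reaching for convexity) as the replacement for the Banach-space argument.
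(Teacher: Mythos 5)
Your proof is correct and takes essentially the same route as the paper: the paper's proof consists of adapting \cite[Proposition~6.1.5]{HytNeerVerWeis2017}, which is precisely your argument---the equality in law of $X+Y$ and $X-Y$ (from symmetry and independence) combined with the decomposition $2X=(X+Y)+(X-Y)$---with the triangle inequality replaced by the quasi-triangle inequality. The only immaterial difference is one of bookkeeping: you apply the $(r\wedge p)$-triangle inequality directly at the level of the $L^p(\Omega;E)$ quasi-norm (invoking the fact, recorded in Section~\ref{ssec:measurability}, that $L^p(\Omega;E)$ is an $(r\wedge p)$-Banach space), whereas the cited proof applies the $r$-triangle inequality pointwise in $E$ and then integrates.
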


\begin{proof}
Follow the proof of \cite[Proposition~6.1.5]{HytNeerVerWeis2017}, thereby using the quasi-triangle inequality instead of the triangle inequality.
\end{proof}

The following example shows that the first inequality in Lemma~\ref{lem:qb-ind-symm} is sharp.

\begin{example}\label{ex:qb-ind-sym}
Consider, for $r\in (0,1]$,
$(E,\nrm{\cdot}):=(\ell^r_2,\left\| \cdot \right\|_{\ell^r_2})$
(i.e.,  the space $\bR^2$ endowed with the $r$-norm  $\nnrm{(x_1,x_2)}{\ell^r_2}^r:=\abs{x_1}^r + \abs{x_2}^r$ for $(x_1,x_2)\in \bR^2$).
Let $e_1 := (1, 0)$ and $e_2 := (0, 1)$ be the standard unit vectors.
Let $x := e_1+e_2$ and $y := e_1-e_2$.
Let $\varepsilon_1, \varepsilon_2$ be two independent $\{-1,1\}$--valued identically distributed random variables with $\P(\varepsilon_1=-1)=\P(\varepsilon_1=1)=1/2$.
For all $p\in (0,\infty)$ it holds that
$\E\| \eps_1 x \|_{\ell^r_2}^{p} = \|x\|_{\ell^r_2}^p = 2^{\frac{p}{r}}$, whereas
$\E \nnrm{\varepsilon_1x+\varepsilon_2y}{\ell^r_2}^p = \E \left\| (\eps_1 + \eps_2, \eps_1-\eps_2) \right\|_{\ell^r_2}^p=2^p$.
Therefore, the constant $2^{\frac{1-(r\minsym p)}{r\minsym p}}$ in Lemma~\ref{lem:qb-ind-symm}
is optimal for $p\geq r$. To see that the constant is also optimal for $p<r$
consider the $\R$-valued random variables  $X:=\eps_1$ and $Y:= \eps_2$.

Note moreover that even in the Gaussian case
one cannot, in general, omit the constant $2^{\frac{1-(r\minsym p)}{r\minsym p}}$
in~\eqref{eq:qb-ind-symm}.
Indeed, let $e_1$ and $e_2$ be as above, let
$\gamma_1$
and $\gamma_2$ be two independent standard normal
Gaussian random variables and define $X:=\frac{1}{4}\gamma_1 (e_1+e_2)$
and $Y := \frac{1}{16} \gamma_2 (e_1-e_2)$. Then
$ \E \nnrm{ X}{\ell^{1/2}_2}^2= 1$,
whereas numerical integration yields
$ \E \nnrm{ X + Y }{\ell^{1/2}_2}^2 \approx 0.987$.
\end{example}

For the remainder of this section we introduce the following:
\begin{setting}\label{set:iidsums}
Let $(X_j)_{j\in\bN}$ be a sequence of independent $E$-valued random variables and
for all $n\in \N$ let $S_n:=\sum_{j=1}^n X_j$.
\end{setting}

The following version of L\'evy's inequality about tail estimates holds in $r$-Banach spaces and plays an important role in our proofs. We refer to \cite[Lemma~2.2]{CoxVer2011} for a detailed proof.

\begin{lemma}[L\'{e}vy's inequality for $r$-Banach spaces]\label{lem:QBLevy:inequal}
Assume Setting~\ref{set:iidsums} and assume in addition that $X_j$ is symmetric for all $j\in\N$. Then for $n\in\N$ and $t>0$ one has
\begin{equation}\label{lem:QBLevy:inequal1}
\P\sgrklam{\max_{1\leq k\leq n}\nnrm{S_k}{E}>t}\leq 2 \P\sgrklam{\nnrm{S_n}{E}>2^{1-\frac{1}{r}}t}
\end{equation}
and
\begin{equation}\label{lem:QBLevy:inequal2}
\P\sgrklam{\max_{1\leq k\leq n}\nnrm{X_k}{E}>t}\leq 2 \P\sgrklam{\nnrm{S_n}{E}>2^{1-\frac{1}{r}}t}.
\end{equation}
Consequently, for all $p\in (0,\infty)$ it holds that
\begin{equation}
 \max
 \ssggklam{
  \E  \sgeklam{\max_{1\leq k\leq n}\nnrm{S_k}{E}^p},
  \E  \sgeklam{\max_{1\leq k\leq n}\nnrm{X_k}{E}^p}
 }
 \leq 2^{1+\frac{p}{r} - p} \E \| S_n \|_E^p.
\end{equation}

\end{lemma}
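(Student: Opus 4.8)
The plan is to run the classical reflection proof of Lévy's inequality, replacing the triangle inequality by the $r$-triangle inequality at the single point where the geometry of $E$ enters; this is exactly what manufactures the factor $2^{1-\frac1r}$. The two facts I will lean on repeatedly are: (a) by symmetry and independence of the $X_j$, for any fixed signs $(\theta_j)_{j=1}^n\in\{-1,1\}^n$ the vectors $(X_1,\dots,X_n)$ and $(\theta_1X_1,\dots,\theta_nX_n)$ have the same distribution; and (b) the $r$-norm $\nnrm{\cdot}{E}$ is even, so that any event described purely through the numbers $\nnrm{X_j}{E}$ is invariant under such sign flips. Working through even functions rather than through linear functionals is what makes the argument available in the non-locally-convex setting.

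For \eqref{lem:QBLevy:inequal1} I would introduce the first-passage index $\tau:=\min\{k\le n:\nnrm{S_k}{E}>t\}$ (with $\tau:=\infty$ if no such $k$ exists), so that $\{\max_{1\le k\le n}\nnrm{S_k}{E}>t\}=\bigcup_{k=1}^n\{\tau=k\}$ is a disjoint union and $\{\tau=k\}$ is a function of $X_1,\dots,X_k$ only. Flipping the signs of $X_{k+1},\dots,X_n$ leaves $S_k$, hence $\{\tau=k\}$, untouched and sends $S_n$ to $S_n':=2S_k-S_n$; by (a) this gives $\P(\{\tau=k\}\cap\{\nnrm{S_n}{E}>ct\})=\P(\{\tau=k\}\cap\{\nnrm{S_n'}{E}>ct\})$ for every $c>0$. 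The geometric input is $2S_k=S_n+S_n'$, whence by the $r$-triangle inequality $2^r\nnrm{S_k}{E}^r\le\nnrm{S_n}{E}^r+\nnrm{S_n'}{E}^r\le 2\max\{\nnrm{S_n}{E}^r,\nnrm{S_n'}{E}^r\}$; on $\{\tau=k\}$ we have $\nnrm{S_k}{E}>t$, so $\max\{\nnrm{S_n}{E},\nnrm{S_n'}{E}\}>2^{1-\frac1r}t$. Taking $c=2^{1-\frac1r}$, a union bound followed by the symmetry identity yields $\P(\tau=k)\le 2\,\P(\{\tau=k\}\cap\{\nnrm{S_n}{E}>2^{1-\frac1r}t\})$, and summing over $k$ (using disjointness on the right) proves \eqref{lem:QBLevy:inequal1}.

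For \eqref{lem:QBLevy:inequal2} I apply the same scheme to the maximal jump: set $\tau:=\min\{k\le n:\nnrm{X_k}{E}>t\}$ and observe that $\{\tau=k\}$ depends only on $\nnrm{X_1}{E},\dots,\nnrm{X_k}{E}$. Now flip the signs of every $X_j$ with $j\neq k$; by (b) this preserves $\{\tau=k\}$, while it sends $S_n$ to $S_n'':=2X_k-S_n$. Since $2X_k=S_n+S_n''$, the $r$-triangle inequality again forces $\max\{\nnrm{S_n}{E},\nnrm{S_n''}{E}\}>2^{1-\frac1r}t$ on $\{\tau=k\}$, and the identical symmetry-plus-union-bound step gives \eqref{lem:QBLevy:inequal2}.

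The moment estimate then follows from the two tail bounds via the layer-cake formula $\E Z^p=\int_0^\infty p s^{p-1}\P(Z>s)\,\ds$ applied to $Z=\max_{1\le k\le n}\nnrm{S_k}{E}$ and to $Z=\max_{1\le k\le n}\nnrm{X_k}{E}$: inserting the tail bound and substituting $u=2^{1-\frac1r}s$ turns the prefactor into $2\cdot 2^{(\frac1r-1)p}=2^{1+\frac{p}{r}-p}$, matching the claim. The only genuinely delicate point in the whole argument is the bookkeeping of the reflection, namely verifying that $\{\tau=k\}$ is invariant under the relevant sign flip (trivially in the first case, and through evenness of the norm in the second) and that the flip sends $S_n$ to the intended reflected sum; everything else is the routine replacement of ``$2\nnrm{S_k}{E}\le\cdots$'' by its $r$-homogeneous analogue.
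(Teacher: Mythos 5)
Your proof is correct, and it is essentially the same argument the paper relies on: the paper gives no proof of its own but defers to \cite[Lemma~2.2]{CoxVer2011}, which is exactly this classical reflection proof—stop at the first passage time, flip the signs of the remaining (resp.\ all other) increments to preserve the joint law, and apply the $r$-triangle inequality to $2S_k = S_n + S_n'$ (resp.\ $2X_k = S_n + S_n''$) to produce the threshold $2^{1-\frac{1}{r}}t$. Your layer-cake deduction of the moment bound, giving the constant $2\cdot 2^{(\frac{1}{r}-1)p} = 2^{1+\frac{p}{r}-p}$, is likewise correct.
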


The following result generalizes L\'evy's theorem on the convergence of sums of independent real valued random variables to the $r$-Banach space setting.

\begin{proposition}[L\'evy]\label{prop:Levy}
Assume Setting~\ref{set:iidsums}. The following are equivalent:
\begin{enumerate}[label=\textup{(\alph*)}]
\item\label{prop:Levy:as} $(S_n)_{n\in\N}$ converges almost surely to a random variable $S$.
\item\label{prop:Levy:prob} $(S_n)_{n\in\N}$ converges in probability to a random variable $S$.
\item\label{prop:Levy:weak} $(\P_{S_n})_{n\in\N}$ converges weakly to a probability measure $\mu$.
\end{enumerate}
In this case, the limits $S=\limn S_n$ in \ref{prop:Levy:as} and \ref{prop:Levy:prob}   coincide and $\P_S = \mu$.

\end{proposition}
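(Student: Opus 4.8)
The plan is to prove the two easy implications directly and then invest all effort in the converse $\ref{prop:Levy:weak}\Rightarrow\ref{prop:Levy:as}$, which is the $r$-Banach analogue of the It\^o--Nisio theorem. For $\ref{prop:Levy:as}\Rightarrow\ref{prop:Levy:prob}$ I would simply recall that almost sure convergence implies convergence in probability in any metric space, and for $\ref{prop:Levy:prob}\Rightarrow\ref{prop:Levy:weak}$ that convergence in probability implies convergence in distribution, so that $\P_{S_n}\to\P_S$ weakly with $\mu:=\P_S$. The coincidence of the limits and the identity $\P_S=\mu$ then come for free once the cycle of implications is closed.

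For $\ref{prop:Levy:weak}\Rightarrow\ref{prop:Levy:as}$ I would first symmetrize. Take an independent copy $(X_j')_j$ of $(X_j)_j$ on a product space, put $\bar X_j:=X_j-X_j'$ and $\bar S_n:=S_n-S_n'$; the $\bar X_j$ are independent and symmetric. Since $\P_{(S_n,S_n')}=\P_{S_n}\otimes\P_{S_n}\to\mu\otimes\mu$ weakly and subtraction is continuous, $\P_{\bar S_n}$ converges weakly to the law $\bar\mu$ of $Z-Z'$ with $Z,Z'$ i.i.d.\ $\sim\mu$. As $E$ is Polish (Remark~\ref{rem:quasiBS}), Prokhorov's theorem makes $\{\bar S_n\}$ tight. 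The core claim is that $(\bar S_n)$ is Cauchy in probability. I would argue by contradiction: if not, there are $\delta,\delta'>0$ and indices $m_k<n_k\to\infty$ with $\P(\|\bar S_{n_k}-\bar S_{m_k}\|>\delta)\ge\delta'$. Since the $\bar S_{m_k}$ and the $\bar S_{n_k}$ come from the tight family $\{\bar S_n\}$, marginal tightness forces tightness of the differences $\bar S_{n_k}-\bar S_{m_k}$, so I can pass to a subsequence along which $\bar S_{m_k}\to\nu_1$ and $\bar S_{n_k}-\bar S_{m_k}\to\nu_2$ weakly. By independence the joint laws converge to $\nu_1\otimes\nu_2$, hence $\bar S_{n_k}\to\nu_1*\nu_2$; comparing with the full weakly convergent sequence gives $\nu_1=\bar\mu$ and $\bar\mu=\bar\mu*\nu_2$.

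The heart of the matter, and the step I expect to be the main obstacle, is the cancellation lemma: a symmetric probability measure $\nu$ with $\bar\mu=\bar\mu*\nu$ must equal $\delta_0$. In the Banach setting this is read off from characteristic functionals, but here $E^*$ may be trivial, so a genuinely duality-free argument is needed. My plan is to realise i.i.d.\ symmetric $W_j\sim\nu$ with $T_n:=\sum_{j\le n}W_j$ and to note that $\bar\mu=\bar\mu*\nu^{*n}$ forces $\{T_n\}$ to be tight. The laws $\rho_n:=\P_{T_n}$ satisfy $\rho_{n+m}=\rho_n*\rho_m$, so a weak limit point $\rho$ of the Ces\`aro averages $\tfrac1N\sum_{n=1}^N\rho_n$ satisfies $\rho*\rho_m=\rho$ for all $m$ (the left-hand side differs from $\tfrac1N\sum_{n=1}^N\rho_n$ by a vanishing amount), and hence $\rho*\rho=\rho$. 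An idempotent probability measure on the Polish abelian group $(E,+)$ is the Haar measure of a compact subgroup, and a quasi-Banach space has no nontrivial compact subgroups (any $x\neq0$ generates the unbounded set $\{nx\}_{n\in\Z}$); therefore $\rho=\delta_0$, and then $\rho*\nu=\rho$ yields $\nu=\delta_0$. Consequently $\bar S_{n_k}-\bar S_{m_k}\to\delta_0$, i.e.\ $\to0$ in probability, contradicting the choice of $\delta,\delta'$.

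Once $(\bar S_n)$ is Cauchy in probability, L\'evy's maximal inequality \eqref{lem:QBLevy:inequal1} applied to the tail summands $\bar X_{m+1},\dots,\bar X_n$ gives $\P(\max_{m<k\le n}\|\bar S_k-\bar S_m\|>t)\le 2\,\P(\|\bar S_n-\bar S_m\|>2^{1-1/r}t)$; letting $n\to\infty$ and using Cauchy-in-probability shows $(\bar S_n)$ is almost surely Cauchy, so by completeness $\bar S_n\to\bar S$ a.s. Finally I would de-symmetrize: by Fubini, for $\P'$-a.e.\ fixed realisation $\omega'$ of the copy the sequence $S_n-c_n$ converges a.s., where $c_n:=S_n'(\omega')$ is deterministic. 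The families $\{S_n\}$ and $\{S_n-c_n\}$ are tight, which confines $(c_n)$ to a relatively compact set, and the translation $c$ solving $\mu=\P_{\lim(S_n-c_n)}*\delta_c$ is unique (a nonzero shift cannot preserve a tight measure); hence $c_n\to c$ and $S_n=(S_n-c_n)+c_n$ converges a.s. Identifying $\P_S=\mu$ and the coincidence of the almost sure and in-probability limits then closes the argument.
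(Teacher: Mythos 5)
Your proof is correct, but it takes a genuinely different route from the paper's. The paper proves the two equivalences separately and by citation: \ref{prop:Levy:as}$\Leftrightarrow$\ref{prop:Levy:prob} by following the Banach-space proofs of Vakhania--Tarieladze--Chobanian (first the symmetric case via L\'evy's inequality~\eqref{lem:QBLevy:inequal1}, then symmetrization, noting that the symmetrization inequality holds with $t/2$ replaced by $2^{-1/r}t$), and \ref{prop:Levy:prob}$\Leftrightarrow$\ref{prop:Levy:weak} again by following VTC, using Prokhorov's theorem and the topological-semigroup structure of probability measures under convolution; its content is essentially the list of modifications needed when $r<1$. You instead close the cycle with the two easy implications plus a direct, self-contained proof of \ref{prop:Levy:weak}$\Rightarrow$\ref{prop:Levy:as}: symmetrization, tightness via Prokhorov, Cauchyness in probability by contradiction, upgrade to almost sure convergence via L\'evy's inequality, and de-symmetrization by the Fubini/centering argument (the latter being exactly the content of It\^o--Nisio's Theorem~3.2, which the paper quotes later in the proof of Proposition~\ref{prop:QB:conv:test:as}). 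The decisive novelty is your cancellation lemma: from $\bar\mu=\bar\mu*\nu$ you build an idempotent measure $\rho$ by Ces\`aro averaging and invoke the Kawada--It\^o/Parthasarathy theorem (an idempotent probability measure on a Polish group is the Haar measure of a compact subgroup) together with the fact that a quasi-Banach space has no nontrivial compact subgroups. That theorem is citable and your application of it is sound, but it is heavier than necessary: since $\rho$ is symmetric and $\rho^{*n}=\rho$, the paper's own L\'evy inequality~\eqref{lem:QBLevy:inequal2} forces $\supp\rho$ to be bounded, and then $\supp\rho+\supp\rho\subseteq\supp\rho$ together with homogeneity of the $r$-norm ($\|nx\|=n\|x\|$) forces $\supp\rho=\{0\}$, so $\rho=\delta_0$ and hence $\nu=\delta_0$ elementarily. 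In sum, the paper's approach is shorter and pinpoints exactly which constants and inequalities change in the $r$-Banach setting, while yours is self-contained, merges the two nontrivial implications into one, and makes fully explicit that the whole argument is duality-free---which is the essential point when $E^*$ may be trivial.
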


\begin{proof}
Let us first consider the equivalence of \ref{prop:Levy:as} and \ref{prop:Levy:prob}.
In a first step, we assume that each random variable $X_j$ is symmetric. Then the assertion can be proven in the same way as in \cite[Theorem~V.2.1]{VakTarCho1987} (which deals with the Banach space case), i.e., by using L\'{e}vy's inequality \eqref{lem:QBLevy:inequal1} from Lemma~\ref{lem:QBLevy:inequal}.
In a second step, we can drop the symmetry assumption by using the same symmetrization technique as in the proof of \cite[Theorem~V.2.2]{VakTarCho1987}. Note that in the $r$-Banach space setting, the symmetrization inequality from \cite[Proposition~V.2.2]{VakTarCho1987} holds with $t/2$ replaced by $2^{-1/r}t$ on the right hand side.

The equivalence of \ref{prop:Levy:prob} and \ref{prop:Levy:weak} can be obtained directly without assuming the symmetry of the random variables, by repeating the arguments of the proof of \cite[Theorem~V.2.3]{VakTarCho1987}. One mainly has to use  Prokhorov's theorem and the fact that the space of probability measures on a separable $r$-Banach space $E$, endowed with the convolution operator and the weak topology, forms a topological semigroup where the neutral element is the Dirac measure $\delta_0$, see \cite[Proposition~I.4.5]{VakTarCho1987}.
\end{proof}

Under reasonable integrability assumptions, the equivalent convergence types \ref{prop:Levy:as}--\ref{prop:Levy:weak} from Proposition~\ref{prop:Levy} turn out to be equivalent to convergence in $L^p$.
The following proposition holds.

\begin{proposition}[Hoffmann-J\o{}rgensen]\label{prop:Levy:Lp}
Assume Setting~\ref{set:iidsums}, and assume in addition that $0<p<\infty$ and that $(S_n)_{n\in\bN}$ converges almost surely to a random variable $S$. Then the following are equivalent:
\begin{enumerate}[label=\textup{(\roman*)}]
\item\label{it:HoJo1} $\displaystyle\sup_{n\in\N} \E\sgeklam{\nnrm{S_n}{E}^p}<\infty$.
\item\label{it:HoJo2} $\displaystyle\E\sgeklam{\sup_{n\in\N}\nnrm{S_n}{E}^p}<\infty$.
\item\label{it:HoJo3} $\displaystyle\E\sgeklam{\nnrm{S}{E}^p}<\infty$.
\item\label{it:HoJo4} $\displaystyle\E\sgeklam{\nnrm{S}{E}^p}<\infty$ and $(S_n)_{n\in\N}$ converges in $L^p(\Omega;E)$ to $S$.
\end{enumerate}
Conversely, if $(S_n)_{n\in\N}$ converges in $L^p(\Omega;E)$ to a random variable $S$, then $(S_n)_{n\in\N}$ converges almost surely to $S$.
\end{proposition}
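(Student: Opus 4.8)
The plan is to run a cycle of implications through condition~\ref{it:HoJo2}, reducing the two substantial steps to the symmetric case, where Lévy's inequality (Lemma~\ref{lem:QBLevy:inequal}) does essentially all the work. First I would clear the routine implications. Since $S_n\to S$ a.s.\ we have $\|S\|\le\sup_n\|S_n\|$ a.s., which gives $\ref{it:HoJo2}\Rightarrow\ref{it:HoJo3}$ and, trivially, $\ref{it:HoJo2}\Rightarrow\ref{it:HoJo1}$. For $\ref{it:HoJo2}\Rightarrow\ref{it:HoJo4}$ I would dominate $\|S_n-S\|^p\to 0$ by $C_{p,r}(\sup_n\|S_n\|^p+\|S\|^p)$, which lies in $L^1(\Omega)$ under~\ref{it:HoJo2} (by the $r$-triangle inequality and $\ref{it:HoJo2}\Rightarrow\ref{it:HoJo3}$), and invoke dominated convergence. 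Finally $\ref{it:HoJo4}\Rightarrow\ref{it:HoJo1}$ and $\ref{it:HoJo4}\Rightarrow\ref{it:HoJo3}$ hold because an $L^p(\Omega;E)$-convergent sequence is $L^p(\Omega;E)$-bounded (again via the $r$-triangle inequality). It therefore remains to prove $\ref{it:HoJo1}\Rightarrow\ref{it:HoJo2}$ and $\ref{it:HoJo3}\Rightarrow\ref{it:HoJo1}$: together with the above these yield $\ref{it:HoJo1}\Rightarrow\ref{it:HoJo2}\Rightarrow\ref{it:HoJo3}\Rightarrow\ref{it:HoJo1}$ and attach~\ref{it:HoJo4} through~\ref{it:HoJo2}.

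Suppose next that each $X_j$ is symmetric. The last estimate in Lemma~\ref{lem:QBLevy:inequal} reads $\E\max_{1\le k\le n}\|S_k\|^p\le 2^{1+p/r-p}\,\E\|S_n\|^p$; bounding its right-hand side by $2^{1+p/r-p}\sup_m\E\|S_m\|^p$ and letting $n\to\infty$ by monotone convergence gives $\E\sup_k\|S_k\|^p\le 2^{1+p/r-p}\sup_m\E\|S_m\|^p$, i.e.\ $\ref{it:HoJo1}\Rightarrow\ref{it:HoJo2}$. For $\ref{it:HoJo3}\Rightarrow\ref{it:HoJo1}$ I would use Lévy's maximal inequality~\eqref{lem:QBLevy:inequal1} in the limit: since its left-hand side increases to $\P(\sup_k\|S_k\|>t)$ and $S_n\to S$ a.s., a reverse Fatou argument yields $\P(\sup_k\|S_k\|>t)\le 2\,\P(\|S\|\ge 2^{1-1/r}t)$, and integrating $p\,t^{p-1}$ against this bound controls $\E\sup_k\|S_k\|^p$ (hence a fortiori $\sup_m\E\|S_m\|^p$) by a constant multiple of $\E\|S\|^p$. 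Thus in the symmetric case the whole proposition is a direct consequence of Lemma~\ref{lem:QBLevy:inequal}.

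For general $X_j$ I would symmetrize: on a product space take an independent copy $(X_j')_j$ with partial sums $S_n'$ and set $\tilde X_j:=X_j-X_j'$, $\tilde S_n:=S_n-S_n'$, which are independent, symmetric and satisfy $\tilde S_n\to S-S'$ a.s. The $r$-triangle inequality gives $\E\|\tilde S_n\|^p\lesssim_{p,r}\E\|S_n\|^p$ and $\E\|S-S'\|^p\lesssim_{p,r}\E\|S\|^p$, so the symmetric case transfers the hypothesis of either implication to $(\tilde S_n)$. The remaining---and genuinely delicate---point is the \emph{de}symmetrization: recovering moments of $S_n$, and of its running maximum $\max_{k\le n}\|S_k\|$, from those of $\tilde S_n$. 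Here I would center at medians $m_n$ of $S_n$ and apply the $r$-Banach analogue of the symmetrization inequality \cite[Proposition~V.2.2]{VakTarCho1987} used already for Proposition~\ref{prop:Levy} (with $t/2$ replaced by $2^{-1/r}t$)---in its pointwise form for $\ref{it:HoJo3}\Rightarrow\ref{it:HoJo1}$ and in a maximal form for $\ref{it:HoJo1}\Rightarrow\ref{it:HoJo2}$---together with the observation that $\sup_n\|m_n\|<\infty$ because $(S_n)$, being a.s.\ convergent, is tight. I expect the maximal desymmetrization (controlling the running maximum of the \emph{non}-symmetric sums simultaneously with the centering constants) to be the main obstacle; it can alternatively be sidestepped by an Ottaviani-type maximal inequality, whose hypothesis $\sup_{k\le n}\P(\|S_n-S_k\|>t)\le\tfrac12$ for large $t$ follows from the tightness of $(S_n)$. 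The concluding converse is then immediate from Proposition~\ref{prop:Levy}: convergence in $L^p(\Omega;E)$ implies convergence in probability, which by Proposition~\ref{prop:Levy}\,(\ref{prop:Levy:prob}$\Rightarrow$\ref{prop:Levy:as}) forces a.s.\ convergence, necessarily to the same limit~$S$.
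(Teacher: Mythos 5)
Your proposal is correct and follows essentially the same route as the paper's proof: the symmetric case is settled by L\'evy's inequality (Lemma~\ref{lem:QBLevy:inequal}), the general case by the same symmetrization/centering technique that the paper invokes from the proof of Proposition~\ref{prop:Levy}, and the converse by Markov's inequality together with Proposition~\ref{prop:Levy}. The difference is only one of presentation: where the paper cites \cite[Theorems~II.4.2(d) and~II.5.4]{Hof77} and ``standard symmetrization techniques'', you spell out the monotone-convergence and tail-integration details and the median/Ottaviani-type desymmetrization explicitly.
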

\begin{proof}
First assume that each $X_j$ is symmetric ($j\in\N$). Then, given the almost sure convergence of $(S_n)_{n\in\N}$, the equivalence of~\ref{it:HoJo1}--\ref{it:HoJo4} can be obtained as in~\cite[Theorem~II.4.2(d) and Theorem~II.5.4]{Hof77} by applying Lemma~\ref{lem:QBLevy:inequal}.
In order to get rid of the symmetry assumption, one can use standard symmetrization techniques in a similar manner as done in the proof of Proposition~\ref{prop:Levy}.

Conversely, if $(S_n)_{n\in\N}$ converges in $L^p(\Omega;E)$,  Markov's inequality yields convergence in probability. Thus, by Proposition~\ref{prop:Levy}, almost sure convergence follows.
\end{proof}

We have already mentioned that the dual of a quasi-Banach space might be trivial. Obviously, in this case, we cannot expect that, e.g., the $\P$-a.s. convergence of $(\langle S_n,x^*\rangle)_{n\in\bN}$ for all $x^*\in E^*$ implies the $\P$-a.s. convergence of $(S_n)_{n\in\bN}$.
However, we can guarantee this implication if we assume two things: Firstly, that $E^*$ is rich enough to separate the points of $E$ and, secondly, that each random variable $X_j$ is symmetric for $j\in\N$.
Note that the symmetry assumption is needed whenever we are dealing with infinite dimensional spaces, even for Hilbert spaces, see \cite[Remark after Theorem~4.1]{ItoNis1968}.
We can prove even more and obtain the following generalization of the so-called \Ito-Nisio theorem from Banach to $r$-Banach spaces. Note that, in analogy to the Banach space setting, we can introduce the \emph{Fourier transform} of a Borel probability measure on $E$ to be the complex valued function on $E^*$ given by the formula
\[
\hat{\mu}(x^*) := \int_{E} e^{-i\langle x,x^*\rangle}\,\mathrm{d}\mu(x), \quad x^*\in E^*.
\]
Standard arguments show that, if $E^*$ separates the points of $E$, the Fourier transform is unique, i.e., if two Borel probability measures $\mu$ and $\nu$ on $E$ have the same Fourier transform, then this measures coincide on $\mathcal{B}(E)$.

\begin{proposition}[\Ito-Nisio]\label{prop:QB:conv:test:as}

Assume Setting~\ref{set:iidsums} and assume in addition that $E^*$ separates the points of $E$ and that $X_j$ is symmetric for all $j\in\N$. Then
statements \ref{prop:Levy:as}--\ref{prop:Levy:weak} from Proposition~\ref{prop:Levy} are equivalent to each of the following
three statements:
\begin{enumerate}[label=\textup{(\alph*)}, start=4]
\item\label{prop:ItoNis:tight} $(\P_{S_n})_{n\in\N}$ is uniformly tight.
\item\label{prop:ItoNis:dualweak} There exists an $E$-valued random variable $S$, such that $\displaystyle \lim_{n\to\infty}\langle S_n,x^*\rangle = \langle S,x^*\rangle$ in probability for all $x^*\in E^*$.
\item\label{prop:ItoNis:FT} There exists a Borel probability measure $\mu$ on $E$, such that $\displaystyle \lim_{n\to\infty} \widehat{\P_{S_n}}(x^*)=\hat{\mu}(x^*)$ for all $x^*\in E^*$.
\end{enumerate}
\end{proposition}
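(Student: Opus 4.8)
The plan is to arrange statements~\ref{prop:Levy:as}--\ref{prop:Levy:weak} of Proposition~\ref{prop:Levy} (already known to be mutually equivalent) together with \ref{prop:ItoNis:tight}, \ref{prop:ItoNis:dualweak}, \ref{prop:ItoNis:FT} into a single cycle of implications. Two of the arrows are routine. For \ref{prop:Levy:prob}$\Rightarrow$\ref{prop:ItoNis:dualweak}: if $S_n\to S$ in probability, then $\langle S_n,x^*\rangle\to\langle S,x^*\rangle$ in probability for every $x^*\in E^*$, by continuity of $x^*$. For \ref{prop:ItoNis:dualweak}$\Rightarrow$\ref{prop:ItoNis:FT}: bounded convergence gives $\widehat{\P_{S_n}}(x^*)=\E e^{-i\langle S_n,x^*\rangle}\to\E e^{-i\langle S,x^*\rangle}$ for every $x^*$, so \ref{prop:ItoNis:FT} holds with $\mu=\P_S$. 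It then remains to establish the two substantial implications \ref{prop:ItoNis:tight}$\Rightarrow$\ref{prop:Levy:weak} and \ref{prop:ItoNis:FT}$\Rightarrow$\ref{prop:ItoNis:tight}; combined with Proposition~\ref{prop:Levy} (which supplies \ref{prop:Levy:weak}$\Rightarrow$\ref{prop:Levy:prob}), the cycle \ref{prop:Levy:prob}$\Rightarrow$\ref{prop:ItoNis:dualweak}$\Rightarrow$\ref{prop:ItoNis:FT}$\Rightarrow$\ref{prop:ItoNis:tight}$\Rightarrow$\ref{prop:Levy:weak} then closes everything.

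For \ref{prop:ItoNis:tight}$\Rightarrow$\ref{prop:Levy:weak} I would argue via subsequences. By tightness and Prokhorov's theorem (valid since $E$ is Polish, see Remark~\ref{rem:quasiBS}), every subsequence of $(\P_{S_n})_{n\in\N}$ has a weakly convergent sub-subsequence. As recalled just before the statement, a Borel probability measure on $E$ is determined by its Fourier transform when $E^*$ separates the points of $E$, so it suffices to show that all subsequential weak limits have the same Fourier transform, i.e.\ that $\limn\widehat{\P_{S_n}}(x^*)$ exists for every $x^*\in E^*$. By independence and symmetry, $\widehat{\P_{S_n}}(x^*)=\prod_{j=1}^n a_j(x^*)$ with $a_j(x^*)=\E\cos\langle X_j,x^*\rangle\in[-1,1]$; hence $|\widehat{\P_{S_n}}(x^*)|$ is non-increasing and convergent, and the only possible obstruction to convergence of $\widehat{\P_{S_n}}(x^*)$ itself is a persistent change of sign. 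To exclude it, observe that tightness of $(\P_{S_n})$ pushes forward under the continuous map $x^*$ to stochastic boundedness of the real partial sums $(\langle S_n,x^*\rangle)_n$, which are sums of independent symmetric real random variables; by the classical one-dimensional theory such stochastically bounded sums converge almost surely (see \cite{VakTarCho1987}), so $\widehat{\P_{S_n}}(x^*)=\E e^{-i\langle S_n,x^*\rangle}$ converges along the full sequence. Thus all subsequential weak limits coincide and $(\P_{S_n})$ converges weakly.

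The main obstacle is \ref{prop:ItoNis:FT}$\Rightarrow$\ref{prop:ItoNis:tight}, i.e.\ extracting uniform tightness of $(\P_{S_n})$ from mere pointwise convergence of the Fourier transforms to that of a Radon measure $\mu$. Here I would first use separability of $E$ together with the separating dual to fix a countable family $(x_m^*)_{m\in\N}\subseteq E^*$ that separates the points of $E$, and pass to the finite-dimensional projections $\pi_N=(\langle\cdot,x_1^*\rangle,\dots,\langle\cdot,x_N^*\rangle)\colon E\to\R^N$. The characteristic functions of $(\pi_N)_*\P_{S_n}$ converge pointwise to that of $(\pi_N)_*\mu$, so the one-dimensional reasoning of the previous paragraph, applied coordinatewise, shows each real family $(\langle S_n,x_m^*\rangle)_n$ is stochastically bounded; consequently every $(\pi_N)_*\P_{S_n}$ is uniformly tight in $\R^N$ and converges weakly to $(\pi_N)_*\mu$ by the L\'evy continuity theorem in $\R^N$. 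The delicate part is to upgrade this finite-dimensional tightness to uniform tightness on $E$: since $\mu$ is Radon on the Polish space $E$ it is tight, and one must transport its concentration on compacta to the whole sequence. This is exactly where symmetry and independence enter decisively, through L\'evy's inequality (Lemma~\ref{lem:QBLevy:inequal}), which allows one to bound, uniformly in $n$, the probability that $S_n$ escapes a finite-dimensional $\eps$-neighbourhood of a compact set, thereby yielding the ``flat concentration'' that, combined with the finite-dimensional tightness, gives uniform tightness on $E$ (cf.\ the Banach space arguments in \cite{ItoNis1968,VakTarCho1987}). The point requiring care is that every ingredient of this transfer---Prokhorov's theorem, L\'evy's inequality, and the symmetrization estimates---has been made available in the separable $r$-Banach setting in Section~\ref{sec:prelim}, with only the numerical constants (now depending on $r$) changing.
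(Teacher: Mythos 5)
Your cycle structure and the two routine arrows \ref{prop:Levy:prob}$\Rightarrow$\ref{prop:ItoNis:dualweak}$\Rightarrow$\ref{prop:ItoNis:FT} are fine, and your implication \ref{prop:ItoNis:tight}$\Rightarrow$\ref{prop:Levy:weak} is correct: tightness of the one-dimensional images, the classical theory of sums of independent symmetric real random variables, and the uniqueness of the Fourier transform for a separating dual do combine, via Prokhorov, to give weak convergence of the full sequence. This is in fact a genuinely different route from the paper's, which instead proves \ref{prop:ItoNis:tight}$\Rightarrow$\ref{prop:Levy:as} through the $r$-Banach analogue of It\^o--Nisio's Theorem~3.2 (tightness yields centering constants $c_n$ with $S_n-c_n$ convergent a.s., then symmetry removes the centering); your version avoids having to re-prove that centering theorem in the $r$-Banach setting.

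The gap is in \ref{prop:ItoNis:FT}$\Rightarrow$\ref{prop:ItoNis:tight}. Hypothesis \ref{prop:ItoNis:FT} controls only the laws of finite vectors $(\langle S_n,x_1^*\rangle,\dots,\langle S_n,x_N^*\rangle)$. The event that $S_n$ lies in an $\eps$-neighbourhood of a compact set, or that $\operatorname{dist}(S_n,F)<\delta$ for a finite-dimensional subspace $F$, is \emph{not} a function of finitely many such coordinates, so \ref{prop:ItoNis:FT} yields no bound, for even a single $n$, on the probability that $S_n$ escapes such a set; there is therefore nothing for L\'evy's inequality to transfer. Lemma~\ref{lem:QBLevy:inequal} only passes bounds on a symmetric quasi-subadditive functional of $S_n$ backwards to the partial sums $S_k$, $k\le n$; it cannot manufacture the missing link between the infinite-dimensional law of $S_n$ and $\mu$. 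The transfer that actually works (It\^o--Nisio, \cite{VakTarCho1987}, and the paper following them) constructs the limit \emph{first}: take a countable separating sequence $(x_m^*)_{m\in\N}\subseteq E^*$ (the Lindel\"of argument), embed $E$ into $\R^{\N}$ via the injective linear map $T=(x_m^*)_{m\in\N}$, and use the one-dimensional theory coordinatewise (which you already established) to get $T(S_n)\to Y$ a.s.\ in $\R^{\N}$, with the law of $Y$ equal to the image of $\mu$ under $T$. Then, given compact $K\subseteq E$ with $\mu(K)>1-\eps$, write $T(S_n)=\tfrac12(Y+Y')$ with $Y':=T(S_n)-(Y-T(S_n))$; since $Y-T(S_n)=\lim_{m\to\infty}T(S_m-S_n)$ is symmetric and independent of $\sigma(X_1,\dots,X_n)$ (Lemma~\ref{lem:Conv:pres:ind}), $Y'$ has the same law as $Y$, whence by linearity and injectivity of $T$,
\begin{equation*}
\{Y\in T(K)\}\cap\{Y'\in T(K)\}\subseteq\bigl\{S_n\in\tfrac12(K+K)\bigr\},
\qquad
\P\bigl(S_n\notin\tfrac12(K+K)\bigr)\le 2\eps
\end{equation*}
uniformly in $n$, and $\tfrac12(K+K)$ is compact because the vector operations are continuous. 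Note that this argument passes through the substance of \ref{prop:ItoNis:dualweak} (indeed $S:=T^{-1}(Y)$ realizes it) --- exactly the implication your cycle was designed to bypass. As described, your flat-concentration step does not close the argument; repairing it amounts to proving \ref{prop:ItoNis:FT}$\Rightarrow$\ref{prop:ItoNis:dualweak} as the paper does.
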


\begin{proof}
These equivalences can be proven by following the lines of the proof of \cite[Theorem~4.1]{ItoNis1968}. Note that, when proving `\ref{prop:ItoNis:tight}$\Rightarrow$\ref{prop:Levy:as}', one needs \cite[Theorem~3.2]{ItoNis1968} saying that the uniform tightness of $(\P_{S_n})_{n\in\N}$ implies the existence of a sequence $(c_n)_{n\in\N}\subseteq E$ such that $S_n-c_n$ converges $\P$-almost surely. But this turns out to hold also in our setting, if one repeats the arguments from the proof given therein.

Also note that when adapting the proof of \cite[Theorem~4.1]{ItoNis1968} to prove `\ref{prop:ItoNis:FT}$\Rightarrow$\ref{prop:ItoNis:dualweak}' we encounter a difficulty: the sequence  $(z_n)_{n\in\bN}$ in the proof of \cite[Theorem~4.1]{ItoNis1968} is constructed by means of the Hahn-Banach theorem. However, as $E$ is a separable metric space, it is also Lindel\"of. This can be used to show that if  $E^*$ separates the  points of $E$, then there exists a sequence $(z_n)_{n\in \N}$ in $E^*$, which separates the points of $E$, too, see, e.g., \cite[Proposition B.1.11]{HytNeerVerWeis2016}. But this suffices for the proof,  see also the implication `\ref{prop:ItoNis:FT}$\Rightarrow$\ref{prop:ItoNis:dualweak}' presented in \cite[Theorem~V.2.4]{VakTarCho1987}, as well as the proof of  \cite[Theorem~IV.2.5]{VakTarCho1987}.
\end{proof}
%

In the sequel, we frequently make use of the following Kahane-Khintchine inequality for Rade\-ma\-cher and Gaussian sums.
Recall from  the `Notation' part in the introduction  that $(\varepsilon_n)_{n\in \N}$ denotes a Rademacher sequence,
and $(\gamma_n)_{n\in\N}$ denotes a Gaussian sequence.

\begin{theorem}[Kahane-Khintchine inequality]\label{thm:QBKahKhin}
For all $p,q\in (0,\infty)$ there exists a constant $\const_{p,q,r}$, depending only on the parameters $p$, $q$, and $r$, such that for all finite sequences $x_1,\ldots,x_N\subseteq E$ we have
\begin{equation}\label{eq:QBKahKhin:Rade}
\rrklam{\E\reklam{\ssgnnrm{\sum_{n=1}^N \varepsilon_n\, x_n}{E}^p}}^{\frac{1}{p}}
\leq \const_{p,q,r}
\rrklam{\E\reklam{\ssgnnrm{\sum_{n=1}^N \varepsilon_n\, x_n}{E}^q}}^{\frac{1}{q}},
\end{equation}
and
\begin{equation}\label{eq:QBKahKhin:Gauss}
\rrklam{\E\reklam{\ssgnnrm{\sum_{n=1}^N \gamma_n\, x_n}{E}^p}}^{\frac{1}{p}}
\leq \const_{p,q,r}
\rrklam{\E\reklam{\ssgnnrm{\sum_{n=1}^N \gamma_n\, x_n}{E}^q}}^{\frac{1}{q}}.
\end{equation}
\end{theorem}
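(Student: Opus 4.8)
The plan is to reduce the statement to a single nontrivial direction in the Rademacher case and then to transfer the resulting inequality to the Gaussian case by a central limit argument. First observe that when $0<p\le q$ both \eqref{eq:QBKahKhin:Rade} and \eqref{eq:QBKahKhin:Gauss} hold with constant $1$: this is just the monotonicity of the $L^p(\Omega)$-norms of the nonnegative random variable $\|\sum_n\varepsilon_nx_n\|_E$ (respectively its Gaussian analogue) on the probability space $(\Omega,\cF,\P)$, i.e.\ Lyapunov's inequality, and uses no geometry of $E$. Hence it suffices to treat $0<q<p<\infty$, that is, to bound higher moments by lower ones.

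For \eqref{eq:QBKahKhin:Rade} I would put $X_n:=\varepsilon_nx_n$, so that $(X_n)$ are independent and symmetric and Setting~\ref{set:iidsums} applies with $S_N=\sum_{n=1}^N\varepsilon_nx_n$. The engine is the Hoffmann--J\o{}rgensen tail inequality, which I would re-derive in the $r$-Banach setting via the first-passage decomposition: letting $\tau$ be the first index $k$ with $\|S_k\|_E>t$, on $\{\tau=k\}$ one writes $S_N=S_{k-1}+X_k+(S_N-S_k)$ and applies the $r$-triangle inequality to get $\|S_N\|_E^r\le t^r+\|X_k\|_E^r+\|S_N-S_k\|_E^r$; controlling the tail sum $S_N-S_k$ by symmetry and Lemma~\ref{lem:QBLevy:inequal} yields, for explicit $r$-dependent constants, an estimate of the form
\[
\P\bigl(\|S_N\|_E>3^{1/r}t\bigr)\le C_0\,\P\bigl(\|S_N\|_E>t\bigr)^2+\P\bigl(\max_{1\le k\le N}\|X_k\|_E>t\bigr).
\]
The exponent comparison then follows by the standard bootstrapping: integrate this against $p\,t^{p-1}\,\mathrm dt$, split the domain at a level $t_0$ chosen by Chebyshev so that $\P(\|S_N\|_E>t_0)\le\delta$ (hence $t_0\lesssim_{p,q,r}\|S_N\|_{L^q(\Omega;E)}$), and for $t>t_0$ use $\P(\|S_N\|_E>t)^2\le\delta\,\P(\|S_N\|_E>t)$ to produce a term $C_0\delta\,\E\|S_N\|_E^p$ that is absorbed into the left-hand side (legitimate since the sum is finite and all moments are finite). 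The decisive simplification for a Rademacher sum is that $\max_k\|X_k\|_E=\max_k\|x_k\|_E$ is deterministic, so by the moment form of Lemma~\ref{lem:QBLevy:inequal} its $L^p(\Omega)$-norm equals $\max_k\|x_k\|_E\lesssim_{q,r}\|S_N\|_{L^q(\Omega;E)}$; this closes the bootstrap and gives \eqref{eq:QBKahKhin:Rade} with a constant $\const_{p,q,r}$ that is independent of $N$.

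To obtain \eqref{eq:QBKahKhin:Gauss} I would transfer the Rademacher estimate by the central limit theorem, crucially exploiting that the constant just produced does not depend on the number of summands. Fix $x_1,\dots,x_N$; for $m\in\N$ take an array $(\varepsilon_{n,k})_{1\le n\le N,\,1\le k\le m}$ of i.i.d.\ Rademacher variables and set $Y_m:=\sum_{n=1}^N\bigl(m^{-1/2}\sum_{k=1}^m\varepsilon_{n,k}\bigr)x_n$, a Rademacher sum of $Nm$ terms. By the finite-dimensional CLT the coefficient vector $\bigl(m^{-1/2}\sum_k\varepsilon_{n,k}\bigr)_{n=1}^N$ converges in distribution to $(\gamma_n)_{n=1}^N$, and since an $r$-norm is continuous, $\|Y_m\|_E\to\|\sum_{n=1}^N\gamma_nx_n\|_E$ in distribution. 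Moreover the $r$-triangle inequality gives $\|Y_m\|_E^{p'}\le\bigl(\sum_n|m^{-1/2}\sum_k\varepsilon_{n,k}|^r\|x_n\|_E^r\bigr)^{p'/r}$, and since $N$ is finite and the moments of the normalized Rademacher sums are bounded uniformly in $m$ by the classical Khintchine inequality, $\sup_m\E\|Y_m\|_E^{p'}<\infty$ for every $p'\in(0,\infty)$; this yields uniform integrability of $\{\|Y_m\|_E^p\}_m$ and of $\{\|Y_m\|_E^q\}_m$, whence $\E\|Y_m\|_E^p\to\E\|\sum_n\gamma_nx_n\|_E^p$ and likewise for the exponent $q$. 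Applying \eqref{eq:QBKahKhin:Rade} to each $Y_m$ with its $N$-free constant $\const_{p,q,r}$ and passing to the limit $m\to\infty$ then gives \eqref{eq:QBKahKhin:Gauss} with the same constant.

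The main obstacle is the $r$-Banach version of the Hoffmann--J\o{}rgensen inequality together with the attendant bootstrapping: the first-passage decomposition must be run with the $r$-triangle inequality in place of the triangle inequality, which alters the admissible constants (the excess factors becoming powers of $2^{1/r}$ or $3^{1/r}$) and requires that the tail sum $S_N-S_k$ be handled by symmetry exactly as in the Banach case. By contrast, once \eqref{eq:QBKahKhin:Rade} is available with an $N$-independent constant, the Gaussian case is comparatively routine, the only delicate point being the uniform integrability that upgrades the distributional CLT to convergence of the relevant moments.
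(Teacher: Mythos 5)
Your proposal is correct and follows essentially the same route as the paper: the paper's proof of \eqref{eq:QBKahKhin:Rade} is "adapt Kahane's original proof" (via Kalton), which is precisely the L\'evy-inequality/first-passage/quadratic-tail-bound bootstrap you carry out with the $r$-triangle inequality and Lemma~\ref{lem:QBLevy:inequal}, the max term being harmless because $\max_k\|\varepsilon_k x_k\|_E$ is deterministic for Rademacher sums. Your derivation of \eqref{eq:QBKahKhin:Gauss} by a finite-dimensional central limit argument with uniform integrability, using that the Rademacher constant is independent of the number of summands, is exactly the transfer the paper invokes from Marcus--Pisier and Ledoux--Talagrand.
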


\begin{proof}
The inequality \eqref{eq:QBKahKhin:Rade} for Rade\-ma\-cher sums can be proven by adapting Kahane's original proof (see \cite[Theorem~2.1]{Kal1981}).
The inequality \eqref{eq:QBKahKhin:Gauss} can be obtained from \eqref{eq:QBKahKhin:Rade} by a (finite dimensional) central limit argument as used, e.g., in the proof of \cite[Corollary~4.8]{MarPis1981}, see also \cite[page~103]{LedTal}.
\end{proof}

The Kahane-Khintchine inequalities guarantee that the convergence of Gaussian and Rademacher series in probability implies their $L^p$-convergence. The following holds.

\begin{proposition}\label{prop:ItoNisio}
For a sequence $(x_n)_{n\in\bN}\subseteq E$, the following are equivalent:
\begin{enumerate}[label=\textup{(\roman*)}]
\item\label{Gaussian:series:prob} The series $\sum_{n=1}^\infty \gamma_n x_n$ converges in probability.
\item\label{Gaussian:series:Lp:all} The series $\sum_{n=1}^\infty \gamma_n x_n$ converges in $L^p(\Omega;E)$ for all $0<p<\infty$.
\item\label{Gaussian:series:Lp:some} The series $\sum_{n=1}^\infty \gamma_n x_n$ converges in $L^p(\Omega;E)$ for some $0<p<\infty$.
\end{enumerate}
These equivalences also hold if all instances of $\sum_{n=1}^\infty \gamma_n x_n$ above are replaced by
$\sum_{n=1}^\infty \varepsilon_n x_n$.
\end{proposition}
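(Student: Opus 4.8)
The plan is to establish the cycle \ref{Gaussian:series:Lp:all} $\Rightarrow$ \ref{Gaussian:series:Lp:some} $\Rightarrow$ \ref{Gaussian:series:prob} $\Rightarrow$ \ref{Gaussian:series:Lp:all}, treating only the Gaussian case, as the Rademacher case is identical after replacing \eqref{eq:QBKahKhin:Gauss} by \eqref{eq:QBKahKhin:Rade}. Throughout I would set $S_n:=\sum_{k=1}^n\gamma_k x_k$ and note that the increments $X_k:=\gamma_k x_k$ are independent and symmetric, so that Setting~\ref{set:iidsums} applies and the results of this section, in particular Propositions~\ref{prop:Levy} and~\ref{prop:Levy:Lp}, are available. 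The two easy implications come first: \ref{Gaussian:series:Lp:all} $\Rightarrow$ \ref{Gaussian:series:Lp:some} is trivial, and \ref{Gaussian:series:Lp:some} $\Rightarrow$ \ref{Gaussian:series:prob} follows from Markov's inequality, since convergence in $L^p(\Omega;E)$ forces convergence in probability.

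The substance lies in \ref{Gaussian:series:prob} $\Rightarrow$ \ref{Gaussian:series:Lp:all}. First I would upgrade convergence in probability to almost sure convergence: being partial sums of independent random variables, $(S_n)_{n\in\bN}$ converges almost surely to some $S$ by Proposition~\ref{prop:Levy} (the implication \ref{prop:Levy:prob}$\Rightarrow$\ref{prop:Levy:as}). In view of the Hoffmann-J\o{}rgensen theorem (Proposition~\ref{prop:Levy:Lp}), it then suffices, for each fixed $p\in(0,\infty)$, to verify condition~\ref{it:HoJo1}, i.e.\ the uniform bound $\sup_{n\in\bN}\E\nnrm{S_n}{E}^p<\infty$; this automatically yields condition~\ref{it:HoJo4}, namely convergence of $(S_n)_{n\in\bN}$ to $S$ in $L^p(\Omega;E)$, and since $p$ is arbitrary we obtain \ref{Gaussian:series:Lp:all}.

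The key step---the only place where the Gaussian structure is genuinely used---is establishing this uniform moment bound from convergence in probability alone, exploiting the equivalence of the ``$L^0$'' and $L^p$ behaviour of random sums via Kahane--Khintchine. Convergence in probability makes $(S_n)_{n\in\bN}$ bounded in probability, so for every $\delta>0$ there is an $R_\delta\in(0,\infty)$, independent of $n$, with $\sup_{n\in\bN}\P(\nnrm{S_n}{E}>R_\delta)\le\delta$; here the $r$-triangle inequality is used to split $\{\nnrm{S_n}{E}>R\}$ into contributions from $\nnrm{S}{E}$ and $\nnrm{S_n-S}{E}$, handling the finitely many small indices separately from the tail. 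On the other hand, each $S_n$ is a finite Gaussian sum, so $\E\nnrm{S_n}{E}^{2p}<\infty$ by the $r$-triangle inequality and the finiteness of all Gaussian moments, and \eqref{eq:QBKahKhin:Gauss} provides a constant $C=C_{p,r}$ with $\E\nnrm{S_n}{E}^{2p}\le C\big(\E\nnrm{S_n}{E}^p\big)^2$ for all $n$. Splitting the $p$-th moment at the level $R_\delta$ and applying Cauchy--Schwarz on the tail,
\begin{align*}
\E\nnrm{S_n}{E}^p
&\le R_\delta^p + \big(\E\nnrm{S_n}{E}^{2p}\big)^{1/2}\,\P\big(\nnrm{S_n}{E}>R_\delta\big)^{1/2}\\
&\le R_\delta^p + C^{1/2}\delta^{1/2}\,\E\nnrm{S_n}{E}^p.
\end{align*}
Choosing $\delta$ so small that $C^{1/2}\delta^{1/2}\le\tfrac12$ and absorbing the last term yields $\E\nnrm{S_n}{E}^p\le 2R_\delta^p$ for all $n$, which is the desired uniform bound.

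I expect the absorption step to be the delicate point: it relies on knowing a priori that each $\E\nnrm{S_n}{E}^p$ is finite (true since $S_n$ is a finite sum) so that subtraction is legitimate, and on choosing $\delta$ \emph{after}, not before, the Kahane--Khintchine constant $C_{p,r}$ has been fixed. One must also take care that the boundedness in probability is genuinely uniform in $n$. The Rademacher case requires no change beyond replacing $\gamma_k$ by $\varepsilon_k$ and \eqref{eq:QBKahKhin:Gauss} by \eqref{eq:QBKahKhin:Rade}.
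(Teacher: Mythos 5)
Your proof is correct, and all the delicate points you flag (a priori finiteness of $\E\|S_n\|_E^p$ before absorbing, fixing the Kahane--Khintchine constant before choosing $\delta$, uniformity of $R_\delta$ in $n$) are handled properly. The comparison with the paper concerns only the implication \ref{Gaussian:series:prob}$\Rightarrow$\ref{Gaussian:series:Lp:all}, since the other two implications are treated identically (triviality and Markov's inequality). For the hard implication the paper gives no self-contained argument: it points to the Banach-space proof in \cite[Proposition~6.4.5]{AlbKal2006} and, as an alternative, to a Paley--Zygmund argument as in \cite[Corollaries~6.4.2 and~6.4.4]{HytNeerVerWeis2017}. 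What you have written is essentially a complete, in-paper realization of that second strategy: the self-improvement $\E\|S_n\|_E^{2p}\le C\,\bigl(\E\|S_n\|_E^p\bigr)^2$ from Theorem~\ref{thm:QBKahKhin}, combined with the Cauchy--Schwarz splitting at the level $R_\delta$ and the absorption step, is precisely the mechanism behind the Paley--Zygmund inequality (whose proof is exactly such a splitting). The one structural difference from the cited route is how the uniform moment bound is converted into $L^p$-convergence: you go through almost sure convergence (Proposition~\ref{prop:Levy}) and Hoffmann--J\o{}rgensen (Proposition~\ref{prop:Levy:Lp}, \ref{it:HoJo1}$\Rightarrow$\ref{it:HoJo4}), whereas the direct Paley--Zygmund argument applies the moment comparison to the differences $S_n-S_m$ and concludes that $(S_n)_{n\in\N}$ is Cauchy in $L^p(\Omega;E)$ outright, with no need for Hoffmann--J\o{}rgensen. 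Your hybrid costs nothing, since both auxiliary results are already established in the paper for exactly this setting (the summands $\gamma_k x_k$, respectively $\varepsilon_k x_k$, are independent and symmetric, so Setting~\ref{set:iidsums} applies), and it has the merit of making the proposition genuinely self-contained rather than a citation; the HNVW-style variant is marginally shorter if one is willing to quote Paley--Zygmund as a black box.
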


\begin{proof}
The implication `\ref{Gaussian:series:Lp:all}$\Rightarrow$\ref{Gaussian:series:Lp:some}' is trivial and `\ref{Gaussian:series:Lp:some}$\Rightarrow$\ref{Gaussian:series:prob}' is classic due to Markov's inequality.
Thus, we only have to check `\ref{Gaussian:series:prob}$\Rightarrow$\ref{Gaussian:series:Lp:all}'. But this can be proven by mimicking the proof of the corresponding result for Banach spaces, see, for instance, the proof of \cite[Proposition~6.4.5]{AlbKal2006} and the subsequent example. An alternative strategy relying on the Paley-Zygmund inequality can be found, e.g., in \cite[Corollaries 6.4.2 and 6.4.4]{HytNeerVerWeis2017}.
\end{proof}

We close this section with  a  simple criterion for convergence of bounded random sums,  which we use to prove  Proposition~\ref{prop:gamma_sum_radon}  later on.

As for Banach spaces, an $r$-Banach space is said to have \emph{(Rademacher) cotype} $q\in[2,\infty]$ if there exists a constant $C_q\in (0,\infty)$ such that for all finite sequences $x_1,\ldots,x_N\in E$ we have
\[
\ssgrklam{\sum_{n=1}^N \nnrm{x_n}E^q}^{\frac{1}{q}}
\leq C_q
\ssgrklam{\E\ssgnnrm{\sum_{n=1}^N \varepsilon_n x_n}{E}^2}^{\frac{1}{2}};
\]
with the usual modification of $\max_{n=1\ldots,N}\nnrm{x_n}E$ replacing the sum on the left hand side for $q=\infty$ (see also, e.g., \cite{BasUri1986,Mal2004} as well as~\cite{AlbKal2006, LedTal}).
Similarly, one can define \emph{type $p$} for $p\in (0,2]$, but this notion is less useful for $r$-Banach spaces with $0<r<1$ (see~\cite{Kal1981}). The following function spaces are classical examples of $r$-Banach spaces with finite cotype:
\begin{itemize}
\item For $p\in (0,\infty)$, the space $L^p$ has cotype $p\vee 2$ (see \cite{Mal2004}). This result remains true for non-commutative $L^p$-spaces (see \cite[Corollary~5.8]{PisXu}).
\item One easily checks that if $X$ has cotype $q\in [2,\infty)$, $p\in (0,\infty)$,
and $(S,\cA,\mu)$ is a $\sigma$-finite measure space, then $L^p(S;X)$ has
cotype $p\vee q$. Consequently, for $p,q\in (0,\infty)$ and $s\in \R$, the Besov space $B^{s}_{p,q}$ has cotype $p\vee q\vee 2$.
\end{itemize}

\begin{proposition}\label{prop:HJKalternative}
The following assertions are equivalent:
\begin{enumerate}[leftmargin=*,align=right,label=\textup{(\roman*)}]
\item \label{it:asconvsymmsums} For all sequences  $(X_n)_{n\in \N}$  of independent symmetric $E$-valued random variables satisfying  $\P\grklam{ \sup_{n\in \N} \gnnrm{ \sum_{k=1}^{n}  X_k }{E} < \infty } = 1$
one has that $\sum_{k=1}^{\infty} X_k $ converges $\P$-almost surely.
\item \label{it:randomizedsequence} For all sequences $(x_n)_{n \in \N}\subseteq E$
satisfying  $\P\grklam{ \sup_{n\in \N} \gnnrm{ \sum_{k=1}^{n} \varepsilon_k x_k }{E} < \infty } = 1$
one has that $\limn x_n = 0$.
\end{enumerate}
In particular, both assertions hold if $E$ has finite cotype.
\end{proposition}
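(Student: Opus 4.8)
The plan is to establish the two implications \ref{it:asconvsymmsums}$\Ra$\ref{it:randomizedsequence} and \ref{it:randomizedsequence}$\Ra$\ref{it:asconvsymmsums} separately, and then to deduce the cotype statement from \ref{it:randomizedsequence}.

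\emph{The implication \ref{it:asconvsymmsums}$\Ra$\ref{it:randomizedsequence}} is a mere specialization. Given $(x_n)_{n\in\N}\subseteq E$ as in \ref{it:randomizedsequence}, set $X_n:=\eps_n x_n$. These are independent (the $\eps_n$ are) and symmetric (since $\eps_n\stackrel{\mathrm d}{=}-\eps_n$), and $\sum_{k=1}^n X_k=\sum_{k=1}^n\eps_k x_k$, so the boundedness hypothesis of \ref{it:randomizedsequence} is exactly the hypothesis of \ref{it:asconvsymmsums}. Hence $\sum_k\eps_k x_k$ converges $\P$-a.s., so its terms tend to zero $\P$-a.s.; since $\nrm{x_n}=\nrm{\eps_n x_n}$, this forces $\limn x_n=0$.

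\emph{The converse \ref{it:randomizedsequence}$\Ra$\ref{it:asconvsymmsums}} is the substance. Let $(X_n)$ be independent symmetric with $\sup_n\nrm{S_n}<\infty$ $\P$-a.s., where $S_n=\sum_{k=1}^n X_k$. By Proposition~\ref{prop:Levy} it suffices to prove that $(S_n)$ converges in probability; since convergence in probability is a complete metric topology on $L^0(\Omega;E)$ (as $E$ is Polish), this reduces to showing $(S_n)$ is Cauchy in probability. I argue by contradiction: if not, there are $\eps,\delta>0$ such that for every $N$ one finds $n>m\ge N$ with $\P(\nrm{S_n-S_m}>\eps)>\delta$. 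Applying this repeatedly I would build a strictly increasing sequence $0=a_0<a_1<a_2<\cdots$ so that infinitely many of the consecutive blocks $Z_i:=\sum_{j=a_{i-1}+1}^{a_i}X_j=S_{a_i}-S_{a_{i-1}}$ satisfy $\P(\nrm{Z_i}>\eps)>\delta$ (the remaining blocks being the ``gaps'' inserted to keep the partition consecutive). The point of a consecutive partition is that $\sum_{i=1}^{I}Z_i=S_{a_I}$, so $\sup_I\nrm{\sum_{i=1}^I Z_i}\le\sup_n\nrm{S_n}<\infty$ $\P$-a.s.; that is, the independent symmetric variables $(Z_i)$ again have $\P$-a.s.\ bounded partial sums.

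The key step is to transfer the randomized hypothesis of \ref{it:randomizedsequence} to the genuinely random sequence $(Z_i)$ by symmetrization. Since the $Z_i$ are independent and symmetric, $(Z_i)_{i}\stackrel{\mathrm d}{=}(\eps_i Z_i)_i$ for a Rademacher sequence $(\eps_i)$ independent of $(Z_i)$, whence $\sup_I\nrm{\sum_{i=1}^I\eps_i Z_i}<\infty$ $\P$-a.s.\ on the product space. By Fubini, for almost every realization $(z_i)\subseteq E$ of $(Z_i)$ one has $\P(\sup_I\nrm{\sum_{i=1}^I\eps_i z_i}<\infty)=1$; applying \ref{it:randomizedsequence} to each such deterministic sequence yields $\lim_{i\to\infty} z_i=0$, i.e.\ $\nrm{Z_i}\to0$ $\P$-a.s. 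In particular $\P(\nrm{Z_i}>\eps)\to0$, contradicting $\P(\nrm{Z_i}>\eps)>\delta$ for infinitely many $i$. This contradiction shows $(S_n)$ is Cauchy in probability and finishes the implication. I expect the main obstacle to be precisely this reduction: arranging the blocks so that partial sums stay bounded while infinitely many blocks remain ``large'', and justifying the symmetrization/Fubini passage from random increments to the deterministic sequences to which \ref{it:randomizedsequence} applies.

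\emph{The cotype statement} follows by verifying \ref{it:randomizedsequence}. Let $(x_n)$ have $\P$-a.s.\ bounded Rademacher partial sums $S_n=\sum_{k=1}^n\eps_k x_k$. A.s.\ boundedness gives $\sup_N\P(\nrm{S_N}>t)\le\P(\sup_n\nrm{S_n}>t)\to0$ as $t\to\infty$, so $(S_N)$ is bounded in probability; combined with a Paley--Zygmund-type lower bound for Rademacher sums (as used in the proof of Proposition~\ref{prop:ItoNisio}) together with the Kahane--Khintchine inequality~\eqref{eq:QBKahKhin:Rade}, this upgrades to $\sup_N\E\nrm{S_N}^2<\infty$. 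If $E$ has cotype $q\in[2,\infty)$, the cotype inequality then yields $\sum_{k=1}^N\nrm{x_k}^q\le C_q^q\,\big(\sup_N\E\nrm{S_N}^2\big)^{q/2}<\infty$, so $\limn x_n=0$ and \ref{it:randomizedsequence} holds.
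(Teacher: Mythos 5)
Your proposal is correct and follows essentially the same route as the paper: the paper handles the equivalence \ref{it:asconvsymmsums}$\Leftrightarrow$\ref{it:randomizedsequence} by citing the Banach-space argument of \cite[Theorem 4.2]{Neerven2010}, which is exactly your L\'evy-theorem/blocking/symmetrization/Fubini contradiction, and its cotype part likewise upgrades almost sure boundedness to boundedness in $L^q$ via the Kahane--Khintchine inequality (the paper invokes \cite[Corollary 6.2.9]{HytNeerVerWeis2017}, whose proof is the Paley--Zygmund argument you sketch) before applying the cotype inequality. The only cosmetic difference is that you work with $\sup_N \E\|S_N\|_E^2$ while the paper bounds the $L^q$-norms directly; both are equivalent by Theorem~\ref{thm:QBKahKhin}.
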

\begin{proof}
The equivalence of \ref{it:asconvsymmsums} and \ref{it:randomizedsequence} can be proven as in the Banach space case,
see e.g.~\cite[Theorem 4.2]{Neerven2010}.
To see that finite cotype implies \ref{it:randomizedsequence} let $q\in [2,\infty)$ and $C_q( E)\in [0,\infty)$
be such that  $E$  has cotype $q$ with cotype constant $C_q( E)$. Assume $\grklam{\sum_{k=1}^{n} \varepsilon_k x_k}_{n\in \N}$ is almost surely bounded and hence bounded in probability. Choose $\lambda>0$ such that for all $n\geq 1$,
\[\P\Big(\Big\|\sum_{k=1}^{n} \varepsilon_k x_k\Big\|_E>\lambda\Big)\leq \frac{1}{4 C_{2q,q,r}^2},\]
where $C_{2q,2,r}$ is the constant in the Kahane-Khintchine inequality (Theorem~\ref{thm:QBKahKhin}).
By \cite[Corollary 6.2.9]{HytNeerVerWeis2017} (which remains valid in the $r$-Banach space setting), we obtain that
\[
  \sup_{n\in \N}
  \Big\|
    \sum_{k=1}^{n} \varepsilon_k x_k
  \Big\|_{L^q(\Omega;E)}
  \leq 2^{1/q} \lambda.
\]
By definition of cotype 
we find that
\begin{align*}
  \sum_{n\in \N}\|x_n\|^q_E
&  =
 \sup_{N\in\bN}
  \sum_{n=1}^N \|x_n\|^q_E
  \leq
  [C_q( E)]^q
 \sup_{N\in\bN}
 \E
  \ssgnnrm{
    \sum_{n=1}^N \varepsilon_n x_n
  }{E}^q
  < \infty.
\end{align*}
Therefore, $\lim_{n\to \infty} \|x_n\|^q_E = 0$, and this yields~\ref{it:randomizedsequence}.
\end{proof}

\subsection{Gaussian random variables}
In this section $E$ denotes a separable $r$-Banach space for some $r\in (0,1]$.
If $r=1$, i.e., if $E$ is a Banach space, an $E$-valued random variable $X$ is usually called Gaussian if $\lb X,x^*\rb$ is Gaussian for every $x^*\in E^*$.
This makes sense, as the points of normed spaces can be separated by their duals due to Hahn-Banach theorem.
However, as already mentioned, $r$-Banach spaces do not necessarily have a rich dual; it can even be trivial.
Nevertheless, we can introduce the notion `Gaussian random variable' by using what is known as Bernstein's characterization of Gaussian measures.

\begin{definition}\label{def:Gauss:Bernstein}
An $E$-valued random variable $X$ is \emph{Gaussian} if the random variables $X+Y$ and $X-Y$ are independent for any independent copy $Y$ of $X$.
\end{definition}

\begin{remark}
The fact that if $E=\R$ this definition coincides with what is usually known to be a real-valued Gaussian random variable has been proven by Bernstein~\cite{Ber1941} and independently by Kac~\cite{Kac1939} (see \cite{BauerEN, Bryc, Feller} for historical comments and proofs).
An analogous definition has been also suggested for introducing Gaussian measures on more general structures like abelian groups, see, e.g., \cite{Byc1979,Fel1993}.
\end{remark}

\begin{remark}
In the construction of the stochastic integral we present below, all Gaussian random variables are symmetric. Therefore, throughout the paper, if not explicitly stated otherwise, when we say `Gaussian' we mean `symmetric Gaussian'. Note that, as in the Banach space setting, if $X$ is a Gaussian random variable (not necessarily symmetric) then there exists a symmetric Gaussian random variable $Y$ and a point $x_0\in E$ such that $X=Y+x_0$, see, for instance,~\cite[Corollary~3.2]{Byc1981}.
\end{remark}

Due to L\'evy's theorem on the equivalence of convergence in distribution and convergence of the characteristic functions for real-valued random variables, it is easy to prove that the convergence in distribution preserves Gaussianity. This is also the case for $r$-Banach space valued Gaussian variables.

\begin{lemma}\label{lem:gausslimit}
Let $(X_n)_{n\in\N}$ be a sequence of $E$-valued Gaussian random variables converging in distribution to an $E$-valued random variable $X$. Then $X$ is Gaussian.
\end{lemma}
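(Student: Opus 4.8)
The plan is to prove that Gaussianity (in the sense of Bernstein's characterization, Definition~\ref{def:Gauss:Bernstein}) is preserved under convergence in distribution. Recall that $X$ is Gaussian precisely when $X+Y$ and $X-Y$ are independent for $Y$ an independent copy of $X$. So the goal is to produce an independent copy $Y$ of the limit $X$ and to show that $X+Y$ and $X-Y$ are independent, using only that each $X_n$ satisfies this property and that $X_n \to X$ in distribution.

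First I would pass to product spaces. For each $n$, let $Y_n$ be an independent copy of $X_n$, realized so that $(X_n, Y_n)$ lives on a product probability space with $X_n, Y_n$ independent and $Y_n \stackrel{d}{=} X_n$. Since $X_n \to X$ in distribution, the pair $(X_n, Y_n)$ converges in distribution on $E \times E$ to $(X, Y)$, where $Y$ is an independent copy of $X$: indeed, for bounded continuous $f,g \in \cont_b(E)$ one has $\E[f(X_n)g(Y_n)] = \E[f(X_n)]\,\E[g(Y_n)] \to \E[f(X)]\,\E[g(Y)]$ by independence and the two marginal convergences, and products of such functions are distribution-determining on the separable metric space $E \times E$. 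This identifies the limit law of $(X_n,Y_n)$ as the product law of $X$ with an independent copy.

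Next I would apply the continuous map $\Phi\colon E\times E \to E\times E$, $\Phi(u,v) = (u+v,\,u-v)$, which is continuous (hence Borel) since addition and subtraction are continuous in the $r$-Banach space $E$. By the continuous mapping theorem, $\Phi(X_n,Y_n) = (X_n+Y_n,\, X_n-Y_n) \to (X+Y,\, X-Y)$ in distribution as $n\to\infty$. By the defining property of Gaussianity for each $X_n$, the components $X_n+Y_n$ and $X_n - Y_n$ are independent for every $n\in\N$. Now I would invoke Lemma~\ref{lem:Conv:pres:ind}: a limit in distribution of pairs whose components are independent has independent components. Applied to the sequence $(X_n+Y_n, X_n-Y_n)$ converging to $(X+Y, X-Y)$, this yields that $X+Y$ and $X-Y$ are independent, which is exactly the condition for $X$ to be Gaussian.

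The only real point requiring care—and the step I expect to be the mild obstacle—is justifying that the joint laws converge, i.e.\ assembling $(X_n,Y_n) \to (X,Y)$ in distribution from the one-dimensional marginal convergence $X_n\to X$ together with the independence structure; this is where separability of $E$ (so that $E\times E$ is a separable metric space and product-form test functions suffice) is used, and where $Y$ must be pinned down as a genuine independent copy of $X$. Once this is in place, the continuous mapping theorem and Lemma~\ref{lem:Conv:pres:ind} do the rest with no convexity or duality input, which is essential in the quasi-Banach setting where the dual may be trivial. Note that the whole argument stays within the Bernstein characterization and never refers to $E^*$, so it is insensitive to the richness of the dual.
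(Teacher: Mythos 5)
Your proposal is correct and follows essentially the same route as the paper: the paper's proof also takes independent copies $Y_n$ of $X_n$ and $Y$ of $X$, asserts that $(X_n+Y_n,\,X_n-Y_n)$ converges in distribution to $(X+Y,\,X-Y)$, and then applies Lemma~\ref{lem:Conv:pres:ind}. The only difference is that you spell out the intermediate step (joint convergence of $(X_n,Y_n)$ via product test functions on the separable space $E\times E$, then the continuous mapping theorem) which the paper leaves implicit.
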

\begin{proof}
Let $Y$ be an independent copy of $X$ and for all $n\in \N$
let $Y_n$ be an independent copy of $X_n$. Then $(X_n+Y_n, X_n-Y_n)$
converges in distribution to $(X+Y,X-Y)$. Moreover, for all $n\in \N$
one has that $X_n+Y_n$ and $X_n - Y_n$ are independent, whence by Lemma~\ref{lem:Conv:pres:ind}
it holds that $X+Y$ and $X-Y$ are independent.
%
%
\end{proof}

The following generalization of Fernique's theorem holds for Gaussian random variables on $r$-Banach spaces, see, for instance, \cite{ByZ,JKRR,Zapala}.

\begin{theorem}\label{thm:Fernique}
Let $X$ be an $E$-valued Gaussian random variable. Then there exists an $\alpha>0$ such that
\[
\E\geklam{\mathrm{exp}({\alpha\nnrm{X}{E}^2})}<\infty.
\]
\end{theorem}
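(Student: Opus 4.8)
The plan is to establish Fernique's theorem for $E$-valued Gaussian random variables by reducing to the classical Banach-space argument, which relies only on the rotation invariance of Gaussian vectors together with the $r$-triangle inequality, and crucially does \emph{not} require convexity or duality. First I would exploit the defining Bernstein property (Definition~\ref{def:Gauss:Bernstein}): if $X,X'$ are independent copies of a Gaussian variable $X$, then $X+X'$ and $X-X'$ are independent. Iterating the standard observation, the pair $\big(\tfrac{1}{\sqrt2}(X+X'),\tfrac{1}{\sqrt2}(X-X')\big)$ has the same distribution as $(X,X')$; this rotation invariance is the engine of Fernique's comparison and it is purely distributional, so it survives the passage to $r$-Banach spaces unchanged.

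The core estimate I would carry out is the iterative tail bound. Fix $s<t$ in $(0,\infty)$ and two independent copies $X,X'$ of $X$. Writing $u=\tfrac{1}{\sqrt2}(X-X')$ and $v=\tfrac{1}{\sqrt2}(X+X')$, which are independent copies of $X$, I would combine the events $\{\nrm{X}_E\le s\}$ and $\{\nrm{X'}_E>t\}$ and use the $r$-triangle inequality in the form
\[
\nrm{u}_E^r+\nrm{v}_E^r\ \ge\ \tfrac12\nrm{X-X'}_E^r+\tfrac12\nrm{X+X'}_E^r\ \cdot\ 2^{-(1-r)/r}\,\text{-type factors},
\]
to deduce that on this event both $\nrm{u}_E$ and $\nrm{v}_E$ must be comparable to $(t-s)$ up to an explicit $r$-dependent constant. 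Taking probabilities and using independence yields a recursion of the shape
\[
\P(\nrm{X}_E>t_{k+1})\ \le\ \Big(\frac{\P(\nrm{X}_E>t_k)}{\P(\nrm{X}_E\le s)}\Big)^2\P(\nrm{X}_E\le s),
\]
for a suitably chosen increasing sequence $t_k$ growing geometrically; I would choose $s$ large enough that $\P(\nrm{X}_E\le s)>\tfrac12$ (possible since $X$ is an honest $E$-valued random variable and hence tight, so $\nrm{X}_E<\infty$ almost surely), forcing the bracketed ratio below $1$ and producing doubly-exponential decay $\P(\nrm{X}_E>t_k)\le C\,2^{-2^k}$. This Gaussian-type tail decay then integrates against $\exp(\alpha\nrm{X}_E^2)$ for sufficiently small $\alpha>0$, giving the claimed finiteness.

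The main obstacle I anticipate is tracking the $r$-dependent constants through the $r$-triangle inequality so that the geometric spacing of the $t_k$ and the smallness of $\alpha$ remain consistent; in the Banach case ($r=1$) the factor $\tfrac{1}{\sqrt2}$ in the rotation matches the triangle inequality cleanly, whereas for $r<1$ one picks up powers of $2^{(1-r)/r}$ that must be absorbed without destroying the contraction in the recursion. A secondary point requiring care is that, lacking a rich dual, I cannot appeal to one-dimensional projections; instead the entire argument must run through the norm $\nrm{\cdot}_E$ directly, which is exactly why the Bernstein-type definition of Gaussianity (rather than a dual characterization) is the right input here. Alternatively, rather than redoing the computation I would simply invoke the cited references \cite{ByZ,JKRR,Zapala}, where precisely this $r$-Banach (indeed metric vector space) version of Fernique's theorem is established, and remark that the classical proof of~\cite[Theorem~2.8.5]{HytNeerVerWeis2017} goes through verbatim upon replacing the triangle inequality by the $r$-triangle inequality.
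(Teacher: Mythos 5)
Your fallback option---invoking \cite{ByZ,JKRR,Zapala}---is in fact exactly what the paper does: Theorem~\ref{thm:Fernique} is stated there with no proof at all, only this citation. Had you stopped at that, the proposal would match the paper. The problem is with your main, self-contained sketch, and with the closing claim that the classical Banach-space proof ``goes through verbatim'' upon replacing the triangle inequality by the $r$-triangle inequality.

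The gap is the rotation invariance. The Bernstein property (Definition~\ref{def:Gauss:Bernstein}) gives you only that $X+X'$ and $X-X'$ are \emph{independent}; it does not give you that $\tfrac{1}{\sqrt2}(X+X')$ and $\tfrac{1}{\sqrt2}(X-X')$ each have the \emph{same law} as $X$. This stability identity is what powers Fernique's recursion, and in the Banach space setting it is proved by duality: one computes the joint characteristic functional of $\bigl(\tfrac{1}{\sqrt2}(X+X'),\tfrac{1}{\sqrt2}(X-X')\bigr)$ using that $\langle X,x^*\rangle$ is a real Gaussian for every $x^*\in E^*$ and invokes Fourier uniqueness. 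Precisely this tool is unavailable here---the theorem covers spaces such as $L^r(0,1)$, $0<r<1$, whose dual is trivial---and deriving stability from the Bernstein property alone is a nontrivial piece of structure theory for Gaussian measures on metric linear spaces and groups; that is, in essence, the content of the references you and the paper cite. (When $E^*$ separates points one could repair your argument via Proposition~\ref{prop:weakGauss} and Fourier uniqueness, but not in general.) So the assertion that the identity is ``purely distributional'' and ``survives the passage to $r$-Banach spaces unchanged'' assumes exactly what must be proven. Moreover, what the Bernstein property does give you without stability is quantitatively too weak: from the $r$-triangle inequality one gets, on the event $\{\|X\|_E\le s,\ \|X'\|_E>t\}$, that $\|X\pm X'\|_E>(t^r-s^r)^{1/r}$, and then one must compare the tail of $X+X'$ back to that of $X$ via $\P\bigl(\|X+X'\|_E>2^{1/r}u\bigr)\le 2\,\P\bigl(\|X\|_E>u\bigr)$; the resulting recursion has thresholds $t_k\sim 2^{k/r}$ with $\P(\|X\|_E>t_k)\lesssim a^{2^k}$, i.e.\ it yields only tails of order $\exp(-ct^{r})$, which does not suffice for $\E\bigl[\exp(\alpha\|X\|_E^2)\bigr]<\infty$. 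Conversely, once stability is available, the $r$-dependent constants you flag as the ``main obstacle'' are harmless: the thresholds then grow like $2^{k/2}$ and the sub-Gaussian tail follows as in the classical case. In short, the hard step is the one you waved through, not the one you worried about; the honest options are either to prove stability for Bernstein-Gaussian variables first, or to cite the literature, as the paper does.
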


\subsection{Gaussian random variables in \texorpdfstring{$r$}{r}-Banach spaces with separating dual}
\label{ssec:Gaussian_separating_dual}

One can say more about Gaussian random variables if $E$ has a separating dual.
We start with the fact that, in this case, Definition~\ref{def:Gauss:Bernstein} extends the usual definition of Gaussian random variables via duality, see also~\cite{Bog} for the Banach space analogue. The results in this subsection will not be used in the rest of the paper.


\begin{proposition}\label{prop:weakGauss}
Assume that $E^*$ separates the points of $E$. An $E$-valued random variable $X$ is Gaussian if and only if $\lb X,x^*\rb$ is Gaussian for all $x^*\in E^*$.
\end{proposition}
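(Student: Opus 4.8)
The plan is to prove the two directions separately. The forward direction ($X$ Gaussian $\Rightarrow$ $\lb X,x^*\rb$ Gaussian for all $x^*\in E^*$) should be essentially immediate: if $X$ is Gaussian in the sense of Definition~\ref{def:Gauss:Bernstein}, then for any independent copy $Y$ of $X$ the random variables $X+Y$ and $X-Y$ are independent. Fix $x^*\in E^*$ and set $\xi:=\lb X,x^*\rb$, $\eta:=\lb Y,x^*\rb$. Since $x^*$ is linear and continuous, $\eta$ is an independent copy of $\xi$, and the pair $(\xi+\eta,\xi-\eta)=(\lb X+Y,x^*\rb,\lb X-Y,x^*\rb)$ is a measurable image of the independent pair $(X+Y,X-Y)$; hence $\xi+\eta$ and $\xi-\eta$ are independent. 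By Bernstein's characterization (the case $E=\bR$, cited in the remark after Definition~\ref{def:Gauss:Bernstein}), $\xi$ is a real Gaussian. This direction does not even use that $E^*$ is separating.

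For the converse I would argue as follows. Suppose $\lb X,x^*\rb$ is Gaussian for every $x^*\in E^*$, and let $Y$ be an independent copy of $X$. I want to show $X+Y$ and $X-Y$ are independent. The natural route, mirroring the Banach-space proof, is to reduce independence to the joint Fourier transform factorizing. Concretely, for $x^*,y^*\in E^*$ consider the real random vector $(\lb X+Y,x^*\rb,\lb X-Y,y^*\rb)$; one computes its characteristic function using independence of $X$ and $Y$ and the fact that all relevant linear functionals of $X$ (and of $Y$) are \emph{jointly} Gaussian. The key point is that a finite family $\lb X,x_1^*\rb,\dots,\lb X,x_k^*\rb$ is a Gaussian vector in $\bR^k$: each linear combination $\sum_j c_j\lb X,x_j^*\rb=\lb X,\sum_j c_j x_j^*\rb$ is the image of $X$ under a single functional $\sum_j c_j x_j^*\in E^*$, hence real Gaussian, and a vector all of whose linear combinations are Gaussian is itself jointly Gaussian. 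From joint Gaussianity one gets that $\lb X+Y,x^*\rb=\lb X,x^*\rb+\lb Y,x^*\rb$ and $\lb X-Y,y^*\rb$ are jointly Gaussian (using independence of the $X$- and $Y$-blocks), and a direct covariance computation shows their covariance vanishes, so they are independent as real variables. This yields factorization of the joint characteristic function evaluated at all pairs $(x^*,y^*)$.

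To pass from this functional-level independence to genuine independence of the $E$-valued random variables $X+Y$ and $X-Y$, I would invoke the uniqueness of the Fourier transform when $E^*$ separates points, as recorded in the discussion just before Proposition~\ref{prop:QB:conv:test:as}. The law of $(X+Y,X-Y)$ on $E\times E$ and the product of the marginal laws are two Borel probability measures whose Fourier transforms (evaluated on $E^*\times E^*$) agree, by the factorization above; since $(E\times E)^*$ contains all functionals of the form $(u,v)\mapsto \lb u,x^*\rb+\lb v,y^*\rb$ and these separate the points of $E\times E$ (because $E^*$ separates those of $E$), the uniqueness statement forces the two measures to coincide on $\mathcal{B}(E\times E)$. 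This is exactly independence of $X+Y$ and $X-Y$, which is Definition~\ref{def:Gauss:Bernstein}, so $X$ is Gaussian.

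The main obstacle, and the place I would be most careful, is precisely this last measure-theoretic step: in a genuine quasi-Banach space one cannot appeal to Hahn--Banach, so one must justify that separation of points by $E^*$ transfers to $E\times E$ and that it is enough to test the Fourier transform on the special separating family of split functionals rather than on all of $(E\times E)^*$. The cited uniqueness of the Fourier transform (and Pettis' theorem, Proposition~\ref{prop:Pettis}, giving $\mathcal{B}(E)=\sigma(x^*:x^*\in E^*)$, hence $\mathcal{B}(E\times E)=\sigma(x^*\otimes y^*)$) is what makes this rigorous, so I would lean on those rather than re-deriving them. The Gaussian covariance computation itself is routine and I would not belabor it.
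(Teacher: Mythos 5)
Your proposal is correct and follows essentially the same route as the paper's proof: the forward direction is the same easy consequence of Bernstein's theorem for real random variables, and the converse rests on uniqueness of the Fourier transform, justified via the separating dual and Pettis' theorem (Proposition~\ref{prop:Pettis}, Example~\ref{ex:Boreldual}), exactly as the paper indicates. You merely spell out the details (joint Gaussianity of finite families, the vanishing covariance, and the transfer of the separation property to $E\times E$) that the paper leaves implicit.
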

\begin{proof}
The `only if' part is an easy consequence of Bernstein's theorem, see~\cite{Ber1941}. The `if' part follows by the fact that if $E^*$ separates the points of $E$, then the Fourier transform is unique as $\mathcal{B}(E)=\sigma(x^*\colon x^*\in E^*)$, see Proposition~\ref{prop:Pettis} and Example~\ref{ex:Boreldual}.
\end{proof}

In the Banach space setting, Karhunen-Lo\`eve expansions can be used to extend many results for Gaussian sums, like, for instance, Theorem~\ref{thm:QBKahKhin} and Proposition~\ref{prop:ItoNisio}, to Gaussian random variables.
Unfortunately, such an expansion is not available for general $r$-Banach spaces. However, if $E^*$ separates the points of $E$, the following result follows from~\cite[Theorem~1]{Byc1987}.

%
\begin{proposition}\label{prop:KarhunenLoeve}
Assume that $E^*$ separates the points of $E$. Let $X\colon \Omega\to E$ be a Gaussian random variable. Then there exists a Gaussian sequence $(\gamma_n)_{n\in \N}\subseteq L^2(\Omega)$ and a sequence $(x_n)_{n\in\N}\subseteq E$ such that
$
\sum_{n=1}^\infty \gamma_n x_n
$
converges $\P$-almost surely to a Gaussian random variable $Y$, which has the same distribution as $X$.
\end{proposition}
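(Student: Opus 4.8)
The plan is to realise the Gaussian first chaos of $X$ as a separable Gaussian subspace of $L^2(\Omega)$, to transport an orthonormal basis of that subspace into $E$, and then to identify the resulting series with $X$ via the \Ito-Nisio theorem. First I would pass to the weak picture: since $E^*$ separates the points of $E$, Proposition~\ref{prop:weakGauss} shows that $\langle X,x^*\rangle$ is a centred real Gaussian for every $x^*\in E^*$, and Fernique's theorem (Theorem~\ref{thm:Fernique}) gives $X\in L^2(\Omega;E)$, so that $x^*\mapsto\langle X,x^*\rangle$ maps $E^*$ into $L^2(\Omega)$ with $\nnrm{\langle X,x^*\rangle}{L^2(\Omega)}\leq\nnrm{X}{L^2(\Omega;E)}\,\nnrm{x^*}{E^*}$. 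Let $\mathcal H\subseteq L^2(\Omega)$ be the closed linear span of $\{\langle X,x^*\rangle:x^*\in E^*\}$. As $E$ is a separable metric space, $\mathcal B(E)$ and hence $\sigma(X)$ are countably generated, so $L^2(\Omega,\sigma(X),\P)$ and therefore its subspace $\mathcal H$ are separable. Because the elements of $\mathcal H$ are jointly centred Gaussian, any orthonormal basis $(\gamma_n)_{n\in\N}$ of $\mathcal H$ is a sequence of independent standard Gaussians, i.e.\ a Gaussian sequence in the sense fixed in the introduction (pad with zero vectors $x_n=0$ below if $\dim\mathcal H<\infty$).

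The crux is to produce the vectors $x_n\in E$. I would require $x_n$ to represent the Pettis-type integral $\E[\gamma_n X]$, that is,
\[
\langle x_n,x^*\rangle=\E\big[\gamma_n\,\langle X,x^*\rangle\big],\qquad x^*\in E^*.
\]
This is precisely the point where local convexity would normally be used; as stressed in the introduction, $\E[\gamma_n X]$ need not exist as an element of a general $r$-Banach space. Here the Gaussian structure rescues the construction: the law of $X$ is a Radon, hence tight, measure on the Polish space $E$ (Remark~\ref{rem:quasiBS}), and it is this tightness together with Bernstein's characterisation that allows the coefficient vectors of the Karhunen-Lo\`eve expansion to be built inside $E$. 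This is the content of \cite[Theorem~1]{Byc1987}, which I would invoke to obtain a sequence $(x_n)_{n\in\N}\subseteq E$ satisfying the displayed identity (alternatively, the full almost surely convergent series may be quoted from there, leaving only the identification of its limit). This step is the main obstacle; everything else is soft.

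Given the $x_n$, set $S_N:=\sum_{n=1}^N\gamma_n x_n$, a finite sum of independent symmetric $E$-valued Gaussians, so each $S_N$ is Gaussian by Proposition~\ref{prop:weakGauss}. For every $x^*\in E^*$ the displayed identity yields $\langle S_N,x^*\rangle=\sum_{n=1}^N\E[\gamma_n\langle X,x^*\rangle]\,\gamma_n$, which is exactly the partial sum of the orthonormal expansion of $\langle X,x^*\rangle\in\mathcal H$ in the basis $(\gamma_n)$; hence $\langle S_N,x^*\rangle\to\langle X,x^*\rangle$ in $L^2(\Omega)$, and a fortiori in probability. Since $(\gamma_n x_n)_{n\in\N}$ fits Setting~\ref{set:iidsums} with independent symmetric summands and $E^*$ is separating, condition~\ref{prop:ItoNis:dualweak} of the \Ito-Nisio theorem (Proposition~\ref{prop:QB:conv:test:as}) holds with limit $X$, whence $S_N$ converges $\P$-almost surely to some $E$-valued random variable $Y$.

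It remains to identify $Y$. Almost sure convergence $S_N\to Y$ gives $\langle S_N,x^*\rangle\to\langle Y,x^*\rangle$ $\P$-a.s.\ for each $x^*$, and comparing with the limit in probability above yields $\langle Y,x^*\rangle=\langle X,x^*\rangle$ $\P$-a.s.\ for every $x^*\in E^*$. Consequently $\widehat{\P_Y}=\widehat{\P_X}$ on $E^*$, and since $E^*$ separates the points of $E$ the Fourier transform is unique (as recalled before Proposition~\ref{prop:QB:conv:test:as}, using $\mathcal B(E)=\sigma(x^*:x^*\in E^*)$ from Example~\ref{ex:Boreldual}); therefore $\P_Y=\P_X$. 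Finally $Y$ is Gaussian, either because $Y\stackrel{d}{=}X$ or because $Y$ is the distributional limit of the Gaussians $S_N$, so that Lemma~\ref{lem:gausslimit} applies. This exhibits the desired Karhunen-Lo\`eve expansion.
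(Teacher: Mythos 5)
Your overall architecture (weak orthonormal expansion plus It\^o-Nisio identification) is sound, but the step you yourself single out as the crux --- producing $x_n\in E$ with $\langle x_n,x^*\rangle=\E[\gamma_n\langle X,x^*\rangle]$ for an orthonormal basis $(\gamma_n)$ of the closed span $\mathcal{H}$ of $\{\langle X,x^*\rangle:x^*\in E^*\}$ --- is a genuine gap, and attributing it to \cite[Theorem~1]{Byc1987} is a mis-citation. Byczkowski's theorem operates with orthonormal bases of the space $E^2(\P_X)$ of square-integrable measurable linear functionals, a space that may be strictly larger than (the image of) $\mathcal{H}$, and it produces its Gaussian sequence and coefficient vectors as one package; it does not assert that, for a basis prescribed in advance and coming from $E^*$, the Pettis-type integrals $\E[\gamma_n X]$ exist as elements of $E$. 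The paper's proof is precisely the verification that the hypotheses of Byczkowski's theorem hold in this setting: $E^*\subseteq E^2(\P_X)$ together with the Lindel\"of argument from the proof of Proposition~\ref{prop:QB:conv:test:as} gives a countable sequence in $E^2(\P_X)$ separating points of $E$ (mod $\P_X$), and every orthonormal basis of $E^2(\P_X)$ induces a Gaussian sequence --- after which the conclusion (a series converging almost surely to a random variable with the same distribution as $X$) is read off from \cite[Theorem~1]{Byc1987}. The paper makes no attempt to construct the $x_n$ as weak expectations against a basis of $\mathcal{H}$.

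The tell that something must be wrong with your main line: if your displayed identity were available, your It\^o-Nisio argument would deliver not merely $\P_Y=\P_X$ but $Y=X$ almost surely, since $\langle Y,x^*\rangle=\langle X,x^*\rangle$ $\P$-a.s.\ for every $x^*$, and a countable separating subfamily of $E^*$ then forces $Y=X$ a.s. But the remark immediately following Proposition~\ref{prop:KarhunenLoeve} states explicitly that it is unclear from \cite{Byc1987} whether the orthonormal basis may be chosen out of $E^*$ and whether the series can be arranged to converge to $X$ itself; your argument would settle both questions affirmatively, which strongly indicates that the cited theorem does not supply the representation you need. Your fallback parenthetical (``the full almost surely convergent series may be quoted from there'') simply collapses to the paper's proof, but then the Pettis/It\^o-Nisio superstructure is redundant, and you would still owe the verification of Byczkowski's hypotheses that the paper actually carries out.
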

\begin{proof}
Using the notation from~\cite{Byc1987}, it is easy to see that if $E^*$ separates the points of $E$, then there exists a sequence of elements of $E^2(\P_X)$ separating points of $E$ (mod $\P_X$), as $E^*\subseteq E^2(\P_X)$, see also the proof of Proposition~\ref{prop:QB:conv:test:as}.
Moreover, for every orthonormal basis $(h_n)_{n\in \N}$ of $E^2(\P_X)$, the random variables $(h_n(X))_{n\in\N}$ form a Gaussian sequence.
Thus, the assertion is just a very particular instance of~\cite[Theorem~1]{Byc1987}.
\end{proof}

\begin{remark}
It is not immediately clear from \cite{Byc1987} whether in the proof of Proposition~\ref{prop:KarhunenLoeve} one may choose the orthonormal basis $(h_n)_{n\in\bN}$ out of $E^*$. It is also unclear whether it is possible to choose $(h_n)_{n\in\bN}$ in such a way that $\sum_{n=1}^\infty h_n(X)\,x_n=X$ $\P$-a.s.
\end{remark}

Now we explain how to use Proposition~\ref{prop:KarhunenLoeve} prove the generalizations of Theorem~\ref{thm:QBKahKhin} and Proposition~\ref{prop:ItoNisio} to Gaussian random variables.
Note, however, that these results are not needed in any other proof in this paper.

\begin{proposition}\label{prop:QBKahKhi_RV}
Assume that $E^*$ separates the points of $E$. Then, for every Gaussian random variable $X\colon\Omega\to E$ and arbitrary $0<p,q<\infty$,
\begin{equation}
\grklam{\E\nnrm{X}{E}^p}^{1/p}
\leq C_{p,q,r}
\grklam{\E\nnrm{X}{E}^q}^{1/q}
\end{equation}
with $C_{p,q,r}$ as in Theorem~\ref{thm:QBKahKhin}.
\end{proposition}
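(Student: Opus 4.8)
The plan is to reduce the statement about a general Gaussian random variable $X$ to the already-established Kahane-Khintchine inequality for finite Gaussian sums (Theorem~\ref{thm:QBKahKhin}) by means of the series representation furnished by Proposition~\ref{prop:KarhunenLoeve}. Since $E^*$ separates the points of $E$, that proposition gives a Gaussian sequence $(\gamma_n)_{n\in\N}$ and a sequence $(x_n)_{n\in\N}\subseteq E$ such that the partial sums $S_N:=\sum_{n=1}^N \gamma_n x_n$ converge $\P$-almost surely to a Gaussian random variable $Y$ with the same distribution as $X$. Because the $L^p$- and $L^q$-norms depend only on the distribution, it suffices to prove the inequality for $Y$ in place of $X$.

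First I would apply Theorem~\ref{thm:QBKahKhin}, specifically the Gaussian inequality~\eqref{eq:QBKahKhin:Gauss}, to each finite partial sum $S_N$: for every $N\in\N$,
\[
\grklam{\E\nnrm{S_N}{E}^p}^{1/p}
\leq C_{p,q,r}
\grklam{\E\nnrm{S_N}{E}^q}^{1/q},
\]
with the constant $C_{p,q,r}$ independent of $N$. The task is then to pass to the limit $N\to\infty$ on both sides. For the left-hand side I would like to conclude $\limn \E\nnrm{S_N}{E}^p = \E\nnrm{Y}{E}^p$, and likewise with $q$ in place of $p$.

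The main technical point, and the step I expect to require the most care, is this convergence of moments. The partial sums $S_N$ form a Gaussian sequence converging $\P$-almost surely to $Y$; since each $S_N$ is an independent-increment Gaussian sum, Proposition~\ref{prop:Levy:Lp} (Hoffmann-J\o{}rgensen) applies once we know $\E\nnrm{Y}{E}^p<\infty$, and this finiteness is guaranteed by Fernique's theorem (Theorem~\ref{thm:Fernique}), which gives even exponential integrability of $\nnrm{Y}{E}$. Thus condition~\ref{it:HoJo3} of Proposition~\ref{prop:Levy:Lp} holds, and hence by~\ref{it:HoJo4} the sequence $(S_N)_{N\in\N}$ converges to $Y$ in $L^p(\Omega;E)$; in particular $\limn \E\nnrm{S_N}{E}^p = \E\nnrm{Y}{E}^p$. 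The identical argument, with $q$ replacing $p$, gives $\limn \E\nnrm{S_N}{E}^q = \E\nnrm{Y}{E}^q$.

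Finally I would take the limit in the displayed inequality for $S_N$, using the two convergences just established, to obtain
\[
\grklam{\E\nnrm{Y}{E}^p}^{1/p}
\leq C_{p,q,r}
\grklam{\E\nnrm{Y}{E}^q}^{1/q},
\]
and since $Y$ and $X$ have the same distribution this is exactly the claimed bound with the same constant $C_{p,q,r}$ as in Theorem~\ref{thm:QBKahKhin}. The only subtlety worth flagging is the need to verify that the finite moments are genuinely finite before invoking Hoffmann-J\o{}rgensen, but Fernique's theorem settles this cleanly, so the argument is essentially a limiting procedure built on the two earlier results.
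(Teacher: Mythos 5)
Your proposal is correct, and its skeleton coincides with the paper's: reduce to the series $S_N=\sum_{n=1}^N \gamma_n x_n$ via Proposition~\ref{prop:KarhunenLoeve}, apply the Gaussian Kahane--Khintchine inequality \eqref{eq:QBKahKhin:Gauss} to each partial sum with the $N$-independent constant $C_{p,q,r}$, and pass to the limit. The only point where you diverge from the paper is the justification of the moment convergence $\E\nnrm{S_N}{E}^p\to\E\nnrm{Y}{E}^p$: the paper cites Proposition~\ref{prop:ItoNisio}, which says directly that a.s.\ (hence in-probability) convergence of a Gaussian series already implies convergence in $L^p(\Omega;E)$ for every $0<p<\infty$; you instead invoke Fernique's theorem (Theorem~\ref{thm:Fernique}) to secure $\E\nnrm{Y}{E}^p<\infty$ and then feed this into the Hoffmann-J{\o}rgensen equivalences of Proposition~\ref{prop:Levy:Lp} to upgrade a.s.\ convergence to $L^p$ convergence. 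Both routes are legitimate and give the same constant. The paper's choice is the more economical one, since Proposition~\ref{prop:ItoNisio} is itself a corollary of the Kahane--Khintchine inequality and requires no moment information as input, so the whole proof stays within the "random series" machinery; your route brings in the heavier exponential-integrability result of Fernique, but in exchange makes the finiteness of all moments of the limit explicit, and it illustrates that Proposition~\ref{prop:Levy:Lp} applies here because the summands $\gamma_n x_n$ are independent, which is exactly what Setting~\ref{set:iidsums} demands. One small step you glossed over, which is worth stating: $L^p$ convergence implies $\E\nnrm{S_N}{E}^p\to\E\nnrm{Y}{E}^p$ by the $(r\minsym p)$-triangle inequality in the $(r\minsym p)$-Banach space $L^p(\Omega;E)$, namely $\bigl|\nnrm{S_N}{L^p(\Omega;E)}^{r\minsym p}-\nnrm{Y}{L^p(\Omega;E)}^{r\minsym p}\bigr|\leq\nnrm{S_N-Y}{L^p(\Omega;E)}^{r\minsym p}$.
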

\begin{proof}
This is an immediate consequence of Proposition~\ref{prop:KarhunenLoeve}, Theorem~\ref{thm:QBKahKhin} and Proposition~\ref{prop:ItoNisio}.
\end{proof}

\begin{proposition}\label{prop:Gaussian_RV_conv}
Assume that $E^*$ separates the points of $E$.
Let $(X_n)_{n\in\N}$ be a sequence of $E$-valued Gaussian random variables. Then the following are equivalent:
\begin{enumerate}[label=\textup{(\roman*)}]
\item\label{Gaussian:RV:prob} The sequence $(X_n)_{n\in\N}$ converges in probability.
\item\label{Gaussian:RV:Lp:all} The sequence $(X_n)_{n\in\N}$ converges in $L^p(\Omega;E)$ for all $0<p<\infty$.
\item\label{Gaussian:RV:Lp:some} The sequence $(X_n)_{n\in\N}$ converges in $L^p(\Omega;E)$ for some $0<p<\infty$.
\end{enumerate}
\end{proposition}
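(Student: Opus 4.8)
The plan is to prove the cycle \ref{Gaussian:RV:Lp:all}$\Rightarrow$\ref{Gaussian:RV:Lp:some}$\Rightarrow$\ref{Gaussian:RV:prob}$\Rightarrow$\ref{Gaussian:RV:Lp:all}. The first implication is trivial and the second is immediate from Markov's inequality, exactly as in the proof of Proposition~\ref{prop:ItoNisio}. The entire content therefore lies in \ref{Gaussian:RV:prob}$\Rightarrow$\ref{Gaussian:RV:Lp:all}, and my strategy there is \emph{not} to estimate the Cauchy differences $X_n-X_m$ directly (these need not be Gaussian, since marginal Gaussianity of $X_n$ and $X_m$ does not give joint Gaussianity of the pair), but instead to upgrade convergence in probability to $L^p$-convergence through a uniform moment bound and Vitali's convergence theorem.

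So assume $X_n\to X$ in probability. Then $X_n\to X$ in distribution, so $X$ is Gaussian by Lemma~\ref{lem:gausslimit}, and by Theorem~\ref{thm:Fernique} each $X_n$ (and $X$) has finite moments of all orders; in particular $\E\nnrm{X_n}{E}<\infty$. The heart of the argument is the claim that $\sup_{n\in\N}\E\nnrm{X_n}{E}^p<\infty$ for every $p\in(0,\infty)$. By the Kahane--Khintchine inequality for Gaussian random variables (Proposition~\ref{prop:QBKahKhi_RV}) one has $\E\nnrm{X_n}{E}^p\leq C_{p,1,r}^p\grklam{\E\nnrm{X_n}{E}}^p$, so it suffices to establish the single bound $\sup_{n\in\N}\E\nnrm{X_n}{E}<\infty$.

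I would prove this last bound by a normalization-and-contradiction argument that serves as a substitute for a uniform Fernique theorem. Suppose the bound fails; passing to a subsequence we may assume $c_k:=\E\nnrm{X_{n_k}}{E}\to\infty$, and we set $Z_k:=c_k^{-1}X_{n_k}$, which is again Gaussian (a scalar multiple of a Gaussian variable is Gaussian) and satisfies $\E\nnrm{Z_k}{E}=1$. Since a sequence converging in probability is bounded in probability, $\sup_{k}\P\grklam{\nnrm{X_{n_k}}{E}>M}\to0$ as $M\to\infty$, and because $c_k\to\infty$ this yields $Z_k\to0$ in probability. On the other hand, Proposition~\ref{prop:QBKahKhi_RV} gives $\sup_k\E\nnrm{Z_k}{E}^2\leq C_{2,1,r}^2\grklam{\E\nnrm{Z_k}{E}}^2=C_{2,1,r}^2<\infty$, so the family $\{\nnrm{Z_k}{E}\}_{k\in\N}$ is bounded in $L^2(\Omega)$, hence uniformly integrable; together with $Z_k\to0$ in probability this forces $\E\nnrm{Z_k}{E}\to0$, contradicting $\E\nnrm{Z_k}{E}=1$. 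This is the step I expect to be the main obstacle, and the one place where the separating-dual hypothesis is genuinely used, through Proposition~\ref{prop:QBKahKhi_RV}.

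With $\sup_{n}\E\nnrm{X_n}{E}^p<\infty$ for all $p$ established, I would conclude as follows. Applying this bound with exponent $2p$ shows that $\{\nnrm{X_n}{E}^p\}_{n\in\N}$ is bounded in $L^2(\Omega)$ and hence uniformly integrable. Since $X_n\to X$ in probability we have $\nnrm{X_n-X}{E}^p\to0$ in probability, and the $r$-triangle inequality together with the integrability of $\nnrm{X}{E}^p$ shows that $\{\nnrm{X_n-X}{E}^p\}_{n\in\N}$ is uniformly integrable as well. Vitali's convergence theorem then gives $\E\nnrm{X_n-X}{E}^p\to0$, that is, $X_n\to X$ in $L^p(\Omega;E)$ for every $p\in(0,\infty)$, which proves \ref{Gaussian:RV:Lp:all} and closes the cycle.
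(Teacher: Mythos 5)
Your proof is correct, and it follows exactly the route the paper's one-line proof prescribes: the paper simply says to use Proposition~\ref{prop:QBKahKhi_RV} to extend the Banach-space argument of Rosi\'nski--Suchanecki (their Lemma~2.1), i.e.\ Gaussian moment comparison plus a uniform-integrability/Vitali upgrade of convergence in probability, which is precisely what you carry out, including the correct observation that one cannot treat $X_n-X_m$ as Gaussian. Your normalization-and-contradiction derivation of $\sup_{n}\E\nnrm{X_n}{E}<\infty$ is a legitimate self-contained substitute for the classical quantile/Paley--Zygmund step, so the write-up fills in exactly the details the paper delegates to the reference.
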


\begin{proof}
Since Proposition~\ref{prop:QBKahKhi_RV} holds one can extend the argument in the Banach space setting of \cite[Lemma 2.1]{RosSuc1980}.
\end{proof}

%
%
%
%

\section{\texorpdfstring{$\gamma$}{g}-norms and square functions}
\label{sec:gamma}

In this section  we first introduce and prove fundamental properties of  so-called $\gamma$-radonifying operators $R\colon H\to E$, where $H$ is a separable Hilbert space and $E$ is a separable $r$-Banach space  for some fixed $r\in (0,1]$.
These operators play a key role in the development of stochastic \Ito\ integrals in the (quasi-)Banach space setting.
For details and  historical remarks  on this class of operators in the Banach space setting, we refer the reader to the survey paper~\cite{Neerven2010}.
In the final  part of this section we consider the special case of $r$-Banach function spaces $E=E(S)$ over a measure space $(S,\cA,\mu)$. In addition, we  replace  $H$ by $L^2(\tilde{S};H)$, where $(\tilde{S}, \tilde\cA, \tilde\mu)$ is a measure
space such that $L^2(S;H)$ is separable, and connect $\gamma$-radonifying operators $ R\colon L^2(\tilde S; H)\to E(S)$ to the theory of (generalized) square functions (see~\cite{KalWei2016}). Our presentation mostly follows~\cite[Chapter~9]{HytNeerVerWeis2017}.

\smallskip

\noindent \textbf{Notation.}
Throughout this section, $H$ denotes a separable Hilbert space, $E$ denotes a separable $r$-Banach space, $r\in (0,1]$ is fixed,
and $(\gamma_n)_{n\in\N}$ denotes a Gaussian sequence.  

\subsection{\texorpdfstring{$\gamma$}{g}-summing and \texorpdfstring{$\gamma$}{g}-radonifying operators}

For a linear operator $R\colon H\to E$ set
$$
\gnnrm{R}{\gamma_\infty(H,E)}
:=
\sup_h \rrklam{\E \ssgnnrm{\sum_{n=1}^N \gamma_n \,R h_n}{E}^2}^{\!1/2} \in [0,\infty],$$
where the supremum is taken over all finite orthonormal systems
$h=\{h_1,\dots,h_N\}$ in $H$.
The operator $R$ is called {\em $\gamma$-summing} if $\n R\n_{\gamma_\infty(H,E)}<\infty$. We write $\gamma_\infty(H,E)$ for the set of $\gamma$-summing operators from $H$ to $E$.
By considering singletons $\{h\}$ we see that
every $\gamma$-summing operator is bounded and $\n R\n_{\linop{H}{E}}\leq \n R\n_{\gamma_\infty(H,E)}$.
 This can be used  to show that the space $\gamma_\infty(H,E)$, endowed with the $r$-norm $\left\n \cdot \right\n_{\gamma_\infty(H,E)}$,  is an $r$-Banach space.

\begin{remark}
Let $0<p<\infty$ and write $\gsum_\infty^p(H,E)$ for the set of all linear operators $R\colon H\to E$ such that \label{gammasump}
\begin{align*}
\gnnrm{R}{\gsum_\infty^p(H,E)}
:=
\sup_h
\ssgrklam{\E \ssgnnrm{\sum_{n=1}^N \gamma_n R h_n}{E}^p}^{1/p} <\infty,
\end{align*}
the supremum being taken over all finite orthonormal systems $h=\{h_1,\ldots,h_N\}$ in $H$.
Then, by the Kahane-Khintchine inequality for Gaussian sums, see Theorem~\ref{thm:QBKahKhin}, for all $p\in (0,\infty)$ it holds that
$\gamma_\infty^p(H,E) = \gamma_{\infty}(H,E)$ and
\begin{align}\label{eq:equiv_gamma_p}
\const_{2,p,r}^{-1} \left\| R \right\|_{\gsum_\infty(H,E)}
\leq \left\| R \right\|_{\gsum_\infty^p(H,E)}
\leq \const_{p,2,r} \left\| R \right\|_{\gsum_\infty(H,E)}
\end{align}
for all $R\in \gamma_{\infty}(H,E)$.
\end{remark}



For $h\in H$ and $x\in E$ we denote by $h\otimes x$ the rank one operator in $\calL(H,E)$ defined by
$$ (h\otimes x)g := \lb g,h\rb_{H} x, \qquad g\in H.$$
The linear span of these operators is usually denoted by $H\otimes E$ and can be viewed as the subspace of finite rank operators in $\calL(H,E)$.
Every such operator can be represented in the form
\begin{equation}\label{eq:finite-rank}
R = \sum_{n=1}^N h_n\otimes x_n
\end{equation}
with  $N\in\bN$, $(h_n)_{n=1}^N$ orthonormal in $H$ and $(x_n)_{n=1}^N$ a finite sequence in $E$.
It is known that, if $E$ is a Banach space (i.e., if $r=1$), then the $\gamma$-summing norm $\nnrm{R}{\gamma_\infty(H,E)}$ of a finite rank operator of this form is equal to the $L_2(\Omega;E)$-norm of $\sum_{n=1}^N\gamma_n x_n$, see, e.g.,~\cite[Proposition~9.1.3]{HytNeerVerWeis2017}.
For general $r$-Banach spaces with $r\in (0,1]$ we obtain the following generalization.


\begin{proposition}\label{prop:f-r}
If $R=\sum_{n=1}^N h_n\otimes x_n$ is a finite rank operator with $N\in\bN$, $\{h_1,\ldots,h_N\}$ orthonormal in $H$ and $x_1,\ldots,x_N\in E$, then $R\in\gsum_\infty(H,E)$ and for all $0<p<\infty$ we have
\begin{equation}\label{eq:tradinONS}
\ssgnnrm{\sum_{n=1}^N \gamma_n x_n}{L^p(\Omega;E)}
\leq
\ssgnnrm{\sum_{n=1}^N h_n\otimes x_n}{\gsum_\infty^p(H,E)}
\leq
2^{\frac{1-(r\minsym p)}{r\minsym p}} \,
\ssgnnrm{\sum_{n=1}^N \gamma_n x_n}{L^p(\Omega;E)}.
\end{equation}
\end{proposition}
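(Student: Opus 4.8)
The plan is to prove the lower and upper bounds in~\eqref{eq:tradinONS} separately, using the $\gamma$-summing norm $\gnnrm{\cdot}{\gsum_\infty^p(H,E)}$ and then invoking the equivalence~\eqref{eq:equiv_gamma_p} only where needed. First I would observe that the finite orthonormal system $\{h_1,\dots,h_N\}$ is itself an admissible system in the supremum defining $\gnnrm{R}{\gsum_\infty^p(H,E)}$. Since $(h_n\otimes x_n)h_m = \lb h_m,h_n\rb_H x_n = \delta_{mn} x_n$, we have $R h_m = x_m$, and therefore the particular choice $h=\{h_1,\dots,h_N\}$ in the supremum gives exactly $\bigl(\E\,\snorm{\sum_{n=1}^N \gamma_n x_n}_E^p\bigr)^{1/p}$. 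This immediately yields the left-hand (lower) inequality, since the supremum dominates this one term; it also shows $R\in\gsum_\infty(H,E)$ once the right-hand bound is in place.

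For the upper bound I would take an arbitrary finite orthonormal system $g=\{g_1,\dots,g_M\}$ in $H$ and estimate $\bigl(\E\,\snorm{\sum_{m=1}^M \gamma_m R g_m}_E^p\bigr)^{1/p}$ from above by $2^{\frac{1-(r\minsym p)}{r\minsym p}}\bigl(\E\,\snorm{\sum_{n=1}^N \gamma_n x_n}_E^p\bigr)^{1/p}$, uniformly in $g$; taking the supremum over $g$ then gives the claim. Writing $R g_m = \sum_{n=1}^N \lb g_m,h_n\rb_H\, x_n$, the sum $\sum_m \gamma_m R g_m = \sum_n \bigl(\sum_m \lb g_m,h_n\rb_H \gamma_m\bigr) x_n$ is again a Gaussian sum in the $x_n$, but with \emph{correlated} Gaussian coefficients $\tilde\gamma_n := \sum_m \lb g_m,h_n\rb_H \gamma_m$. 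The key point is that the Gram matrix of the coefficient vectors $(\lb g_m,h_n\rb_H)_m$ is the $N\times N$ matrix $P$ with entries $P_{nn'}=\sum_m \lb g_m,h_n\rb_H\lb g_m,h_{n'}\rb_H = \lb \Pi g_{(\cdot)} h_n, h_{n'}\rb$, which is the matrix of the orthogonal projection onto $\mathrm{span}\{g_1,\dots,g_M\}$ expressed in the basis $\{h_n\}$; in particular $P\preceq I$. Since the jointly Gaussian vector $(\tilde\gamma_n)_n$ has covariance $P$ while $(\gamma_n)_n$ has covariance $I$, one can realize $(\gamma_n)_n \stackrel{d}{=} (\tilde\gamma_n)_n + (\eta_n)_n$ with $(\eta_n)_n$ an independent centered Gaussian vector of covariance $I-P\succeq 0$. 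This is precisely the situation of Lemma~\ref{lem:qb-ind-symm}: adding the independent symmetric Gaussian $\sum_n \eta_n x_n$ only increases the $L^p$-norm up to the stated constant.

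Concretely, I would set $Y := \sum_{n=1}^N \tilde\gamma_n x_n$ and $Z := \sum_{n=1}^N \eta_n x_n$ on a suitable enlarged probability space, with $Z$ independent of $Y$ and symmetric. Then $Y+Z \stackrel{d}{=} \sum_{n=1}^N \gamma_n x_n$, so by Lemma~\ref{lem:qb-ind-symm},
\[
\ssgnnrm{\sum_{m=1}^M \gamma_m R g_m}{L^p(\Omega;E)}
= \nnrm{Y}{L^p(\Omega;E)}
\leq 2^{\frac{1-(r\minsym p)}{r\minsym p}}\,\nnrm{Y+Z}{L^p(\Omega;E)}
= 2^{\frac{1-(r\minsym p)}{r\minsym p}}\,\ssgnnrm{\sum_{n=1}^N \gamma_n x_n}{L^p(\Omega;E)}.
\]
Taking the supremum over all finite orthonormal systems $g$ on the left yields the right-hand inequality in~\eqref{eq:tradinONS}.

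I expect the main obstacle to be the clean realization of the covariance decomposition without appealing to duality or local convexity: I must verify that $P$ is a genuine orthogonal projection matrix (hence $0\preceq P\preceq I$, so $I-P$ is a legitimate covariance), and that the distributional identity $(\tilde\gamma_n)_n + (\eta_n)_n \stackrel{d}{=} (\gamma_n)_n$ together with the independence and symmetry of $(\eta_n)_n$ is enough to feed into Lemma~\ref{lem:qb-ind-symm} purely at the level of $E$-valued distributions. The distributional equality $Y+Z\stackrel{d}{=}\sum_n\gamma_n x_n$ holds because the coefficient vectors are jointly Gaussian with matching covariance and the map $(t_n)_n\mapsto \sum_n t_n x_n$ is continuous, so the pushforward distributions agree; Gaussianity of $(\tilde\gamma_n)+(\eta_n)$ is preserved under convolution of independent Gaussians. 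Everything else is a direct application of the already-established $r$-Banach tools (orthonormality of $\{h_n\}$, the definition of the $\gsum_\infty^p$-norm, and Lemma~\ref{lem:qb-ind-symm}), so no convexity or Hahn--Banach input is required, which is exactly what the quasi-Banach setting demands.
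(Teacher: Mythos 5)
Your proof is correct, and it succeeds through the same underlying mechanism as the paper---Lemma~\ref{lem:qb-ind-symm}---but packaged differently. The paper first establishes a general matrix inequality (Lemma~\ref{lem:idealmatrix}): for any contraction $A \in \calL(\ell^2_n,\ell^2_m)$ and $x_1,\dots,x_n \in E$ one has $\big\| \sum_i \gamma_i \sum_j a_{ij} x_j \big\|_{L^p(\Omega;E)} \leq 2^{\frac{1-(r\minsym p)}{r\minsym p}} \|A\| \big\| \sum_j \gamma_j x_j \big\|_{L^p(\Omega;E)}$, proved by embedding $A$ into the explicit $2n\times 2n$ unitary dilation with blocks $A$, $(I-AA^*)^{1/2}$, $(I-A^*A)^{1/2}$, $-A^*$ and then invoking Lemma~\ref{lem:qb-ind-symm}; Proposition~\ref{prop:f-r} follows by specializing to $a_{mn}=\langle u_m,h_n\rangle$, and the lower bound is obtained by testing against $\{h_1,\dots,h_N\}$ exactly as you do. Your covariance-splitting argument is the probabilistic shadow of that dilation: realizing $(\gamma_n)_n \stackrel{d}{=} (\tilde\gamma_n)_n + (\eta_n)_n$ with $(\eta_n)_n$ an independent centered Gaussian vector of covariance $I-P$ is precisely what the block $(I-A^*A)^{1/2}$ accomplishes, since $P=A^{\mathsf{T}}A$. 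What the paper's route buys is reusability: Lemma~\ref{lem:idealmatrix} is invoked again for the ideal property (Theorem~\ref{thm:ideal}) and for Theorem~\ref{thm:summing-sep}, so proving the matrix statement once pays off later, whereas your direct argument is self-contained but would need to be repeated or generalized at those points. One small correction to your write-up: $P$ is the \emph{compression} of the orthogonal projection $\Pi$ onto $\operatorname{span}\{g_1,\dots,g_M\}$ to $\operatorname{span}\{h_1,\dots,h_N\}$, not itself ``a genuine orthogonal projection matrix''; it is not idempotent in general (take $N=M=1$ with $\langle g_1,h_1\rangle = c \in (0,1)$, so $P=(c^2)$ and $P^2 \neq P$). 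This does not damage the proof, because all you need is $0 \preceq P \preceq I$, which does hold: $P = A^{\mathsf{T}}A$ with $\|A\|_{\calL(\ell^2_N,\ell^2_M)} \leq 1$ by Bessel's inequality, or equivalently, compressions of positive contractions are positive contractions.
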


For the proof of this two-sided estimate, we first prove the following auxiliary result. It turns out to be very useful when it comes to handling $\gamma_\infty$-norms of finite rank operators.
Recall that, for arbitrary $n\in\N$, we write $\ell^2_n$ for the space $\R^n$ endowed with the Euclidean norm.

\begin{lemma}\label{lem:idealmatrix}
Let $0<p<\infty$ and fix $m,n\in\N$. For all $m\times n$ matrices $A = (a_{ij})_{i,j=1}^{m,n}$ and all finite sequences $(x_j)_{j=1}^n$ in $E$,
\begin{equation}\label{eq:idealmatrix}
\ssgnnrm{\sum_{i=1}^m \gamma_i \sum_{j=1}^n    a_{ij} x_j}{L^p(\Omega;E)}
\leq 2^{\frac{1-(r\minsym p)}{r\minsym p}}\nnrm{A}{\calL(\ell^2_n,\ell^2_m)} \ssgnnrm{\sum_{j=1}^n \gamma_j x_j}{L^p(\Omega;E)}.
\end{equation}
\end{lemma}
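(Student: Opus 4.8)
The plan is to reduce to the case of a contraction and then combine an orthogonal dilation with the rotation invariance of Gaussian sequences, absorbing the resulting ``extra'' Gaussian mass by means of Lemma~\ref{lem:qb-ind-symm}. First I would dispose of the trivial case $A=0$, and otherwise use the $r$-homogeneity of the $L^p(\Omega;E)$-norm to rescale: replacing $A$ by $A/\nnrm{A}{\calL(\ell^2_n,\ell^2_m)}$ and pulling the scalar $\nnrm{A}{\calL(\ell^2_n,\ell^2_m)}$ out of both sides of~\eqref{eq:idealmatrix}, it suffices to prove the inequality under the assumption that $A$ is a contraction, i.e.\ $\nnrm{A}{\calL(\ell^2_n,\ell^2_m)}\leq 1$, in which case the target bound reads
\[
\ssgnnrm{\sum_{i=1}^m \gamma_i \sum_{j=1}^n a_{ij}x_j}{L^p(\Omega;E)}
\leq 2^{\frac{1-(r\minsym p)}{r\minsym p}} \ssgnnrm{\sum_{j=1}^n \gamma_j x_j}{L^p(\Omega;E)}.
\]

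Next I would dilate $A$ to an orthogonal matrix. Since $A$ is a contraction between finite-dimensional Hilbert spaces, the explicit orthogonal dilation
\[
U := \begin{pmatrix} A & (I_m - AA^{T})^{1/2} \\ (I_n - A^{T}A)^{1/2} & -A^{T}\end{pmatrix}
\]
defines an orthogonal $(m+n)\times(m+n)$ matrix $U=(u_{kl})_{k,l=1}^{m+n}$ whose upper-left $m\times n$ block equals $A$, that is, $u_{ij}=a_{ij}$ for $1\leq i\leq m$ and $1\leq j\leq n$; orthogonality follows from $AA^{T}+(I_m-AA^{T})=I_m$, from $A^{T}A+(I_n-A^{T}A)=I_n$, and from the intertwining identity $A(I_n-A^{T}A)^{1/2}=(I_m-AA^{T})^{1/2}A$, which is immediate from the singular value decomposition of $A$. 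Setting $y_j:=x_j$ for $1\leq j\leq n$ and $y_j:=0$ for $n<j\leq m+n$, I then consider
\[
Z := \sum_{i=1}^{m+n} \gamma_i \sum_{j=1}^{m+n} u_{ij} y_j = \sum_{j=1}^{m+n}\Big(\sum_{i=1}^{m+n} u_{ij}\gamma_i\Big) y_j.
\]

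Now I would invoke the rotation invariance of Gaussian sequences: as the columns of $U$ are orthonormal, $(\sum_{i} u_{ij}\gamma_i)_{j=1}^{m+n}$ has the same distribution as $(\gamma_j)_{j=1}^{m+n}$, so $Z$ has the same distribution as $\sum_{j=1}^{m+n}\gamma_j y_j = \sum_{j=1}^n \gamma_j x_j$; in particular $\nnrm{Z}{L^p(\Omega;E)}=\nnrm{\sum_{j=1}^n\gamma_j x_j}{L^p(\Omega;E)}$. On the other hand, reading off $Z$ in the first representation and using $y_j=0$ for $j>n$ together with $u_{ij}=a_{ij}$ for $i\leq m$, I can split
\[
Z = \underbrace{\sum_{i=1}^{m}\gamma_i\sum_{j=1}^n a_{ij}x_j}_{=:W} + \underbrace{\sum_{i=m+1}^{m+n}\gamma_i\sum_{j=1}^n u_{ij}x_j}_{=:V},
\]
where $W$ is exactly the quantity to be estimated, $V$ is a (symmetric) Gaussian sum, and $W$ and $V$ are independent since they depend on the disjoint blocks $(\gamma_i)_{i=1}^m$ and $(\gamma_i)_{i=m+1}^{m+n}$, respectively. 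Applying Lemma~\ref{lem:qb-ind-symm} with $X=W$ and the symmetric independent $Y=V$ then gives $\nnrm{W}{L^p(\Omega;E)}\leq 2^{\frac{1-(r\minsym p)}{r\minsym p}}\nnrm{W+V}{L^p(\Omega;E)}=2^{\frac{1-(r\minsym p)}{r\minsym p}}\nnrm{Z}{L^p(\Omega;E)}$, which is precisely the contraction bound; undoing the normalization finishes the proof.

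The main obstacle is conceptual rather than computational. In the Banach space setting the passage from $Z=W+V$ back to $W$ costs nothing: one conditions on $V$ and applies Jensen's inequality, using that $V$ is symmetric with vanishing mean and that $\nrm{\cdot}^p$ is convex for $p\geq 1$, which yields the sharp constant $1$. This convexity argument is unavailable in an $r$-Banach space, and Lemma~\ref{lem:qb-ind-symm} is exactly the substitute that survives the lack of local convexity, at the price of the factor $2^{\frac{1-(r\minsym p)}{r\minsym p}}$ (which is sharp by Example~\ref{ex:qb-ind-sym}). The remaining ingredients---the dilation and the rotation invariance of Gaussians---are finite-dimensional facts that hold independently of the geometry of $E$, so no further care is needed there.
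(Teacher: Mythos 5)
Your proof is correct and follows essentially the same route as the paper's: an orthogonal dilation of the (normalized) matrix $A$, identification of the enlarged Gaussian sum with $\sum_{j=1}^n \gamma_j x_j$ via rotation invariance, and Lemma~\ref{lem:qb-ind-symm} to discard the extra symmetric independent part at the cost of the constant $2^{\frac{1-(r\minsym p)}{r\minsym p}}$. The only cosmetic difference is that the paper first pads $A$ to a square matrix and uses a $2n\times 2n$ dilation, whereas you dilate the rectangular matrix directly to size $(m+n)\times(m+n)$.
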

In the Banach space setting, i.e., if $r=1$,  Lemma~\ref{lem:idealmatrix} has been proven in several ways. Most proofs rely either on a duality argument using covariance operators or on the fact that every matrix $A$ with $\nrm{A}\leq 1$ is a convex combination of unitary operators. Clearly both methods fail in the $r$-Banach space setting for $0<r<1$.
However, as shown in~\cite[Proposition~6.1.23]{HytNeerVerWeis2017}, Lemma~\ref{lem:idealmatrix} can also be proven from basic principles which turn out to remain valid for $0<r<1$ and $0<p<1$.
For the convenience of the reader, we provide the proof with minor modifications.

\begin{proof}[Proof of Lemma~\ref{lem:idealmatrix}]
(See also the proof of Proposition~6.1.23 in \cite{HytNeerVerWeis2017}.)
We may assume that $\nnrm{A}{\calL(\ell^2_n,\ell^2_m)}= 1$. By adding zero terms to the matrix
$A = (a_{ij})_{i,j=1}^{m,n}$, we may also assume that $m=n$.
Let $B$ be the $2n\times 2n$-matrix given by
\[
B=\left(
\begin{array}{cc}
A & (I-AA^*)^{1/2} \\
(I-A^* A)^{1/2} & -A^*
\end{array}\right).
\]
Using the simple fact that $A(I-A^* A)^{1/2} = (I-AA^*)^{1/2}A$ we find that $B^* B = I$.
Writing $B=(b_{ij})_{i,j=1}^{2n}$ we have  $a_{ij} = b_{ij}$ for $1\leq i,j\leq n$. By Lemma~\ref{lem:qb-ind-symm},
\begin{align*}
\E \ssgnrm{\sum_{i=1}^n  \gamma_i \sum_{j=1}^n a_{ij} x_j }^p
\leq
2^{\frac{(1-(r\minsym p))p}{r\minsym p}}\, \E \ssgnrm{\sum_{i=1}^{2n}   \gamma_i \sum_{j=1}^{n}   b_{ij} x_j }^p
=
2^{\frac{(1-(r\minsym p))p}{r\minsym p}}\, \E \ssgnrm{\sum_{j=1}^{n} G_j x_j }^p,
\end{align*}
where $G_j = \sum_{i=1}^{2n} \gamma_i b_{ij}$, for $1\leq j\leq n$. The result now follows from the fact that $(G_j)_{j=1}^n$
is a sequence of independent real-valued standard Gaussian random variables.
\end{proof}

\begin{remark}\label{rem:idealmatrix:no1}
Contrary to the Banach space setting, the constant
$C_{r,p}=2^{\frac{1-(r\minsym p)}{r\minsym p}}$ in Inequality~\eqref{eq:idealmatrix}
cannot be replaced by $C=1$: Indeed, setting
$(E,\nrm{\cdot})=(\ell^{1/2}_2,\left\| \cdot \right\|_{\ell^{1/2}_2})$, $n=2$, $m=1$, $e_1 = (1,0)$, $e_2=(0,1)$,
$x_1=\frac{1}{4}(e_1+e_2)$, $x_2=\frac{1}{16}(e_1-e_2)$, and $A = (1\  0)$,
we recover the Gaussian case in Example~\ref{ex:qb-ind-sym}.
\end{remark}

Lemma~\ref{lem:idealmatrix} allows for the following short proof of Proposition~\ref{prop:f-r}.
\begin{proof}[Proof of Proposition~\ref{prop:f-r}]
Let $\{u_1,\ldots,u_M\}$ be an arbitrary orthonormal system in $H$. Let $A$ be the $M\times N$ matrix given by $a_{m n} = \lb u_m, h_n\rb$.
It is easily checked that $\nrm{A}\leq 1$ and therefore, by Lemma~\ref{lem:idealmatrix},
\begin{align*}
\ssgnnrm{\sum_{m=1}^M \gamma_m R u_m}{L^p(\Omega;E)}
=
\ssgnnrm{\sum_{m=1}^M \gamma_m \sum_{n=1}^N a_{mn} x_n}{L^p(\Omega;E)}
\leq
2^{\frac{1-(r\minsym p)}{r\minsym p}}
\ssgnnrm{\sum_{n=1}^N \gamma_n x_n}{L^p(\Omega;E)}.
\end{align*}
This shows that $R\in \gamma_{\infty}(H,E)$ and that the second estimate in \eqref{eq:tradinONS} holds.
To prove the first estimate in \eqref{eq:tradinONS} it suffices to use the orthonormal system $\{h_1,\dots,h_N\}$.
\end{proof}
%


In Proposition~\ref{prop:f-r} we have seen that every finite rank
operator from $H$ to $E$ belongs to $\gamma_\infty(H,E)$.
It is demonstrated in e.g.~\cite{NeeVerWei2007, NeervenWeis2005a} that the closure of the space of finite rank operators in $\gamma_\infty(H,E)$ is the right space to study vector valued stochastic \Ito\ integrals, see also Sections~\ref{sec:stoch_int_functions} and~\ref{sec:stoch_int_processes} below.


\begin{definition}\label{def:g-rad}
The space $\gamma(H,E)$ is defined as the closure of the space of  finite rank operators
in $(\gamma_\infty(H,E),\nnrm{\cdot}{\gamma_\infty(H,E)})$. The operators in $\gamma(H,E)$ are called {\em $\gamma$-radonifying}.
\end{definition}
\begin{remark}
By definition, the space $\gamma(H,E)$ is an $r$-Banach space with respect to the $r$-norm inherited from $\gamma_\infty(H,E)$. For simplicity, we write
\[
\nnrm{R}{\gamma(H,E)}:= \nnrm{R}{\gamma_\infty(H,E)}, \qquad R\in\gamma(H,E),
\]
and, more general,
\[
\nnrm{R}{\gamma^p(H,E)}:= \nnrm{R}{\gamma_\infty^p(H,E)}, \qquad R\in\gamma(H,E),
\]
for $0<p<\infty$.
\end{remark}


For the case that $E$ is a Banach space it is well-known that the spaces $\gamma_{\infty}(H,E)$ and $\gamma(H,E)$ are operator
ideals in $L(H,E)$. This property plays an important role in the construction of stochastic \Ito\ integrals in Banach spaces (see~\cite{NeeVerWei2007,NeervenWeis2005a}).
By using Lemma~\ref{lem:idealmatrix}, we can use verbatim the same proof as presented, e.g., in~\cite[Chapter~9]{HytNeerVerWeis2017}, to
verify that the ideal property also holds in the case that $E$ is an $r$-Banach space with $0<r<1$. Indeed, we obtain the following.

\begin{theorem}[Ideal property]\label{thm:ideal}\index{ideal property!of $\gamma$-radonifying operators}
Let $R\in\gamma_\infty(H,E)$. Furthermore, let $F$ be a separable $r_0$-Banach space for some $r_0\in (0,1]$ and let $G$ be a separable Hilbert space.
Then for all $V\in\calL(G,H)$
and $U\in\calL(E,F)$ we have $URV\in\gamma_\infty(G,F)$, and
for all $0< p<\infty$,
\begin{equation}\label{eq:ideal}
\nnrm{ U R V }{\gamma_\infty^p(G,F)}
\leq
2^{\frac{1-(r\minsym p)}{r\minsym p}}
\nnrm{U}{\calL(E,F)} \, \nnrm{R}{\gamma_\infty^p(H,E)} \,\nnrm{V}{\calL(G,H)}.
\end{equation}
If, moreover, $R\in \gamma(H,E)$, then $URV\in \gamma(G,F)$.
\end{theorem}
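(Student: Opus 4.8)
The plan is to factor the two-sided estimate into two one-sided \emph{ideal} estimates, first multiplication by $V$ on the right and then by $U$ on the left, and crucially to perform them \emph{in this order}. The only nontrivial constant is produced by a single application of Lemma~\ref{lem:idealmatrix}, and that lemma must be invoked with vectors living in $E$ (not in $F$): this is precisely what makes the exponent $r$ (the modulus of $E$) rather than $r_0$ appear in the asserted bound~\eqref{eq:ideal}. Reversing the order would be equally valid as an estimate but would yield $r_0$ in the constant, so the bookkeeping of which space hosts the application of Lemma~\ref{lem:idealmatrix} is really the only point demanding care; everything else transcribes the Banach-space argument of~\cite[Chapter~9]{HytNeerVerWeis2017} verbatim, with the $r$-triangle inequality already absorbed into Lemma~\ref{lem:idealmatrix}.

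To treat the right factor, fix $0<p<\infty$ and an arbitrary finite orthonormal system $\{g_1,\dots,g_M\}$ in $G$. Since $V$ is bounded and the system is finite, $\{Vg_1,\dots,Vg_M\}$ spans a finite-dimensional subspace of $H$; choosing an orthonormal basis $\{h_1,\dots,h_N\}$ of that span I write $Vg_m=\sum_{n=1}^N a_{mn}h_n$ with $a_{mn}=\lb Vg_m,h_n\rb_H$. Using $\lb Vg_m,\sum_n c_nh_n\rb_H=\lb g_m,V^*(\sum_n c_nh_n)\rb_G$, Bessel's inequality for the orthonormal $\{g_m\}$, and $\nnrm{V^*}{\calL(H,G)}\le\nnrm{V}{\calL(G,H)}$, one checks $\nnrm{A}{\calL(\ell^2_N,\ell^2_M)}\le\nnrm{V}{\calL(G,H)}$. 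Setting $x_n:=Rh_n\in E$ and applying Lemma~\ref{lem:idealmatrix} gives
\[
\ssgnnrm{\sum_{m=1}^M \gamma_m RVg_m}{L^p(\Omega;E)}
= \ssgnnrm{\sum_{m=1}^M \gamma_m \sum_{n=1}^N a_{mn}x_n}{L^p(\Omega;E)}
\le 2^{\frac{1-(r\minsym p)}{r\minsym p}}\nnrm{V}{\calL(G,H)}\ssgnnrm{\sum_{n=1}^N \gamma_n Rh_n}{L^p(\Omega;E)}.
\]
Bounding the last factor by $\nnrm{R}{\gamma_\infty^p(H,E)}$ (legitimate since $\{h_n\}$ is orthonormal in $H$) and taking the supremum over all $\{g_m\}$ yields $\nnrm{RV}{\gamma_\infty^p(G,E)}\le 2^{\frac{1-(r\minsym p)}{r\minsym p}}\nnrm{V}{\calL(G,H)}\nnrm{R}{\gamma_\infty^p(H,E)}$.

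For the left factor no new constant arises: for any orthonormal $\{g_m\}$ in $G$, pulling the bounded operator $U$ out of the finite sum pointwise in $\omega$ gives $\nnrm{U(\sum_m\gamma_m RVg_m)}{F}\le\nnrm{U}{\calL(E,F)}\nnrm{\sum_m\gamma_m RVg_m}{E}$, whence $\nnrm{URV}{\gamma_\infty^p(G,F)}\le\nnrm{U}{\calL(E,F)}\nnrm{RV}{\gamma_\infty^p(G,E)}$. Composing the two one-sided bounds proves~\eqref{eq:ideal} for every $p$. Taking $p=2$, where $r\minsym 2=r$ since $r\le 1$, shows in particular $\nnrm{URV}{\gamma_\infty(G,F)}<\infty$, i.e.\ $URV\in\gamma_\infty(G,F)$.

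Finally, for the closure statement, suppose $R\in\gamma(H,E)$ and pick finite rank operators $R_k\to R$ in $\gamma_\infty(H,E)$ (Definition~\ref{def:g-rad}). Each $UR_kV$ is again of finite rank: if $R_k=\sum_i h_i\otimes x_i$ then $UR_kV=\sum_i (V^*h_i)\otimes(Ux_i)$, since $(h_i\otimes x_i)(Vg)=\lb Vg,h_i\rb_H x_i=\lb g,V^*h_i\rb_G x_i$; hence $UR_kV\in\gamma(G,F)$ because finite rank operators belong to $\gamma(G,F)$ by Definition~\ref{def:g-rad}. Applying~\eqref{eq:ideal} to $R-R_k$ with $p=2$ gives $\nnrm{URV-UR_kV}{\gamma_\infty(G,F)}\le 2^{\frac{1-r}{r}}\nnrm{U}{\calL(E,F)}\nnrm{V}{\calL(G,H)}\nnrm{R-R_k}{\gamma_\infty(H,E)}\to 0$, so $URV$ is a $\gamma_\infty$-limit of elements of the closed subspace $\gamma(G,F)$ and therefore lies in it.
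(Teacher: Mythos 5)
Your proof is correct and takes essentially the same route as the paper, whose proof simply instructs the reader to mimic the Banach-space argument of \cite[Theorem~9.1.10]{HytNeerVerWeis2017} and invoke Lemma~\ref{lem:idealmatrix} where needed; your write-up is exactly that argument made explicit, splitting into right multiplication (handled by Lemma~\ref{lem:idealmatrix} with vectors in $E$) and left multiplication (which costs nothing). Your observation that the constant arises solely from the right multiplication performed in $E$, so that $r$ rather than $r_0$ appears, is also precisely the content of the paper's Remark following Theorem~\ref{thm:ideal}.
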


\begin{proof}

Mimic the proof of~\cite[Theorem~9.1.10]{HytNeerVerWeis2017} and use Lemma~\ref{lem:idealmatrix} where needed.

%
%
\end{proof}

\begin{remark}
Close inspection of the proof of Theorem~\ref{thm:ideal} reveals that the constant $2^{\frac{1-(r\minsym p)}{r\minsym p}}$ in \eqref{eq:ideal} is only due to the right multiplication, i.e., in the setting of Theorem~\ref{thm:ideal}, if $H=G$ and $V={\rm Id}$, then
\begin{equation*}
\n U R \n_{\gamma_\infty^p(H,F)} \leq \n U\n_{\calL(E,F)} \, \n R\n_{\gamma_\infty^p(H,E)} .
\end{equation*}
\end{remark}

\subsection{Convergence in \texorpdfstring{$\gamma(H,E)$}{}}

We have seen that the $\gamma$-radonifying property of an operator is preserved under left and right multiplication.
Our next result considers the situation where we multiply with
convergent sequences of operators.

\begin{theorem}\label{thm:ga-conv}
Let $R\in\gamma(H,E)$. Furthermore, let $F$ be a separable $r_0$-Banach space for some $r_0\in (0,1]$ and let $G$ be a separable Hilbert space.
Let $V, V_1, V_2, \dots \in  \calL(G,H)$ and $U, U_1, U_2, \dots \in \calL(E,F)$ be such that
\begin{enumerate}[label=\textup{(\roman*)}]
\item $\limn V\s_n h=  V\s h$ for all $ h\in H$,
\item $\limn U_n x= U x$ for all $x\in E$.
\end{enumerate}
Then $\limn U_n R V_n = URV$ in $\gamma(G,F)$.
\end{theorem}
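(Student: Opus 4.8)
The plan is to reduce to the case where $R$ is a finite rank operator and then to compute directly. Throughout I would use that $\gamma(G,F)$ is an $r_0$-Banach space (being a closed subspace of the $r_0$-Banach space $\gamma_\infty(G,F)$), so the $r_0$-triangle inequality is available, and that $U_n R V_n, URV \in \gamma(G,F)$ by the ideal property (Theorem~\ref{thm:ideal}). First I would record that $(U_n)$ and $(V_n)$ are uniformly bounded. Since $U_n x \to Ux$ for each $x\in E$, the family $(U_n)$ is pointwise bounded, so the uniform boundedness theorem (valid in the $r$-Banach setting) gives $C_U := \sup_n \nnrm{U_n}{\calL(E,F)} < \infty$. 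Likewise (i) makes $(V_n\s)$ pointwise bounded, whence $\sup_n \nnrm{V_n\s}{\calL(H,G)} < \infty$; because $G$ and $H$ are Hilbert spaces, $\nnrm{V_n}{\calL(G,H)} = \nnrm{V_n\s}{\calL(H,G)}$, so $C_V := \sup_n \nnrm{V_n}{\calL(G,H)} < \infty$.

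Next, given $\eps > 0$, I would choose a finite rank operator $R_0 \in H\otimes E$ with $\nnrm{R - R_0}{\gamma(H,E)} < \eps$. The ideal property (with $p=2$, so the constant is $2^{(1-r)/r}$) bounds $\nnrm{U_n (R - R_0) V_n}{\gamma(G,F)}$ by $2^{(1-r)/r} C_U C_V \eps$ and $\nnrm{U(R-R_0)V}{\gamma(G,F)}$ by $2^{(1-r)/r} \nnrm{U}{\calL(E,F)} \nnrm{V}{\calL(G,H)} \eps$, both uniformly in $n$. Splitting $U_n R V_n - URV$ via the $r_0$-triangle inequality into these two error terms and the main term $U_n R_0 V_n - U R_0 V$, it then suffices to prove $U_n R_0 V_n \to U R_0 V$ in $\gamma(G,F)$ for finite rank $R_0$; this yields $\limsup_n \nnrm{U_n R V_n - URV}{\gamma(G,F)}^{r_0} \lesssim \eps^{r_0}$, and letting $\eps \downarrow 0$ concludes.

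For the finite rank case, write $R_0 = \sum_{k=1}^N h_k \otimes x_k$ with $\{h_1,\ldots,h_N\}$ orthonormal in $H$. A direct computation gives $U_n R_0 V_n = \sum_{k=1}^N (V_n\s h_k) \otimes (U_n x_k)$ and $U R_0 V = \sum_{k=1}^N (V\s h_k) \otimes (U x_k)$. Telescoping each summand,
\[
(V_n\s h_k) \otimes (U_n x_k) - (V\s h_k) \otimes (U x_k)
= (V_n\s h_k - V\s h_k) \otimes (U_n x_k) + (V\s h_k) \otimes (U_n x_k - U x_k),
\]
and applying the $r_0$-triangle inequality together with the rank-one estimate $\nnrm{g \otimes y}{\gamma(G,F)} \le 2^{(1-r_0)/r_0} \nnrm{g}{G} \nnrm{y}{F}$ (Proposition~\ref{prop:f-r} applied to the single unit vector $g/\nnrm{g}{G}$), one bounds $\nnrm{U_n R_0 V_n - U R_0 V}{\gamma(G,F)}^{r_0}$ by a finite sum of terms of the form $\bigl(\nnrm{V_n\s h_k - V\s h_k}{G}\, \nnrm{U_n x_k}{F}\bigr)^{r_0}$ and $\bigl(\nnrm{V\s h_k}{G}\, \nnrm{U_n x_k - U x_k}{F}\bigr)^{r_0}$. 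By (i) and (ii) the factors $\nnrm{V_n\s h_k - V\s h_k}{G}$ and $\nnrm{U_n x_k - U x_k}{F}$ tend to $0$, while $\nnrm{U_n x_k}{F}$ stays bounded; as the sum is finite, the whole expression tends to $0$.

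The main obstacle is keeping the approximation error uniform in $n$ throughout the finite-rank reduction, which is exactly why the uniform boundedness of $(U_n)$ and $(V_n)$ must be established first; for $(V_n)$ it is essential that $G$ and $H$ are Hilbert spaces, since in a general $r$-Banach space one only has $\nnrm{V_n\s}{} \le \nnrm{V_n}{}$ and could not recover a bound on $\nnrm{V_n}{\calL(G,H)}$ from the convergence of the adjoints alone.
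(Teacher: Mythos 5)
Your proof is correct and takes essentially the same approach as the paper, whose proof simply invokes the strategy of \cite[Theorem~9.1.14]{HytNeerVerWeis2017}: uniform boundedness of $(U_n)_{n\in\N}$ and $(V_n)_{n\in\N}$ via the uniform boundedness principle (valid in $r$-Banach spaces), reduction to finite rank operators with the approximation error controlled uniformly by the ideal property (Theorem~\ref{thm:ideal}), and a direct computation in the finite rank case. Your additional care in deducing $\sup_{n\in\N}\nnrm{V_n}{\calL(G,H)}<\infty$ from the hypothesis on the adjoints via the Hilbert space identity $\nnrm{V_n}{\calL(G,H)}=\nnrm{V_n^*}{\calL(H,G)}$ is exactly the right point to address, since in general $r$-Banach spaces only $\nnrm{R^*}{}\leq\nnrm{R}{}$ is available.
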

\begin{proof}
This result can be obtained by the same strategy as in the proof of~\cite[Theorem~9.1.14]{HytNeerVerWeis2017}, i.e., by using the ideal property (Theorem~\ref{thm:ideal}) together with the uniform boundedness principle (which holds on arbitrary $r$-Banach spaces, see, e.g.,~\cite{KalPecRob1984}).
\end{proof}

The following two sample applications of this result are frequently used in forthcoming proofs.

\begin{example}[Approximation]\label{ex:g-approx}
Let $(h_n)_{n\in\N}$ be an orthonormal basis for $H$
and for all $n\in \N$ let $P_n$ denote the orthogonal projection
onto $\operatorname{span}(\{h_1,\dots, h_n\})$.
Then for all  $R\in \gamma(H,E)$ we have
$\limn RP_n =R$ in $\gamma(H,E)$.
\end{example}

\begin{example}[Strong measurability]\label{ex:g-meas}
Let $(S,  \cA, \mu)$ be a measure space. For a function $\Phi\colon S\to \gamma(H,E)$ and $h\in H$ define $\Phi h\colon S\to E$ by
$(\Phi h)(\xi):=  \Phi(\xi)h$, $\xi\in S$.
Then by employing the same arguments as in the proof of~\cite[Example~9.1.16]{HytNeerVerWeis2017} it can be shown that  the following assertions are equivalent:
\begin{enumerate}[label=\textup{(\roman*)}]
\item\label{it:measurable:strong} $\Phi$ is strongly measurable.
\item\label{it:measurable:weakstrong} $\Phi h$ is strongly measurable for all $h\in H$.
\end{enumerate}
\end{example}

We proceed with a characterization of
$\gamma$-summing and $\gamma$-radonifying operators in terms of orthonormal bases.

\begin{theorem}[Testing against an orthonormal basis]
\label{thm:summing-sep}
Let $(h_n)_{n\in\N}$ be an orthonormal basis for $H$.
\begin{enumerate}[leftmargin=*,align=right,label=\textup{(\roman*)}]
\item\label{thm:summing-sep:summing} An operator $R \in \calL(H,E)$ belongs to $\gamma_\infty(H,E)$ if and only if for some (equivalently, for all) $0<p<\infty$,
\begin{equation}\label{eq:QBgamsum:onb}
\sup_{N\in\N}\, \ssgnnrm{\sum_{n=1}^N \gamma_n\, Rh_n}{L^p(\Omega;E)}< \infty.
\end{equation}
In this case,
\begin{equation}\label{eq:gammsummingseq}
\ssgnnrm{\sum_{n=1}^N \gamma_n\, Rh_n}{L^p(\Omega;E)}
\leq
\nnrm{R}{\gamma_\infty^p(H,E)}
\leq
2^{\frac{1-(r\minsym p)}{r\minsym p}}  \sup_{N\in \N}\, \ssgnnrm{\sum_{n=1}^N \gamma_n\, Rh_n}{L^p(\Omega;E)}.
\end{equation}
\item\label{thm:summing-sep:rad}  An operator $R\in \calL(H,E)$ belongs to $\gamma(H,E)$ if and only if for some (equivalently, for all) $0<p<\infty$,
\[
\sum_{n=1}^\infty \gamma_n \,Rh_n \ \hbox{converges in $L^p(\Omega;E)$}.
\]
In this case, the sum converges almost surely and
\begin{equation}\label{eq:gammradseq}
\ssgnnrm{\sum_{n=1}^\infty \gamma_n\, Rh_n}{L^p(\Omega;E)}
\leq
\nnrm{R}{\gamma^p(H,E)}
\leq
2^{\frac{1-(r\minsym p)}{r\minsym p}} \ssgnnrm{\sum_{n=1}^\infty \gamma_n\, Rh_n}{L^p(\Omega;E)}.
\end{equation}
\end{enumerate}
\end{theorem}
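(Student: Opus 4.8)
The plan is to reduce everything to the finite-rank truncations $RP_N=\sum_{n=1}^N h_n\otimes Rh_n$, where $P_N$ denotes the orthogonal projection onto $\operatorname{span}(\{h_1,\dots,h_N\})$, and then to apply Proposition~\ref{prop:f-r} to each $RP_N$. The single most useful fact is that, by Proposition~\ref{prop:f-r}, for every $N\in\N$ and every $0<p<\infty$,
\[
\ssgnnrm{\sum_{n=1}^N \gamma_n Rh_n}{L^p(\Omega;E)} \leq \nnrm{RP_N}{\gamma_\infty^p(H,E)} \leq 2^{\frac{1-(r\minsym p)}{r\minsym p}}\ssgnnrm{\sum_{n=1}^N \gamma_n Rh_n}{L^p(\Omega;E)}.
\]
Thus the operators $RP_N$ and the Gaussian partial sums $\sum_{n=1}^N\gamma_n Rh_n$ carry the same information, up to the universal constant.

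For part~\ref{thm:summing-sep:summing}, the first inequality in \eqref{eq:gammsummingseq} is immediate: $\{h_1,\dots,h_N\}$ is itself an orthonormal system, so testing the definition of $\nnrm{R}{\gamma_\infty^p(H,E)}$ against it already yields $\ssgnnrm{\sum_{n=1}^N\gamma_n Rh_n}{L^p(\Omega;E)}\le\nnrm{R}{\gamma_\infty^p(H,E)}$, and taking the supremum over $N$ gives ``$\le$''. The substance is the reverse inequality. First I would fix an arbitrary finite orthonormal system $u=\{u_1,\dots,u_M\}$ and set $a_{mn}:=\lb u_m,h_n\rb$. Since $(h_n)$ is a basis and $R$ is bounded, $Ru_m=\sum_{n=1}^\infty a_{mn}Rh_n$ in $E$, and Bessel's inequality shows that the matrix $A=(a_{mn})$ has norm $\le 1$ as an operator $\ell^2\to\ell^2_M$; hence so does each truncation $A_N=(a_{mn})_{m\le M,\,n\le N}$. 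Applying Lemma~\ref{lem:idealmatrix} to $A_N$ gives
\[
\ssgnnrm{\sum_{m=1}^M\gamma_m\sum_{n=1}^N a_{mn}Rh_n}{L^p(\Omega;E)}\le 2^{\frac{1-(r\minsym p)}{r\minsym p}}\sup_{N\in\N}\ssgnnrm{\sum_{n=1}^N\gamma_n Rh_n}{L^p(\Omega;E)},
\]
and letting $N\to\infty$ (the inner sums converge to $Ru_m$ in $E$, so the finite $m$-sum converges in $L^p(\Omega;E)$ by the quasi-triangle inequality) yields the same bound for $\ssgnnrm{\sum_{m=1}^M\gamma_m Ru_m}{L^p(\Omega;E)}$. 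Taking the supremum over all $u$ produces the second inequality in \eqref{eq:gammsummingseq}, and hence the stated equivalence for each fixed $p$. The independence of $p$ (``for some, equivalently for all'') then follows from the Kahane--Khintchine inequality (Theorem~\ref{thm:QBKahKhin}) applied to the finite sums $\sum_{n=1}^N\gamma_n Rh_n$, whose constants do not depend on $N$.

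For part~\ref{thm:summing-sep:rad} I would argue via the telescoping increments $RP_N-RP_M=\sum_{n=M+1}^N h_n\otimes Rh_n$ ($N>M$), again applying Proposition~\ref{prop:f-r} to bound $\ssgnnrm{\sum_{n=M+1}^N\gamma_n Rh_n}{L^p(\Omega;E)}$ and $\nnrm{RP_N-RP_M}{\gamma_\infty^p(H,E)}$ against each other. If $R\in\gamma(H,E)$, then $RP_N\to R$ in $\gamma(H,E)$ by Example~\ref{ex:g-approx}; since $\nnrm{\cdot}{\gamma_\infty^p}\eqsim\nnrm{\cdot}{\gamma_\infty}$ by \eqref{eq:equiv_gamma_p}, the increments are $\gamma_\infty^p$-Cauchy, whence $\sum_n\gamma_n Rh_n$ is $L^p(\Omega;E)$-Cauchy and converges. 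Conversely, convergence of $\sum_n\gamma_n Rh_n$ in $L^p$ makes $(RP_N)$ $\gamma_\infty^p$-Cauchy, hence $\gamma_\infty$-Cauchy, with limit in the closure $\gamma(H,E)$; since $\gamma_\infty$-convergence dominates operator-norm convergence and $RP_Nh\to Rh$ for every $h\in H$, the limit must be $R$. Almost sure convergence follows because the terms $\gamma_n Rh_n$ are independent and symmetric, so $L^p$-convergence gives convergence in probability (Markov), and Proposition~\ref{prop:Levy} upgrades this to $\P$-a.s.\ convergence; the equivalence over all $p$ is Proposition~\ref{prop:ItoNisio}. Finally, the two-sided estimate \eqref{eq:gammradseq} is obtained by passing to the limit $N\to\infty$ in the displayed inequality for $RP_N$, using $\nnrm{RP_N}{\gamma^p}\to\nnrm{R}{\gamma^p}$.

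The step I expect to be the main obstacle is the reverse inequality in part~\ref{thm:summing-sep:summing}: in the Banach-space proof one controls a general orthonormal system by the fixed basis through a duality/covariance argument, which is unavailable here. The replacement is the matrix contraction estimate of Lemma~\ref{lem:idealmatrix} together with the truncation-and-limit passage above; the only real care needed is to justify the $L^p(\Omega;E)$-convergence of the truncated sums as $N\to\infty$ via the quasi-triangle inequality, since no dominated-convergence or Bochner machinery is at hand in a general $r$-Banach space.
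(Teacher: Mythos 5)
Your proposal is correct and follows essentially the same route as the paper's proof, which assembles exactly the same ingredients: the trivial direction by testing against $\{h_1,\dots,h_N\}$, Lemma~\ref{lem:idealmatrix} plus a limiting argument for the reverse inequality in part~\ref{thm:summing-sep:summing}, the Kahane--Khintchine inequality (Theorem~\ref{thm:QBKahKhin}) for the $p$-independence, Proposition~\ref{prop:f-r} combined with the truncations $RP_N$ of Example~\ref{ex:g-approx} for part~\ref{thm:summing-sep:rad}, and L\'evy/Hoffmann-J{\o}rgensen (Propositions~\ref{prop:Levy} and~\ref{prop:Levy:Lp}) for the almost sure convergence. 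The only cosmetic differences are that the paper invokes Fatou's lemma where you pass to the limit directly via the quasi-triangle inequality, and cites Proposition~\ref{prop:Levy:Lp} where you unwind it into Markov plus Proposition~\ref{prop:Levy}; these are equivalent.
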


\begin{proof}
We have proven all the ingredients we need in order to obtain this result by mimicking the proof of its Banach space analogue presented in~\cite[Theorem~9.1.17]{HytNeerVerWeis2017}: The Kahane-Khintchine inequality for Gaussian sums proven in Theorem~\ref{thm:QBKahKhin} guarantees that once we have the results for one $p$, we also have it for arbitrary $0<p<\infty$.
Part~\ref{thm:summing-sep:summing} follows from a simple application of Lemma~\ref{lem:idealmatrix} and Fatou's lemma.
The equivalence of the $\gamma$-radonifying norm of a finite rank operator with the $L^p(\Omega;E)$-norm of the corresponding Gaussian sum provided by Proposition~\ref{prop:f-r} in combination with Example~\ref{ex:g-approx} are the ingredients needed to prove~\ref{thm:summing-sep:rad}. The almost sure convergence follows from the extension of Hoffmann-J\o{}rgensen's  theorem presented in Proposition~\ref{prop:Levy:Lp}.
\end{proof}

Note that Theorem~\ref{thm:summing-sep} provides a simple proof of the main result in~\cite{BycIng} for the special (and more simple) case that the metric space $E$ is an $r$-Banach space.

\begin{corollary}
Let $R\in\calL(H,E)$.
If for some orthonormal basis $(h_n)_{n\in\N}$ of $H$, the series $\sum_{n=1}^\infty \gamma_n \,Rh_n$ converges in probability, then for every orthonormal basis $(h_n')_{n\in\N}$ of $H$, the series $\sum_{n=1}^\infty \gamma_n \,Rh_n'$ converges in $L^p(\Omega;E)$ for all $0<p<\infty$  and $\P$-almost surely.
Moreover, the distribution of the limit random variable does not depend on the particular basis.
\end{corollary}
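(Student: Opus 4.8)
The plan is to deduce everything from Proposition~\ref{prop:ItoNisio} together with the orthonormal-basis characterisation in Theorem~\ref{thm:summing-sep}, isolating a separate rotational-invariance argument for the distributional statement. First I would apply Proposition~\ref{prop:ItoNisio} to the sequence $x_n:=Rh_n$ in $E$: the assumed convergence in probability of $\sum_{n}\gamma_n Rh_n$ at once upgrades to convergence in $L^p(\Omega;E)$ for all $0<p<\infty$. By part~\ref{thm:summing-sep:rad} of Theorem~\ref{thm:summing-sep}, applied to the given basis $(h_n)_{n\in\N}$, this $L^p$-convergence is equivalent to $R\in\gamma(H,E)$, a property of $R$ alone that refers to no basis. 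I would then re-apply part~\ref{thm:summing-sep:rad} of Theorem~\ref{thm:summing-sep}, this time with an arbitrary orthonormal basis $(h_n')_{n\in\N}$, to conclude that $\sum_{n}\gamma_n Rh_n'$ converges in $L^p(\Omega;E)$ for every $0<p<\infty$ and almost surely. This settles the convergence assertions essentially by chaining cited results.

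The substantive point is the basis-independence of the distribution of the limit, and here the main obstacle is that $E^*$ may fail to separate the points of $E$, so the usual argument via characteristic functionals is unavailable. The plan is to establish it first for finite rank operators and then pass to the limit. For $R=\sum_{n=1}^N h_n\otimes x_n$ with $\{h_1,\dots,h_N\}$ orthonormal and any orthonormal basis $(e_m)_{m\in\N}$ of $H$, I would write $Re_m=\sum_{n=1}^N\langle e_m,h_n\rangle x_n$ and interchange the summations to obtain
\[
\sum_{m}\gamma_m Re_m=\sum_{n=1}^N \widetilde\gamma_n\, x_n,\qquad \widetilde\gamma_n:=\sum_{m}\langle e_m,h_n\rangle\,\gamma_m.
\]
Parseval's identity gives $\sum_m\langle e_m,h_n\rangle\langle e_m,h_{n'}\rangle=\langle h_n,h_{n'}\rangle=\delta_{nn'}$, so $(\widetilde\gamma_n)_{n=1}^N$ is jointly Gaussian with identity covariance, that is, a standard Gaussian vector. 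Consequently $\sum_{m}\gamma_m Re_m$ and $\sum_{n=1}^N\gamma_n x_n$ have the same distribution, which is manifestly independent of the chosen basis $(e_m)_{m\in\N}$.

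Finally I would transfer this to general $R\in\gamma(H,E)$ by approximation. By Definition~\ref{def:g-rad} there are finite rank operators $R_k\to R$ in $\gamma(H,E)$, and for any fixed orthonormal basis $(e_m)_{m\in\N}$ the bound $\ssgnnrm{\sum_m\gamma_m(R_k-R)e_m}{L^2(\Omega;E)}\leq\nnrm{R_k-R}{\gamma^2(H,E)}$ from part~\ref{thm:summing-sep:rad} of Theorem~\ref{thm:summing-sep} shows that $\sum_m\gamma_m R_k e_m\to\sum_m\gamma_m Re_m$ in $L^2(\Omega;E)$, hence in distribution. Since each $\sum_m\gamma_m R_k e_m$ has a distribution not depending on $(e_m)_{m\in\N}$ by the finite rank case, the same holds for the limit, which is exactly the claimed basis-independence. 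The only delicate points I expect to verify are that the infinite series defining $\widetilde\gamma_n$ converges (which it does, being an $L^2$-convergent Gaussian series with $\sum_m|\langle e_m,h_n\rangle|^2=\nrm{h_n}^2=1$) and that convergence in distribution from a common law forces the limit to have that same law.
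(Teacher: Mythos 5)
Your proposal is correct and follows essentially the same route as the paper: Proposition~\ref{prop:ItoNisio} plus Theorem~\ref{thm:summing-sep}\ref{thm:summing-sep:rad} for the convergence assertions, and then the finite-rank case combined with approximation in $\gamma(H,E)$ for the basis-independence of the distribution. The paper leaves the latter step as ``easily verified and then extended''; your rotation argument with $\widetilde\gamma_n=\sum_m\langle e_m,h_n\rangle\gamma_m$ and the passage to the limit in distribution is precisely the intended elaboration.
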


\begin{proof}
The assumption and Proposition~\ref{prop:ItoNisio} yield that $\sum_{n=1}^\infty \gamma_n \,Rh_n$ converges in $L^p(\Omega;E)$ for all $0<p<\infty$. Therefore, $R\in \gamma(H,E)$ by Theorem~\ref{thm:summing-sep}. Applying  Theorem~\ref{thm:summing-sep} again with an arbitrary orthonormal basis $(h_n')_{n\in \N}$ we find that $\sum_{n=1}^\infty \gamma_n \,Rh_n'$ converges in $L^p(\Omega;E)$ for all $0<p<\infty$ and $\P$-almost surely.
The independence of the distribution of the series on the orthonormal basis can be easily verified for finite rank operators and then extended to arbitrary $R\in\gamma(H,E)$ by exploiting Theorem~\ref{thm:summing-sep}\ref{thm:summing-sep:rad}.
\end{proof}

In general, $\gamma(H,E)$ is a proper closed subspace of $\gamma_\infty(H,E)$.
In the case that $E$ is a Banach space, it was proven by Hoffmann-J{\o}rgensen and Kwapi\'en~\cite{HoffmannJorgensen1974,Kwapien1974}
that $\gamma(H,E)=\gamma_{\infty}(H,E)$ if $E$ fails to contain a copy of $c_0$. More precisely,
Hoffmann-J{\o}rgensen and Kwapie\'n proved that
for a Banach space $E$ it holds that $c_0$
fails to embed continuously into $E$ if and only if for certain $E$-valued random sums it holds that almost sure boundedness of the partial sums implies almost sure convergence. The proof uses a characterization of Bessaga and Pelczy\'nski, which seems to be unavailable for $r$-Banach spaces. Fortunately Proposition~\ref{prop:HJKalternative} in combination with Proposition~\ref{prop:ItoNisio}
implies the following quasi-Banach space analogue of
the result by Hoffmann-J{\o}rgensen and Kwapie\'n (one may
copy verbatim the  relevant parts of the  proof provided in~\cite[Theorem 4.3]{Neerven2010}).
\begin{proposition}\label{prop:gamma_sum_radon}
If $E$ has finite cotype, then $\gamma(H,E)= \gamma_\infty(H,E)$.
\end{proposition}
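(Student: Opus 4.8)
The reverse inclusion $\gamma(H,E)\subseteq\gamma_\infty(H,E)$ holds by definition, since $\gamma(H,E)$ is a closed subspace of $\gamma_\infty(H,E)$; so the plan is to establish $\gamma_\infty(H,E)\subseteq\gamma(H,E)$ under the finite cotype hypothesis. The strategy is to fix an orthonormal basis and apply the testing-against-an-orthonormal-basis criterion of Theorem~\ref{thm:summing-sep}, thereby reducing the claim to a convergence statement for a Gaussian series that can be handled by Proposition~\ref{prop:HJKalternative}.

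First I would fix an orthonormal basis $(h_n)_{n\in\N}$ of $H$, take an arbitrary $R\in\gamma_\infty(H,E)$, and set $X_n:=\gamma_n\,Rh_n$ and $S_n:=\sum_{k=1}^n X_k$. Then $(X_n)_{n\in\N}$ is a sequence of independent symmetric $E$-valued random variables, as each $\gamma_n$ is a symmetric real random variable and $Rh_n\in E$ is deterministic. By Theorem~\ref{thm:summing-sep}\ref{thm:summing-sep:summing} applied with $p=1$, membership of $R$ in $\gamma_\infty(H,E)$ gives $\sup_{n\in\N}\nnrm{S_n}{L^1(\Omega;E)}<\infty$.

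Next I would upgrade this $L^1$-boundedness to almost sure boundedness of the partial sums. Since the $X_k$ are symmetric, L\'evy's inequality in its $r$-Banach form (Lemma~\ref{lem:QBLevy:inequal}) yields $\E[\max_{1\le k\le n}\nnrm{S_k}{E}]\le 2^{1/r}\,\E\nnrm{S_n}{E}$ for every $n\in\N$; letting $n\to\infty$ and invoking monotone convergence gives $\E[\sup_{n\in\N}\nnrm{S_n}{E}]<\infty$, hence $\sup_{n\in\N}\nnrm{S_n}{E}<\infty$ $\P$-almost surely. This passage from integrated boundedness to almost sure boundedness of the \emph{maximal} partial sum is the one step that genuinely exploits symmetry together with the maximal inequality, and I expect it to be the main point requiring care.

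Finally the cotype hypothesis enters. Because $E$ has finite cotype, assertion~\ref{it:asconvsymmsums} of Proposition~\ref{prop:HJKalternative} holds; applied to the sequence $(X_n)_{n\in\N}$, whose partial sums we have just shown to be almost surely bounded, it yields that $\sum_{k}\gamma_k\,Rh_k$ converges $\P$-almost surely, in particular in probability. Proposition~\ref{prop:ItoNisio} then upgrades this to convergence in $L^p(\Omega;E)$ for every $p\in(0,\infty)$, and Theorem~\ref{thm:summing-sep}\ref{thm:summing-sep:rad} translates the $L^p$-convergence of $\sum_n\gamma_n\,Rh_n$ into $R\in\gamma(H,E)$, as desired. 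The conceptual heart of the argument is that Proposition~\ref{prop:HJKalternative} supplies, via finite cotype, the $r$-Banach substitute for the Bessaga--Pelczy\'nski-type characterization used in the Banach space proof of~\cite[Theorem~4.3]{Neerven2010}, which is unavailable in the present setting.
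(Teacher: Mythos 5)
Your proof is correct and follows essentially the same route as the paper: the paper's proof consists of invoking Proposition~\ref{prop:HJKalternative} together with Proposition~\ref{prop:ItoNisio} and copying the argument of~\cite[Theorem~4.3]{Neerven2010}, which is precisely the chain you spell out (testing against an orthonormal basis via Theorem~\ref{thm:summing-sep}, L\'evy's inequality to pass from $L^1$-boundedness to almost sure boundedness of the partial sums, finite cotype to get almost sure convergence, and then Proposition~\ref{prop:ItoNisio} and Theorem~\ref{thm:summing-sep}\ref{thm:summing-sep:rad} to conclude $R\in\gamma(H,E)$). You have merely made explicit the details the paper leaves to the citation, and each step checks out, including the constant $2^{1/r}$ from Lemma~\ref{lem:QBLevy:inequal} with $p=1$.
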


\subsection{\texorpdfstring{$\gamma$}{g}-norms in \texorpdfstring{$r$}{r}-Banach function spaces and square functions}

It is well-known that, if $E=E(S)$ is a Banach function space with finite cotype over a $\sigma$-finite measure space $(S,\cA,\mu)$, then  $\nnrm{\cdot}{\gamma(H,E)}$ has an equivalent square function interpretation, see, e.g.,~\cite[Theorem~9.3.6]{HytNeerVerWeis2017}.
In particular, $\gamma(H,E(S))\eqsim E(S;H)$, where $E(S;H)$ is the vector space of all (equivalence classes of) strongly measurable functions $f\colon S\to H$ such that $\xi\mapsto \nnrm{f(\xi)}{H}$ is an element of $E(S)$ (naturally normed by $\nnrm{f}{E(S;H)}:=\nnrm{\rklam{\xi\mapsto\nnrm{f(\xi)}{H}}}{E(S)}$, $f\in E(H;S)$).
In this section, we prove that this result extends verbatim to $r$-Banach function spaces with finite cotype (see Theorem~\ref{thm:sq_fnc_lattice} below).

\smallskip

We begin with a definition of an $r$-Banach function spaces analogue to the definition
of a Banach function space as considered in e.g.~\cite[Definition F.3.1]{HytNeerVerWeis2017}. For details in the case $r=1$ we refer the reader to 
\cite[Definition 1.b.17]{LindTza1977}, \cite[Chapter~15]{Zaa1967}. In the case $0<r<1$ these spaces have been studied in \cite{Kalton2005, KalMS93}.

\begin{definition}
Let $(S,\cA,\mu)$ be a $\sigma$-finite measure space and let $r\in (0,1]$.
We say that a vector space $E(S)\subseteq L^0(S)$ is an \emph{$r$-Banach function space}
if it is complete under some $r$-norm $\lVert\cdot \rVert_{E(S)}\colon E(S)\rightarrow [0,\infty)$
and moreover for all $f\in L^0(S)$, $g\in E(S)$ such that $f\leq g$ it holds
that $f\in E(S)$ and $\| f \|_{E(S)} \leq \| g \|_{E(S)}$.
\end{definition}

Before we continue to our next result we recall some standard results for $r$-Banach function spaces. For details on $r$-convexity and $q$-concavity we refer to~\cite[Section 3]{KalMS93}, or, e.g., to~\cite{LindTza1977}).
If $E(S)$ has finite cotype, say cotype $t$, it satisfies a lower $t$-estimate. 
Therefore, it follows from \cite[Theorem 3.2]{KalMS93} that $X$ is $s$-convex for some $s\in (0,1]$.
It follows from \cite[Theorem 4.4]{Mal2004} that $E(S)$ is $q$-concave for any $q>t$.
Therefore, as in \cite[Theorem 1.d.6]{LindTza1977} (using $s$-convexity to prove the part $\gtrsim$) one can prove the Maurey--Khintchine inequalities:
for all $0<p<\infty$, $n\in \N$, and $x_1,\ldots,x_n\in E(S)$ it holds that
\begin{equation}\label{eq:MaureyKhintchine}
\ssgnnrm{\sum_{n=1}^N \varepsilon_n x_n}{L^p(\Omega;E(S))}
\eqsim_{q,s,E(S)}
\ssgnnrm{\ssgrklam{\sum_{n=1}^N |x_n|^2}^{\frac{1}{2}}}{E(S)}.
\end{equation}
Moreover, in \eqref{eq:MaureyKhintchine} one can replace the Rademacher sum by a Gaussian sum as well. Indeed, this follows from \cite[Proposition~2.5]{Kalton2005}.
For completeness we mention that finite cotype is actually equivalent to $q$-concavity for some $q<\infty$. Indeed, the remaining implication follows from \cite[Proposition 1.f.3]{LindTza1977}.

The theorem below can be found in~\cite[Chapter~9]{HytNeerVerWeis2017} for the case that $r=1$. The proof extends to the quasi-Banach space case by taking into account that the Maurey-Khintchine inequalities remain valid in this setting, as explained above.
For an $r$-Banach function space $E(S)\subseteq L^0(S)$ we set
\begin{equation*}
E(S;H) :=\{ f\in L^0(S;H) \colon s\mapsto \| f(s) \|_{H} \in E(S) \},
\end{equation*}
which is an $r$-Banach space when endowed with the $r$-norm $\| f \|_{E(S;H)} = \| \|f\|_{H} \|_{E(S)}$, $f\in E(S;H)$.

%

\begin{theorem}\label{thm:sq_fnc_lattice}
Let $0<r\leq 1$. Let $E(S)$ be a separable  $r$-Banach function  space with finite cotype over a $\sigma$-finite measure space $(S,\cA,\mu)$.
Then the mapping $U\colon  E(S;H)\to \calL(H,E(S))$ defined by
$$  (U f) h := \lb h, f(\cdot)\rb_{H}, \quad  h\in H,$$
defines an isomorphism $U$ of Banach spaces
$$ E(S;H)\eqsim \gamma(H,E(S)).$$
Furthermore,
for an  operator $R\in\calL(H,E(S))$ the following assertions are equivalent:
\begin{enumerate}[label=\textup{(\roman*)}]
\item\label{thm:sq_fnc_lattice:1} $R\in \gamma(H,E(S))$.
\item\label{thm:sq_fnc_lattice:2}  There exists a 
function $0\le g\in E(S)$ such that
for all
finite orthonormal systems $(h_n)_{n=1}^N\subseteq H$ we have
\[
\ssgrklam{\sum_{n=1}^N \abs{Rh_n}^2}^{\frac12}\le g  \ \ \mu\textup{-a.e.}
\]
\item\label{thm:sq_fnc_lattice:3}  There exists a function $0\le g\in E(S)$ such that for all $h\in H$ we have
\[
\abs{Rh}\le \nnrm{h}{H}\cdot g \ \ \mu\textup{-a.e.}
\]
\item\label{thm:sq_fnc_lattice:4}  There exists a function $k\in E(S;H)$ such that for all $h\in H$ we have
\[
R h = \lb h, k(\cdot)\rb_H \ \ \mu\textup{-a.e.}
\]
\item\label{thm:sq_fnc_lattice:5} The function
$\displaystyle\ssgrklam{\sumn \abs{R h_n}^2}^{\frac12}$ belongs to $E(S)$.
\end{enumerate}
In this situation, in~\ref{thm:sq_fnc_lattice:3} we may take $\displaystyle g= \ssgrklam{\sum_{n=1}^\infty \abs{R h_n}^2}^{\frac12}$ and we have
\begin{equation}\label{eq:Rsquarekappa}
\nnrm{R}{\gamma(H,E(S))}
\eqsim_{E}
\nnrm{k}{E(S;H)}
=
\ssgnnrm{\ssgrklam{\sum_{n=1}^\infty \abs{Rh_n}^2}^{\frac12}}{E(S)}.
\end{equation}
\end{theorem}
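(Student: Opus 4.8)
The plan is to follow the strategy from~\cite[Chapter~9]{HytNeerVerWeis2017}, leaning on the Maurey--Khintchine inequalities~\eqref{eq:MaureyKhintchine} as the central tool that replaces the Banach-space arguments. First I would fix an orthonormal basis $(h_n)_{n\in\N}$ of $H$ and, for $R\in\calL(H,E(S))$, consider the square functions $g_N:=\grklam{\sum_{n=1}^N \abs{Rh_n}^2}^{1/2}$. By Theorem~\ref{thm:summing-sep}\ref{thm:summing-sep:summing} together with the Gaussian version of~\eqref{eq:MaureyKhintchine}, the $\gamma_\infty$-norm of $R$ is (up to constants depending only on $E(S)$) equivalent to $\sup_N \nnrm{g_N}{E(S)}$. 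Since $(g_N)_N$ is nondecreasing, the monotone ideal property of an $r$-Banach function space lets this supremum coincide with $\nnrm{g}{E(S)}$ for $g:=\grklam{\sumn \abs{Rh_n}^2}^{1/2}$ (interpreted as an element of $L^0(S)$), which establishes the equivalence of~\ref{thm:sq_fnc_lattice:1} (via $\gamma=\gamma_\infty$ from Proposition~\ref{prop:gamma_sum_radon}, using finite cotype) with~\ref{thm:sq_fnc_lattice:5}, and simultaneously delivers the norm equivalence~\eqref{eq:Rsquarekappa}.

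Next I would chase the remaining equivalences. The implication \ref{thm:sq_fnc_lattice:5}$\Rightarrow$\ref{thm:sq_fnc_lattice:2} is immediate by taking $g$ to be the full square function and using the ideal property of $E(S)$ to dominate the partial square functions over any finite orthonormal system (after extending it to a basis). For \ref{thm:sq_fnc_lattice:2}$\Rightarrow$\ref{thm:sq_fnc_lattice:3}, testing against a single $h=\sum_n c_n h_n$ with $\nnrm{h}{H}=1$ and applying Cauchy--Schwarz pointwise gives $\abs{Rh}=\abs{\sum_n c_n Rh_n}\le \grklam{\sum_n \abs{Rh_n}^2}^{1/2}\le g$ $\mu$-a.e.\ for a countable dense set of such $h$, which suffices by continuity of $R$. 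The step \ref{thm:sq_fnc_lattice:3}$\Rightarrow$\ref{thm:sq_fnc_lattice:4} is where I would construct the kernel: for each fixed $n$ the function $Rh_n\in E(S)\subseteq L^0(S)$, and I would define $k(\cdot):=\sum_n (Rh_n)(\cdot)\, h_n$ as an $H$-valued function, checking that $\nnrm{k(\cdot)}{H}=\grklam{\sum_n \abs{Rh_n}^2}^{1/2}\le g\in E(S)$ so that $k\in E(S;H)$, and that $\lb h,k(\cdot)\rb_H=Rh$ $\mu$-a.e.\ for every $h\in H$ by linearity and a density/continuity argument. Finally \ref{thm:sq_fnc_lattice:4}$\Rightarrow$\ref{thm:sq_fnc_lattice:1} identifies $R=Uk$, and combined with the isomorphism claim this closes the cycle; the map $U$ is clearly linear and injective (since $Uk=0$ forces $k=0$ $\mu$-a.e.\ because $E^*$-free arguments are replaced by pointwise vanishing against a basis), and surjective onto $\gamma(H,E(S))$ by the equivalences just established, with the norm identity in~\eqref{eq:Rsquarekappa} showing it is an isomorphism.

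The main obstacle I anticipate is the measurability bookkeeping in the construction of $k$ and the justification that $\lb h,k(\cdot)\rb_H = Rh$ holds as an identity in $E(S)$ for \emph{all} $h\in H$ simultaneously, rather than merely for each fixed $h$ on an $h$-dependent null set. The resolution is to establish the identity first on the countable set of rational linear combinations of basis vectors, exploit that a countable union of null sets is null, and then pass to general $h$ using the continuity of $R$ together with the (sequential, order-)continuity properties of $E(S)$ that follow from finite cotype; strong measurability of $k$ as an $E(S;H)$-valued object is handled via Example~\ref{ex:g-meas} and Proposition~\ref{prop:mb}. A secondary technical point is ensuring that the constants in~\eqref{eq:Rsquarekappa} depend only on $E(S)$ (through its concavity/convexity constants) and not on $R$ or $H$; this is inherited directly from the corresponding dependence in the Maurey--Khintchine equivalence~\eqref{eq:MaureyKhintchine}. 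Throughout, the only genuinely new ingredient beyond the $r=1$ proof is the replacement of convexity-based duality by the pointwise lattice estimates, and these have already been secured above via $s$-convexity, $q$-concavity, and~\eqref{eq:MaureyKhintchine}.
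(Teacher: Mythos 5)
Your route is the same as the paper's, which simply defers to the Banach-space proof in~\cite[Chapter~9]{HytNeerVerWeis2017} and notes that the Maurey--Khintchine equivalence~\eqref{eq:MaureyKhintchine} (also in its Gaussian form) survives in the $r$-Banach setting; your reduction of $\nnrm{R}{\gamma_\infty(H,E(S))}$ to $\sup_N\nnrm{g_N}{E(S)}$ and your chase \ref{thm:sq_fnc_lattice:2}$\Rightarrow$\ref{thm:sq_fnc_lattice:3}$\Rightarrow$\ref{thm:sq_fnc_lattice:4}$\Rightarrow$\ref{thm:sq_fnc_lattice:1} are the standard steps. There is, however, one genuine gap, and it sits at the central implication \ref{thm:sq_fnc_lattice:1}$\Rightarrow$\ref{thm:sq_fnc_lattice:5}: the ``monotone ideal property'' you invoke to pass from $\sup_N \nnrm{g_N}{E(S)}<\infty$ to $g=\big(\sum_{n\in\N}|Rh_n|^2\big)^{1/2}\in E(S)$ is not part of the paper's definition of an $r$-Banach function space, and for that definition it is false in general: what you are using is the weak Fatou property. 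For instance, $c_0$ over $\N$ with counting measure satisfies the paper's definition (completeness plus the ideal property), yet $g_N=\one_{\{1,\dots,N\}}$ is nondecreasing with $\sup_N\nnrm{g_N}{c_0}=1$ while $\sup_N g_N=\one_{\N}\notin c_0$. That example has no finite cotype, and finite cotype does in fact rescue the property (in the Banach case via ``no copy of $c_0$, hence KB-space''), but such an argument appears nowhere in your proposal, and its quasi-Banach version is not among the paper's tools.

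The repair uses exactly the ingredients you already cite, assembled in a different order. Given $\sup_N\nnrm{g_N}{E(S)}<\infty$, first conclude $R\in\gamma_\infty(H,E(S))$ (Maurey--Khintchine plus Theorem~\ref{thm:summing-sep}\ref{thm:summing-sep:summing}), then upgrade to $R\in\gamma(H,E(S))$ by Proposition~\ref{prop:gamma_sum_radon}; now Theorem~\ref{thm:summing-sep}\ref{thm:summing-sep:rad} gives that $\sum_n\gamma_n Rh_n$ \emph{converges} in $L^2(\Omega;E(S))$, so its tails are small. By the Gaussian version of~\eqref{eq:MaureyKhintchine} applied to these tails, $\nnrm{\big(\sum_{n=M+1}^N|Rh_n|^2\big)^{1/2}}{E(S)}\to 0$ as $M,N\to\infty$, and since pointwise $|g_N-g_M|\le \big(\sum_{n=M+1}^N|Rh_n|^2\big)^{1/2}$, the sequence $(g_N)_N$ is \emph{Cauchy} in $E(S)$. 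Its $E(S)$-limit coincides $\mu$-a.e.\ with $g$, because convergence in $E(S)$ forces a.e.\ convergence along a subsequence; the continuity of the embedding $E(S)\hookrightarrow L^0(S)$ needed here \emph{does} follow from the ideal property, completeness, the closed graph theorem and Egorov's theorem. This yields $g\in E(S)$ and $\nnrm{g}{E(S)}=\sup_N\nnrm{g_N}{E(S)}$, hence~\eqref{eq:Rsquarekappa}, and it is how the $r=1$ proof you are adapting handles the point. A smaller gloss of the same flavour: in your step \ref{thm:sq_fnc_lattice:3}$\Rightarrow$\ref{thm:sq_fnc_lattice:4}, the bound $\nnrm{k(\cdot)}{H}\le g$ does not follow termwise from~\ref{thm:sq_fnc_lattice:3} (termwise you only get $\sqrt{N}\,g$ for the $N$-th partial square function); one must realize each partial square function a.e.\ as the supremum of $|Ru(\cdot)|$ over a countable dense set of unit vectors $u$ in the span of $h_1,\dots,h_N$ and apply~\ref{thm:sq_fnc_lattice:3} to those countably many $u$.
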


In the setting of stochastic integrals the case that $H$ in Theorem~\ref{thm:sq_fnc_lattice}
is an $L^2$-space is of particular relevance (see Example~\ref{example:stoch_int_fs}).
In this case we obtain the following corollary.

\begin{corollary}\label{cor:lattice}
Let  $0<r\leq 1$. Let $E(S)$ be a separable $r$-Banach function  space with finite cotype over a $\sigma$-finite measure space $(S,\cA,\mu)$.
Furthermore, let $(\tilde{S},\tilde{\cA},\tilde{\mu})$ be a $\sigma$-finite measure space such that $L^2(\tilde{S};H)$ is separable.
\begin{enumerate}[label=\textup{(\roman*)}]
\item\label{it:cor:lattice:1} If $\varphi\colon \tilde S \times S \to H$ is strongly $\tilde\cA\otimes\cA$-measurable and such that
\begin{equation}\label{eq:EL2H}
\ssgnnrm{\ssgrklam{\int_{\tilde S} \nnrm{\varphi(\tilde\xi,\cdot)}{H}^2 \,\mathrm{d}\tilde\mu(\tilde\xi)}^{\frac12}}{E(S)}<\infty,
\end{equation}
then $R\colon L^2(\tilde S;H)\to E(S)$ given by
\begin{equation}\label{eq:RfAform}
R f(\xi) = \int_{\tilde S} \langle \varphi(\tilde\xi,\xi),f(\tilde{\xi}) \rangle_H \,\mathrm{d}\tilde\mu(\tilde\xi),\qquad\xi\in S,
\end{equation}
for $f\in L^2(\tilde S;H)$,
is in $\gamma(L^2(\tilde S;H), E(S))$, and
\begin{equation}\label{eq:normequivlattice}
\nnrm{R}{\gamma(L^2(\tilde S;H), E(S))}
\eqsim_{E(S)}
\ssgnnrm{\ssgrklam{\int_{\tilde S} \nnrm{\varphi(\tilde\xi,\cdot)}{H}^2 \,\mathrm{d}\tilde\mu(\tilde\xi)}^{\frac12}}{E(S)}.
\end{equation}
\item\label{it:cor:lattice:2} Conversely, if $R\in \gamma(L^2(\tilde S;H), E(S))$, then we can find a $\tilde\mu\otimes\mu$-almost everywhere unique $\varphi$ such that~\eqref{eq:RfAform} holds.
\end{enumerate}
Moreover, if $E(S)$ has cotype $2$,
then given $\varphi$ such that~\eqref{eq:EL2H} holds we can define $\Phi\in L^2(\tilde S;\gamma(H,E(S)))$ by
$\Phi(\tilde\xi) h= \langle \varphi(\tilde\xi,\cdot), h\rangle_H$, $\tilde\xi\in\tilde S$, $h\in H$.
\end{corollary}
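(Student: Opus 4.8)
The plan is to deduce everything from Theorem~\ref{thm:sq_fnc_lattice} applied with the separable Hilbert space $H$ there replaced by $\tilde H := L^2(\tilde S;H)$. The key observation is that, writing $k(\xi) := \varphi(\cdot,\xi)\in L^2(\tilde S;H)$ for $\xi\in S$, the defining formula \eqref{eq:RfAform} reads
\[
Rf(\xi)=\int_{\tilde S}\langle \varphi(\tilde\xi,\xi),f(\tilde\xi)\rangle_H\,\mathrm{d}\tilde\mu(\tilde\xi)=\langle f,k(\xi)\rangle_{\tilde H},\qquad f\in \tilde H,
\]
so that $R=Uk$ in the notation of Theorem~\ref{thm:sq_fnc_lattice}. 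Moreover, by definition of the norm on $L^2(\tilde S;H)$ we have the pointwise identity $\nnrm{k(\xi)}{\tilde H}=(\int_{\tilde S}\nnrm{\varphi(\tilde\xi,\xi)}{H}^2\,\mathrm{d}\tilde\mu(\tilde\xi))^{1/2}$, so condition \eqref{eq:EL2H} is precisely the statement $k\in E(S;\tilde H)$. Once this dictionary is set up, parts \ref{it:cor:lattice:1} and \ref{it:cor:lattice:2} become the two directions of the isomorphism $E(S;\tilde H)\eqsim\gamma(\tilde H,E(S))$ furnished by Theorem~\ref{thm:sq_fnc_lattice}, and \eqref{eq:normequivlattice} is exactly \eqref{eq:Rsquarekappa}.

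For part \ref{it:cor:lattice:1} the only point requiring care is that $k\colon S\to\tilde H$ is strongly measurable, so that it genuinely defines an element of $E(S;\tilde H)$. First I would fix a countable dense set $\{f_j\}\subseteq\tilde H$ and note that each $\xi\mapsto\langle f_j,k(\xi)\rangle_{\tilde H}=\int_{\tilde S}\langle f_j(\tilde\xi),\varphi(\tilde\xi,\xi)\rangle_H\,\mathrm{d}\tilde\mu(\tilde\xi)$ is measurable by the joint $\tilde\cA\otimes\cA$-measurability of $\varphi$ and Fubini's theorem; since the functionals $g\mapsto\langle f_j,g\rangle_{\tilde H}$ separate the points of the Polish space $\tilde H$, Pettis' theorem (Proposition~\ref{prop:Pettis}) then yields strong measurability of $k$. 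With $k\in E(S;\tilde H)$ in hand, Theorem~\ref{thm:sq_fnc_lattice} gives $R=Uk\in\gamma(\tilde H,E(S))$ together with $\nnrm{R}{\gamma(\tilde H,E(S))}\eqsim_{E(S)}\nnrm{k}{E(S;\tilde H)}$, which is \eqref{eq:normequivlattice}.

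For part \ref{it:cor:lattice:2} I would run the correspondence backwards: given $R\in\gamma(\tilde H,E(S))$, the isomorphism of Theorem~\ref{thm:sq_fnc_lattice} provides a unique $k\in E(S;\tilde H)$ with $Rf(\xi)=\langle f,k(\xi)\rangle_{\tilde H}$ $\mu$-a.e.\ (this is item~\ref{thm:sq_fnc_lattice:4}). It then remains to select a jointly $\tilde\cA\otimes\cA$-measurable representative $\varphi(\tilde\xi,\xi):=k(\xi)(\tilde\xi)$; this is a standard Fubini/measurable-selection step using separability of $\tilde H$ and the strong measurability of $k$, after which \eqref{eq:RfAform} holds, and the $\tilde\mu\otimes\mu$-a.e.\ uniqueness of $\varphi$ follows from the uniqueness of $k$ together with a.e.\ uniqueness of sections. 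I expect these measurability bookkeeping steps to be the only genuine (if routine) obstacle; the algebraic heart of the statement is the identification $R=Uk$.

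Finally, for the cotype~$2$ addendum, I would apply Theorem~\ref{thm:sq_fnc_lattice} a second time, now with the original Hilbert space $H$: for fixed $\tilde\xi$ the function $\varphi(\tilde\xi,\cdot)$ lies in $E(S;H)$, whence $\Phi(\tilde\xi)\colon h\mapsto\langle\varphi(\tilde\xi,\cdot),h\rangle_H$ belongs to $\gamma(H,E(S))$ with $\nnrm{\Phi(\tilde\xi)}{\gamma(H,E(S))}\eqsim_{E(S)}\nnrm{\nnrm{\varphi(\tilde\xi,\cdot)}{H}}{E(S)}$, while strong measurability of $\tilde\xi\mapsto\Phi(\tilde\xi)$ follows from Example~\ref{ex:g-meas} since each $\Phi h$ is strongly measurable. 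To obtain $\Phi\in L^2(\tilde S;\gamma(H,E(S)))$ I would integrate this norm equivalence and invoke the hypothesis that $E(S)$ has cotype~$2$, which makes $E(S)$ $2$-concave and hence yields the integral Minkowski inequality
\[
\int_{\tilde S}\nnrm{\nnrm{\varphi(\tilde\xi,\cdot)}{H}}{E(S)}^2\,\mathrm{d}\tilde\mu(\tilde\xi)
\lesssim_{E(S)}
\ssgnnrm{\ssgrklam{\int_{\tilde S}\nnrm{\varphi(\tilde\xi,\cdot)}{H}^2\,\mathrm{d}\tilde\mu(\tilde\xi)}^{\frac12}}{E(S)}^2,
\]
the right-hand side being finite by \eqref{eq:EL2H}; this is the continuous analogue of the concavity estimate underlying the Maurey--Khintchine inequalities \eqref{eq:MaureyKhintchine}. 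Combining the two displays gives the required square-integrability and completes the argument.
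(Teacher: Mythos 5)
Your proposal is correct and follows essentially the same route as the paper: part \ref{it:cor:lattice:1} and the norm equivalence \eqref{eq:normequivlattice} are obtained, exactly as in the paper, by applying Theorem~\ref{thm:sq_fnc_lattice} with $H$ replaced by $L^2(\tilde S;H)$, and the cotype-$2$ addendum rests on the same continuous $2$-concavity estimate that the paper imports from the proof of \cite[Theorem 3.9(2)]{Ver13emb} (your only slip is one of ordering: that $\varphi(\tilde\xi,\cdot)\in E(S;H)$ for a.e.\ $\tilde\xi$ is a consequence of that estimate, not a premise). The sole difference is in the packaging of part \ref{it:cor:lattice:2}: the paper approximates $R$ by finite-rank operators, whose kernels $\varphi_n$ are automatically jointly measurable and form a Cauchy sequence in $E(S;L^2(\tilde S;H))$, and then cites \cite[Lemma~9.3.7]{HytNeerVerWeis2017} for joint measurability of the limit, whereas you invoke item \ref{thm:sq_fnc_lattice:4} of Theorem~\ref{thm:sq_fnc_lattice} directly and then perform the jointly-measurable-representative selection; this is the same technical content, deferred to the same standard lemma.
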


\begin{proof}
Part~\ref{it:cor:lattice:1} is an immediate consequence of Theorem~\ref{thm:sq_fnc_lattice} with $H$ replaced by $L^2(\tilde{S};H)$.
To prove part~\ref{it:cor:lattice:2}, we choose a sequence $(R_n)_{n\in\N}\subseteq L^2(\tilde{S};H)\otimes E(S)$ of finite rank operators such that $R_n\to R$ in $\gamma(L^2(\tilde S;H), E(S))$.
For every $n\in\N$, since $R_n$ is of finite rank, we can obviously find a strongly measurable $\varphi_n:S\times \tilde{S}\to H$ such that $R_n$ is given by~\eqref{eq:RfAform} with $\varphi$ replaced by $\varphi_n$. From \eqref{eq:normequivlattice} it follows that $(\varphi_n)$ is a Cauchy sequence and hence convergent to some $\varphi$ in $E(S;L^2(\tilde{S};H))$.
Thus,
\[R f = \lim_{n\to \infty} R_n f = \lim_{n\to \infty}\int_{\tilde{S}} \lb \varphi_n(\tilde{\xi},\cdot),f(\tilde{\xi})\rb_H d\mu(\tilde{\xi})  = \int_{\tilde{S}} \lb\varphi(\tilde{\xi},\cdot), f(\tilde{\xi})\rb_H d\mu(\tilde{\xi}),\]
where the convergence takes place in $E(S)$; the last step follows from the fact that evaluation against $f\in L^2(\tilde{S};H)$ is a bounded operator from $E(S;L^2(\tilde{S};H))$ into $E(S)$ by the Cauchy-Schwarz inequality. The fact that we can choose $\varphi$ to be strongly $\tilde{\cA}\otimes \cA$-measurable can be obtained by mimicking the proof of \cite[Lemma~9.3.7]{HytNeerVerWeis2017}.
We conclude that \eqref{eq:RfAform} holds. The $\tilde{\mu}\otimes\mu$-uniqueness follows by a standard Fubini argument.

For the final assertion note that cotype $2$ implies $2$-concavity by \eqref{eq:MaureyKhintchine}.
Therefore, the result follows from the continuous version of $2$-concavity which is characterized by (see the proof of \cite[Theorem 3.9(2)]{Ver13emb})
\begin{equation*}
\nnrm{ \Phi }{L^2(\tilde S;\gamma(H,E(S)))}
\eqsim_{E(S)}
\nnrm{\varphi}{L^2(\tilde S;E(S;H))}
\lesssim_{E(S)}
\ssgnnrm{\ssgrklam{\int_{\tilde S} \nnrm{\varphi}{H}^2 \,\mathrm{d}\tilde\mu }^{\frac12}}{E(S)}.\qedhere
\end{equation*}

\end{proof}

\begin{remark}
If $E$ is a quasi-Banach space isomorphic to a subspace of an $r$-Banach function space, then one can deduce a similar characterization as above. Indeed, for a characterization of $\gamma(L^2(\tilde{S};H);E)$ in the case that $E=B^{\sigma}_{p,q}(\R^d)$ (i.e., a Besov space which is not a Banach function space) and $\tilde{S} = [0,T]$ we refer to Proposition~\ref{prop:Bpqgamma}.
\end{remark}

\begin{remark}\label{rem:QBFS:integral:det}
Note that, in particular, if $\tilde\mu(\tilde S)<\infty$, Corollary~\ref{cor:lattice} provides an example of a class of functions
$\Phi\colon \tilde S\to \calL(H,E(S))$ for which an integral
$\int_{\tilde S} \Phi \,\mathrm{d}\tilde \mu := R\one_{\tilde S}$ exists in $\calL(H,E(S))$.
Recall from the introduction that some integration theory in $r$-Banach spaces
is available, see~\cite{Vog1967}, but that the results in~\cite{AlbAns2013} show that the integral does
not obey the usual properties of Bochner integrals.
\end{remark}



\section{Stochastic integration I: deterministic integrands}
\label{sec:stoch_int_functions}
The theory developed in the previous sections enables us to extend the notion of a stochastic integral from Banach spaces to quasi-Banach spaces by closely following the ideas developed in~\cite{NeeVerWei2007,NeervenWeis2005a}, see also~\cite{NeeVerWei2015} for a survey.
In this section we assume that the integrand is deterministic, i.e., we develop what is sometimes called a Wiener integral.
%
%
We first construct an abstract stochastic integral for $\gamma$-radonifying operators
on a separable  Hilbert space $\cH$ with values in separable a quasi-Banach space $E$  (Subsection~\ref{ssec:stoch_int_def}). We then proceed in Subsection~\ref{ssec:stoch_int_process}
to consider the case $\cH=L^2(0,T;H) := L^2([0,T];H)$,
where $T\in (0,\infty)$ and $H$ is a separable  Hilbert space\textemdash in this case the abstract stochastic integral introduced
in Subsection~\ref{ssec:stoch_int_def} can be interpreted as a continuous, $E$-valued stochastic process.
Examples~\ref{example:stoch_int_fs} and~\ref{example:stoch_int_besov} provide characterizations of functions on $[0,T]$
for which an $E$-valued stochastic integral is well-defined in the case that $E$ is an $r$-Banach function space or a Besov space.\par

Explicit characterizations of stochastically integrable $\cL(H,E)$-valued functions can be
given if $E$ has a separating dual. This is the topic of Section~\ref{ssec:stoch_int_sepdual}. More specifically, we show that the notion of $\gamma$-radonifying operators as defined above coincides with the classical notion of $\gamma$-radonifying operators which goes back to Gel'fand~\cite{Gel1955}, Segal~\cite{Seg1956}, and Gross~\cite{Gro1962, Gro1967}\textemdash see~\cite{Neerven2010} for more historical remarks.
As a consequence, we can define a weak (Pettis type) stochastic \Ito\ integral for functions $\Phi\colon [0,T]\to\cL(H,E)$ as in the Banach space setting.

\smallskip

\noindent\textbf{Notation.} Throughout this section, $E$ is a separable $r$-Banach space for some $0<r\leq 1$, $\cH$ and $H$ are separable Hilbert spaces and $(\Omega,\cF,\P)$ is a probability space. Moreover, $\mathcal{W}_{\cH}\colon \cH \rightarrow L^2(\Omega)$ is an $\cH$-isonormal
process (see e.g.~\cite[Definition~2.1]{NeeVerWei2015}).
If $\cH=L^2(0,T;H)$, we write $W_H$ instead of $\mathcal W_{L^2(0,T;H)}$.

\subsection{An abstract definition of the stochastic integral.}\label{ssec:stoch_int_def}

The results obtained in Section~\ref{sec:gamma} allow us to introduce the notion of stochastic integration with respect to an $\cH$-isonormal process $\mathcal{W}_{\cH}$ for $\gamma$-radonifying operators. We start with finite rank operators.
\begin{definition}
Let $R\in \cH\otimes E$, i.e., $\displaystyle R=\sum_{n=1}^N h_n\otimes x_n$ with $\{h_1,\ldots,h_N\}$ orthonormal in $\cH$ and $x_1,\ldots,x_N\in E$ for some $N\in\N$. Then
\[
R \cdot \mathcal{W}_{\cH}:=\sum_{n=1}^N \mathcal{W}_{\cH}(h_n)x_n \in L^2(\Omega;E)
\]
is called \emph{the stochastic integral of $R$ with respect to $\mathcal{W}_{\cH}$}.
\end{definition}

\begin{remark}\label{rem:stochInt:f-r}
\begin{enumerate}[leftmargin=*,align=right,label=\textup{(\roman*)}]
\item The stochastic integral of a finite rank operator $R\in \cH\otimes E$ with respect to an $\cH$-isonormal Wiener process is well-defined, i.e., if \[R=\sum_{n=1}^N h_n\otimes x_n=\sum_{m=1}^M h_m'\otimes x_m'\] with different orthonormal systems $\{h_1,\ldots,h_N\}$ and $\{h_1',\ldots,h_m'\}$ in $\cH$ and $x_1,\ldots,x_N,x_1',\ldots,x_M'\in E$ for some $N,M\in\N$, then
\[
\sum_{n=1}^N \mathcal{W}_{\cH}(h_n)x_n
=
\sum_{m=1}^M \mathcal{W}_{\cH}(h_m')x_m'\qquad (\text{in } L^2(\Omega;E)).
\]
This can be seen by using the linearity of the $\cH$-isonormal process $\mathcal{W}_{\cH}$.

\item Standard calculations show that the mapping
$\mathcal{W}_{\cH} \colon  \cH\otimes E \to L^2(\Omega;E)$,
$R  \mapsto R \cdot \mathcal{W}_{\cH}$,
is linear. We also call this mapping the \emph{stochastic integral}.

\item\label{rem:stochInt:f-r:Gauss} For any real-valued standard Gaussian random variable $\gamma$ and any $x\in E$, the $E$-valued random variable  $\gamma(\cdot) x$ is Gaussian. Also, the sum of finitely many independent $E$-valued Gaussian random variables is Gaussian. Consequently, $R \cdot \mathcal{W}_{\cH}$ is Gaussian for any $R\in \cH\otimes E$.

\item\label{rem:stochInt:f-r:Ito} Due to Proposition~\ref{prop:f-r},
\begin{equation}\label{eq:Ito:f-r}
2^{-\frac{1-(r\minsym p)}{(r\minsym p)}} \,
\gnnrm{R}{\gsum^p(\cH,E)}
\leq
\gnnrm{ R \cdot \mathcal{W}_{\cH}}{L^p(\Omega;E)}
\leq
\gnnrm{R}{\gsum^p(\cH,E)},
\quad R\in \cH\otimes E,
\end{equation}
for arbitrary $0<p<\infty$.
\end{enumerate}

\end{remark}

Since the finite rank operators form a dense subset of the space $\gradon{\cH}{E}$ of $\gamma$-radonifying operators and \eqref{eq:Ito:f-r} holds, we can extend the notion of the stochastic integral $\mathcal{W}_{\cH}\colon \cH\otimes E \to L^2(\Omega;E)$  uniquely to a bounded  linear operator $\mathcal{W}_{\cH}\colon \gradon{\cH}{E}\to L^2(\Omega;E)$.

\begin{definition}\label{def:stochInt}
Let $R\in\gradon{\cH}{E}$ and fix a sequence $(R_n)_{n\in\N}\subset \cH\otimes E$ of finite rank operators, such that
\begin{equation}\label{eq:gradon:approx}
\lim_{n\to\infty}\nnrm{R_n-R}{\gradon{\cH}{E}}=0.
\end{equation}
Then the $E$-valued random variable
\begin{equation}\label{eq:stochInt:gradon}
R\cdot \mathcal{W}_{\cH}:=\lim_{n\to\infty} R_n \cdot \mathcal{W}_{\cH}\qquad (\text{in }L^2(\Omega;E))
\end{equation}
is called \emph{stochastic integral of $R$ with respect to $\mathcal{W}_{\cH}$}.
\end{definition}

\begin{remark}\label{rem:stochInt:fnc}
\begin{enumerate}[leftmargin=*,align=right,label=\textup{(\roman*)}]
\item The stochastic integral of a $\gamma$-radonifying operator $R\in\gradon{\cH}{E}$ with respect to an $\cH$-isonormal process $\mathcal{W}_{\cH}$ is well-defined, i.e., $R \cdot \mathcal{W}_{\cH}$ in \eqref{eq:stochInt:gradon} exists as a limit in $L^2(\Omega;E)$ and does not depend on the approximating sequence $(R_n)_{n\in\N}\subset \cH\otimes E$ fulfilling \eqref{eq:gradon:approx}.
This is an immediate consequence of the completeness of $L^2(\Omega;E)$ and the estimates in~\eqref{eq:Ito:f-r}.

\item Standard calculations show that the mapping
$\mathcal{W}_{\cH} \colon  \gradon{\cH}{E} \to L^2(\Omega;E)$,
$R  \mapsto R \cdot \mathcal{W}_{\cH}$,
is linear. We also call this mapping the \emph{stochastic integral}.

\item $R \cdot \mathcal{W}_{\cH}$ is Gaussian for any $R\in\gradon{\cH}{E}$. This follows immediately from the definition due to Lemma~\ref{lem:gausslimit} and Remark~\ref{rem:stochInt:f-r}\ref{rem:stochInt:f-r:Gauss}.

\item\label{rem:stochInt:fnc:Ito} Due to Remark~\ref{rem:stochInt:f-r}\ref{rem:stochInt:f-r:Ito},
\begin{equation}\label{eq:Ito:radon}
2^{-\frac{1-(r\minsym p)}{(r\minsym p)}} \,
\gnnrm{R}{\gsum^p(\cH,E)}
\leq
\gnnrm{ R \cdot \mathcal{W}_{\cH}}{L^p(\Omega;E)}
\leq
\gnnrm{R}{\gsum^p(\cH,E)}
,\quad R\in \gradon{\cH}{E},
\end{equation}
for arbitrary $0< p<\infty$.
\end{enumerate}
\end{remark}

\subsection{Stochastic integrals as stochastic processes}\label{ssec:stoch_int_process}
In this subsection  we assume  that $H$ is a separable Hilbert space  and investigate the stochastic integral introduced  in Definition~\ref{def:stochInt}
with $\cH = L^2(0,T;H)$.

In this particular case, for every $t\in[0,T]$, if $R\in\gamma(\cH,E)$, then the operator $R|_{[0,t]}:\cH\to E$, $f\mapsto R(\one_{[0,t]}f)$, is $\gamma$-radonifying as well, due to the right ideal property proven in Theorem~\ref{thm:ideal}.
Thus, we can define the stochastic process $R\cdot W_H(t):=R|_{[0,t]}\cdot W_H$, $t\in [0,T]$, which we also refer to as the \emph{stochastic integral} or the \emph{stochastic integral process of $R$ with respect to $W_H$}\textemdash recall that $W_H:=\mathcal{W}_{L^2(0,T;H)}$.
Our next goal is to prove that this process has a version with continuous paths and that it satisfies appropriate estimates.
%

%
In the situation of normed spaces the continuity proof usually relies on Doob's maximal inequality.
By lack of convexity properties of $\nnrm{\cdot}{E}^p$, a different argument is needed for $r$-Banach spaces.

\begin{lemma}\label{lem:sup_cont_stoch_proc}
Let $(X_t)_{t\in [0,T]}$ be a continuous $E$-valued stochastic process such that $X_0 \equiv 0$
and for all $0\leq s\leq t\leq T$
it holds that $X_t-X_s$ is symmetric and independent
of $\sigma( \{X_u \colon u\in [0,s] \})$. Then for all $0<p<\infty$
it holds that
\begin{equation*}
\E \sgeklam{\sup_{t\in [0,T]} \nnrm{X_t}{E}^p}
\leq
2^{1+\frac{p}{r} - p }
\E \nnrm{X_T}{E}^p.
\end{equation*}

\end{lemma}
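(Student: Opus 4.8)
The plan is to discretise $[0,T]$ and reduce the assertion to the discrete Lévy inequality of Lemma~\ref{lem:QBLevy:inequal}, whose constant $2^{1+\frac{p}{r}-p}$ is precisely the one appearing here. First I would note that, since $E$ is a separable metric space and $t\mapsto X_t$ is continuous, the real-valued map $t\mapsto \nnrm{X_t}{E}$ is continuous; hence $\sup_{t\in[0,T]}\nnrm{X_t}{E}$ is a genuine random variable and equals the supremum of $\nnrm{X_t}{E}$ over any countable dense subset of $[0,T]$.

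Next, for $n\in\N$ I would take the dyadic partition $t_k^{(n)}:=kT/2^n$, $k=0,\dots,2^n$, and set $\Delta_k^{(n)}:=X_{t_k^{(n)}}-X_{t_{k-1}^{(n)}}$. Since $X_0\equiv 0$, the partial sums satisfy $\sum_{j=1}^k\Delta_j^{(n)}=X_{t_k^{(n)}}$, and in particular the full sum equals $X_T$. Each $\Delta_j^{(n)}$ is symmetric by hypothesis; moreover, for $j<k$ the increment $\Delta_j^{(n)}$ is $\sigma(\{X_u:u\in[0,t_{k-1}^{(n)}]\})$-measurable, while $\Delta_k^{(n)}$ is, by assumption, independent of this $\sigma$-algebra. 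Since a finite family is independent as soon as each member is independent of the $\sigma$-algebra generated by its predecessors, an induction on $k$ shows that $(\Delta_1^{(n)},\dots,\Delta_{2^n}^{(n)})$ is a family of independent symmetric $E$-valued random variables. Applying Lemma~\ref{lem:QBLevy:inequal} to these increments and using $X_{t_{2^n}^{(n)}}=X_T$ gives, uniformly in $n$,
\begin{equation*}
\E\sgeklam{\max_{1\le k\le 2^n}\nnrm{X_{t_k^{(n)}}}{E}^p}\le 2^{1+\frac{p}{r}-p}\,\E\nnrm{X_T}{E}^p.
\end{equation*}

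Finally, since the dyadic partitions are nested and their union is dense in $[0,T]$, continuity of $t\mapsto\nnrm{X_t}{E}$ implies that $\max_{1\le k\le 2^n}\nnrm{X_{t_k^{(n)}}}{E}$ increases pointwise to $\sup_{t\in[0,T]}\nnrm{X_t}{E}$ as $n\to\infty$; the monotone convergence theorem then upgrades the uniform bound above to the desired inequality. I expect the only genuinely delicate point to be the extraction of \emph{joint} independence of the increments (for each fixed partition) from the one-step hypothesis that $X_t-X_s$ is independent of the past $\sigma$-algebra; once that is settled, the discrete Lévy inequality carries all the quantitative content and the passage to the continuum is a routine limiting argument.
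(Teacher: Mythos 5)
Your proof is correct and follows essentially the same route as the paper's: dyadic discretisation, the L\'evy inequality of Lemma~\ref{lem:QBLevy:inequal} applied to the symmetric, jointly independent increments, and a limiting argument using path continuity. The only cosmetic differences are that the paper passes to the limit via Fatou's lemma rather than monotone convergence over the nested partitions, and it leaves implicit the induction showing joint independence of the increments, which you spell out.
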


\begin{proof}
For $n\in \N$ set $D_n = \{ j 2^{-n} T \colon j\in \{0,1,\ldots,2^n \}\}$.
The result follows from the continuity of the process $(X_t)_{t\in [0,T]}$,
Fatou's lemma and
Lemma~\ref{lem:QBLevy:inequal}:
\begin{align*}
\E \sgeklam{\sup_{t\in [0,T]} \nnrm{ X_t }{E}^p}
&\leq \liminf_{n\rightarrow \infty} \E \sgeklam{\sup_{t\in D_n} \nnrm{ X_t }{E}^p}\\
&=  \liminf_{n \rightarrow \infty} \E \reklam{\sup_{k\in \{1,\ldots,2^n\}}
    \ssgnnrm{ \sum_{j=1}^{k} X_{j2^{-n} T} - X_{(j-1)2^{-n}T} }{E}^p}\\
&\leq 2^{1+\frac{p}{r} - p} \E \nnrm{ X_T}{E}^p.\qedhere
\end{align*}
\end{proof}

\begin{definition}\label{def:frsf}
An operator valued function $\Phi\colon [0,T]\to\calL(H,E)$ of the form
\[
\Phi(t)=\sum_{j=1}^J \one_{(t_{j-1},t_j]}(t)\sum_{k=1}^K  h_k \otimes x_{j,k},\qquad
t\in[0,T],
\]
with $0=t_0<t_1<\ldots<t_J=T$, $\{h_1,\ldots,h_K\}\subseteq H$ orthonormal, and $x_{j,k}\in E$, $1\leq j\leq J$, $1\leq k\leq K$, for some $J,K\in \bN$, is called \emph{finite rank step function}.
\end{definition}

Note that every finite rank step function can be canonically identified with
an element of $\gamma(L^2(0,T;H),E)$.
Indeed, if $\Phi\colon [0,T] \rightarrow \cL(H,E)$ is as in Definition~\ref{def:frsf}, then the operator $R_\Phi\colon L^2(0,T;H)\to E$ defined by
\begin{equation}\label{eq:phiRepR}
R_\Phi f
:=
\int_0^T\Phi(t)f(t)\,\dt,
\quad
f\in L^2(0,T;H),
\end{equation}
is $\gamma$-radonifying.
We say that $\Phi$ \emph{represents} $R_\Phi$ and sometimes write $\Phi$ instead of $R_\Phi$ (note that the mapping $\Phi\mapsto R_\Phi$ is one-to-one).
The following lemma shows that the class of operators represented by finite rank step functions is actually dense in $\gamma(L^2(0,T;H),E)$.

\begin{lemma}\label{lemma:approx_frsf}
Let  $R\in \gamma(L^2(0,T;H),E)$.
Then there exists a sequence of finite rank
step functions $(\Phi_n)_{n\in \N}$ 
such that $\lim_{n\rightarrow \infty} \nnrm{R_{\Phi_n} - R }{\gamma(L^2(0,T;H),E)} =0$.
\end{lemma}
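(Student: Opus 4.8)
The plan is to realize the approximating finite rank step functions as the images of $R$ under a sequence of finite rank orthogonal projections acting on the domain Hilbert space $\cH:=L^2(0,T;H)$, and then to transfer the resulting $L^2$-type approximation to the $\gamma$-norm by means of the convergence result in Theorem~\ref{thm:ga-conv}. First I would fix an orthonormal basis $(h_k)_{k\in\N}$ of the separable Hilbert space $H$ and, for each $n\in\N$, the dyadic partition of $[0,T]$ into the $2^n$ intervals $I^{(n)}_j:=((j-1)2^{-n}T,\,j2^{-n}T]$, $1\le j\le 2^n$. Let $V_n\subseteq\cH$ be the finite-dimensional span of the functions $\one_{I^{(n)}_j}h_k$, $1\le j\le 2^n$, $1\le k\le n$, and let $P_n\in\calL(\cH)$ denote the orthogonal projection onto $V_n$. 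Since these functions are pairwise orthogonal in $\cH$, normalising them gives an orthonormal system $e^{(n)}_{j,k}:=|I^{(n)}_j|^{-1/2}\one_{I^{(n)}_j}h_k$, so that $P_n f=\sum_{j,k}\lb f,e^{(n)}_{j,k}\rb_{\cH}\,e^{(n)}_{j,k}$ for $f\in\cH$.

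Next I would record two facts about the $P_n$. The subspaces $V_n$ are nested with dense union in $\cH$: finite-valued step functions with values in $H$ are dense in $L^2(0,T;H)$, and each such value can be approximated in $H$ by its projection onto $\operatorname{span}(h_1,\dots,h_n)$. Combined with the uniform bound $\nnrm{P_n}{\calL(\cH)}\le 1$, a standard $3\varepsilon$-argument then yields $P_n f\to f$ in $\cH$ for every $f\in\cH$. Moreover each $P_n$ is self-adjoint, so $P_n^*=P_n\to\mathrm{Id}_{\cH}$ strongly. I would then apply Theorem~\ref{thm:ga-conv} with the Hilbert spaces $G=\cH$ (and domain $\cH$), the $r$-Banach space $F=E$, and the operators $V_n:=P_n$, $V:=\mathrm{Id}_{\cH}$, $U_n:=U:=\mathrm{Id}_E$; condition~(i) is the strong convergence just established and condition~(ii) is trivial. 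This gives $RP_n\to R$ in $\gamma(\cH,E)$.

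It then remains to identify each $RP_n$ with an operator represented by a finite rank step function. Writing $RP_n=\sum_{j,k} e^{(n)}_{j,k}\otimes\big(Re^{(n)}_{j,k}\big)=\sum_{j,k}\big(\one_{I^{(n)}_j}h_k\big)\otimes x^{(n)}_{j,k}$ with $x^{(n)}_{j,k}:=|I^{(n)}_j|^{-1/2}Re^{(n)}_{j,k}\in E$, and setting
\[
\Phi_n(t):=\sum_{j=1}^{2^n}\one_{I^{(n)}_j}(t)\sum_{k=1}^{n} h_k\otimes x^{(n)}_{j,k},\qquad t\in[0,T],
\]
one sees that $\Phi_n$ is a finite rank step function in the sense of Definition~\ref{def:frsf} (the $h_k$ are orthonormal and the $I^{(n)}_j$ form a partition), and a direct computation from~\eqref{eq:phiRepR} shows $R_{\Phi_n}=RP_n$. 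Hence $\nnrm{R_{\Phi_n}-R}{\gamma(L^2(0,T;H),E)}\to 0$, which is the claim.

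I do not expect a serious obstacle: the argument is essentially a projection-onto-step-functions approximation, and the only points needing (routine) care are the strong convergence $P_n\to\mathrm{Id}_{\cH}$—resting on the density of the nested subspaces $V_n$ together with $\nnrm{P_n}{\calL(\cH)}\le1$—and the bookkeeping that casts $RP_n$ into the canonical form of Definition~\ref{def:frsf}. The conceptually important feature is that no convexity or duality enters: the passage from the $L^2$-approximation on the domain to an approximation in the $\gamma$-norm is carried entirely by Theorem~\ref{thm:ga-conv}, whose proof rests on the ideal property and the uniform boundedness principle and is therefore valid for $r$-Banach spaces.
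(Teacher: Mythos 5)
Your proof is correct and is essentially the paper's own argument: the paper projects onto $\operatorname{span}(\{f_1,\dots,f_n\})\otimes\operatorname{span}(\{h_1,\dots,h_n\})$ with $(f_j)_{j\in\N}$ the Haar basis of $L^2(0,T)$, and since the span of the Haar functions up to dyadic level $n$ is exactly the space of step functions on the dyadic intervals of length $2^{-n}T$, your projections coincide (along a subsequence) with the paper's $P_{2^n}\otimes Q_n$. Both arguments then pass from strong convergence of these self-adjoint finite rank projections to convergence in the $\gamma$-norm via Theorem~\ref{thm:ga-conv} (the paper cites it through Example~\ref{ex:g-approx}) and identify the projected operator as one represented by a finite rank step function.
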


\begin{proof}
Let $(h_n)_{n\in \N}$ be an orthonormal basis for $H$, let $(f_n)_{n\in \N}$ be
the Haar basis for $L^2(0,T)$. For $n\in \N$ let $Q_n\in \mathcal{L}(H)$ be
the orthogonal projection on $\operatorname{span}(\{ h_1,\ldots,h_n\})$ and let $P_n \in \cL(L^2(0,T))$
be the orthogonal projection on the subspace $\operatorname{span}(\{ f_1,\ldots, f_n\})$. Then, for every $n\in\N$,
$R(P_n \otimes Q_n)$ is represented by a finite rank step function, and since $\{f_j\cdot h_k\colon (j,k)\in \N\times\N\}$ is an orthonormal basis of $L^2(0,T;H)$,  $R(P_n \otimes Q_n)\rightarrow R$ in $\gamma(L^2(0,T;H),E)$, see also Example~\ref{ex:g-approx}.
\end{proof}

\begin{proposition}\label{prop:contfunc}
Let  $R\in \gamma(L^2(0,T;H),E)$.
Then the stochastic integral process $R\cdot W_H$  has a version with continuous paths and for every $0<p<\infty$,
\begin{equation}\label{eq:RWHcont}
\begin{aligned}
2^{-\frac{1-(r\minsym p)}{r\minsym p}}
\nnrm{R}{\gamma^p(L^2(0,T;H),E)}
&\leq \Big(\E\sgeklam{ \sup_{t\in [0,T]}\left\| R\cdot W_H(t) \right\|_E^p}\Big)^{\frac1p} \\
&\leq
2^{\frac{1}{p}+\frac{1}{r} - 1}
\nnrm{R}{\gamma^p(L^2(0,T;H),E)}.
\end{aligned}
\end{equation}
\end{proposition}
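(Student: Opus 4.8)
The plan is to establish both estimates and path continuity first for finite rank step functions, where the stochastic integral process is explicit, and then to transfer the statement to a general $R\in\gamma(L^2(0,T;H),E)$ via the density result of Lemma~\ref{lemma:approx_frsf}. The lower bound requires no continuity: since the supremum over $[0,T]$ dominates the value at $t=T$ and $R\cdot W_H(T)=R|_{[0,T]}\cdot W_H=R\cdot W_H$, we have $\E\sgeklam{\sup_{t\in[0,T]}\nnrm{R\cdot W_H(t)}{E}^p}\ge\gnnrm{R\cdot W_H}{L^p(\Omega;E)}^p$, and the left inequality in~\eqref{eq:Ito:radon} supplies the factor $2^{-\frac{1-(r\minsym p)}{r\minsym p}}$.

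For the upper bound, let $\Phi$ be a finite rank step function as in Definition~\ref{def:frsf} and set $\beta_k(t):=W_H(\one_{[0,t]}h_k)$ for $1\le k\le K$. Since $\{h_1,\dots,h_K\}$ is orthonormal, the $\beta_k$ are independent standard Brownian motions and hence admit continuous versions; as $R_\Phi|_{[0,t]}\cdot W_H=\sum_{j,k}W_H(\one_{(t_{j-1},t_j]\cap[0,t]}h_k)\,x_{j,k}$ is a continuous function of the $\beta_k$, the process $t\mapsto R_\Phi\cdot W_H(t)$ has continuous paths. For $0\le s\le t\le T$ the increment $R_\Phi\cdot W_H(t)-R_\Phi\cdot W_H(s)=\sum_{j,k}W_H(\one_{(s,t]\cap(t_{j-1},t_j]}h_k)\,x_{j,k}$ is a centered (hence symmetric) Gaussian vector built from $W_H$ on functions supported in $(s,t]$, so it is symmetric and independent of $\sigma(\{R_\Phi\cdot W_H(u):u\le s\})$, which is contained in the $\sigma$-algebra generated by $W_H$ on functions supported in $[0,s]$. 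Thus Lemma~\ref{lem:sup_cont_stoch_proc} applies and gives $\E\sgeklam{\sup_{t\in[0,T]}\nnrm{R_\Phi\cdot W_H(t)}{E}^p}\le 2^{1+\frac{p}{r}-p}\,\E\nnrm{R_\Phi\cdot W_H(T)}{E}^p$; combining this with the right inequality in~\eqref{eq:Ito:radon} yields the upper bound with constant $2^{\frac1p+\frac1r-1}$ for step functions.

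To pass to a general $R$, I would choose finite rank step functions with $R_{\Phi_n}\to R$ in $\gamma(L^2(0,T;H),E)$ (Lemma~\ref{lemma:approx_frsf}); by~\eqref{eq:equiv_gamma_p} this convergence holds in every $\gamma^p$-norm. Applying the step-function upper bound to the differences $R_{\Phi_n}-R_{\Phi_m}$ (again finite rank step functions) shows that $(R_{\Phi_n}\cdot W_H)_{n\in\N}$ is Cauchy in the $\min\{p,r\}$-Banach space $L^p(\Omega;C([0,T];E))$, hence converges to some $Y$ with almost surely continuous paths. That $Y$ is a version of $R\cdot W_H$ follows by fixing $t$: since $R|_{[0,t]}$ is obtained from $R$ by right composition with multiplication by $\one_{[0,t]}$, a contraction on $L^2(0,T;H)$, Theorem~\ref{thm:ideal} gives $R_{\Phi_n}|_{[0,t]}\to R|_{[0,t]}$ in $\gamma^p$, whence $R_{\Phi_n}\cdot W_H(t)\to R\cdot W_H(t)$ in $L^p(\Omega;E)$ by~\eqref{eq:Ito:radon}, while at the same time $R_{\Phi_n}\cdot W_H(t)\to Y_t$; therefore $Y_t=R\cdot W_H(t)$ almost surely. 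Since the $\min\{p,r\}$-norm is continuous along its own convergent sequences, the step-function estimate passes to the limit and delivers the upper bound for $R$.

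The main obstacle is this final passage to the limit: one must use that $L^p(\Omega;C([0,T];E))$ is complete as a $\min\{p,r\}$-Banach space so that a sequence that is Cauchy uniformly in $t$ has a continuous limit, and one must reconcile this limit with the separately defined random variables $R\cdot W_H(t)$, which is exactly where the contraction property of multiplication by $\one_{[0,t]}$, the one-sided ideal estimate of Theorem~\ref{thm:ideal}, and the two-sided bound~\eqref{eq:Ito:radon} are needed. The genuinely quasi-Banach difficulty, namely the absence of Doob's maximal inequality, has already been absorbed into Lemma~\ref{lem:sup_cont_stoch_proc}.
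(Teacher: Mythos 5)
Your proposal is correct and follows essentially the same route as the paper's proof: establish continuity and both estimates for finite rank step functions (continuity via the Brownian motions $W_H(\one_{[0,t]}h_k)$, the lower bound from~\eqref{eq:Ito:radon}, the upper bound from Lemma~\ref{lem:sup_cont_stoch_proc} combined with~\eqref{eq:Ito:radon}), and then pass to general $R$ by the density result of Lemma~\ref{lemma:approx_frsf}. The only difference is that you spell out the limiting argument (Cauchyness in $L^p(\Omega;\cont([0,T];E))$ and the identification of the limit with $R\cdot W_H(t)$ via the ideal property) which the paper leaves implicit; this is a faithful completion, not a different method.
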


\begin{proof}
First assume $R$ is represented by a finite rank step function. In this case continuity of $R\cdot W_{H}$
follows from classical \Ito\ theory, observing that if $(h_k)_{k=1}^{n}$ is an orthonormal sequence in $H$,
then $((W_{H} ( 1_{[0,t]} \cdot h_k )_{t\in [0,T]})_{k=1}^{n}$ is a sequence of independent standard Brownian motions.
The first inequality in~\eqref{eq:RWHcont} follows by Remark~\ref{rem:stochInt:f-r}\ref{rem:stochInt:f-r:Ito},
and the second from Lemma~\ref{lem:sup_cont_stoch_proc} and  Remark~\ref{rem:stochInt:f-r}\ref{rem:stochInt:f-r:Ito}.\par
For general $R\in \gamma(L^2(0,T;H),E)$, the result now follows from Lemma~\ref{lemma:approx_frsf} and
the fact that~\eqref{eq:RWHcont} holds for operators represented by  finite rank step functions.
\end{proof}

As in the Banach space setting, we have the following series expansion of the stochastic integral.
For $R\in \gamma(L^2(0,T;H),E)$ and $h\in H$ we write $R(\cdot\otimes h)$ for the $\gamma$-radonifying operator $R(\cdot\otimes h)\colon L^2(0,T)\to E$, $f\mapsto R(f\otimes h)$. Moreover, we write $W_H(\cdot\otimes h)$ for the $L^2(0,T)$-isonormal process $W_H(\cdot\otimes h)\colon L^2([0,T])\to L^2(\Omega)$, $f\mapsto W_H(f\otimes h)$, and $W_Hh$ for the Brownian motion $W_Hh(t):=W_H(\one_{(0,t]}\otimes h)$, $t\in[0,T]$.

\begin{corollary}\label{cor:inf_int_rep}
Let $R\in \gamma(L^2(0,T;H),E)$. Then, for every orthonormal basis $(h_n)_{n\in\N}$ in $H$,
\begin{equation}\label{eq:inf_int_rep_full}
R\cdot W_H = \sum_{n\in\N} R(\cdot\otimes h_n)\cdot W_H(\cdot \otimes h_n),
\end{equation}
where the series converges in $L^p(\Omega;\cont([0,T];E))$, $0<p<\infty$. In particular, for every $x^*\in E^*$,
\begin{equation}\label{eq:inf_int_rep}
\begin{aligned}
 \langle R\cdot W_H , x^* \rangle
& =
  \sum_{n\in \N}
    \int_{0}^{\cdot}
      \langle R^* x^*(t), h_n \rangle_H
    \,\mathrm{d}W_Hh_n(t),
\end{aligned}
\end{equation}
where the series converges in $L^p(\Omega;\cont([0,T]))$, $0<p<\infty$.
\end{corollary}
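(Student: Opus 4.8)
The plan is to reduce the infinite series to a sequence of partial sums that can each be identified with the stochastic integral of a projected operator, and then to pass to the limit using the convergence results of Section~\ref{sec:gamma} together with Proposition~\ref{prop:contfunc}. First I would introduce, for each $n\in\N$, the isometric embedding $\iota_n\colon L^2(0,T)\to L^2(0,T;H)$, $f\mapsto f\otimes h_n$, so that $R(\cdot\otimes h_n)=R\iota_n\in\gamma(L^2(0,T),E)$ by the right ideal property (Theorem~\ref{thm:ideal}), and $W_H(\cdot\otimes h_n)=W_H\circ\iota_n$ is an $L^2(0,T)$-isonormal process. Writing $Q_N\in\calL(H)$ for the orthogonal projection onto $\operatorname{span}\{h_1,\dots,h_N\}$, the heart of the argument is the identity
\[
\sum_{n=1}^N R(\cdot\otimes h_n)\cdot W_H(\cdot\otimes h_n)=R(I\otimes Q_N)\cdot W_H,
\]
understood as an equality of stochastic integral processes, where $I\otimes Q_N\in\calL(L^2(0,T;H))$ acts only in the $H$-variable.

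I would prove this identity first for a rank-one operator $R=g\otimes x$ with $g\in L^2(0,T;H)$ and $x\in E$ by direct computation: one has $R\iota_n=g_n\otimes x$ with $g_n(\cdot)=\langle g(\cdot),h_n\rangle_H$, hence $R(\cdot\otimes h_n)\cdot W_H(\cdot\otimes h_n)=W_H(g_n\otimes h_n)\,x$, and summing, linearity of $W_H$ gives $W_H\big((I\otimes Q_N)g\big)\,x=R(I\otimes Q_N)\cdot W_H$. By linearity this extends to all finite rank operators. Both sides are continuous and linear in $R$ with respect to $\nnrm{\cdot}{\gamma(L^2(0,T;H),E)}$ — the left side via the ideal property and Proposition~\ref{prop:contfunc}, the right side via the right ideal property and Proposition~\ref{prop:contfunc} — so the identity extends to all $R\in\gamma(L^2(0,T;H),E)$ by density of finite rank operators (Definition~\ref{def:g-rad}). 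A small but necessary check is that restriction to $[0,t]$ commutes with both $\iota_n$ and $I\otimes Q_N$, since $\one_{[0,t]}$ acts only in the time variable while $Q_N$ acts only in $H$; this is what makes the identity hold at the level of processes and not merely at the terminal time.

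Next I would let $N\to\infty$. As $(h_n)_{n\in\N}$ is an orthonormal basis, each $I\otimes Q_N$ is self-adjoint and $(I\otimes Q_N)g\to g$ in $L^2(0,T;H)$ for every $g$ (by dominated convergence, since $Q_N\to\mathrm{Id}_H$ strongly). Hence Theorem~\ref{thm:ga-conv}, applied with $U_n=U=\mathrm{Id}_E$ and $V_n=I\otimes Q_N$, yields $R(I\otimes Q_N)\to R$ in $\gamma(L^2(0,T;H),E)$. By the upper estimate in Proposition~\ref{prop:contfunc}, the stochastic integral map $R\mapsto R\cdot W_H$ is bounded from $\gamma(L^2(0,T;H),E)$ into $L^p(\Omega;\cont([0,T];E))$ for every $0<p<\infty$, so $R(I\otimes Q_N)\cdot W_H\to R\cdot W_H$ in $L^p(\Omega;\cont([0,T];E))$. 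Combined with the partial-sum identity this proves~\eqref{eq:inf_int_rep_full}.

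Finally, for~\eqref{eq:inf_int_rep} I would compute each term after testing against a fixed $x^*\in E^*$. For a scalar isonormal process $\tilde W$ on $L^2(0,T)$ and $S\in\gamma(L^2(0,T),E)$ one checks on finite rank operators, and extends by continuity, that $\langle S\cdot\tilde W,x^*\rangle=\tilde W(S^*x^*)$. Taking $S=R\iota_n$ and $\tilde W=W_H\iota_n$, and using $(R\iota_n)^*=\iota_n^*R^*$ with $\iota_n^*g=\langle g(\cdot),h_n\rangle_H$ together with $(R|_{[0,t]})^*x^*=\one_{[0,t]}\,R^*x^*$, one obtains $\langle R(\cdot\otimes h_n)\cdot W_H(\cdot\otimes h_n),x^*\rangle=\int_0^{\cdot}\langle R^*x^*(s),h_n\rangle_H\,\mathrm{d}W_Hh_n(s)$. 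Since $x^*\colon E\to\R$ is bounded it induces a bounded map $\cont([0,T];E)\to\cont([0,T])$, hence a bounded map on the corresponding $L^p(\Omega;\cdot)$-spaces; applying it to the convergent series~\eqref{eq:inf_int_rep_full} gives convergence in $L^p(\Omega;\cont([0,T]))$, which is~\eqref{eq:inf_int_rep}. The main obstacle is the verification of the partial-sum identity and its stability under restriction to $[0,t]$; once this algebraic and continuity bookkeeping is in place, the convergence is an immediate consequence of Theorem~\ref{thm:ga-conv} and Proposition~\ref{prop:contfunc}.
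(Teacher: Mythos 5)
Your proof is correct and follows essentially the same route as the paper: identify the partial sums $\sum_{n=1}^N R(\cdot\otimes h_n)\cdot W_H(\cdot\otimes h_n)$ with $R(I\otimes Q_N)\cdot W_H$, then pass to the limit using Theorem~\ref{thm:ga-conv} (the paper cites Example~\ref{ex:g-approx}) together with Proposition~\ref{prop:contfunc}, and obtain \eqref{eq:inf_int_rep} by testing against $x^*$. The only organizational difference is that the paper also projects in the time variable via the Haar basis (working with $R(P_M\otimes Q_N)$, for which the identity is immediate since these operators are genuinely finite rank), whereas you prove the partial-sum identity for general $R$ by a rank-one computation plus density; both are the same approximation-and-continuity argument.
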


\begin{proof}

For $n\in \N$ let $Q_n\in \cL(H)$ be
the orthogonal projection on $\operatorname{span}(\{ h_1,\ldots,h_n\})$.
Moreover, let $(f_n)_{n\in \N}$ be
the Haar basis for $L^2(0,T)$, and for every $n\in \N$ write $P_n \in \cL(L^2(0,T))$
for the orthogonal projection on $\operatorname{span}(\{ f_1,\ldots, f_n\})$.
The representation~\eqref{eq:inf_int_rep_full} of $R\cdot W_H$ holds due to Example~\ref{ex:g-approx}, Proposition~\ref{prop:contfunc} and the fact that it obviously holds with $R$ replaced by $R(P_M\otimes Q_N)$ for arbitrary $M,N\in\N$.
To obtain~\eqref{eq:inf_int_rep}, we only have to check that for arbitrary $x^*\in E^*$ and $n\in \N$,
\[
\langle R(\cdot\otimes h_n)\cdot W_H(\cdot\otimes h_n),x^*\rangle
=
\int_0^\cdot \langle R^*x^*,h_n\rangle_H\,\mathrm{d}W_H h_n.
\]
But this is clear due to the construction of the stochastic integral.
\end{proof}
%
%
In the Banach space setting the dual space can be used to relate operator-valued functions to operators $R\in \cL(L^2(0,T;H),E)$, see e.g.~\cite[Theorem 2.5]{NeervenWeis2005a}. Recall however that the dual of a quasi-Banach space may be trivial. In Subsection~\ref{ssec:stoch_int_sepdual} we provide an analogue of~\cite[Theorems 2.3 and 2.5]{NeervenWeis2005a} in the case that $E^*$ separates
points in $E$. The following two examples show that even if $E^*$ does not separate points, it can
still be possible to relate functions and operators.

\begin{example}\label{example:stoch_int_fs}
Let $(S,\cA,\mu)$ be a $\sigma$-finite measure space and let $E(S)$ be a separable $r$-Banach function space, $0<r\leq 1$, with finite cotype.
It follows from Corollary~\ref{cor:lattice}
that for every strongly measurable $\varphi \colon [0,T]\times S \rightarrow H$
such that
$
\displaystyle \ssgnnrm{\ssgrklam{\int_0^T \| \varphi(t,\cdot) \|_{H}^2 \,\mathrm{d}t }^{\frac{1}{2}} }{E}<\infty
$
it holds that $R_{\varphi} \in \gamma(L^2(0,T;H),E(S))$, where
\begin{equation*}
 (R_{\varphi} f)(s) = \int_{0}^{T} \langle \varphi(t,s), f(t) \rangle_H \,\mathrm{d}t,
\qquad
f\in L^2(0,T;H).
\end{equation*}
Consequently, $R_{\varphi}\cdot W_H$ is well-defined (see Definition~\ref{def:stochInt})
and moreover Corollary~\ref{cor:lattice} and Proposition~\ref{prop:contfunc} guarantee
that for all $p \in (0,\infty)$ there exist constants $c_{p,E}, C_{p,E} \in (0,\infty)$
(independent of the choice of $\varphi$) such that

\begin{align*}
\rnnrm{\ssgrklam{\int_0^T \| \varphi(t,\cdot) \|_{H}^2 \,\mathrm{d}t }^{\frac{1}{2}} }{E}
&\eqsim_{p,E}
\| R_{\varphi}\cdot W_H \|_{L^p(\Omega;\cont([0,T];E))}.
\end{align*}
More specifically, if $E(S)=L^q(S)$ for some $q\in (0,\infty)$,
then for all $p\in (0,\infty)$ it holds that

\begin{align*}
\int_{S} \ssgrklam{\int_0^T \nnrm{ \varphi(t,s) }{H}^2 \,\mathrm{d}t }^{\frac{q}{2}} \,\mathrm{d}s
& \eqsim_{p,q}
\gnnrm{ R_{\varphi}\cdot W_H }{L^p(\Omega;\cont([0,T];L^q(S)))}^q.
\end{align*}
\end{example}

\begin{example}\label{example:stoch_int_besov}
Let $d\in \N$, $0<p,q<\infty$, $\sigma\in \R$ and let $\varphi \in B^{\sigma}_{p\,q}(\R^d;L^2([0,T];H))$.
By Proposition~\ref{prop:Bpqgamma} it holds that $R_{\varphi}\in \gamma(L^2(0,T;H),B^{\sigma}_{p,q}(\R^d))$,
where $(R_{\varphi} f)(s) = \int_{0}^{T} \langle \varphi(s)(t), f(t) \rangle_H \,\mathrm{d}t $.
Consequently, $R_{\varphi}\cdot W_H$ is well-defined (see Definition~\ref{def:stochInt})
and moreover Proposition~\ref{prop:Bpqgamma} and Proposition~\ref{prop:contfunc} guarantee
that for all $\tau \in (0,\infty)$ there exist constants $c_{\tau,B^{\sigma}_{p,q}}, C_{\tau,B^{\sigma}_{p,q}} \in (0,\infty)$
(independent of the choice of $\varphi$) such that
\begin{align*}
\nnrm{ \varphi }{B^{\sigma}_{p,q}(\R^d;L^2(0,T;H))}
&\eqsim_{p,q, \sigma,\tau}
\nnrm{ R_{\varphi}\cdot W_H }{L^{\tau}(\Omega;\cont([0,T];B^{\sigma}_{p,q}(\R^d)))}.
\end{align*}
\end{example}

\subsection{Stochastic integration in \texorpdfstring{$r$}{r}-Banach spaces with separating dual}\label{ssec:stoch_int_sepdual}
Recall that although the dual of a quasi-Banach space may be empty, there exist quasi-Banach spaces with separating dual.
An important example in the context of nonlinear approximation is the scale of Besov spaces $B^\alpha_{\tau,\tau}(\cO)$ satisfying $\frac{1}{\tau}=\frac{\alpha}{d}+\frac{1}{p}$ for a fixed $p\in (1,\infty)$.
As announced above Example~\ref{example:stoch_int_fs}, the aim of this section is to prove Proposition~\ref{prop:Pettis_Ito}, i.e., an analogue of~\cite[Theorem~2.5]{NeervenWeis2005a}. We begin with proving the following key result in a similar fashion as if $E$ were a Banach space, see, e.g.,~\cite[Theorem~2.3]{NeervenWeis2005a}.


%


\begin{theorem}\label{thm:weak_char_stoch_int}
Assume that $E^*$ separates the points of $E$ and let $R\in\linop{\cH}{E}$. Then the following are equivalent:
\begin{enumerate}[label=\textup{(\roman*)}]
 \item\label{item:Rgradon} $R\in\gradon{\cH}{E}$.
 \item\label{item:weakchar} There exists a random variable $X\colon \Omega \rightarrow E$ such that
\begin{equation}\label{eq:gradon}
\forall x^*\in E^*\colon \mathcal{W}_{\cH}(R^*x^*)=\langle X,x^*\rangle \quad \P\text{-a.s.}
\end{equation}
\end{enumerate}
In this case $X=R \cdot \mathcal{W}_{\cH}$ $\P$-a.s.
\end{theorem}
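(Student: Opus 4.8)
The plan is to prove the two implications separately and then identify the limit, relying on the testing-against-a-basis characterization of $\gamma(\cH,E)$ (Theorem~\ref{thm:summing-sep}) and on the \Ito--Nisio theorem (Proposition~\ref{prop:QB:conv:test:as}), both of which are available precisely because $E^*$ separates the points of $E$. Throughout I identify $\cH^*$ with $\cH$ and use that an $\cH$-isonormal process is a linear isometry $\mathcal{W}_{\cH}\colon\cH\to L^2(\Omega)$ sending orthonormal sequences to sequences of independent standard Gaussians.

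For \ref{item:Rgradon}$\Rightarrow$\ref{item:weakchar} I would set $X:=R\cdot\mathcal{W}_{\cH}$ and first check \eqref{eq:gradon} for finite rank operators: if $R=\sum_{n=1}^N h_n\otimes x_n$ with $\{h_n\}$ orthonormal, then $R^*x^*=\sum_{n=1}^N\langle x_n,x^*\rangle h_n$, so linearity of $\mathcal{W}_{\cH}$ gives $\mathcal{W}_{\cH}(R^*x^*)=\sum_{n=1}^N\langle x_n,x^*\rangle\,\mathcal{W}_{\cH}(h_n)=\langle R\cdot\mathcal{W}_{\cH},x^*\rangle$. For general $R$ I would pick finite rank $R_k\to R$ in $\gamma(\cH,E)$. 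Since $\|\cdot\|_{\mathcal{L}(\cH,E)}\leq\|\cdot\|_{\gamma_\infty(\cH,E)}$ and $\|R_k^*x^*-R^*x^*\|_{\cH}\leq\|R_k-R\|_{\mathcal{L}(\cH,E)}\|x^*\|_{E^*}$, the isometry property yields $\mathcal{W}_{\cH}(R_k^*x^*)\to\mathcal{W}_{\cH}(R^*x^*)$ in $L^2(\Omega)$, while boundedness of the stochastic integral from $\gamma(\cH,E)$ into $L^2(\Omega;E)$ recorded in \eqref{eq:Ito:radon} gives $\langle R_k\cdot\mathcal{W}_{\cH},x^*\rangle\to\langle R\cdot\mathcal{W}_{\cH},x^*\rangle$ in $L^2(\Omega)$; equating the two limits gives \eqref{eq:gradon} with $X=R\cdot\mathcal{W}_{\cH}$.

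The substantive direction is \ref{item:weakchar}$\Rightarrow$\ref{item:Rgradon}. Fix an orthonormal basis $(h_n)_{n\in\N}$ of $\cH$ and put $S_N:=\sum_{n=1}^N\mathcal{W}_{\cH}(h_n)\,Rh_n$. Since $R^*x^*=\sum_n\langle Rh_n,x^*\rangle h_n$ in $\cH$ and $\mathcal{W}_{\cH}$ is a linear isometry, the series $\mathcal{W}_{\cH}(R^*x^*)=\sum_n\langle Rh_n,x^*\rangle\,\mathcal{W}_{\cH}(h_n)$ converges in $L^2(\Omega)$; that is, $\langle S_N,x^*\rangle\to\mathcal{W}_{\cH}(R^*x^*)=\langle X,x^*\rangle$ in $L^2(\Omega)$, hence in probability, for every $x^*\in E^*$. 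Now $(\mathcal{W}_{\cH}(h_n))_n$ is a sequence of independent standard Gaussians, so the $S_N$ are partial sums of independent \emph{symmetric} $E$-valued random variables, and the convergence just established is exactly condition~\ref{prop:ItoNis:dualweak} of the \Ito--Nisio theorem (Proposition~\ref{prop:QB:conv:test:as}); therefore condition~\ref{prop:Levy:as} holds and $(S_N)_{N\in\N}$ converges $\P$-almost surely. Because $(\mathcal{W}_{\cH}(h_n))_n$ and the Gaussian sequence $(\gamma_n)_n$ have the same law, $(S_N)$ and $\big(\sum_{n=1}^N\gamma_n\,Rh_n\big)$ have the same finite-dimensional distributions, so almost sure convergence of the former forces the latter to be Cauchy in probability, hence convergent in probability. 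Proposition~\ref{prop:ItoNisio} upgrades this to convergence in $L^p(\Omega;E)$ for all $0<p<\infty$, and Theorem~\ref{thm:summing-sep}\ref{thm:summing-sep:rad} then yields $R\in\gamma(\cH,E)$.

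Finally, once $R\in\gamma(\cH,E)$, the already-proven implication shows $R\cdot\mathcal{W}_{\cH}$ also satisfies \eqref{eq:gradon}; since $E^*$ separates points it contains a countable separating family $(z_m)_{m\in\N}\subseteq E^*$ (as used in the proof of Proposition~\ref{prop:QB:conv:test:as}), and $\langle X,z_m\rangle=\mathcal{W}_{\cH}(R^*z_m)=\langle R\cdot\mathcal{W}_{\cH},z_m\rangle$ holds $\P$-a.s. simultaneously for all $m$, whence $X=R\cdot\mathcal{W}_{\cH}$ $\P$-a.s. The main obstacle is the passage in \ref{item:weakchar}$\Rightarrow$\ref{item:Rgradon} from the \emph{scalar} convergence of $\langle S_N,x^*\rangle$ to genuine convergence of $S_N$ in $E$: this is exactly where a duality or local-convexity argument would normally enter, and its replacement is the symmetry of the summands together with the separating-dual hypothesis, which together make the \Ito--Nisio theorem applicable. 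The remaining steps (matching the laws of the $\mathcal{W}_{\cH}$-sums and the $\gamma_n$-sums, then invoking Proposition~\ref{prop:ItoNisio} and Theorem~\ref{thm:summing-sep}) are routine.
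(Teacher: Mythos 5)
Your proof is correct, and for the substantive implication \ref{item:weakchar}$\Rightarrow$\ref{item:Rgradon} it takes a genuinely different route from the paper's. The paper starts from the random variable $X$: it builds, via Gram--Schmidt, functionals $(x_i^*)\subseteq E^*$ such that $(\langle X,x_i^*\rangle)_i$ is an orthonormal basis of the closed span $G\subseteq L^2(\Omega)$ of $\{\langle X,x^*\rangle\colon x^*\in E^*\}$, shows that the vectors $h_i:=R^*x_i^*$ then form an orthonormal basis of $\cH_0:=\overline{\{R^*x^*\colon x^*\in E^*\}}$, expands $\langle X,x^*\rangle$ in this basis, applies \Ito--Nisio to the sums $\sum_i \langle X,x_i^*\rangle Rh_i$, upgrades the resulting a.s.\ convergence to $L^2$-convergence via Fernique's theorem (Theorem~\ref{thm:Fernique}), Proposition~\ref{prop:weakGauss} and Hoffmann--J{\o}rgensen (Proposition~\ref{prop:Levy:Lp}), concludes $R\in\gradon{\cH_0}{E}$, and finally invokes the separating dual once more to show $R$ vanishes on $\cH_0^\perp$. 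You instead work with an arbitrary orthonormal basis $(h_n)$ of all of $\cH$: Parseval's expansion of $R^*x^*$ together with \eqref{eq:gradon} gives scalar convergence of $S_N=\sum_{n\leq N}\mathcal{W}_{\cH}(h_n)Rh_n$ towards $X$, \Ito--Nisio gives a.s.\ convergence, the Kahane--Khintchine-based Proposition~\ref{prop:ItoNisio} (rather than Fernique plus Hoffmann--J{\o}rgensen) upgrades this to $L^p$-convergence, and Theorem~\ref{thm:summing-sep}\ref{thm:summing-sep:rad} concludes. Your route is shorter: it bypasses the Gram--Schmidt construction, the decomposition $\cH=\cH_0\oplus\cH_0^\perp$, and any discussion of the Gaussianity or integrability of $X$ (you never need $X\in L^2(\Omega;E)$). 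What the paper's longer route buys is the structural information produced along the way---that $X$ is Gaussian and square-integrable, and that the ``reproducing kernel'' space $\cH_0$ carries all of $R$---whereas you use the separating dual only twice (\Ito--Nisio and the final identification) instead of three times. One cosmetic remark: your law-transfer from $(\mathcal{W}_{\cH}(h_n))_n$ to $(\gamma_n)_n$ is correct but unnecessary, since $(\mathcal{W}_{\cH}(h_n))_n$ is itself a Gaussian sequence in the paper's sense, so Proposition~\ref{prop:ItoNisio} and Theorem~\ref{thm:summing-sep} apply to $S_N$ directly.
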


\begin{proof}
`\ref{item:Rgradon}$\Rightarrow$\ref{item:weakchar}': This implication holds even without assuming that $E^*$ separates the points of $E$:
The equality $\mathcal{W}_{\cH}(R^*x^*)=\langle R\cdot\mathcal W_\cH,x^*\rangle$ for all $x^*\in E^*$ is easily verified if $R$ is a finite rank operator and
can be extended to arbitrary $R\in\gamma(\cH,E)$ by using the continuity of $\mathcal{W}_\cH$ and the fact that, if  we choose an orthonormal basis $(h_n)_{n\in\bN}$ in $\cH$, then, due to Theorem~\ref{thm:summing-sep}\ref{thm:summing-sep:rad}, the series
\[
X:= \sum_{n=1}^\infty \mathcal{W}_{\cH}(h_n) Rh_n
=
R\cdot \mathcal{W}_\cH
\]
converges in $L^2(\Omega;E)$.

%

`\ref{item:weakchar}$\Rightarrow$\ref{item:Rgradon}':
Given a
random variable $X\colon \Omega \rightarrow E$ such that \eqref{eq:gradon} is fulfilled,
we set
\[
G:=\overline{\operatorname{span}(\{\langle X,x^*\rangle: x^*\in E^*\})}^{\nnrm{\cdot}{L^2(\Omega)}}
=
\overline{\{\langle X,x^*\rangle: x^*\in E^*\}}^{\nnrm{\cdot}{L^2(\Omega)}}.
\]
By a Gram-Schmidt argument,
we can choose a sequence $(x^*_i)_{i\in I}\subset E^*$ such that
$\{\langle X,x_i^*\rangle: i\in I\}$ is an orthonormal basis of $(G,\lb\cdot,\cdot\rb_{L_2(\Omega)})$, where $I\subseteq \N$ (note that $G$ is separable since $X$ is strongly measurable). We assume that $I=\bN$, the other case can be treated analogously.
Since $\mathcal{W}_{\cH}$ is an $\cH$-isonormal process and \eqref{eq:gradon} holds, $(\langle X,x_i^*\rangle)_{i\in \bN}$ is a sequence of independent real-valued standard Gaussian random variables, i.e., a Gaussian sequence.
Put $h_i:= R^*x_i^*$, $i\in \bN$. Then $(h_i)_{i\in \bN}$ is an orthonormal basis of the closure in $\cH$ of the space spanned by $\{R^*x^*: x^*\in E^*\}$, i.e., of
\[
\cH_0
:=
\overline{\operatorname{span}(\{R^*x^*: x^*\in E^*\})}^{\nnrm{\cdot}{\cH}}
=
\overline{\{R^*x^*: x^*\in E^*\}}^{\nnrm{\cdot}{\cH}},
\]
endowed with the scalar product $\lb\cdot,\cdot\rb_\cH$ inherited from $\cH$.
This can be seen as follows. Since $\mathcal{W}_{\cH}$ is an $\cH$-isonormal process and \eqref{eq:gradon} holds,
\begin{align*}
\lb h_i,h_j\rb_\cH
=
\lb R^*x_i^*,R^*x_j^*\rb_\cH
&=
\E\geklam{\mathcal{W}_{\cH}(R^*x_i^*)\mathcal{W}_{\cH}(R^*x_j^*)}\\
&=
\E\geklam{\langle X,x_i^*\rangle \langle X,x_j^*\rangle}
=
\delta_{ij}
,\qquad i,j\in \N.
\end{align*}
Thus, $(h_i)_{i\in \bN}$ is an orthonormal system in $\cH_0$.
In order to prove the maximality of $(h_i)_{i\in \bN}$, fix an arbitrary $h\in \cH_0$ with $\lb h,R^*x_i^*\rb_\cH=0$ for all $i\in\bN$.
Then, since $\mathcal{W}_{\cH}$ is an $\cH$-isonormal process and due to \eqref{eq:gradon},
\begin{equation}\label{eq:gradon2}
\E\geklam{\mathcal{W}_{\cH}(h)\langle X,x_i^*\rangle}
=
\E\geklam{\mathcal{W}_{\cH}(h)\mathcal{W}_{\cH}(R^*x_i^*)}
=
\lb h,R^*x_i^*\rb_\cH
=
0
 \text{ for all } i\in\bN.
\end{equation}
Also, since $h\in \cH_0$, the very definitions of the spaces $\cH_0$ and $G$, together with the continuity of $\mathcal{W}_{\cH}\colon \cH\to L^2(\Omega)$, yield $\mathcal{W}_{\cH}(h)\in G$.
Thus, since $(\langle X,x_i^*\rangle)_{i\in\bN}$ is an orthonormal basis of $G$, Eq.~\eqref{eq:gradon2} yields $\mathcal{W}_{\cH}(h)=0$.
Using again the properties of $\mathcal{W}_{\cH}$, we obtain $\nnrm{h}{\cH}^2=\E\Abs{W_{\cH}(h)}^2=0$ and therefore $h=0$. Consequently, $(h_i)_{i\in\bN}$ is an orthonormal basis of $\cH_0$.

Fix $x^*\in E^*$. Recalling that $( \langle X, x_i^* \rangle )_{i\in \N}$ is an orthonormal basis for $G$
we have
\[
\langle X,x^*\rangle = \sum_{i\in \bN} c_i \langle X,x_i^*\rangle \quad \text{in }L^2(\Omega)
\]
with
\[
c_i:=\E\geklam{\langle X,x^*\rangle \langle X,x_i^*\rangle}
=
\lb R^*x^*,R^*x_i^*\rb_\cH
=
\langle Rh_i,x^*\rangle, \quad i\in \bN.
\]
Therefore,
\[
\langle X,x^*\rangle = \sum_{i\in \bN} \langle X,x_i^*\rangle \langle x^*,Rh_i\rangle \quad \text{ in }L^2(\Omega).
\]
Therefore, by Proposition~\ref{prop:QB:conv:test:as}, the sequence
\[
S_n:=\sum_{i=1}^n \langle X,x_i^*\rangle Rh_i,\quad n\in\bN,
\]
of partial sums converges $\P$-a.s.\ to $X$. It follows from~\eqref{eq:gradon}, Theorem~\ref{thm:Fernique}, and Proposition~\ref{prop:weakGauss}
that $X \in L^2(\Omega;E)$, whence Proposition~\ref{prop:Levy:Lp} yields
\[
\sum_{i=1}^\infty \langle X,x_i^*\rangle Rh_i = X
\quad \text{in } L^2(\Omega;E).
\]
Thus, $R\in \gradon{\cH_0}{E}$ due to Theorem~\ref{thm:summing-sep}\ref{thm:summing-sep:rad}.
The assertion that $R\in \gradon{\cH}{E}$ follows now if we prove that $R$ vanishes on the orthogonal complement $\cH_0^\bot$ of $\cH_0$ in $\cH$. This is indeed the case since for a fixed $h\in \cH_0^\bot$,
$\lb Rh, x^*\rb=\lb h,R^*x^*\rb_\cH=0$ for all $x^*\in E^*$. Since $E^*$ separates the points of $E$, $Rh=0$ for all $h\in \cH_0^\bot$.

In order to prove that $X=R\cdot \mathcal W_{\cH}$ $\P$-a.s., first recall from the proof of Proposition~\ref{prop:QB:conv:test:as} that there exists a countable set in $E^*$ that separates the points of $E$. Hence every random variable fulfilling~\eqref{eq:gradon} is just a version of $R\cdot\mathcal{W}_\cH$. Next note that we have seen in the proof of `\ref{item:Rgradon}$\Rightarrow$\ref{item:weakchar}' that~\eqref{eq:gradon} holds for $X=R\cdot\mathcal{W}_\cH$.
\end{proof}

We have now all ingredients we need to prove a characterization of the stochastic integral introduced above as a Pettis type stochastic \Ito-integral, see~Proposition~\ref{prop:Pettis_Ito} below; for $r=1$ this result can be found, e.g., in~\cite[Proposition~3.2]{NeeVerWei2007}, see also~\cite{NeervenWeis2005a}.
Before we state this result, let us generalize some notions, which are part of the folklore in Banach space theory: We say a function $\Phi\colon [0,T]\to\cL(H,E)$ is \emph{$H$-measurable} if for every $h\in H$ the mapping $t\mapsto\Phi(t)h$ is strongly measurable.
Moreover, an $H$-measurable function $\Phi\colon [0,T]\to\cL(H,E)$ \emph{belongs to $L^2(0,T;H)$ scalarly} if for every $x^*\in E^*$ the mapping $\Phi^*x^*\colon [0,T]\to H$, $t\mapsto \Phi(t)^*x^*$, is square integrable. If $E^*$ separates the points of $E$, then we say that a function $\Phi\colon [0,T]\to\cL(H,E)$ that belongs to $L^2(0,T;H)$  \emph{(scalarly) represents} an operator $R\in \cL(L^2(0,T;H),E)$ if $R^*x^*=\Phi^*x^*$ in $L^2(0,T;H)$ for all $x^*\in E^*$, i.e., if for all $x^*\in E^*$,
\[
\langle Rf,x^*\rangle = \int_0^T \langle \Phi(t) f(t), x^*\rangle\,\dt,
\qquad
f\in L^2(0,T;H).
\]
Recall from~\eqref{eq:phiRepR} that every finite rank step function $\Phi\colon [0,T]\rightarrow \cL(H,E)$ represents an operator $R\in\cL(L^2(0,T;H),E)$.

\begin{proposition}\label{prop:Pettis_Ito}
Assume that $E^*$ separates the points of $E$. For a function $\Phi\colon [0,T]\to \cL(H,E)$ belonging to $L^2(0,T;H)$ scalarly, the following are equivalent:
\begin{enumerate}[label=\textup{(\roman*)}]
\item\label{item:Phi_approx_weak} There exists a sequence $(\Phi_n)_{n\in\bN}$ of finite rank step functions,
such that
\smallskip
\begin{itemize}
\item $\displaystyle\lim_{n\to\infty} \Phi_n^*x^*=\Phi^* x^*$ in $L^2(0,T;H)$ for all $x^*\in E^*$;
\item $(R_{\Phi_n} \cdot W_H)_{n\in\bN}$ is a Cauchy sequence in $L^0(\Omega;E)$ (i.e., Cauchy in probability).
\end{itemize}
\item\label{item:Phi_weakchar} There exists a random variable $X\colon\Omega \rightarrow E$ such that for all $x^*\in E^*$ we have
\[
\langle X,x^*\rangle = \int_0^T \Phi^*(t)x^*\,\mathrm{d}W_H(t)\qquad\P\textup{-a.s.}
\]
\item\label{item:Phi_gradon} $\Phi$ represents an operator $R\in\gamma(L^2(0,T;H),E)$.
\end{enumerate}
In this case, $R\cdot W_H = X $ in $L^p(\Omega;E)$
for all $p\in (0,\infty)$, and
\begin{equation}\label{eq:Ito}
\nnrm{R}{\gamma^p(L^2(0,T;H);E)}
\leq
\nnrm{X}{L^p(\Omega;E)}
\leq 2^{\frac{1-r\land p}{r\land p}}
\nnrm{R}{\gamma^p(L^2(0,T;H);E)}.
\end{equation}
We call a function $\Phi$ satisfying these equivalent conditions \emph{stochastically integrable with respect to $W_H$}. The random variable $X$ is called the \emph{stochastic integral of $\Phi$ with respect to $W_H$} and is denoted by
\[
\int_0^T\Phi(t)\,\mathrm{d}W_H(t):=X.
\]
\end{proposition}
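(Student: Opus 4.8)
The plan is to treat condition~\ref{item:Phi_gradon} as the hub and to use Theorem~\ref{thm:weak_char_stoch_int} as the main engine, establishing \ref{item:Phi_gradon}$\Rightarrow$\ref{item:Phi_weakchar}, \ref{item:Phi_gradon}$\Rightarrow$\ref{item:Phi_approx_weak}, \ref{item:Phi_approx_weak}$\Rightarrow$\ref{item:Phi_gradon} and \ref{item:Phi_weakchar}$\Rightarrow$\ref{item:Phi_gradon}. The implication \ref{item:Phi_gradon}$\Rightarrow$\ref{item:Phi_weakchar} is immediate: if $\Phi$ represents $R\in\gamma(L^2(0,T;H),E)$ then $R^*x^*=\Phi^*x^*$, so $W_H(R^*x^*)=\int_0^T\Phi^*(t)x^*\,\mathrm{d}W_H(t)=W_H(\Phi^*x^*)$, and Theorem~\ref{thm:weak_char_stoch_int} yields the random variable $X=R\cdot W_H$ with $\langle X,x^*\rangle=W_H(R^*x^*)$ $\P$-a.s. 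For \ref{item:Phi_gradon}$\Rightarrow$\ref{item:Phi_approx_weak} I would invoke Lemma~\ref{lemma:approx_frsf} to choose finite rank step functions $\Phi_n$ with $R_{\Phi_n}\to R$ in $\gamma(L^2(0,T;H),E)$; since the adjoint operation is norm-contractive, $\Phi_n^*x^*=R_{\Phi_n}^*x^*\to R^*x^*=\Phi^*x^*$ in $L^2(0,T;H)$, while \eqref{eq:Ito:radon} gives $R_{\Phi_n}\cdot W_H\to R\cdot W_H$ in $L^2(\Omega;E)$, in particular Cauchy in probability.

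For \ref{item:Phi_approx_weak}$\Rightarrow$\ref{item:Phi_gradon} I would exploit that each $R_{\Phi_n}\cdot W_H$ is Gaussian (Remark~\ref{rem:stochInt:fnc}). As $L^0(\Omega;E)$ is complete, the Cauchy-in-probability sequence $(R_{\Phi_n}\cdot W_H)_{n\in\N}$ converges in probability; by Lemma~\ref{lem:gausslimit} the limit is Gaussian, and Proposition~\ref{prop:Gaussian_RV_conv} upgrades the convergence to $L^2(\Omega;E)$. Applying the lower bound in \eqref{eq:Ito:radon} to the differences $R_{\Phi_n}-R_{\Phi_m}$ shows that $(R_{\Phi_n})_{n\in\N}$ is Cauchy in the complete space $\gamma(L^2(0,T;H),E)$, hence $R_{\Phi_n}\to R\in\gamma(L^2(0,T;H),E)$. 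Passing to adjoints and combining with the first bullet of~\ref{item:Phi_approx_weak} gives $R^*x^*=\lim_n\Phi_n^*x^*=\Phi^*x^*$, i.e.\ $\Phi$ represents $R$.

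The crux is \ref{item:Phi_weakchar}$\Rightarrow$\ref{item:Phi_gradon}. Here $\langle X,x^*\rangle=W_H(\Phi^*x^*)$ is a real centered Gaussian, so $X$ is Gaussian by Proposition~\ref{prop:weakGauss} and $X\in L^2(\Omega;E)$ by Theorem~\ref{thm:Fernique}. \emph{The main obstacle is that, unlike in Theorem~\ref{thm:weak_char_stoch_int}, no operator $R$ is given a priori, and since $r$-Banach spaces possess no Pettis or Bochner integral one cannot simply set $Rh_i:=\E[\gamma_i X]$.} To manufacture $R$ I would run a Karhunen--Lo\`eve argument: Proposition~\ref{prop:KarhunenLoeve} produces a Gaussian sequence $(\gamma_n)_{n\in\N}$ and elements $(z_n)_{n\in\N}\subseteq E$ with $\sum_n\gamma_n z_n\to Y$ $\P$-a.s., where $Y\overset{d}{=}X$, and by Theorem~\ref{thm:summing-sep}\ref{thm:summing-sep:rad} the operator $\widetilde R\colon K\to E$, $e_n\mapsto z_n$, on a separable Hilbert space $K$ lies in $\gamma(K,E)$. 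Since $Y\overset{d}{=}X$, the covariances match,
\[
\langle\widetilde R^*x^*,\widetilde R^*y^*\rangle_K=\E\bigl[\langle X,x^*\rangle\langle X,y^*\rangle\bigr]=\langle\Phi^*x^*,\Phi^*y^*\rangle_{L^2(0,T;H)},\qquad x^*,y^*\in E^*,
\]
so $\widetilde R^*x^*\mapsto\Phi^*x^*$ extends to a partial isometry $W\colon K\to L^2(0,T;H)$ with range $\cH_0:=\overline{\operatorname{span}}\{\Phi^*x^*:x^*\in E^*\}$ (set to $0$ on $\overline{\Ran\widetilde R^*}^{\perp}$). Then $R:=\widetilde R\,W^*$ lies in $\gamma(L^2(0,T;H),E)$ by the ideal property (Theorem~\ref{thm:ideal}) and satisfies $R^*x^*=W\widetilde R^*x^*=\Phi^*x^*$, so $\Phi$ represents $R$.

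Finally, once $\Phi$ represents $R\in\gamma(L^2(0,T;H),E)$, Theorem~\ref{thm:weak_char_stoch_int} gives $\langle R\cdot W_H,x^*\rangle=W_H(\Phi^*x^*)=\langle X,x^*\rangle$ $\P$-a.s.\ for all $x^*\in E^*$; since a countable subfamily of $E^*$ already separates the points of $E$ (see the proof of Proposition~\ref{prop:QB:conv:test:as}), this forces $R\cdot W_H=X$ $\P$-a.s., hence in $L^p(\Omega;E)$ for every $p\in(0,\infty)$ because both sides are Gaussian and thus lie in every $L^p$. The two-sided estimate~\eqref{eq:Ito} is then exactly \eqref{eq:Ito:radon} read with $X=R\cdot W_H$. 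I expect the construction of $R$ in \ref{item:Phi_weakchar}$\Rightarrow$\ref{item:Phi_gradon} to be the genuinely delicate point, precisely because the Pettis-integral route available in the Banach setting is unavailable here.
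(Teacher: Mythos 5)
Your proof is correct, and it departs from the paper's proof at two points. For `\ref{item:Phi_approx_weak}$\Rightarrow$\ref{item:Phi_gradon}' the paper upgrades $(R_{\Phi_n}\cdot W_H)_{n\in\N}$ from Cauchy in probability to Cauchy in $L^q(\Omega;E)$ by applying the Kahane--Khintchine inequality (Theorem~\ref{thm:QBKahKhin}, together with \cite[Corollary~6.2.9]{HytNeerVerWeis2017}) directly to the Gaussian sums $R_{\Phi_n-\Phi_m}\cdot W_H$; that step needs no duality at all. You instead invoke Proposition~\ref{prop:Gaussian_RV_conv}, which rests on the Karhunen--Lo\`eve machinery of Section~\ref{ssec:Gaussian_separating_dual} and hence on the separating dual. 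Both are valid under the standing hypothesis, the paper's being the more economical; the endgames then coincide (lower bound in~\eqref{eq:Ito:radon}, completeness of $\gamma(L^2(0,T;H),E)$, convergence of adjoints). The substantive difference is `\ref{item:Phi_weakchar}$\Rightarrow$\ref{item:Phi_gradon}'. The paper disposes of it as an ``immediate consequence'' of Theorem~\ref{thm:weak_char_stoch_int}, but that theorem takes a bounded operator $R\in\cL(\cH,E)$ as given, and its proof uses the vectors $Rh_i\in E$ as the $E$-valued terms of the \Ito-Nisio partial sums; when only $\Phi$ (scalarly in $L^2$) and $X$ are given, these vectors do not exist a priori---exactly the obstruction you isolate (no Bochner/Pettis integral with which to define $Rh_i:=\E[\langle X,x_i^*\rangle X]$). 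Your construction---Proposition~\ref{prop:KarhunenLoeve} producing $\widetilde R\in\gamma(K,E)$ whose Gaussian series has the distribution of $X$, matching of covariances, the partial isometry $W\colon K\to L^2(0,T;H)$, and the right ideal property (Theorem~\ref{thm:ideal}) giving $R=\widetilde R\,W^*\in\gamma(L^2(0,T;H),E)$ with $R^*x^*=\Phi^*x^*$---supplies exactly the operator that the paper's appeal to Theorem~\ref{thm:weak_char_stoch_int} takes for granted, so on this implication your argument is more complete than the published one.

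Two small repairs. First, Theorem~\ref{thm:summing-sep}\ref{thm:summing-sep:rad} presupposes that $\widetilde R$ is already a bounded operator, so to build $\widetilde R$ from $e_n\mapsto z_n$ you should argue via Proposition~\ref{prop:ItoNisio} (the a.s.\ convergent series converges in $L^p(\Omega;E)$) and Proposition~\ref{prop:f-r} (the partial-sum finite-rank operators are then Cauchy in $\gamma(K,E)$, and the limit is a bounded operator extending $e_n\mapsto z_n$). Second, what your argument (and, implicitly, the paper's) actually yields is~\eqref{eq:Ito:radon} read with $X=R\cdot W_H$, namely $\|X\|_{L^p(\Omega;E)}\le\|R\|_{\gamma^p(L^2(0,T;H);E)}\le 2^{\frac{1-(r\wedge p)}{r\wedge p}}\|X\|_{L^p(\Omega;E)}$; in the printed~\eqref{eq:Ito} the constant sits on the opposite side, which is inconsistent with~\eqref{eq:Ito:radon} and is evidently a typo, so your reading (``exactly~\eqref{eq:Ito:radon}'') is the correct one.
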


\begin{proof} `\ref{item:Phi_approx_weak}$\Rightarrow$\ref{item:Phi_gradon}':
We claim that $(R_{\Phi_n}\cdot W_H)$ converges in $L^q(\Omega;E)$ for any $q\in (0,\infty)$. In order to show this let $\varepsilon>0$ be arbitrary.
Note that $R_{\Phi_n}\cdot W_H - R_{\Phi_m}\cdot W_H = R_{\Phi_n - \Phi_m}\cdot W_H$ is a Gaussian sum, so that Theorem \ref{thm:QBKahKhin} applies and we will use the constant $C_{p,q,r}$ introduced there.
Let $\delta = \varepsilon 2^{-1/q}$. Choose $N$ such that for all $m,n\geq N$,
\[\P(\|R_{\Phi_n}\cdot W_H-R_{\Phi_m}\cdot W_H\|>\delta)\leq \frac{1}{4 C_{2q,q,r}^2}.\]
Then by \cite[Corollary 6.2.9]{HytNeerVerWeis2017} (which remains valid in the $r$-Banach space setting),
we obtain that for all $n,m\geq N$,
\[\|R_{\Phi_n}\cdot W_H - R_{\Phi_m}\cdot W_H\|_{L^q(\Omega;E)}\leq 2^{1/q} \delta = \varepsilon.\]
This proves the claim. In particular,  $(R_{\Phi_n}\cdot W_H)$ converges in $L^2(\Omega;E)$. This, in turn, implies the existence of an operator $R\in \gamma(L^2(0,T;H),E)$ such that $\lim_{n\to\infty}\nnrm{R_{\Phi_n}-R}{\gamma(L^2(0,T;H),E)}= 0$ due to Inequality~\eqref{eq:Ito:f-r} and the completeness of $\gamma(L^2(0,T;H),E)$. Obviously, for all $x^*\in E^*$, we have $R_{\Phi_n}^* x^*=\Phi_n^*x^*$, and, as a consequence, $R^*x^*=\Phi^*x^*$.

`\ref{item:Phi_gradon}$\Rightarrow$\ref{item:Phi_approx_weak}':
Since $R\in \gamma(L^2(0,T;H),E)$, Lemma~\ref{lemma:approx_frsf} implies that there exists a sequence of finite rank step functions $(\Phi_n)_{n\in\N}$ such that $\lim_{n\to\infty}\nnrm{R_{\Phi_n}-R}{\gamma(L^2(0,T;H),E)}= 0$. Thus, due to Inequality~\eqref{eq:Ito:f-r}, $(R_{\Phi_n}\cdot W_H)_{n\in\N}$ is a Cauchy sequence in $L^2(\Omega;E)$, and hence in $L^0(\Omega;E)$. Moreover, for every $x^*\in E^*$, we have $\nnrm{R_{\Phi_n}^*x^*-R^*x^*}{L^2(0,T;H)}\to 0$ for $n\to \infty$. Thus, $\Phi_n^*x^*\to \Phi^*x^*$ in $L^2(0,T;H)$.

The equivalence of \ref{item:Phi_weakchar} and \ref{item:Phi_gradon} as well as the final statement are immediate consequences of Theorem~\ref{thm:weak_char_stoch_int}.
\end{proof}

\section{Stochastic integration II: random integrands}\label{sec:stoch_int_processes}

In this section we extend the stochastic integral introduced above by allowing for random integrands.
More specifically, we define the stochastic integral for $R\in L^{0}(\Omega; \gamma(L^2(0,T;H),E))$,
where $T>0$, $H$ is a Hilbert space, $E$ is a suitable type of $r$-Banach space, and $R$ satisfies an adaptedness condition.
The approach is in the spirit of~\cite{NeeVerWei2007} (see also the survey~\cite{NeeVerWei2015}):
We first define the stochastic integral for stochastic processes with a very simple structure, the so-called adapted elementary processes.
Then we use a decoupling inequality to obtain a Burkholder-Davis-Gundy type estimate.
Finally, using this estimate, we extend the stochastic integral to a wider class of random integrands: first in $L^p$ and then in $L^0$ by localization.


The main difference to the Banach space theory is that we do not assume that $E$ has the UMD property.
Instead, we consider a (weaker) one-sided decoupling property.
This property was investigated in the quasi-Banach space setting in~\cite{CoxVer2011}.
It turns out to be sufficient for the one-sided estimate required for the extension of the stochastic integral.
In fact, as Proposition~\ref{prop:quasi_not_UMD} below shows, the UMD property fails to make sense in the quasi-Banach space setting.


\smallskip
\noindent\textbf{Notation.} Throughout this section, $E$ is a separable $r$-Banach space for some $0<r\leq 1$,
$H$ is a separable Hilbert space, and $0<T<\infty$.
In addition, $(\Omega,\cF,\P)$ is a probability space endowed with a normal  filtration $\F=(\cF_t)_{t\in [0,T]}$,
and $W_H$ is an $\mathbb F/ L^2(0,T;H)$-isonormal process, i.e., an $L^2(0,T;H)$-isonormal process such that
for all $0\leq s < t \leq T$ and all $h\in L^2(0,T;H)$ such that $\supp h \subseteq [s,t]$
it holds that $W_{H}(h)$ is $\cF_t$-measurable and independent of $\cF_s$.



\subsection{Intermezzo: The UMD property in quasi-Banach spaces}\label{ssec:UMD}
In order to extend the class of admissible integrands and to  develop a stochastic calculus in the Banach space setting
one typically assumes the Banach space has the UMD (\emph{unconditional martingale differences}) property, see, e.g., the survey~\cite{NeeVerWei2015}.
Although the concept of an $E$-valued martingale, with $E$ an $r$-Banach space and $r\in (0,1)$,
does not make sense in general (for lack of a Bochner integral), it does makes sense to consider
$E$-valued martingales with respect to a filtration of finitely generated $\sigma$-algebras.
Moreover, it is known from~\cite{Maurey:1975}, see also e.g.~\cite[Theorem 4.2.5]{HytNeerVerWeis2016}, that
in the Banach space setting the UMD property is equivalent to the dyadic UMD property (i.e., where one
considers only martingales with respect to a dyadic filtration). It is also known, see~\cite[Section 6]{CoxGeiss}, that the
decoupling inequalities needed to develop the stochastic integral of random integrands can be derived directly from the dyadic UMD
property. Unfortunately, Proposition~\ref{prop:quasi_not_UMD} below states that if an $r$-Banach space satisfies the dyadic UMD property,
then it is isomorphic to a Banach space.

Moreover, one can combine Proposition~\ref{prop:quasi_not_UMD} with \cite[Theorem~2 ]{Garl86} (the proof carries over to the r-Banach space setting mutatis mutandis) to prove that if $E$ is an r-Banach space such that a two-sided version of inequality~\eqref{eq:gaussian_decoupling} below holds (or, equivalently, a two-sided version of inequality~\eqref{eq:BDG_frsp}), then $E$ is isomorphic to a Banach space.

A sequence of $E$-valued random variables
$(d_n)_{n\in\N}$ is called a \emph{Paley-Walsh martingale} (or \emph{dyadic martingale}) if there exists a Rademacher
sequence $(r_k)_{k\in\N}$, a $v_0\in E$, and mappings $v_n\colon \{-1,1\}^{n} \rightarrow E$, $n\in \N$, such that
$d_1=r_1 v_0$ and $d_n = r_1 v_0 + \sum_{k=2}^{n} r_k v_{k-1}(r_1,\ldots,r_{k-1} )$, $n\in \{2,3,\ldots,\}$.
If $E$ is a Banach space, this is equivalent to saying that $(d_n)_{n\in\bN}$
is a martingale with respect to a dyadic filtration. 

\smallskip

\begin{proposition}\label{prop:quasi_not_UMD}
Let $0< r <1$, $1<p <\infty$, and let $E$ be a separable $r$-Banach space
that satisfies the dyadic UMD$_{p}$-property, i.e., there exists a constant $\beta \in (0,\infty)$
such that
\[
\E\ssgnnrm{\sum_{n=1}^N\varepsilon_n d_n}{E}^p
\leq
\beta^p
\E\ssgnnrm{\sum_{n=1}^N d_n}{E}^p,
\quad\text{for all }N\in\N,
\]
holds for all Paley-Walsh martingales $(d_n)_{n\in \N}$ and all signs $(\eps_n)_{n\in \N}\in \{-1,1\}^{\N}$.
Then $E$ is isomorphic to a Banach space.
\end{proposition}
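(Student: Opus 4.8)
The plan is to deduce that $E$ has nontrivial type and then to invoke Kalton's theorem~\cite{Kal1981} that a quasi-Banach space of nontrivial type is locally convex; since $E$ is moreover locally bounded and complete, local convexity upgrades it to an isomorphic copy of a Banach space (cf.\ Remark~\ref{rem:quasiBS}). Thus the whole argument reduces to the single implication ``dyadic UMD$_p$ $\Rightarrow$ nontrivial type,'' everything else being soft.

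First I would record that the hypothesised inequality is automatically two-sided. If $(d_n)_n$ is a Paley--Walsh martingale and $(\eps_n)_n\in\{-1,1\}^{\N}$, then $(\eps_n d_n)_n$ is again a Paley--Walsh martingale (absorb the sign into the maps $v_{n-1}$); applying the hypothesis to $(\eps_n d_n)_n$ with the same signs returns $(d_n)_n$ and yields $\beta^{-p}\,\E\|\sum_{n=1}^N d_n\|_E^p\le \E\|\sum_{n=1}^N \eps_n d_n\|_E^p\le \beta^p\,\E\|\sum_{n=1}^N d_n\|_E^p$ for every fixed sign sequence, and hence, after averaging over a Rademacher sequence, for randomised differences as well. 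In other words, Paley--Walsh martingale differences are unconditional in $L^p(\Omega;E)$.

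The hard part, and the only genuinely nontrivial step, is to convert this unconditionality into nontrivial type. Here I would follow the martingale-transform argument of Garling~\cite{Garl86}: one tests the two-sided inequality on martingales whose differences are built from ``spread-out'' vectors $x_1,\dots,x_N$ (realised on the successive atoms of a dyadic filtration) in such a way that the sign change $d_n\mapsto\eps_n d_n$ genuinely alters the law of the partial sums while simultaneously activating the blow-up of the quasi-norm quantified in Example~\ref{ex:qb-ind-sym}. Comparing the two sides, with the Kahane--Khintchine inequality (Theorem~\ref{thm:QBKahKhin}) used to pass freely between exponents, should produce a type estimate $\bigl(\E\|\sum \eps_n x_n\|_E^p\bigr)^{1/p}\lesssim \bigl(\sum\|x_n\|_E^s\bigr)^{1/s}$ with some $s>1$; equivalently, it rules out the uniform containment of $\ell^r_N$ that the $r$-triangle inequality would otherwise permit. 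The feature that makes Garling's proof survive the passage from Banach to $r$-Banach spaces is that it is a purely martingale/contraction argument and never appeals to the dual space -- essential here, since $E^*$ need not separate the points of $E$. I expect this step -- manufacturing the witnessing martingales that actually detect non-local-convexity -- to be the main obstacle: the naive two-step Paley--Walsh martingales are sign-symmetric and detect nothing (indeed even disjointly supported vectors in $\ell^r$ satisfy the inequality trivially, since their randomised sums do not cancel), so the blow-up must be coupled to genuine sign sensitivity through a more delicate branching construction.

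Finally, once $E$ is known to have nontrivial type, Kalton's theorem~\cite{Kal1981} gives local convexity of $E$. A locally convex, locally bounded, complete metrizable topological vector space is normable and complete, i.e.\ isomorphic to a Banach space, which is the assertion.
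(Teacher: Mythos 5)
Your outer frame is sound where it is soft: the self-improvement to a two-sided inequality is correct (absorbing signs into the $v_{n-1}$ keeps you inside the class of Paley--Walsh martingales), Kalton's theorem from~\cite{Kal1981} that a quasi-Banach space with type $p>1$ is locally convex is a genuine theorem (this is precisely why the paper remarks that type is ``less useful'' for $r$-Banach spaces), and a complete, locally bounded, locally convex metrizable vector space is normable by Kolmogorov's criterion. But the whole proof hangs on the middle implication ``dyadic UMD$_p$ $\Rightarrow$ nontrivial type,'' and this is a genuine gap: you do not prove it, you concede it is the main obstacle, and the tool you point to does not supply it. Garling's paper~\cite{Garl86} proves an equivalence between UMD and two-sided decoupling estimates for Brownian martingales; it contains no passage from UMD to type, so the citation is misdirected. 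The natural repair of your sketch would be: (i) show the dyadic UMD$_p$ constants of $\ell^1_N$ grow with $N$, so that $\ell^1$ is not finitely representable in $E$; (ii) invoke a Maurey--Pisier theorem (``$\ell^1$ not finitely representable $\Rightarrow$ nontrivial type''). Step (ii), however, is a Banach space theorem; no quasi-Banach analogue for \emph{type} is cited in your argument or invoked anywhere in the paper, so even the cleaned-up version does not close. By contrast, for \emph{cotype} such a quasi-Banach analogue does exist, namely~\cite[Theorem~1]{BasUri1986}, and that asymmetry is exactly what dictates the paper's route.

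Indeed, the paper's proof never goes through type. It tests the hypothesis against copies of $\ell^{\infty}_{2^n}$, whose dyadic UMD$_p$ constants grow like $\sqrt{n}$ (see~\cite[Proposition~4.2.19]{HytNeerVerWeis2016}), to conclude that $c_0$ is not finitely representable in $E$; then Bastero--Uriz~\cite[Theorem~1]{BasUri1986} --- the quasi-Banach Maurey--Pisier theorem on the cotype side --- yields cotype $q$ for some $q\in[2,\infty)$. Next, Burkholder's stopping-time extrapolation (with the modifications of~\cite[Lemma~3.2]{CoxVer2011}) upgrades dyadic UMD$_p$ to dyadic UMD$_s$ for all $s\in(1,\infty)$, whence $E$ has Paley--Walsh martingale cotype $q$; finally, the proof of Pisier's renorming theorem~\cite[Theorem~3.1(a)]{Pisier:1975}, which goes through for quasi-norms, directly manufactures an equivalent genuine norm on $E$. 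If you want to salvage your plan, you must either prove a quasi-Banach Maurey--Pisier theorem for type (a substantial open-ended task) or replace your step (b) by the cotype/martingale-cotype route above --- at which point Kalton's theorem becomes unnecessary, since Pisier's construction already produces the norm.
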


\begin{proof}
Let $\beta_{p}(\ell_{2^n}^{\infty})$, $n\in \N$, denote the dyadic UMD$_p$ constant
of $\ell_{2^n}^{\infty}$.
It is well-known, see e.g.~\cite[Proposition 4.2.19]{HytNeerVerWeis2016}, that
there exists a constant $c_p\in (0,\infty)$ such that
$\beta_{p}(\ell_{2^n}^{\infty})\geq c_p \sqrt{n}$ for all $n\in \N$. Note that
if, for some $\lambda \in (0,\infty)$ and $n\in \N$, $E$ contains a $\lambda$-isomorphic copy of $\ell_{2^n}^{\infty}$
in the sense of~\cite[Definition~7.1.8]{HytNeerVerWeis2017}
then $\beta_{p}(\ell_{2^n}^{\infty}) \leq \lambda \beta$. It follows that $c_0$ is not
$\eps$-finitely represented in $E$ for any $\eps \in (0,\infty)$, in the sense of~\cite[Definition~11.1.1]{AlbKal2006}.
Thus~\cite[Theorem~1]{BasUri1986} implies that there exists a $q\in [2,\infty)$
such that $E$ has cotype $q$.

Minor modifications of the Burkholder stopping time proof (see~\cite[Theorem~1.1, `(1.2)$\Rightarrow$ (1.3)']{Burkholder:1981},
and see~\cite[Lemma 3.2]{CoxVer2011} for the minor modifications) shows that
if $E$ satisfies the dyadic $\text{UMD}_{s}$-property for some $s\in (1,\infty)$, then it satisfies the dyadic $\text{UMD}_{s}$-property
for all $s\in (1,\infty)$.
It follows immediately (see e.g.~\cite[Proposition~5.3]{NeeVerWei2015})
that $E$ has Paley-Walsh martingale cotype $q$.
Now the proof of~\cite[Theorem~3.1(a)]{Pisier:1975}
(which also works for $E$ a quasi-Banach space)
provides a norm $\abs{\cdot}_E\colon E \rightarrow \R$ and constants $c,C\in (0,\infty)$
such that $c\nnrm{ x }{E} \leq \abs{ x }_{E} \leq C \nnrm{ x}{E}$ for all $x\in E$.
\end{proof}

\subsection{Stochastic integrals of adapted elementary processes and a one-sided Burkholder-Davis-Gundy inequality}\label{ssec:stoch_int_frsp}
As usual, our construction starts with the following class of integrands.

\begin{definition}\label{def:frsp}
We say that
$\Phi \colon [0,T]\times \Omega \to \calL(H,E)$ is an \emph{adapted elementary process} if there exist
$0= t_0 < t_1 < \ldots < t_J = T$,
$\{h_1,\ldots,h_K\}\subseteq H$ orthonormal
and $\cF_{t_{j-1}}$-measurable simple random variables $X_{j,k}\colon \Omega \rightarrow E$,
$j\in \{1,\ldots,J\}$, $k \in \{1,\ldots, K\}$, for some $J,K\in\N$, such that
\begin{equation*}
 \Phi(t,\omega) = \sum_{j=1}^{J}
    1_{(t_{j-1},t_j]}(t)
    \sum_{k=1}^{K}
     h_k \otimes X_{j,k}(\omega).
\end{equation*}
Moreover, we say a random variable $R\in L^0(\Omega;\gamma(L^2(0,T;H),E))$
is \emph{represented} by an adapted elementary process $\Phi$ if
\begin{equation}\label{eq:frsp_gradon}
 Rf=R_{\Phi}f := \int_{0}^{T} \Phi(t,\cdot) f(t)  \,\mathrm{d}t, \qquad f\in L^2(0,T;H).
\end{equation}
\end{definition}


For this class of processes/operator valued random variables we define the stochastic integral the usual way as a path-wise Lebesgue-Stieltjes type integral.

\begin{definition}\label{def:stoch_int_frsp}
Let $R\in L^0(\Omega;\gamma(L^2(0,T;H),E))$ be represented by an adapted elementary process
$\Phi= \sum_{j=1}^{J} \sum_{k=1}^{K}  1_{(t_{j-1},t_j]} \otimes h_{k}\otimes X_{j,k}$ with $t_j$, $h_k$ and $X_{j,k}$ as in Definition~\ref{def:frsp}.
%
Then the \emph{stochastic integral of $R$ with respect to $W_{H}$} is denoted by $R \cdot W_{H}$
and defined by
\begin{equation*}
 R \cdot W_{H}
 :=
 \sum_{j=1}^{J} \sum_{k=1}^{K} W_{H}(\one_{(t_{j-1},t_j]} \otimes h_{k}) X_{j,k}.
\end{equation*}
Moreover, the \emph{stochastic integral process $(R \cdot W_{H}(t))_{t\in [0,T]}$}
is defined by
\begin{equation*}
R \cdot W_{H}(t)
 :=
 \sum_{j=1}^{J} \sum_{k=1}^{K}  W_{H}(\one_{(t_{j-1},t_j]\cap [0,t]} \otimes h_{k})X_{j,k},\qquad t\in [0,T].
\end{equation*}
\end{definition}

\begin{remark}\label{remark:stoch_int_frsp_properties}
Let $\Phi$ be an adapted elementary process.
\begin{enumerate}[leftmargin=*,align=right,label=\textup{(\roman*)}]
 \item With slight abuse of notation we use $R_\Phi\cdot W_{H}$ to denote
both the stochastic integral and the stochastic integral process. We may also write $\Phi\cdot W_H$ instead of $R_\Phi\cdot W_H$.
 \item The stochastic integral process $\Phi \cdot W_{H}$
is an $E$-valued $\F$-adapted process, and
there exists a version of $\Phi \cdot W_{H}$ that has continuous paths.
 \item Recall that the Bochner integral (and hence the conditional expectation)
 fails to extend to the quasi-Banach space setting.
 However, clearly $\langle \Phi \cdot W_H, x^* \rangle$ is a martingale for all
 $x^*\in E^*$.
\end{enumerate}
\end{remark}

In the Banach space setting the UMD property is not strictly necessary to extend the definition of the stochastic integral to a suitable class of operator valued random variables $R\in L^0(\Omega; \gamma(L^2(0,T;H),E))$.
Indeed, as shown in~\cite[Theorem~5.4(ii)]{CoxVer2011}, to obtain an extension together with a $p$-independent one-sided (`upper') Burkholder-Davis-Gundy inequality, it is enough to assume that $E$ satisfies  a decoupling property for tangent sequences
as introduced by Kwapie\'n and Woyczynski~\cite{KwapienWoyczynski1989},
see also~\cite{KwapienWoyczynski1992} for details on this topic. In this section we show that in the quasi-Banach space setting we can extend the class of integrands with a similar strategy.

%
%

The precise definition of the decoupling property is technical and understanding the definition is not essential for the comprehension of this section; one may take Proposition~\ref{prop:DPgauss} as a starting point. However, for the readers' 
convenience we recall here the definition of decoupling that we have in mind (see also~\cite[Definition~1.2 and Theorem~4.1]{CoxVer2011}).

\begin{definition}
Let $E$ be a quasi-Banach space, $(\Omega,\cF,\P,(\mathcal G_n)_{n \in \N_0})$ a filtered probability space, and let $(d_n)_{n \in \N}, (e_n)_{n\in \N}$ be $L^p(\Omega;E)$-sequences such that $(d_n)_{n\in \N}$ is $(\mathcal G_n)_{n\in \N}$-adapted. We say that $(e_n)_{n\in \N}$ is a \emph{$\mathcal G_\infty:=\sigma(\mathcal G_n\colon n \in \N)$-decoupled tangent sequence of} $(d_n)_{n\in \N}$ if $(e_n)_{n \in \N}$ is a $\mathcal G_\infty$-conditionally independent sequence and for all $B\in\cB(E)$, $n\in \N$ it holds that
\begin{align*}
\P(d_n\in B |\mathcal G_{n-1})
=
\P(e_n\in B|\mathcal G_{\infty}).
\end{align*} 
\end{definition}

By passing to a larger probability space, one can always construct a $\mathcal{G}_{\infty}$-decoupled tangent sequence to a given $(\mathcal G_n)_{n \in \N}$-adapted $L^p(\Omega;E)$-sequence $(d_n)_{n \in \N}$ (see, e.g.~\cite[Section 4.3]{KwapienWoyczynski1992}).

\begin{definition}
We say that a quasi-Banach space \emph{$E$ satisfies the decoupling inequality} if for all $p\in(0,\infty)$ there exists a constant $D_p\in (0,\infty)$ such that for all filtered probability spaces $(\Omega,\cF,\P,(\mathcal G_n)_{n \in \N_0})$ and all $L^p(\Omega;X)$-sequences $(d_n)_{n \in \N}$ and $(e_n)_{n\in \N}$ such that $(d_n)_{n\in \N}$ is $(\mathcal G_n)_{n \in \N}$-adapted and $(e_n)_{n\in \N}$ is a $\mathcal G_\infty$-decoupled tangent sequence of $(d_n)_{n\in \N}$ it holds that
\begin{equation*}
\E\ssgnnrm{\sum_{n=1}^N d_n}{E}^p
\leq D_p^p\,
\E\ssgnnrm{\sum_{n=1}^N e_n}{E}^p\qquad \text{ for all }N \in \N.
\end{equation*}
\end{definition}

In contrast to the UMD property, the decoupling property is satisfied by many prominent examples of quasi-Banach spaces.
The following examples are particularly important for our purposes.

\begin{example}\label{ex:decoupling_spaces}
Every Hilbert space satisfies the decoupling
property, see e.g.~\cite[Corollary 4.9]{CoxVer2011}. Moreover, let $F$ be an $r$-Banach space
satisfying the decoupling property, $0<r\leq 1$, let $(S,\mathcal{A},\mu)$ be a $\sigma$-finite measure space,
and let $0<p<\infty$, then the $r\minsym p$-Banach space $L^p(S;F)$
satisfies the decoupling property, see~\cite[Corollary 4.6]{CoxVer2011}. Finally,
it is immediate from the definition that any closed subspace of an $r$-Banach space satisfying the decoupling property
again satisfies the decoupling property. In particular, $B_{p,q}^{\alpha}(\R^d)$
satisfies the decoupling property for all $\alpha\in \R$ and all $p,q\in (0,\infty)$.
\end{example}

Proposition~\ref{prop:DPgauss} below provides the crucial property ensuring
that quasi-Banach spaces that satisfy the decoupling property allow for a well-defined stochastic integral, see~\cite[Example 1.1, Definition~1.2, and Theorem~4.1]{CoxVer2011} for a proof.

\begin{proposition}\label{prop:DPgauss}
Let $E$ satisfy the decoupling property.
Then for all $p \in (0,\infty)$ there exists a constant $C_{E,p}\in (0,\infty)$
such that for every filtered probability space $(\Omega,\cF,\P,\G=(\mathcal{G}_n)_{n\in \N_0})$,
all Gaussian sequences  $(\gamma_n)_{n\in \N}$
on $(\Omega,\cF,\P)$ such that $\gamma_n$ is $\mathcal{G}_n$-measurable and independent of $\mathcal{G}_{n-1}$,
and all $\G$-adapted $E$-valued stochastic processes $(v_n)_{n\in \N_0}$
it holds that
\begin{equation}\label{eq:gaussian_decoupling}
\E \ssgeklam{
\sup_{1\leq M \leq N}
\ssgnnrm{
  \sum_{n=1}^{M} \gamma_n v_{n-1}
}{E}^p
 }
 \leq
 C_{E,p}^p
 \E \ssgnnrm{
  \sum_{n=1}^{N} \gamma_n' v_{n-1}
 }{E}^p,
\end{equation}
where $(\gamma_n')_{n\in \N}$ is a copy of $(\gamma_n)_{n\in \N}$ independent
of $\mathcal{G}_{\infty}:=\sigma(\bigcup_{n\in \N}\mathcal{G}_n)$. Moreover, without loss of generality
one may assume $\limsup_{p\rightarrow \infty} C_{E,p}/p < \infty$.
\end{proposition}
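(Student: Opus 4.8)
The plan is to realise both sides of~\eqref{eq:gaussian_decoupling} through the decoupling inequality for the specific pair of sequences $d_n:=\gamma_n v_{n-1}$ and $e_n:=\gamma_n' v_{n-1}$, $n\in\N$, and then to remove the inner supremum on the right by an appeal to L\'evy's inequality. We may assume the right-hand side of~\eqref{eq:gaussian_decoupling} is finite, since otherwise there is nothing to prove.

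First I would verify that $(e_n)_{n\in\N}$ is a $\mathcal{G}_\infty$-decoupled tangent sequence of $(d_n)_{n\in\N}$. As $v_{n-1}$ is $\mathcal{G}_{n-1}$-measurable and $\gamma_n$ is $\mathcal{G}_n$-measurable, $(d_n)_{n\in\N}$ is $\G$-adapted. Conditional independence of $(e_n)_{n\in\N}$ given $\mathcal{G}_\infty$ is immediate: the $\gamma_n'$ are mutually independent and independent of $\mathcal{G}_\infty$, while each $v_{n-1}$ is $\mathcal{G}_\infty$-measurable. For the tangency identity $\P(d_n\in B\mid\mathcal{G}_{n-1})=\P(e_n\in B\mid\mathcal{G}_\infty)$ (which is meaningful since $E$ is Polish, so regular conditional distributions exist) I would observe that both sides equal $g_n(v_{n-1},B)$, where $g_n(x,B):=\P(\gamma x\in B)$ for a standard real Gaussian $\gamma$: on the left this uses independence of $\gamma_n$ from $\mathcal{G}_{n-1}$ together with $\mathcal{G}_{n-1}$-measurability of $v_{n-1}$, and on the right independence of $\gamma_n'$ from $\mathcal{G}_\infty$ together with $\mathcal{G}_\infty$-measurability of $v_{n-1}$. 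In particular $\E\nnrm{d_n}{E}^p=\E\nnrm{e_n}{E}^p$, so finiteness of the right-hand side transfers integrability to the $d_n$.

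With the tangent structure in hand, the decoupling inequality for $E$ yields $\E\nnrm{\sum_{n=1}^N d_n}{E}^p\le D_p^p\,\E\nnrm{\sum_{n=1}^N e_n}{E}^p$. The main obstacle is to place the maximal function $\sup_{1\le M\le N}$ on the left. For this I would apply decoupling not to $(d_n)_{n\in\N}$ directly but to its stopped versions: for $\lambda>0$ put $\tau_\lambda:=\inf\{M:\nnrm{\sum_{n=1}^M d_n}{E}>\lambda\}\wedge N$, a $(\mathcal{G}_M)_M$-stopping time, and note that $(d_n\one_{\{n\le\tau_\lambda\}})_n$ is again $\G$-adapted with $\mathcal{G}_\infty$-decoupled tangent sequence $(e_n\one_{\{n\le\tau_\lambda\}})_n$, because $\one_{\{n\le\tau_\lambda\}}=\one_{\{\tau_\lambda\ge n\}}$ is $\mathcal{G}_{n-1}$-measurable. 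A Burkholder-type good-$\lambda$ argument then upgrades the non-maximal inequality to
\[
\E\sup_{1\le M\le N}\nnrm{\sum_{n=1}^M d_n}{E}^p\le C^p\,\E\sup_{1\le M\le N}\nnrm{\sum_{n=1}^M e_n}{E}^p ;
\]
equivalently one may invoke the maximal form of the decoupling theorem of~\cite{CoxVer2011} directly. Finally, conditionally on $\mathcal{G}_\infty$ the summands $e_n=\gamma_n'v_{n-1}$ are independent and symmetric, so L\'evy's inequality (Lemma~\ref{lem:QBLevy:inequal}), applied conditionally and then integrated, gives $\E\sup_{1\le M\le N}\nnrm{\sum_{n=1}^M e_n}{E}^p\le 2^{1+\frac{p}{r}-p}\,\E\nnrm{\sum_{n=1}^N e_n}{E}^p$, which removes the remaining supremum and establishes~\eqref{eq:gaussian_decoupling}.

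For the growth statement I would track the accumulated constants. The L\'evy factor contributes $2^{\frac{1}{p}+\frac{1}{r}-1}\to 2^{\frac{1}{r}-1}$ as $p\to\infty$, which is harmless. The delicate point is the decoupling constant: rather than relying on the abstract $D_p$, whose growth in $p$ is not controlled a priori, I would use that in the present Gaussian setting the decoupled sum $\sum_n\gamma_n'v_{n-1}$ is conditionally Gaussian given $\mathcal{G}_\infty$, so that the comparison can be carried out with a constant growing at most linearly in $p$---for instance by combining the Gaussian Kahane--Khintchine inequality (Theorem~\ref{thm:QBKahKhin}, whose constants are of the known order) with Gaussian tail bounds. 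Keeping these factors explicit yields $\limsup_{p\to\infty}C_{E,p}/p<\infty$. The maximal upgrade and this constant-tracking are where the work concentrates; the identification of $(d_n)_n$ and $(e_n)_n$ as a tangent pair is routine.
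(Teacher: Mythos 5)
Your proposal is correct, and at the decisive points it coincides with the paper's own proof, which consists entirely of the citation to \cite{CoxVer2011} (Example~1.1, Definition~1.2 and Theorem~4.1): your verification that $(\gamma_n' v_{n-1})_{n\in\N}$ is a $\mathcal{G}_\infty$-decoupled tangent sequence of $(\gamma_n v_{n-1})_{n\in\N}$ is precisely Example~1.1 there, and the maximal upgrade with controlled constants is precisely Theorem~4.1 there, which you allow yourself to invoke just as the paper does. Your observation that the tangent structure survives the predictable cutoffs $\one_{\{n\le\tau_\lambda\}}$ is also exactly the structural fact that makes the Burkholder stopping-time extrapolation of \cite{CoxVer2011} applicable, so the good-$\lambda$ sketch points at the right machinery. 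Two remarks on the parts where you go beyond the citation. First, if you invoke the maximal form of the cited theorem (supremum on the left, plain sum on the right), the conditional L\'evy step is superfluous; it is needed only in your modular variant with suprema on both sides, and there it is correctly justified: under a regular conditional probability given $\mathcal{G}_\infty$ the $e_n$ are independent and symmetric, so Lemma~\ref{lem:QBLevy:inequal} applies conditionally and integrates to the bound with the constant $2^{1+\frac{p}{r}-p}$ you quote. Second, your argument for $\limsup_{p\to\infty}C_{E,p}/p<\infty$ is the weak link: Gaussian Kahane--Khintchine inequalities and Gaussian tails control the $p$-dependence of the decoupled, conditionally Gaussian right-hand side, but they say nothing by themselves about the high moments of the non-decoupled maximal function on the left, which is where the constant lives; in \cite{CoxVer2011} the linear growth is extracted from the stopping-time extrapolation itself (with the Gaussian tail behaviour of the decoupled tail sums feeding into the good-$\lambda$ estimates), not from a Kahane--Khintchine comparison. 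As written, that final paragraph would not stand as a proof, but since you explicitly permit the fallback to the cited theorem---which is the paper's entire proof---this does not constitute a genuine gap.
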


The proposition above allows us to obtain a one-sided Burkholder-Davis-Gundy inequality
for the stochastic integral of an adapted elementary process.

\begin{proposition}\label{prop:BDG_frsp}
Let $E$ satisfy the
decoupling property.
Then for all
$p\in (0,\infty)$ and all adapted elementary processes $\Phi \colon [0,T]\times \Omega \rightarrow \calL(H,E)$
it holds that
\begin{equation}\label{eq:BDG_frsp}
 \E \sgeklam{\sup_{t\in [0,T]}
 \nnrm{\Phi \cdot W_{H}(t)}{E}^p}
 \leq
 C_{E,p}^p
 \E \nnrm{ R_{\Phi}}{\gamma^{p}(L^2(0,T;H),E)}^p,
\end{equation}
where $C_{E,p}$ is the constant introduced in Proposition~\ref{prop:DPgauss}
and $R_{\Phi}$ is defined by~\eqref{eq:frsp_gradon}.
\end{proposition}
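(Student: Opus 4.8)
The plan is to reduce the continuous-time stochastic integral of an adapted elementary process to a discrete Gaussian sum to which the decoupling estimate of Proposition~\ref{prop:DPgauss} applies. Let $\Phi = \sum_{j=1}^J \one_{(t_{j-1},t_j]}\otimes\sum_{k=1}^K h_k\otimes X_{j,k}$ be an adapted elementary process, with $X_{j,k}$ being $\cF_{t_{j-1}}$-measurable. First I would set $g_{j,k} := W_H(\one_{(t_{j-1},t_j]}\otimes h_k)$ and observe, exactly as in the proof of Proposition~\ref{prop:contfunc}, that since the $h_k$ are orthonormal and the intervals $(t_{j-1},t_j]$ are disjoint, the family $(g_{j,k})_{j,k}$ is a (jointly) Gaussian sequence of independent standard real-valued Gaussians, with $g_{j,k}$ being $\cF_{t_j}$-measurable and independent of $\cF_{t_{j-1}}$. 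Thus $\Phi\cdot W_H(t) = \sum_{j,k} W_H(\one_{(t_{j-1},t_j]\cap[0,t]}\otimes h_k)X_{j,k}$, and at the partition points the supremum over $t\in[0,T]$ can be compared with the supremum over partial sums of the double-indexed Gaussian sum.

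Next I would relabel the double index $(j,k)$ into a single index $n$ respecting the filtration: order the pairs so that all pairs with first coordinate $j$ precede those with first coordinate $j+1$, and define $\cG_n$ to be the $\sigma$-algebra making $g$ up to index $n$ measurable (concretely $\cG_n\subseteq\cF_{t_j}$ for the appropriate $j$), so that the relabelled Gaussians $\gamma_n$ satisfy the hypotheses of Proposition~\ref{prop:DPgauss} ($\gamma_n$ is $\cG_n$-measurable and independent of $\cG_{n-1}$) and the relabelled vectors $v_{n-1}:=X_{j,k}$ are $\cG_{n-1}$-adapted because $X_{j,k}$ is $\cF_{t_{j-1}}$-measurable. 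Applying~\eqref{eq:gaussian_decoupling} yields
\[
\E\Big[\sup_{1\leq M\leq N}\ssgnnrm{\sum_{n=1}^M \gamma_n v_{n-1}}{E}^p\Big]
\leq
C_{E,p}^p\,\E\ssgnnrm{\sum_{n=1}^N \gamma_n' v_{n-1}}{E}^p,
\]
where $(\gamma_n')$ is an independent copy of $(\gamma_n)$ independent of $\cG_\infty$. The left-hand side dominates $\E[\sup_{t\in[0,T]}\nnrm{\Phi\cdot W_H(t)}{E}^p]$, because evaluating the integral process at the partition points $t_j$ recovers exactly the partial sums, and on each subinterval the integrand is constant so continuity lets me reduce the continuous supremum to the discrete one (invoking Lemma~\ref{lem:sup_cont_stoch_proc} or the argument of Proposition~\ref{prop:contfunc} on each block if a finer justification of the supremum reduction is needed).

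It then remains to identify the decoupled right-hand side with the $\gamma^p$-norm of $R_\Phi$. Here I would note that conditionally on $\cG_\infty$ the vectors $v_{n-1}$ are fixed and $(\gamma_n')$ is a genuine independent standard Gaussian sequence independent of them; hence, conditioning and using Proposition~\ref{prop:f-r} together with Remark~\ref{rem:stochInt:f-r}\ref{rem:stochInt:f-r:Ito} applied path-wise, the quantity $\E^{\cG_\infty}\nnrm{\sum_n \gamma_n' v_{n-1}}{E}^p$ equals (the $p$-th power of) the stochastic-integral norm of the operator represented path-wise by $\Phi$, which by construction~\eqref{eq:frsp_gradon} is precisely $\nnrm{R_\Phi}{\gamma^p(L^2(0,T;H),E)}^p$. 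Taking full expectation and using the tower property gives the right-hand side of~\eqref{eq:BDG_frsp}. The main obstacle I anticipate is the bookkeeping in the relabelling step: one must choose the linear order on the pairs $(j,k)$ and the intermediate $\sigma$-algebras $\cG_n$ so that simultaneously the adaptedness of $v_{n-1}$, the conditional-independence/measurability of $\gamma_n$, and the identification of the decoupled sum with the path-wise $\gamma$-norm all hold; once the index is ordered primarily by the time index $j$ this is routine, but it is the step where the continuous-time martingale structure must be correctly matched to the discrete decoupling framework of Proposition~\ref{prop:DPgauss}.
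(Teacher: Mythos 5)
Your proposal is correct and is essentially the paper's own argument: the paper's proof simply defers to the argument of \cite[Theorem~5.4(2), pp.~371--372]{CoxVer2011}, whose structure is exactly your discretize--relabel--decouple--identify scheme, with the two-sided equivalence used there replaced by the one-sided estimate of Proposition~\ref{prop:DPgauss}. One inaccuracy to fix: conditionally on $\mathcal{G}_\infty$ the decoupled moment does \emph{not} equal $\nnrm{R_\Phi}{\gamma^p(L^2(0,T;H),E)}^p$ --- by Proposition~\ref{prop:f-r} it is only two-sidedly equivalent to it in the $r$-Banach setting --- but the inequality with constant $1$ (the first inequality in~\eqref{eq:tradinONS}, equivalently the upper bound in~\eqref{eq:Ito:f-r}) goes in precisely the direction you need, so~\eqref{eq:BDG_frsp} with constant $C_{E,p}^p$ is unaffected. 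For the reduction of the continuous supremum to the discrete one, note that the supremum over the original partition points is genuinely smaller than the supremum over $[0,T]$, and a per-block application of Lemma~\ref{lem:sup_cont_stoch_proc} would spoil the constant; the clean route is to refine the partition dyadically (which changes neither $R_\Phi$ nor the law of the decoupled right-hand side, by a conditional covariance computation), apply~\eqref{eq:gaussian_decoupling} at each refinement level, and let the mesh tend to zero using path continuity and Fatou's lemma.
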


\begin{proof}
The proof is entirely analogous to the proof of~\cite[Theorem 5.4 (2)]{CoxVer2011}. More precisely,
we follow the argument on~\cite[pp.\ 371-372]{CoxVer2011}, but replace the
equivalence labeled `$\overset{\textup{(i)}}{\eqsim}$' on~\cite[p.\ 371]{CoxVer2011} by an upper estimate
which follows from Proposition~\ref{prop:DPgauss}.
\end{proof}

\begin{remark}
For the purpose of obtaining the one-sided Burkholder-Davis-Gundy inequality~\eqref{eq:BDG_frsp}
one merely needs the Gaussian decoupling property, i.e., one needs that \eqref{eq:gaussian_decoupling} holds.
The reason we do not directly consider quasi-Banach spaces with this property is that it does not seem possible to prove $p$-independence of this property directly. Note however that results in \cite[Section 6]{CoxGeiss} and \cite{Veraar:2007} prove that---at least in the Banach space setting---Gaussian decoupling is equivalent to Rademacher decoupling, for which the $p$-independence is well-established.
We take~\cite{CoxVer2011} as a starting point because this is already in the quasi-Banach space setting.
\end{remark}

\subsection{\texorpdfstring{$L^p$}{Lp}-stochastic integration}\label{ssec:stoch_int_processes}

With the Burkholder-Davis-Gundy type inequality from Proposition~\ref{prop:BDG_frsp} at hand, we can extend the stochastic integral to the closure in  $L^p(\Omega; \gamma(L^2(0,T;H),E))$ of the space of operators that represent elementary adapted processes.
Before we do so, we want to describe this completion.
Recall that if $F$ is an $r$-Banach space,
then $L^0(\Omega;F)$ denotes the space of $F$-valued random variables.
When endowed with the topology of convergence in probability, $L^0(\Omega;F)$ becomes a complete metric space under the metric
$
  d(f,g) = \E ( \nnrm{ f - g}{F}^r \minsym 1 ).
$
In order to state and prove Lemma~\ref{lemma:approx_frsp} without having to
deal with the case $p=0$ separately, in this section we adopt the convention
\begin{equation}\label{eq:L0_convention}
\nnrm{ f }{L^0(\Omega;F)} := \E\geklam{ \nnrm{ f }{F}^r \minsym 1 }.
\end{equation}

\begin{definition}\label{def:adapted_process}
Let $0\leq p\leq \infty$ and let $R \in L^p(\Omega; \gamma(L^2(0,T;H),E))$. We say that $R$
is \emph{$\F$-adapted}, and write $R \in L^p_{\F}(\Omega; \gamma(L^2(0,T;H),E))$, if for all
$t\in [0,T]$ and all $f\in L^2(0,T;H)$ satisfying $\supp f \subseteq [0,t]$ it holds that
$Rf\in L^p(\Omega;E)$ is (strongly) $\cF_t$-measurable.
\end{definition}

\begin{lemma}\label{lemma:approx_frsp}
Let $0\leq p <\infty$  and let $R\in L^p(\Omega; \gamma(L^2(0,T;H),E))$.
Then the following are equivalent:
\begin{enumerate}[leftmargin=*,align=right,label=\textup{(\roman*)}]
  \item\label{item:adapted} $R\in L^p_{\F}(\Omega; \gamma(L^2(0,T;H),E))$.
  \item\label{item:approx_frsf} There exists a sequence of adapted elementary processes $(\Phi_n)_{n\in \N}$
  such that
  $\lim_{n\rightarrow \infty} \| R - R_{\Phi_n} \|_{L^p(\Omega;\gamma(L^2(0,T;H),E))} =0$.
 \end{enumerate}
\end{lemma}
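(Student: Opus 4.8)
The plan is to prove the two implications separately, recording the easy direction first and investing the real work in \ref{item:adapted}$\Rightarrow$\ref{item:approx_frsf}. For \ref{item:approx_frsf}$\Rightarrow$\ref{item:adapted} I would first note that each $R_\Phi$ coming from an adapted elementary process $\Phi=\sum_{j,k}\one_{(t_{j-1},t_j]}\otimes h_k\otimes X_{j,k}$ is itself $\F$-adapted: for $f$ with $\supp f\subseteq[0,t]$ one has $R_\Phi f=\sum_{j,k}X_{j,k}\int_{(t_{j-1},t_j]\cap[0,t]}\langle f(s),h_k\rangle_H\,\mathrm ds$, a sum of products of an $\cF_{t_{j-1}}\subseteq\cF_t$-measurable random variable with a deterministic scalar, hence $\cF_t$-measurable. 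Moreover $L^p_\F(\Omega;\gamma(L^2(0,T;H),E))$ is closed in $L^p(\Omega;\gamma(L^2(0,T;H),E))$, because evaluation $S\mapsto Sf$ is bounded from $\gamma(L^2(0,T;H),E)$ to $E$ (as $\nnrm{Sf}{E}\le\nnrm{S}{\gamma(L^2(0,T;H),E)}\nnrm{f}{L^2(0,T;H)}$), so $R_{\Phi_n}f\to Rf$ in $L^p(\Omega;E)$ and, for $\supp f\subseteq[0,t]$, the $\cF_t$-measurability passes to the limit. This yields the implication and shows the closure of the elementary integrands is contained in $L^p_\F$.

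For the converse the key idea is to realise the adaptedness \emph{inside} the Hilbert space $L^2(0,T;H)$, thereby avoiding any conditional expectation on $E$ (which does not exist). Fix the uniform partition $t^J_j=jT/J$ with intervals $I^J_j$, let $(h_k)_{k\in\N}$ be an orthonormal basis of $H$ with coordinate projections $Q_N$, and let $\widetilde Q_N\in\calL(L^2(0,T;H))$ act pointwise by $Q_N$. Define the averaging-and-shift operator $\pi_J\in\calL(L^2(0,T;H))$ by $(\pi_J f)|_{I^J_j}=\tfrac{1}{|I^J_{j-1}|}\int_{I^J_{j-1}}f$ for $j\ge2$ and $(\pi_J f)|_{I^J_1}=0$; it is the composition of the orthogonal averaging projection onto functions constant on the $I^J_j$ with the one-block right shift. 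Then $R\widetilde Q_N\pi_J$ is represented by the finite rank step function with coefficients $X^{N,J}_{j,k}=\tfrac{1}{|I^J_{j-1}|}R(\one_{I^J_{j-1}}\otimes h_k)$, $1\le k\le N$, and since $\one_{I^J_{j-1}}$ is supported in $[0,t^J_{j-1}]$ the adaptedness of $R$ forces each $X^{N,J}_{j,k}$ to be $\cF_{t^J_{j-1}}$-measurable. The right shift is exactly what turns the $\cF_{t^J_j}$-measurability one would naively obtain into the required $\cF_{t^J_{j-1}}$-measurability.

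It then remains to show $R\widetilde Q_N\pi_J\to R$ in $L^p(\Omega;\gamma(L^2(0,T;H),E))$ as $N,J\to\infty$ and to replace the coefficients by simple random variables. Both $\widetilde Q_N$ and $\pi_J$, together with their adjoints, converge strongly to the identity on $L^2(0,T;H)$ (for $\pi_J$ this combines uniform continuity of translation with the $L^2$-convergence of the averaging projections), so for a.e.\ fixed $\omega$ the deterministic convergence theorem for $\gamma$-norms, Theorem~\ref{thm:ga-conv}, gives $R(\omega)\widetilde Q_{N_n}\pi_{J_n}\to R(\omega)$ in $\gamma(L^2(0,T;H),E)$ along a diagonal sequence $N_n,J_n\to\infty$. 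The ideal property (Theorem~\ref{thm:ideal}) yields the domination $\nnrm{R\widetilde Q_{N_n}\pi_{J_n}}{\gamma(L^2(0,T;H),E)}\lesssim\nnrm{R}{\gamma(L^2(0,T;H),E)}$, so dominated convergence upgrades this to convergence in $L^p$ (for $p=0$ one works directly with the bounded metric~\eqref{eq:L0_convention}). Finally, each $X^{N_n,J_n}_{j,k}\in L^p(\Omega,\cF_{t_{j-1}};E)$ is strongly $\cF_{t_{j-1}}$-measurable, hence approximable in $L^p(\Omega;E)$ by $\cF_{t_{j-1}}$-measurable simple random variables; since only finitely many coefficients occur, Proposition~\ref{prop:f-r} bounds the resulting change of $R\widetilde Q_{N_n}\pi_{J_n}$ in $\gamma$-norm by the $L^p(\Omega;E)$-errors of the coefficients, producing genuine adapted elementary processes $\Phi_n$ with $R_{\Phi_n}\to R$.

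I expect the main obstacle to be the construction in the second paragraph: because $E$ carries no conditional expectation, one cannot simply project $R$ onto the adapted operators, and the entire adaptedness mechanism has to be transported into the Hilbert space factor through the operator $\pi_J$. The crux is checking that a single operator can simultaneously perform the time discretisation (so that $R\widetilde Q_N\pi_J$ is genuinely a step function), enforce predictability via the one-block shift (so that the coefficients are $\cF_{t_{j-1}}$- rather than $\cF_{t_j}$-measurable), and still satisfy the strong convergence $\pi_J,\pi_J^*\to\mathrm{Id}$ demanded by Theorem~\ref{thm:ga-conv}. Once this is in place, the remaining estimates are routine consequences of the ideal property, Proposition~\ref{prop:f-r}, and dominated convergence.
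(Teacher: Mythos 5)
Your overall strategy coincides with the paper's: the implication \ref{item:approx_frsf}$\Rightarrow$\ref{item:adapted} is handled identically, and for \ref{item:adapted}$\Rightarrow$\ref{item:approx_frsf} the paper likewise restores predictability by composing $R$ on the right with a shift (the left shift $S_\eta f(t)=f(t+\eta)$ with $\eta = 2^{-K}$, followed by the Haar and coordinate projections $P_{2^L}\otimes Q_{2^L}$ with $L\geq K$), and then replaces the resulting $\cF_{t_{j-1}}$-measurable coefficients by simple random variables via Proposition~\ref{prop:f-r}. Your variant merges the shift and the time-averaging into a single operator whose shift width equals one block; that is fine in principle.

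However, there is a concrete error in your key step: your $\pi_J$ shifts in the wrong direction, so the claimed identification of $R\widetilde Q_N\pi_J$ with an adapted step function fails. Indeed, since $(\pi_Jf)|_{I_j}=\frac{1}{|I_{j-1}|}\int_{I_{j-1}}f$, one computes
\[
R\widetilde Q_N\pi_J f
=\sum_{j\ge 2}\sum_{k\le N}\Big(\tfrac{1}{|I_{j-1}|}\int_{I_{j-1}}\langle f(s),h_k\rangle_H\,\mathrm{d}s\Big)\,R(\one_{I_j}\otimes h_k),
\]
so the step function representing $R\widetilde Q_N\pi_J$ carries, on the block $I_{j-1}$, the coefficient $\tfrac{1}{|I_{j-1}|}R(\one_{I_j}\otimes h_k)$: this involves $R$ tested against the \emph{future} block $I_j$ and is in general only $\cF_{t_j}$-measurable, i.e., the resulting elementary process is anticipating rather than adapted. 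What you need is the \emph{adjoint}: $R\widetilde Q_N\pi_J^*$ (equivalently, define the averaging-and-shift operator using the one-block \emph{left} shift, $(\pi_J f)|_{I_j}=\frac{1}{|I_{j+1}|}\int_{I_{j+1}}f$), which is represented by the step function carrying the coefficient $\tfrac{1}{|I_{j-1}|}R(\one_{I_{j-1}}\otimes h_k)$ on the block $I_j$; these coefficients are $\cF_{t_{j-1}}$-measurable precisely because $R$ is adapted and $\one_{I_{j-1}}$ is supported in $[0,t_{j-1}]$. This is exactly the paper's mechanism: there $R$ is composed with the left shift, so that $RS_\eta(\one_{(a,b]}\otimes h)=R(\one_{(a-\eta,b-\eta]\cap[0,1]}\otimes h)$ looks into the past. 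The repair costs nothing in the rest of your argument: you already demand strong convergence of both $\pi_J$ and $\pi_J^*$ to the identity, so Theorem~\ref{thm:ga-conv}, the domination via the ideal property (Theorem~\ref{thm:ideal}) together with dominated convergence, and the coefficient approximation via Proposition~\ref{prop:f-r} all go through unchanged with $\pi_J^*$ in place of $\pi_J$.
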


\begin{proof}

`\ref{item:approx_frsf}$\Rightarrow$\ref{item:adapted}':
The desired implication follows immediately by observing that for all $t\in [0,T]$ and $f\in L^2(0,T;H)$
satisfying $\supp f\subseteq [0,t]$ it holds that
$R_{\Phi_n} f$ is an $E$-valued, $\cF_{t}$-measurable simple function and
$$\lim_{n\rightarrow \infty} \| R f - R_{\Phi_n} f  \|_{L^p(\Omega;E)} = 0 .$$  \par

`\ref{item:adapted}$\Rightarrow$\ref{item:approx_frsf}':
By rescaling we may assume $T=1$.
Let $R\in L^p_{\F}(\Omega;\gamma(L^2(0,1;H),E))$ and $\eps>0$ be given.\par
For $\eta \in [0,1]$
let $S_{\eta}\in \calL(L^2(0,1;H))$ be the left shift, i.e., for $f\in L^2(0,1;H)$ we define
\begin{equation*}
S_{\eta}f(t) =  \begin{cases}  f(t+\eta),& t\in [0,1-\eta]; \\ 0,& t \in (1-\eta,1]. \end{cases}
\end{equation*}
Note that $ S_{\eta}^*f  \rightarrow f $  in $L^2(0,1;H)$ as $\eta \downarrow 0$
for all $f\in L^2(0,1;H)$,
and that $\sup_{\eta\in [0,1]}\| S_{\eta} \|_{\calL(L^2(0,1;H))} =1$.
If $p=0$ then Theorem~\ref{thm:ga-conv} implies
$
\lim_{\eta\downarrow 0} \| R - R S_{\eta} \|_{L^p(\Omega;\gamma(L^2(0,1;H),E))} = 0.
$
The same conclusion holds for $p\in (0,\infty)$ by invoking Theorem~\ref{thm:ga-conv},
Theorem~\ref{thm:ideal}, and the dominated convergence theorem.
Hence we can pick $K \in \N$ such that
\begin{equation}\label{eq:approx_frsf_h1}
 \left\| R - R S_{2^{-K}} \right\|_{L^p(\Omega; \gamma(L^2(0,1;H),E))} < \eps.
\end{equation}

Now let $(h_k)_{k\in \N}$ be an orthonormal basis for $H$, and for all $k\in \N$ let $Q_k\in \calL(H)$ be
the orthogonal projection on $\operatorname{span}(\{ h_1,\ldots,h_{k} \})$.
Let $(f_k)_{k\in \N}$ be the Haar basis for $L^2(0,1)$, and for $k\in \N$
let $P_k \in \calL(L^2(0,1))$
be the orthogonal projection on $\operatorname{span}(\{ f_1,\ldots, f_{k}\})$.
Note that
\begin{equation}\label{eq:Haar}
P_{2^k} f
=
\sum_{j=1}^{2^k} 2^{k} \langle \one_{((j-1)2^{- k}, j2^{- k}]}, f\rangle_{L^2(0,1)} \one_{((j-1)2^{- k}, j2^{- k}]},\quad \text{for all } f\in L^2(0,1).
\end{equation}

%
By Example~\ref{ex:g-approx} we have
$\lim_{L\rightarrow \infty}
  \|
    R S_{2^{-K}}(P_{L} \otimes Q_{L})
    -
    R S_{2^{-K}}
\|_{\gamma(L^2(0,1;H),E)} = 0$ $\P$-a.s.
It follows from the above that for $p=0$
there exists an $L\geq K$
such that
\begin{equation}\label{eq:approx_frsf_h2}
\left\|
  R S_{2^{-K}} - R S_{2^{-K}} (P_{2^{L}} \otimes Q_{2^{L}})
\right\|_{L^p(\Omega;\gamma(L^2(0,1;H),E))}
< \eps.
\end{equation}
The same conclusion is obtained for  $p\in (0,\infty)$ by invoking Theorem~\ref{thm:ideal} and the dominated convergence theorem.
Now define $R_{K,L}= R S_{2^{-K}} (P_{2^{L}} \otimes Q_{2^{L}})$
and observe that $R_{K,L}$ is represented by $\Phi_{K,L}\colon [0,1]\times \Omega \rightarrow \cL(H,E)$
in the sense of Definition~\ref{eq:frsp_gradon}, where $\Phi_{K,L}$
is given by
\begin{align*}
\Phi_{K,L}
& =
\sum_{j=1}^{2^{L}} \sum_{\ell=1}^{2^{L}}
  \one_{((j-1)2^{-L},j  2^{-L}]} \otimes h_{\ell} \otimes X_{j,\ell}^{(K,L)},
\\ \text{with} \quad
X_{j,\ell}^{(K,L)}
& =
2^{L} R S_{2^{-K}} ( \one_{((j-1)2^{-L},j  2^{-L}]} \otimes h_{\ell} )
\\ &
=
2^L R ( \one_{((j-1)2^{-L}-2^{-K},j  2^{-L}-2^{-K}]\cap[0,1]} \otimes h_{\ell})
.
\end{align*}
As
$R\in L^p_{\F}(\Omega;\gamma(L^2(0,1;H),E))$
and $L\geq K$ we have $ X_{j,\ell}^{(K,L)} \in L^p(\Omega,\cF_{(j-1)2^{-L}};E)$.

As a final step, note that if $Y_{j,\ell} \in L^p(\Omega,\cF_{(j-1)2^{-L}},\P;E)$,
$j,\ell \in \{1,\ldots,2^L\}$, and if $R_{K,L,Y}\in L^p(\Omega;\gamma(L^2(0,1;H);E))$
is represented by
\begin{equation}
\Phi_{K,L,Y}
=
\sum_{k=1}^{2^{L}}
  \sum_{\ell=1}^{2^{L}}
    \one_{((j-1)2^{-L},j  2^{-L}]}
    \otimes h_{\ell}
    \otimes Y_{j,\ell},
\end{equation}
then by Estimate~\eqref{eq:tradinONS} in Proposition~\ref{prop:f-r} we obtain
that
\begin{equation*}
\| R_{K,L} - R_{\Phi_{K,L,Y}} \|_{L^p(\Omega;\gamma(L^2(0,1;H);E))}
\eqsim_{p,r}
\Bigg\|
  \sum_{j=1}^{2^L}\sum_{\ell=1}^{2^L}
    2^{-L/2}\gamma_{j,\ell} (X_{j,\ell} - Y_{j,\ell})
\Bigg\|_{L^p(\Omega;E)},
\end{equation*}
where $(\gamma_{j,\ell})_{j,\ell\in \{1,\ldots,2^L\}}$ is a Gaussian sequence on $(\Omega,\cF,\P)$
independent of $\cF_{\infty}:=\sigma(\bigcup_{t\in [0,\infty)} \cF_t)$.
It follows that we can pick $\cF_{(j-1)2^{-L}}$-measurable simple random variables $Y_{j,\ell}$,
$k,\ell\in \{1,\ldots,2^L\}$, such that
\begin{equation}\label{eq:approx_frsf_h3}
\|  R_{K,L} - R_{K,L,Y} \|_{L^p(\Omega;\gamma(L^2(0,1;H);E)) }<\eps.
\end{equation}
Note that $\Phi_{K,L,Y}$ is an adapted elementary process in the sense of Definition~\ref{def:frsp}.
Estimates~\eqref{eq:approx_frsf_h1}, \eqref{eq:approx_frsf_h2}, and~\eqref{eq:approx_frsf_h3}
yield $\| R - R_{K,L,Y}S \|_{L^0(\Omega;\gamma(L^2(0,1;H),E))} < 3 \eps$,
and $\| R - R_{K,L,Y}S \|_{L^p(\Omega;\gamma(L^2(0,1;H),E))} < 3^{\frac{1}{r\minsym p}} \eps$
for all $p\in (0,\infty)$.
\end{proof}

Now all ingredients are available to extend the stochastic integral to integrands in $L^p_{\F}(\Omega;\gamma(L^2(0,T;H),E))$ for $0<p<\infty$. The following theorem is an immediate consequence of Proposition~\ref{prop:BDG_frsp} and Lemma~\ref{lemma:approx_frsp}.

\begin{theorem}\label{thm:stochInt_BDG}
Let $0<p<\infty$, let $E$ satisfy the decoupling property,
and let $R\in L^p_{\F}(\Omega;\gamma(L^2(0,T;H),E))$. Then
there exists a unique continuous $E$-valued $\F$-adapted stochastic process
$(\Psi(t))_{t\in [0,T]} $ such that for every sequence of adapted elementary processes $(\Phi_n)_{n\in \N}$ satisfying
\begin{equation*}
\lim_{n\rightarrow \infty} \E \left\|  R_{\Phi_n} - R \right\|_{\gamma(L^2(0,T;H),E)}^p =0
\end{equation*}
it holds that
\begin{equation*}
\lim_{n\rightarrow \infty} \left\| \Phi_n\cdot W_{H} - \Psi\right\|_{L^p(\Omega;\cont([0,T];E))} = 0.
\end{equation*}
We use the notation $R\cdot W_{H} = (R\cdot W_{H}(t))_{t\in [0,T]}$ for $\Psi$ and call this process the \emph{stochastic integral of $R$ with
respect to $W_{H}$}.  It holds that
\begin{equation}\label{eq:BDG}
 \E \left\| R\cdot W_H \right\|_{\cont([0,T];E)}^p
 \leq C_{E,p}^p 
 \E \left\| R \right\|_{\gamma(L^2(0,T;H),E)}^p,
\end{equation}
where $C_{E,p}$ is as in Proposition~\ref{prop:DPgauss}.

\end{theorem}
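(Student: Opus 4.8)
The plan is to carry out the standard completion argument, using Lemma~\ref{lemma:approx_frsp} to supply approximating sequences and the one-sided Burkholder--Davis--Gundy inequality of Proposition~\ref{prop:BDG_frsp} to transfer convergence of integrands into convergence of integral processes. First I would fix $R\in L^p_{\F}(\Omega;\gamma(L^2(0,T;H),E))$ and invoke the implication \ref{item:adapted}$\Rightarrow$\ref{item:approx_frsf} of Lemma~\ref{lemma:approx_frsp} to obtain at least one sequence $(\Phi_n)_{n\in\N}$ of adapted elementary processes with $R_{\Phi_n}\to R$ in $L^p(\Omega;\gamma(L^2(0,T;H),E))$. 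Since $\Phi_n-\Phi_m$ is again an adapted elementary process representing $R_{\Phi_n}-R_{\Phi_m}=R_{\Phi_n-\Phi_m}$, applying~\eqref{eq:BDG_frsp} to the difference yields
\[
\nnrm{\Phi_n\cdot W_H-\Phi_m\cdot W_H}{L^p(\Omega;\cont([0,T];E))}^p
\leq
C_{E,p}^p\,\E\nnrm{R_{\Phi_n}-R_{\Phi_m}}{\gamma^p(L^2(0,T;H),E)}^p,
\]
which tends to $0$ because $(R_{\Phi_n})$ is Cauchy in $L^p(\Omega;\gamma)$ and the $\gamma^p$- and $\gamma$-norms are equivalent by~\eqref{eq:equiv_gamma_p}. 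As $\cont([0,T];E)$ is an $r$-Banach space, $L^p(\Omega;\cont([0,T];E))$ is a complete $\min\{p,r\}$-Banach space, so I may define $\Psi$ as the limit of $(\Phi_n\cdot W_H)$ there.

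Next I would verify that $\Psi$ is independent of the approximating sequence, which simultaneously gives the asserted universal property and uniqueness. If $(\tilde\Phi_n)$ is another sequence of adapted elementary processes with $R_{\tilde\Phi_n}\to R$, then $\Phi_n-\tilde\Phi_n$ is adapted elementary with $R_{\Phi_n-\tilde\Phi_n}\to 0$ in $L^p(\Omega;\gamma)$, so a further application of~\eqref{eq:BDG_frsp} forces $\tilde\Phi_n\cdot W_H$ to converge to the same $\Psi$; uniqueness of the process with the stated property is then immediate from uniqueness of limits in the metric space $L^p(\Omega;\cont([0,T];E))$. The limit $\Psi$ is continuous because it belongs to $L^p(\Omega;\cont([0,T];E))$, and it is $\F$-adapted because for each $t$ the variable $\Psi(t)$ is the $L^p(\Omega;E)$-limit of the $\cF_t$-measurable variables $\Phi_n\cdot W_H(t)$ (cf.\ Remark~\ref{remark:stoch_int_frsp_properties}), the normality of the filtration ensuring that null sets cause no difficulty.

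Finally I would obtain~\eqref{eq:BDG} by passing to the limit: applying~\eqref{eq:BDG_frsp} to each $\Phi_n$ gives $\E\nnrm{\Phi_n\cdot W_H}{\cont([0,T];E)}^p\leq C_{E,p}^p\,\E\nnrm{R_{\Phi_n}}{\gamma^p(L^2(0,T;H),E)}^p$, and since $\Phi_n\cdot W_H\to R\cdot W_H$ in $L^p(\Omega;\cont([0,T];E))$ while $R_{\Phi_n}\to R$ in $L^p(\Omega;\gamma)$ (hence in $L^p(\Omega;\gamma^p)$ by~\eqref{eq:equiv_gamma_p}), continuity of the respective $\min\{p,r\}$-quasi-norms together with continuity of $x\mapsto x^p$ lets me take $n\to\infty$ on both sides. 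I do not expect a deep obstacle, as the theorem is essentially a completion statement; the points requiring genuine care are the two routine-but-essential observations that differences of adapted elementary processes are again adapted elementary (so that the \emph{one-sided} estimate~\eqref{eq:BDG_frsp} may legitimately be applied to them), and that every passage to the limit in the regime $0<p<\infty$, $0<r\leq 1$ must be justified through the $\min\{p,r\}$-Banach space structure rather than through the (unavailable) triangle inequality.
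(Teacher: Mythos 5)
Your proposal is correct and follows exactly the route the paper intends: the paper's own proof consists of the single remark that the theorem is an immediate consequence of Proposition~\ref{prop:BDG_frsp} and Lemma~\ref{lemma:approx_frsp}, and your argument is precisely the spelled-out completion argument behind that remark (approximation by adapted elementary processes, the one-sided estimate applied to differences, completeness of $L^p(\Omega;\cont([0,T];E))$ as a $\min\{p,r\}$-Banach space, and passage to the limit for~\eqref{eq:BDG}). The two points you flag as needing care---that differences of adapted elementary processes are again adapted elementary, and that all limits are taken in the $\min\{p,r\}$-norm rather than via a triangle inequality---are indeed the right ones.
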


\subsection{Localization}\label{ssec:loc}

We now turn to the localized case. To this end, we need to introduce `stopped'
elements of $L^0_{\F}(\Omega; \gamma(L^2(0,T;H),E))$. Hence we define,
for $t\in [0,T]$, the operator $P_t \in \calL(L^2(0,T;H))$ by $P_t(f) = f\one_{[0,t]}$, $f\in L^2(0,T;H)$.

\begin{lemma}\label{lemma:stopped_int}
Let $0\leq p<\infty$, let $E$ satisfy the decoupling property,
let $\tau \colon \Omega \rightarrow [0,T]$ be an $\F$-stopping time,
and let $R \in L^{p}_{\F}(\Omega, \gamma(L^2(0,T;H),E))$. Then
$R P_{\tau} \in L^{p}_{\F}(\Omega, \gamma(L^2(0,T;H),E))$. Moreover, if $p>0$
then
\begin{equation}\label{eq:stopped_int}
 R\cdot W_H (\tau) = R P_{\tau} \cdot W_H (T)\quad\text{in }L^p(\Omega;E).
\end{equation}
\end{lemma}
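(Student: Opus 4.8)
The plan is to first establish the structural claim $R P_\tau \in L^p_{\F}(\Omega;\gamma(L^2(0,T;H),E))$ and then to prove the identity~\eqref{eq:stopped_int} by approximation, reducing everything to adapted elementary processes integrated against a simple stopping time.

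For the structural claim, observe that for each fixed $\omega$ the operator $P_{\tau(\omega)}$ is the orthogonal projection $f\mapsto f\one_{[0,\tau(\omega)]}$ on $L^2(0,T;H)$, so that $\nnrm{P_{\tau(\omega)}}{\calL(L^2(0,T;H))}\leq 1$. The ideal property (Theorem~\ref{thm:ideal}) then yields $R(\omega)P_{\tau(\omega)}\in\gamma(L^2(0,T;H),E)$ with $\gnnrm{R(\omega)P_{\tau(\omega)}}{\gamma(L^2(0,T;H),E)}\leq 2^{\frac{1-r}{r}}\gnnrm{R(\omega)}{\gamma(L^2(0,T;H),E)}$, which already gives $RP_\tau\in L^p(\Omega;\gamma(L^2(0,T;H),E))$ once measurability is known. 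To obtain strong measurability I would approximate $\tau$ from above by simple stopping times $\tau_n\downarrow\tau$, e.g.\ $\tau_n=\lceil 2^n\tau/T\rceil\,2^{-n}T$. Each $RP_{\tau_n}=\sum_i\one_{\{\tau_n=s_i\}}RP_{s_i}$ is strongly measurable, since right multiplication by the fixed bounded operator $P_{s_i}$ is continuous on $\gamma(L^2(0,T;H),E)$ and hence preserves strong measurability. As $\one_{[0,\tau_n(\omega)]}\to\one_{[0,\tau(\omega)]}$ pointwise on $[0,T]$, we have $P_{\tau_n(\omega)}f\to P_{\tau(\omega)}f$ in $L^2(0,T;H)$ for every $f$, and since these projections are self-adjoint, Theorem~\ref{thm:ga-conv} (applied pathwise with $U_n=U=\mathrm{Id}$ and $V_n=P_{\tau_n(\omega)}$) gives $R(\omega)P_{\tau_n(\omega)}\to R(\omega)P_{\tau(\omega)}$ in $\gamma(L^2(0,T;H),E)$ for every $\omega$; thus $RP_\tau$ is a pointwise limit of strongly measurable maps, hence strongly measurable. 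Adaptedness is checked directly on $RP_{\tau_n}$: for $\supp f\subseteq[0,t]$ one writes $(RP_{\tau_n})f=\sum_i\one_{\{\tau_n=s_i\}}R(f\one_{[0,s_i]})$, uses $f\one_{[0,s_i]}=f$ when $s_i\geq t$ to collect the $\cF_t$-measurable term $\one_{\{\tau_n\geq t\}}Rf$, and uses adaptedness of $R$ together with $\{\tau_n=s_i\}\in\cF_{s_i}$ for $s_i<t$; passing to the limit in $f$ (evaluation at $f$ being continuous on $\gamma$) shows $(RP_\tau)f$ is $\cF_t$-measurable.

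For the identity, the base case is that of an adapted elementary process $\Phi$ as in Definition~\ref{def:frsp} and a simple stopping time $\tau$ taking values in the grid $0=t_0<\dots<t_J=T$ of $\Phi$, which can always be arranged by refining the grid. On $(t_{j-1},t_j]$ one has $\one_{[0,\tau]}=\one_{\{\tau\geq t_j\}}=\one_{\{\tau>t_{j-1}\}}$, so $R_\Phi P_\tau$ is represented by the adapted elementary process $\sum_{j,k}\one_{(t_{j-1},t_j]}\otimes h_k\otimes(\one_{\{\tau\geq t_j\}}X_{j,k})$, and Definition~\ref{def:stoch_int_frsp} gives
\[
R_\Phi P_\tau\cdot W_H(T)=\sum_{j,k}\one_{\{\tau\geq t_j\}}\,W_H(\one_{(t_{j-1},t_j]}\otimes h_k)\,X_{j,k}.
\]
On the other hand, evaluating the continuous process $R_\Phi\cdot W_H$ at $\tau$ and using $\one_{\{t_j\leq\tau\}}=\one_{\{\tau\geq t_j\}}$ yields exactly the same expression, so~\eqref{eq:stopped_int} holds pathwise in this case.

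Next I would fix a simple stopping time $\tau$ and a general $R\in L^p_{\F}$, choose adapted elementary processes $\Phi_m$ with $R_{\Phi_m}\to R$ in $L^p(\Omega;\gamma(L^2(0,T;H),E))$ (Lemma~\ref{lemma:approx_frsp}), refining each grid to contain the values of $\tau$. The base case gives $R_{\Phi_m}\cdot W_H(\tau)=R_{\Phi_m}P_\tau\cdot W_H(T)$, and I pass to the limit: the left-hand sides converge in $L^p(\Omega;E)$ since $\E\nnrm{R_{\Phi_m}\cdot W_H(\tau)-R\cdot W_H(\tau)}{E}^p\leq\E\nnrm{(R_{\Phi_m}-R)\cdot W_H}{\cont([0,T];E)}^p\to0$ by~\eqref{eq:BDG}, while the right-hand sides converge because $\nnrm{(R_{\Phi_m}-R)P_\tau}{L^p(\Omega;\gamma(L^2(0,T;H),E))}\leq 2^{\frac{1-r}{r}}\nnrm{R_{\Phi_m}-R}{L^p(\Omega;\gamma(L^2(0,T;H),E))}\to0$ by the ideal property, followed by another application of~\eqref{eq:BDG}. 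Hence~\eqref{eq:stopped_int} holds for all $R\in L^p_{\F}$ and all simple $\tau$. Finally, for a general stopping time $\tau$ I take simple $\tau_n\downarrow\tau$: the left-hand side $R\cdot W_H(\tau_n)$ converges in $L^p(\Omega;E)$ by path-continuity of $R\cdot W_H$ together with dominated convergence, the dominating function $\sup_{t}\nnrm{R\cdot W_H(t)}{E}$ lying in $L^p$ by~\eqref{eq:BDG}; and the right-hand side converges because $RP_{\tau_n}\to RP_\tau$ in $L^p(\Omega;\gamma(L^2(0,T;H),E))$, using the pathwise convergence from Theorem~\ref{thm:ga-conv} and dominated convergence (with $p$-th powers dominated by $2^{\frac{(1-r)p}{r}}\nnrm{R}{\gamma(L^2(0,T;H),E)}^p$), followed once more by~\eqref{eq:BDG}. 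I expect the main obstacle to be the strong measurability and adaptedness of the random operator $RP_\tau$, since $P_\tau$ is operator-valued and the operator space is non-separable; the approximation by simple stopping times combined with Theorem~\ref{thm:ga-conv} is precisely what circumvents this.
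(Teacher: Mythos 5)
Your proof is correct, and its skeleton (elementary base case, then approximation via Lemma~\ref{lemma:approx_frsp}, the ideal property of Theorem~\ref{thm:ideal}, and the Burkholder--Davis--Gundy estimate~\eqref{eq:BDG}) matches the paper's. The genuine difference is in how the stopping time is handled. The paper keeps a \emph{general} stopping time $\tau$ fixed throughout: for an adapted elementary process it disposes of the identity~\eqref{eq:stopped_int} by appealing to classical \Ito\ integration theory (the elementary integral is a finite sum of classical one-dimensional stochastic integrals, for which stopping is standard), and then approximates only in $R$; the structural claim $RP_\tau\in L^p_{\F}$ is dismissed as trivial for elementary $R$ and inherited in the limit from the closedness of $L^p_{\F}$. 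You instead discretize $\tau$: your base case is elementary $\Phi$ against a \emph{simple} stopping time aligned with the grid, where~\eqref{eq:stopped_int} becomes a pathwise algebraic identity requiring no stochastic input at all, and you then run two limiting arguments---first in $R$ (with simple $\tau$), then in $\tau$ (via $\tau_n\downarrow\tau$, path continuity of $R\cdot W_H$, Theorem~\ref{thm:ga-conv} applied pathwise to $P_{\tau_n}\to P_\tau$, and dominated convergence). What your route buys is self-containedness---no black-box appeal to classical stopping theorems---and, as a by-product, a genuinely detailed proof of the strong measurability and adaptedness of $RP_\tau$ (via $RP_{\tau_n}=\sum_i\one_{\{\tau_n=s_i\}}RP_{s_i}$ and pathwise $\gamma$-convergence), a point the paper glosses over even though, as you correctly note, it is not automatic because $\omega\mapsto P_{\tau(\omega)}$ takes values in the non-separable space $\calL(L^2(0,T;H))$. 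The cost is the extra approximation layer in $\tau$, which the paper's appeal to classical theory renders unnecessary.
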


\begin{proof}
If $R$ is represented by an adapted elementary process,
then it follows trivially from the fact that $\tau$ is a stopping time, Theorem~\ref{thm:ideal},
and the fact that $\sup_{t\in [0,T]} \| P_t \|_{\calL(L^2(0,T;H))} =1$ that
$R P_{\tau} \in L^{p}_{\F}(\Omega, \gamma(L^2(0,T;H),E))$. Moreover, identity~\eqref{eq:stopped_int}
follows from `classical' \Ito\ integration theory.

The general case is obtained by considering an approximating sequence of adapted elementary processes $(\Phi_n)_{n\in \N}$,
see Lemma~\ref{lemma:approx_frsp}, and
noting that if
$$
  \lim_{n\rightarrow \infty}
    \| R - R_{\Phi_n} \|_{L^p(\Omega;\gamma(L^2(0,T;H),E))}
  = 0,
$$
then by Theorem ~\ref{thm:ideal}
also
$$
\lim_{n\rightarrow \infty}
  \| R P_{\tau} - R_{\Phi_n} P_{\tau} \|_{L^p(\Omega;\gamma(L^2(0,T;H),E))}
= 0.
$$
Finally, if $p>0$, then by Theorem~\ref{thm:stochInt_BDG} it holds that
$$
  \lim_{n\rightarrow \infty}
    \E \| R \cdot W_H - R_{\Phi_n} \cdot W_H \|_{\cont([0,T];E)}^p
  =
  \lim_{n\rightarrow \infty}
    \E \| R P_{\tau} \cdot W_H - R_{\Phi_n} P_{\tau} \cdot W_H \|_{\cont([0,T];E)}^p
  =0.
$$
Recalling that Identity~\eqref{eq:stopped_int} holds for $R_{\Phi_n}$, we conclude that it also holds for $R$.
\end{proof}

%


The following lemma is the key ingredient that enables us to define a localized version of the stochastic integral. For $r=1$ it can be found in~\cite[Lemma 5.4]{NeeVerWei2007}.

\begin{lemma}\label{lemma:conv_probability}
Let $0<p<\infty$, let $E$ satisfy the decoupling property,
and let $R\in L^p_{\F}(\Omega; \gamma(L^2(0,T;H),E))$. Then for all $\eps,\delta \in (0,\infty)$
it holds that
\begin{equation*}
 \P\grklam{ \| R \cdot W_H \|_{\cont([0,T];E)} > \eps }
 \leq
 \frac{ C_{E,p}^p \eps^p}{\delta^p}
+ \P\grklam{ \| R \|_{L^p(\Omega; \gamma(L^2(0,T;H);E))} > \delta},
\end{equation*}
where $C_{E,p}$ is the constant in Theorem~\ref{thm:stochInt_BDG}.
\end{lemma}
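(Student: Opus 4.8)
The plan is to localise the one-sided Burkholder--Davis--Gundy estimate of Theorem~\ref{thm:stochInt_BDG} by a stopping time that keeps the $\gamma$-norm of the truncated integrand at the level $\delta$. I would work with the process $t\mapsto\|RP_t\|_{\gamma(L^2(0,T;H),E)}$ and set
\[
\tau:=\inf\bigl\{t\in[0,T]\colon \|RP_t\|_{\gamma(L^2(0,T;H),E)}>\delta\bigr\}\wedge T .
\]
First I would argue that $\tau$ is a genuine $\F$-stopping time. Since $P_t^*f=f\one_{[0,t]}\to P_s^*f$ in $L^2(0,T;H)$ as $t\to s$, Theorem~\ref{thm:ga-conv} gives that $t\mapsto RP_t$ is continuous in $\gamma(L^2(0,T;H),E)$, so the path $t\mapsto\|RP_t\|_{\gamma}$ is continuous, while $\F$-adaptedness of $R$ makes $\|RP_t\|_{\gamma}$ $\cF_t$-measurable. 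Continuity together with $P_0=0$ then forces $\|RP_\tau\|_{\gamma(L^2(0,T;H),E)}\le\delta$ almost surely.

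Next I would split the event along $\tau$. By Lemma~\ref{lemma:stopped_int} the operator $RP_\tau$ again lies in $L^p_\F(\Omega;\gamma(L^2(0,T;H),E))$, and applying that lemma with the stopping time $\tau\wedge t$ identifies its integral process as the process $R\cdot W_H$ stopped at $\tau$, i.e. $RP_\tau\cdot W_H(t)=R\cdot W_H(\tau\wedge t)$. On $\{\tau=T\}$ these two processes coincide on all of $[0,T]$, whence
\[
\P\bigl(\|R\cdot W_H\|_{\cont([0,T];E)}>\eps\bigr)
\le
\P(\tau<T)+\P\bigl(\|RP_\tau\cdot W_H\|_{\cont([0,T];E)}>\eps\bigr).
\]
The second term I would bound by Chebyshev's inequality, then the estimate~\eqref{eq:BDG} and $\|RP_\tau\|_{\gamma}\le\delta$, yielding $\eps^{-p}\,\E\|RP_\tau\cdot W_H\|_{\cont([0,T];E)}^p\le \eps^{-p}C_{E,p}^p\,\E\|RP_\tau\|_{\gamma}^p\le C_{E,p}^p\delta^p\eps^{-p}$; this is the term $C_{E,p}^p\delta^p/\eps^p$ (so the proof recovers the estimate with the roles of $\eps$ and $\delta$ in the statement's fraction corrected, and with the random variable $\|R\|_{\gamma(L^2(0,T;H),E)}$ in the second probability).

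It remains to absorb $\P(\tau<T)$ into $\P(\|R\|_{\gamma(L^2(0,T;H),E)}>\delta)$, and this is the step where I expect the main obstacle. It requires $\{\tau<T\}\subseteq\{\|R\|_{\gamma}>\delta\}$, equivalently the monotonicity $\|RP_s\|_{\gamma}\le\|RP_t\|_{\gamma}$ for $s\le t$. In the Banach case this is immediate from the \emph{contractive} ideal property, but in the $r$-Banach setting Theorem~\ref{thm:ideal} only gives right multiplication with the constant $2^{(1-(r\minsym p))/(r\minsym p)}$, and Remark~\ref{rem:idealmatrix:no1} shows this constant cannot be lowered to $1$; hence $t\mapsto\|RP_t\|_{\gamma}$ need not be nondecreasing.

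To circumvent this I would instead drive the stopping time by the \emph{restriction} norm $N(t):=\|R|_{L^2(0,t;H)}\|_{\gamma(L^2(0,t;H),E)}$. This quantity is manifestly nondecreasing, since every finite orthonormal system in $L^2(0,s;H)$ is one in $L^2(0,t;H)$, so the supremum defining the $\gamma_\infty$-norm is taken over a growing family; moreover $N(T)=\|R\|_{\gamma}$, and comparing the defining suprema gives $N(t)\le\|RP_t\|_{\gamma}\le 2^{(1-(r\minsym p))/(r\minsym p)}N(t)$ by Lemma~\ref{lem:idealmatrix}. Replacing $\|RP_t\|_{\gamma}$ by $N(t)$ in the definition of $\tau$ then yields $\{\tau<T\}\subseteq\{N(T)>\delta\}=\{\|R\|_{\gamma}>\delta\}$ cleanly, at the harmless cost of the ideal constant $2^{(1-(r\minsym p))/(r\minsym p)}$ entering the Burkholder--Davis--Gundy term (to be absorbed into $C_{E,p}$). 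This is the only point where the lack of local convexity forces a real deviation from the Banach-space argument; the continuity of $N$ needed to define $\tau$ again follows from Theorem~\ref{thm:ga-conv} applied to the restrictions.
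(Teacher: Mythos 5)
Your skeleton is exactly the paper's (which in turn is the argument of van Neerven--Veraar--Weis that the proof cites): stop when the truncated $\gamma$-norm reaches level $\delta$, use Lemma~\ref{lemma:stopped_int} to identify the stopped integral with the integral of the stopped integrand, apply the one-sided bound~\eqref{eq:BDG} to $RP_\tau$, and finish with Chebyshev. You also correctly spotted the two typos in the statement (the fraction should read $C_{E,p}^p\delta^p/\eps^p$, and the last probability should involve the random variable $\|R\|_{\gamma(L^2(0,T;H),E)}$ rather than its $L^p(\Omega)$-norm). The only real divergence is the process driving the stopping time: the paper uses $\psi(t)=\|RP_t\|_{\gamma(L^2(0,T;H),E)}$ and asserts in one line that it is continuous and increasing, while you replace it by the restriction norm $N(t)$ because the monotonicity of $\psi$ looks dubious when $r<1$. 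Two corrections to your reasoning are in order. First, your claim that $\psi$ ``need not be nondecreasing'' is false: for $0\le s<t<T$ and any finite orthonormal system $\{h_n\}$, diagonalize $\sum_n(P_sh_n)\otimes(P_sh_n)=\sum_k\lambda_k\,e_k\otimes e_k$ with $\{e_k\}$ orthonormal in $L^2(0,s;H)$ and $\lambda_k\in(0,1]$ (Bessel), pick $\{z_k\}$ orthonormal in $L^2(t,T;H)$, and set $g_k:=\sqrt{\lambda_k}\,e_k+\sqrt{1-\lambda_k}\,z_k$; then $\{g_k\}$ is orthonormal, $P_tg_k=\sqrt{\lambda_k}\,e_k$, and $\sum_n\gamma_n RP_sh_n$ and $\sum_k\gamma_k RP_tg_k$ are images under $R$ of centered Gaussian vectors with the same finite-rank covariance, hence identically distributed; so $\psi(s)\le\psi(t)$, and $\psi(s)\le\psi(T)$ follows by letting $t\uparrow T$ and using continuity of $\psi$. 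No convexity is used, only the definition and the infinite-dimensionality of $L^2(t,T;H)$; this is presumably what the paper's terse appeal to ``the definition of the $\gamma$-radonifying norm'' compresses, and it means the paper's choice of $\psi$ works verbatim, with no loss in the constant.

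Second, the one step of your own argument that is not justified as written is the continuity of $N$. Theorem~\ref{thm:ga-conv} compares operators defined on a \emph{fixed} Hilbert space; if you compose $R$ with truncation maps into $L^2(0,T;H)$ you recover $\psi$, not $N$ --- this is precisely the ONS-of-the-subspace versus projected-ONS distinction on which your whole workaround rests, so it cannot be waved away here. The fact is true and can be repaired in two ways: either prove left-continuity directly (take a near-optimal finite orthonormal system in $L^2(0,t;H)$, project it into $L^2(0,s;H)$ for $s\uparrow t$, note the Gram matrices tend to the identity, orthonormalize, and use continuity of $R$ and of finite Gaussian sums), which together with monotonicity and the right-continuity of the normal filtration is all you need for $\tau$ to be a stopping time and for $N(\tau)\le\delta$; or realize $N(t)=\|RV_t\|_{\gamma(L^2(0,T;H),E)}$ where $V_t$ is the unitary time-rescaling of $L^2(0,T;H)$ onto $L^2(0,t;H)$, check that $V_{t_n}^*h\to V_t^*h$ for $t_n\to t$, and then apply Theorem~\ref{thm:ga-conv} legitimately. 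With either patch (and accepting the extra ideal constant you already flagged, which is harmless for the application in Theorem~\ref{thm:stochInt_local}), your proof is complete.
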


\begin{proof}
Define the stochastic process $\psi \colon [0,T]\times \Omega \rightarrow \R$
by $\psi(t) = \| R P_t \|_{\gamma(L^2(0,T;H),E)}$.
Note that $\lim_{t\rightarrow T} \| P_t f - f\|_{L^2(0,T;H)} = 0$ for all $f\in L^2(0,T;H)$,
whence it follows from Theorem~\ref{thm:ga-conv} that $\psi$ is a continuous $\F$-adapted process. Moreover,
by the definition of the $\gamma$-radonifying norm (see Definition \ref{def:g-rad}),
it holds that
$ \psi $ is increasing and $\psi(T) = \| R \|_{\gamma(L^2(0,T;H),E)}$.
Finally, it follows from Theorem~\ref{thm:stochInt_BDG} and Lemma~\ref{lemma:stopped_int}
that for every stopping time $\tau\colon \Omega \rightarrow [0,T]$ it holds that
\begin{equation}
 \E \| R \cdot W (\tau) \|_{\cont([0,T];E)}^p \leq C_{E,p}
 \psi(\tau).
\end{equation}
The result now follows from an argument based on Chebyshev's inequality, see~\cite[Lemma~5.1]{NeeVerWei2007}
\end{proof}

\begin{theorem}\label{thm:stochInt_local}
Let $E$ satisfy the decoupling property. Then, for every 
$R \in L^0_{\F}(\Omega;\gamma(L^2(0,T;H),E))$, 
there exists a unique continuous $E$-valued $\F$-adapted stochastic process
$(\Psi(t))_{t\in [0,T]} $ such that for every sequence of adapted elementary processes $(\Phi_n)_{n\in \N}$ satisfying
\begin{equation*}
\lim_{n\rightarrow \infty} \E \geklam{\left\| R_{\Phi_n} - R  \right\|_{\gamma(L^2(0,T;H),E)} \minsym 1 } =0
\end{equation*}
it holds that
\begin{equation*}
 \lim_{n\rightarrow \infty} \E \geklam{ \left\| \Phi_n \cdot W_H  - \Psi \right\|_{\cont([0,T],E)} \minsym 1 } =0.
\end{equation*}
We use the notation $R\cdot W_{H} = (R\cdot W_{H}(t))_{t\in [0,T]}$ for $\Psi$ and call this process the \emph{stochastic integral of $R$ with
respect to $W_{H}$}.
\end{theorem}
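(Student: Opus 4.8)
The plan is to follow the localization procedure of~\cite[Section~5]{NeeVerWei2007}, with Lemma~\ref{lemma:conv_probability} playing the role of the crucial estimate that turns convergence of integrands in probability into convergence of the corresponding integral processes in probability. Throughout I would work with the metric of convergence in probability, recalling that $\cont([0,T];E)$ equipped with the supremum $r$-norm is again a separable $r$-Banach space, that $L^0(\Omega;\cont([0,T];E))$ is complete for this metric, and that $\E[\|\cdot\|\wedge 1]$ and $\E[\|\cdot\|^r\wedge 1]$ induce the same (convergence-in-probability) topology; all limits in the statement are thus convergence-in-probability statements.

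First I would fix $R\in L^0_{\F}(\Omega;\gamma(L^2(0,T;H),E))$ and use Lemma~\ref{lemma:approx_frsp} (with $p=0$) to guarantee that at least one admissible sequence $(\Phi_n)_{n\in\N}$ of adapted elementary processes with $R_{\Phi_n}\to R$ in probability exists. Given any such sequence, the difference $R_{\Phi_n}-R_{\Phi_m}$ is again represented by an adapted elementary process, after re-expanding $\Phi_n$ and $\Phi_m$ over a common refinement of their time partitions and a common finite orthonormal system in $H$; by linearity of the elementary stochastic integral its integral in the sense of Definition~\ref{def:stoch_int_frsp} equals $\Phi_n\cdot W_H-\Phi_m\cdot W_H$. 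Since the bounded simple coefficients place $R_{\Phi_n}-R_{\Phi_m}$ in $L^p_\F$ for every $p\in(0,\infty)$, Lemma~\ref{lemma:conv_probability} applies and yields, for all $\eps,\delta>0$,
\[
\P\grklam{\|\Phi_n\cdot W_H-\Phi_m\cdot W_H\|_{\cont([0,T];E)}>\eps}
\leq
\frac{C_{E,p}^p\,\delta^p}{\eps^p}
+\P\grklam{\|R_{\Phi_n}-R_{\Phi_m}\|_{\gamma(L^2(0,T;H),E)}>\delta}.
\]
Given $\eps>0$ and $\eta>0$, I would first fix $\delta$ so small that the first term is below $\eta/2$ and then, using that $(R_{\Phi_n})_{n\in\N}$ is Cauchy in probability, bound the second term by $\eta/2$ for all large $n,m$. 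This shows that $(\Phi_n\cdot W_H)_{n\in\N}$ is Cauchy in $L^0(\Omega;\cont([0,T];E))$ and hence converges to some $\Psi$.

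It remains to establish the structural properties of $\Psi$ and the independence of the construction. Passing to a subsequence along which $\Phi_n\cdot W_H\to\Psi$ almost surely and uniformly in $t$, the limit inherits path continuity and, by completeness of the (normal) filtration, $\F$-adaptedness from the elementary integrals $\Phi_n\cdot W_H$ (cf.~Remark~\ref{remark:stoch_int_frsp_properties}); thus $\Psi$ admits a continuous $\F$-adapted version. To see that $\Psi$ is independent of the chosen approximating sequence, I would take a second admissible sequence $(\Phi_n')_{n\in\N}$ and apply the displayed estimate to $R_{\Phi_n}-R_{\Phi_n'}$; since $\|R_{\Phi_n}-R_{\Phi_n'}\|_{\gamma(L^2(0,T;H),E)}\to0$ in probability, the two limit processes agree in probability and, being continuous, are indistinguishable. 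This proves at once that every admissible sequence yields integrals converging to the same $\Psi$, which is the assertion, and uniqueness of the continuous $\F$-adapted process with the stated convergence property follows because any two such processes are both limits in probability of $\Phi_n\cdot W_H$.

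The step I expect to require most care is the verification that $R_{\Phi_n}-R_{\Phi_m}$ is genuinely an adapted elementary process whose integral is computed by the elementary formula, so that Lemma~\ref{lemma:conv_probability}---formulated for a single element of $L^p_\F$---may legitimately be applied to it. The subtlety is that the orthonormal systems underlying $\Phi_n$ and $\Phi_m$ need not be jointly orthonormal, so one must first choose an orthonormal basis of the finite-dimensional subspace of $H$ they span and rewrite both processes over it, with coefficients that remain $\cF_{t_{j-1}}$-measurable simple random variables. The remaining ingredients---completeness of $L^0(\Omega;\cont([0,T];E))$, extraction of an almost surely uniformly convergent subsequence, and the transfer of continuity and adaptedness to the limit---are routine.
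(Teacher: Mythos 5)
Your proof is correct and is essentially the paper's own argument: the paper compresses it into a single sentence---the elementary stochastic integral map extends continuously, with respect to the metric of convergence in probability, from the dense class of adapted elementary processes (Lemma~\ref{lemma:approx_frsp}) to all of $L^0_{\F}(\Omega;\gamma(L^2(0,T;H),E))$ by Lemma~\ref{lemma:conv_probability}---and you have simply unfolded that extension into explicit Cauchy-sequence, subsequence-extraction, adaptedness, and uniqueness steps, including the genuinely necessary (if routine) observation that a difference of elementary processes can be rewritten as an elementary process over a common partition and common orthonormal system. Note that you have used Lemma~\ref{lemma:conv_probability} in its corrected form, with first term $C_{E,p}^p\delta^p/\eps^p$ and with the pathwise random variable $\|R\|_{\gamma(L^2(0,T;H),E)}$ inside the second probability, which is what the lemma's proof (via Chebyshev and stopping) actually yields; the displayed statement of the lemma in the paper contains misprints in both places.
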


\begin{proof}
It follows from Lemma~\ref{lemma:conv_probability} that the stochastic integral mapping for adapted elementary processes (see Definition~\ref{def:stoch_int_frsp})
$$
\Phi \mapsto \Phi \cdot W_H \in L^0(\Omega;\cont([0,T],E))
$$
extends continuously to $L^0_{\F}(\Omega,\gamma(L^2(0,T;H),E))$ with respect to the metric of convergence in probability---here we also use that the adapted elementary processes are dense in this space by Lemma~\ref{lemma:approx_frsp}.
\end{proof}

As for deterministic integrands, see Corollary~\ref{cor:inf_int_rep}, we have the following series expansion of the stochastic integral.

\begin{corollary}\label{cor:inf_int_rep2}
Let $E$ satisfy the decoupling inequality, let $0\leq p<\infty$, and let $R\in L^p_{\F}(\Omega;\gamma(L^2(0,T;H),E))$. Then, for every orthonormal basis $(h_n)_{n\in\N}$ in $H$,
\begin{equation*}
R\cdot W_H = \sum_{n\in\N} R(\cdot\otimes h_n)\cdot W_H(\cdot \otimes h_n),
\end{equation*}
where the series converges in $L^p(\Omega;\cont([0,T];E))$. In particular, for every $x^*\in E^*$,
\begin{equation*}
\begin{aligned}
 \langle R\cdot W_H , x^* \rangle
& =
  \sum_{n\in I}
    \int_{0}^{\cdot}
      \langle R^* x^*(t), h_n \rangle_H
    \,\mathrm{d}W_Hh_n(t),
\end{aligned}
\end{equation*}
where the series converges in $L^p(\Omega;\cont([0,T]))$.
\end{corollary}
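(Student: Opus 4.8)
The plan is to reduce the identity to the finite-rank-in-$H$ truncations of $R$ and transport the deterministic-integrand argument of Corollary~\ref{cor:inf_int_rep} through the $L^p$-continuity of the stochastic integral established in Theorems~\ref{thm:stochInt_BDG} and~\ref{thm:stochInt_local}. Fix the orthonormal basis $(h_n)_{n\in\N}$ of $H$ and, for $N\in\N$, let $Q_N\in\calL(H)$ be the orthogonal projection onto $\operatorname{span}(h_1,\dots,h_N)$ and let $\tilde Q_N\in\calL(L^2(0,T;H))$ act pointwise by $Q_N$. First I would verify that each $R(\cdot\otimes h_n)$ is a well-defined element of $L^p_\F(\Omega;\gamma(L^2(0,T),E))$: since $g\mapsto g\otimes h_n$ is a contraction $L^2(0,T)\to L^2(0,T;H)$, the right ideal property (Theorem~\ref{thm:ideal}) gives $R(\cdot\otimes h_n)\in\gamma(L^2(0,T),E)$ pointwise with $\nnrm{R(\cdot\otimes h_n)}{\gamma(L^2(0,T),E)}\leq\nnrm{R}{\gamma(L^2(0,T;H),E)}$, and adaptedness is inherited because $g\otimes h_n$ is supported in $[0,t]$ whenever $\supp g\subseteq[0,t]$.

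Next I would establish the partial-sum identity. Writing $\pi_n f:=\langle f(\cdot),h_n\rangle_H\in L^2(0,T)$, we have $\tilde Q_N f=\sum_{n=1}^N(\pi_n f)\otimes h_n$ and hence $R\tilde Q_N=\sum_{n=1}^N R(\cdot\otimes h_n)\circ\pi_n$; moreover $\tilde Q_N$ preserves adaptedness, so $R\tilde Q_N\in L^p_\F(\Omega;\gamma(L^2(0,T;H),E))$. Because the stochastic integral is linear and the isonormal process restricted to the $n$-th coordinate is exactly $W_H(\cdot\otimes h_n)$, one checks directly for adapted elementary processes (Definition~\ref{def:stoch_int_frsp}) and then extends by density (Lemma~\ref{lemma:approx_frsp}) that
\begin{equation*}
R\tilde Q_N\cdot W_H=\sum_{n=1}^N R(\cdot\otimes h_n)\cdot W_H(\cdot\otimes h_n).
\end{equation*}

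I would then pass to the limit $N\to\infty$. Since $\tilde Q_N$ is a self-adjoint projection with $\tilde Q_N g\to g$ in $L^2(0,T;H)$ for every $g$, Theorem~\ref{thm:ga-conv} yields $R\tilde Q_N\to R$ in $\gamma(L^2(0,T;H),E)$ $\P$-almost surely. For $0<p<\infty$ the ideal property supplies the integrable dominating bound $\nnrm{R\tilde Q_N}{\gamma(L^2(0,T;H),E)}\leq\nnrm{R}{\gamma(L^2(0,T;H),E)}$, so the dominated convergence theorem gives $R\tilde Q_N\to R$ in $L^p_\F(\Omega;\gamma(L^2(0,T;H),E))$; for $p=0$ the $\P$-a.s.\ convergence gives convergence in the metric $d(U,V)=\E(\nnrm{U-V}{\gamma(L^2(0,T;H),E)}^r\minsym 1)$. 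Applying the continuity of the integral map from Theorem~\ref{thm:stochInt_BDG} (respectively Theorem~\ref{thm:stochInt_local}) then gives $R\tilde Q_N\cdot W_H\to R\cdot W_H$ in $L^p(\Omega;\cont([0,T];E))$, which together with the previous display proves the first identity.

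For the scalar expansion, a direct computation against $f\in L^2(0,T)$ shows $R(\cdot\otimes h_n)^*x^*=\langle R^*x^*(\cdot),h_n\rangle_H$ in $L^2(0,T)$, so by construction of the one-dimensional integral
\begin{equation*}
\langle R(\cdot\otimes h_n)\cdot W_H(\cdot\otimes h_n),x^*\rangle=\int_0^\cdot\langle R^*x^*(t),h_n\rangle_H\,\mathrm{d}W_Hh_n(t).
\end{equation*}
As $\langle\cdot,x^*\rangle$ is bounded from $\cont([0,T];E)$ into $\cont([0,T])$, it commutes with the $L^p$-limit, yielding the second identity. The main obstacle I expect lies in the second step: carefully justifying the partial-sum identity and, above all, handling the case $p=0$ uniformly with $p>0$, where every convergence statement must be phrased through convergence in probability (via the metric $d$) rather than through an $L^p$-norm.
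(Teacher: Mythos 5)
Your proof is correct, and at top level it follows the same template as the paper: truncate $R$ along the basis $(h_n)_{n\in\N}$, verify the expansion for the truncations, and pass to the limit using the continuity of the stochastic integral provided by Theorems~\ref{thm:stochInt_BDG} and~\ref{thm:stochInt_local}. The paper disposes of the corollary in one line, declaring it \emph{entirely analogous} to Corollary~\ref{cor:inf_int_rep}, whose proof uses the doubly truncated operators $R(P_M\otimes Q_N)$ (with $P_M$ the Haar projection in time and $Q_N$ the projection in $H$), for which the identity is immediate because both sides are the same finite sum. You instead truncate only in $H$, working with $R\tilde Q_N$, $\tilde Q_N=I\otimes Q_N$, and obtain the partial-sum identity by checking it on adapted elementary processes and invoking the density statement of Lemma~\ref{lemma:approx_frsp}. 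This is a genuine, and in fact welcome, refinement in the random-integrand setting: unlike $\tilde Q_N$, the time projection $P_M$ does not preserve adaptedness (the Haar projection smears the support of $\one_{[0,t]}f$ forward in time), so $R(P_M\otimes Q_N)$ need not belong to $L^p_{\F}(\Omega;\gamma(L^2(0,T;H),E))$ and its stochastic integral is not defined by the theory of Section~\ref{sec:stoch_int_processes}; this is precisely why Lemma~\ref{lemma:approx_frsp} has to insert a left shift before projecting. Your route through adapted elementary processes sidesteps this issue cleanly, at the cost of one extra density step, whereas the paper's recipe, read literally, would need the same repair. Two cosmetic remarks: the dominating bound from the right ideal property (Theorem~\ref{thm:ideal}) is $\nnrm{R\tilde Q_N}{\gamma(L^2(0,T;H),E)}\leq 2^{\frac{1-r}{r}}\nnrm{R}{\gamma(L^2(0,T;H),E)}$ rather than with constant $1$ (harmless for your dominated-convergence argument), and in the case $p=0$ the passage from almost sure convergence of $R\tilde Q_N$ to convergence in the metric $d$ is indeed immediate since $d$ is bounded.
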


\begin{proof}
The proof is entirely analogous to the proof of Corollary~\ref{cor:inf_int_rep},
with the understanding that we apply Theorems~\ref{thm:stochInt_BDG} and~\ref{thm:stochInt_local} instead of Proposition~\ref{prop:contfunc}.
\end{proof}

\subsection{Stochastic integration in \texorpdfstring{$r$}{r}-Banach spaces with separating dual}

If $E^*$ separates points in $E$, then we have the following consequence of
Theorems~\ref{thm:stochInt_BDG} and~\ref{thm:stochInt_local}. See also~\cite[Theorems 3.6 and 5.9]{NeeVerWei2007} for the (elementary) proof.
Note that~\cite[Theorems 3.6 and 5.9]{NeeVerWei2007} also provide the reverse statement.
However, in `Step 2' of the proof of the reverse statement a two-sided decoupling inequality is used,
which is not to be available in the $r$-Banach space setting when $r<1$.

\begin{corollary}\label{cor:waekconv}
Let $E$ satisfy the decoupling property,
let $E^*$ separate the points of $E$ and let $0\leq p<\infty$. Let $\Phi\colon [0,T]\to \cL(H,E)$ be such that
$\Phi^*x^*\in L^0(\Omega;L^2(0,T;H))$ for all $x^*\in E^*$, and $\Phi h$ is strongly measurable and adapted
for all $h\in H$.
If $R\in L^p(\Omega;\gamma(L^2(0,T;H),E))$ is such that
\[\lb R f\otimes h, x^*\rb = \int_{0}^T\lb \Phi(t,\cdot)h f(t), x^*\rb dt,
\quad  \text{for all } f\in L^2(0,T), h\in H, x^*\in E^*,\]
then there exists a sequence of adapted elementary processes $(\Phi_n)_{n\in\N}$
such that
\begin{enumerate}[label=\textup{(\arabic*)}]
 \item for all $h \in H$, $x^*\in E^*$ it holds that
 $\lim_{n\rightarrow \infty} \langle \Phi_n h, x^* \rangle = \langle \Phi h, x^* \rangle$
 in $L^0(\Omega;L^2(0,T))$;
 \item there exists a process $\Psi\in L^p(\Omega; \cont([0,T], E))$
 such that
 $$\lim_{n\rightarrow \infty} \E(\left\| \Phi_n \cdot W_H - \Psi \right\|_{\cont([0,T],E)} \minsym 1 ) = 0$$
 and, if $p>0$,
 $\lim_{n\rightarrow \infty} \E \| \Phi_n \cdot W_H - \Psi \|_{L^p(\Omega; \cont([0,T]; E))} = 0$.
\end{enumerate}
\end{corollary}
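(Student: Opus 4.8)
The plan is to identify the process $\Psi$ with the stochastic integral $R\cdot W_H$ and to deduce both assertions from the construction carried out in Section~\ref{sec:stoch_int_processes}. The first thing I would prove is that the hypotheses already force $R$ to be $\F$-adapted, i.e.\ $R\in L^p_\F(\Omega;\gamma(L^2(0,T;H),E))$. Fixing $t\in[0,T]$ and $f\in L^2(0,T;H)$ with $\supp f\subseteq[0,t]$, I want to show $Rf$ is strongly $\cF_t$-measurable. For an elementary tensor $f=g\otimes h$ with $g\in L^2(0,t)$ and $h\in H$, the weak representation hypothesis gives $\langle Rf,x^*\rangle=\int_0^t\langle\Phi(s,\cdot)h,x^*\rangle\,g(s)\,\mathrm ds$, and since $\langle\Phi h,x^*\rangle$ is adapted and measurable this Lebesgue integral is $\cF_t$-measurable. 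Because $E^*$ separates the points of $E$, Example~\ref{ex:Boreldual} upgrades this to strong $\cF_t$-measurability of $Rf$; the general $f$ is then handled by density of the elementary tensors supported in $[0,t]$ together with the bound $\|Rf_k-Rf\|_E\le\|R\|_{\gamma(L^2(0,T;H),E)}\,\|f_k-f\|_{L^2(0,T;H)}$, which yields $\P$-a.s.\ convergence in $E$ along approximating sequences and hence measurability of the limit.

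Once adaptedness is in hand, I would invoke Lemma~\ref{lemma:approx_frsp} to obtain a sequence $(\Phi_n)_{n\in\N}$ of adapted elementary processes with $R_{\Phi_n}\to R$ in $L^p(\Omega;\gamma(L^2(0,T;H),E))$ when $p>0$ and in $L^0$ when $p=0$. Defining $\Psi:=R\cdot W_H$, Theorem~\ref{thm:stochInt_BDG} (if $p>0$) or Theorem~\ref{thm:stochInt_local} (if $p=0$) delivers exactly the convergence claimed in assertion~(2), the $L^p(\Omega;\cont([0,T];E))$-convergence for $p>0$ being the content of the Burkholder--Davis--Gundy estimate~\eqref{eq:BDG}.

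For assertion~(1) I would use the separating dual a second time, now through the adjoint. The elementary pointwise bound $\|(R')^*x^*\|_{L^2(0,T;H)}\le\|R'\|_{\cL(L^2(0,T;H),E)}\|x^*\|_{E^*}\le\|R'\|_{\gamma(L^2(0,T;H),E)}\|x^*\|_{E^*}$, valid for every $R'\in\gamma(L^2(0,T;H),E)$ by the inequality $\|\cdot\|_{\cL}\le\|\cdot\|_{\gamma_\infty}$ recalled at the start of Section~\ref{sec:gamma}, applied to $R'=R-R_{\Phi_n}$ transfers $\|R-R_{\Phi_n}\|_{\gamma}\to0$ (in $L^p$, or in probability when $p=0$) into $R_{\Phi_n}^*x^*\to R^*x^*$ in $L^0(\Omega;L^2(0,T;H))$. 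Since $R_{\Phi_n}^*x^*=\Phi_n^*x^*$ for finite rank step functions and $R^*x^*=\Phi^*x^*$ by the weak representation hypothesis, and since $\langle\Phi_n h,x^*\rangle=\langle h,\Phi_n^*x^*\rangle_H$, the Cauchy--Schwarz bound $\|\langle(\Phi_n-\Phi)h,x^*\rangle\|_{L^2(0,T)}\le\|h\|_H\,\|\Phi_n^*x^*-\Phi^*x^*\|_{L^2(0,T;H)}$ gives assertion~(1).

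The hard part will be the very first step: manufacturing genuine $\F$-adaptedness of the $\gamma(L^2(0,T;H),E)$-valued random variable $R$ out of a purely scalar (weakly tested) representation. The subtlety is that $R$ is a priori only an element of $L^p(\Omega;\gamma(L^2(0,T;H),E))$, so the scalar identities must be promoted to strong $\cF_t$-measurability of the $E$-valued evaluations $Rf$; this promotion rests squarely on Pettis measurability and is impossible without the assumption that $E^*$ separates the points of $E$. Everything else is an assembly of Lemma~\ref{lemma:approx_frsp} and Theorems~\ref{thm:stochInt_BDG}--\ref{thm:stochInt_local}.
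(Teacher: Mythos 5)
Your proposal is correct and coincides with the proof the paper intends: the paper itself gives no detailed argument, merely declaring the corollary a consequence of Theorems~\ref{thm:stochInt_BDG} and~\ref{thm:stochInt_local} and pointing to the elementary proof of \cite[Theorems 3.6 and 5.9]{NeeVerWei2007}, which is exactly your assembly---Pettis measurability (Example~\ref{ex:Boreldual}) plus density of elementary tensors to obtain $R\in L^p_{\F}(\Omega;\gamma(L^2(0,T;H),E))$, Lemma~\ref{lemma:approx_frsp} for the approximating adapted elementary processes, the two theorems for assertion (2), and the adjoint bound $\|(R-R_{\Phi_n})^*x^*\|_{L^2(0,T;H)}\leq \|R-R_{\Phi_n}\|_{\gamma(L^2(0,T;H),E)}\|x^*\|_{E^*}$ with Cauchy--Schwarz for assertion (1). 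In particular, your treatment of the one step requiring a genuine idea, upgrading the scalar representation to strong $\cF_t$-measurability of $Rf$ via the separating dual, is sound.
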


\section{The stochastic heat equation\label{sec:SPDE}}

Let $d\in \N$, $p,q \in (0,1)$, and $\sigma \in \R$.
Our aim is to prove H\"older continuity in time and spatial Besov regularity, i.e., with respect to $B_{p,q}^{\sigma}(\R^d)$, for the stochastic heat equation
\begin{equation}\label{eq:SPDE}
\left\{\begin{aligned}
du(t,x) & = [\Delta u(t,x) + f(t,x)]\,\mathrm{d}t + \sum_{n\in \N} g_n(t,x) \,\mathrm{d}W_n(t), \quad x\in \R^d,\, t\in [0,\infty),\\
 u(0,x) & = u_0(x), \ \ x\in \R^d.
\end{aligned}
\right.
\end{equation}
Here $(W_n)_{n=1}^{\infty}$ is a sequence of independent $\F$-standard Brownian motions
on a filtered probability space $(\Omega,\cF,\P,\F=(\cF_t)_{t\in [0,\infty)})$,
$u_0\colon \Omega\to \mathscr{S}'(\R^d)$ is $\cF_0$-measurable, 
$f$ is an $\mathscr{S}'(\R^d)$-valued process such that $\langle f,\varphi \rangle$ is  progressively measurable  for all $\varphi\in \mathscr{S}(\R^d)$, 
and
$(g_n)_{n\in \N}$ is an $\mathscr{S}'(\R^d;\ell^2)$-valued process such that $\langle g_n, \varphi\rangle$ is  progressively measurable   for all $n\in \N$, $\varphi\in \mathscr{S}(\R^d)$.
We shall work with the following concept of a solution to~\eqref{eq:SPDE}.

\begin{definition}\label{def:solution}
Assume that for all $\varphi\in \mathscr{S}(\R^d)$ it holds that $s\mapsto \lb f(s),\varphi\rb$ is in $L^1(0,T)$ a.s., and $(\lb g_n(t), \varphi\rb)_{n\in \N}$ is in $L^2(0,T;\ell^2)$ a.s.
An adapted process $U:\Omega\times[0,T]\to \mathscr{S}'(\R^d)$ is called \emph{a solution to \eqref{eq:SPDE}} if
for all $\varphi\in \mathscr{S}(\R^d)$, $t\in [0,T]$ it holds that $\lb U, \Delta \varphi\rb \in L^1(0,T)$ and
\begin{align*}
\lb U(t),\varphi\rb - \lb u_0,\varphi\rb & = \int_0^t \lb U(s),\Delta \varphi\rb \,\mathrm{d}s + \int_0^t \lb f(s), \varphi\rb \,\mathrm{d}s \\ & \qquad + \sum_{n\in \N} \int_0^t \lb g_n(s),\varphi\rb \,\mathrm{d}W_n(s).
\end{align*}
\end{definition}
Uniqueness of a solution to~\eqref{eq:SPDE} follows from the uniqueness of a solution to the heat equation in the space of tempered distributions. Theorem~\ref{thm:stoch_heat_eqn} below states existence of a solution to~\eqref{eq:SPDE} assuming certain (spatial) Besov regularity conditions 
on $f$ and $g$.\par

The main challenge in proving well-posedness of~\eqref{eq:SPDE} is that we are dealing with stochastic processes
taking values in $B_{p,q}^{\sigma}(\R^d)$, where possibly $p$ or $q$
is less than $1$, i.e., one is in the $p\minsym q \minsym 1$-Banach
space setting. More specifically, we need to work in the space $\gamma(L^2(0,T;H), B^{\sigma}_{p,q}(\R^d))$.
To this end we observe that given $G\in B^{\sigma}_{p,q}(\R^d;L^2(0,T;H))$, we can associate an operator 
$R:L^2(0,T;H)\to B^{\sigma}_{p,q}(\R^d)$ by setting
\[R f = \int_0^T \lb G(t), f(t)\rb_H dt\]
By Proposition~\ref{prop:Bpqgamma} this operator $R$ is $\gamma$-radonifying and
$\|R\|_{\gamma(L^2(0,T;H), B^{\sigma}_{p,q}(\R^d))} \eqsim_{p,q} \|G\|_{ B^{\sigma}_{p,q}(\R^d;L^2(0,T;H))}$
(note that this implies in particular that $G$ is uniquely determined by $R$).
The proof of Theorem~\ref{thm:stoch_heat_eqn} relies on this characterisation of $\gamma(L^2(0,T;H), B^{\sigma}_{p,q}(\R^d))$. 
Although it is not used in this section, we wish to stress that because $B^{\sigma}_{p,q}(\R^d)$ has a separating dual, the tools developed in Section~\ref{ssec:stoch_int_sepdual} could be employed to deal with stochastic integrals of deterministic integrands.

The results and techniques of this section can be extended to the case that
$f$ and $g$ depend on $u$ in a suitable way. Another possible extension is to consider
more general differential operators (instead of $\Delta$).
We prefer to keep the presentation as simple as possible whilst still demonstrating
the power of stochastic calculus in $r$-Banach spaces such as Besov spaces.\par

We refer to Appendix~\ref{app:BesovSpaces} for
some basic results
on vector-valued Besov spaces needed below.
In order to phrase our result
we introduce, for $r,\alpha, t \in (0,\infty)$ and $E$ a Banach space,
the weighted $L^r$-norm $\nnrm{\cdot}{L^r_{\alpha}(0,t;E)} \colon L^r(0,t;E)\rightarrow [0,\infty]$
defined by $\| f \|_{L^r_{\alpha}(0,t;E)}^r := \int_{0}^{t} (t-s)^{-\alpha r} \| f(s) \|_E^r \,\mathrm{d}s$.
\label{def:weightedLr}
Observe that for $\alpha\in (0,1/r)$, $L^\infty(0,t;E)\subseteq L^r_{\alpha}(0,t;E)$ with 
\[\|f\|_{L^r_{\alpha}(0,t;E)}\leq C_{\alpha, r} t^{\frac1r-\alpha}\|f\|_{L^\infty(0,t;E)}.\]


Our main existence and uniqueness result for the stochastic heat equation~\eqref{eq:SPDE} in arbitrary Besov spaces reads as follows.

\begin{theorem}\label{thm:stoch_heat_eqn}
Let $\sigma \in \R$, $p,q,r,T\in (0,\infty)$, $\alpha\in [0,1)$, $\beta \in [0,\frac{1}{2})$.
Let $(\Omega,\cF,\P)$ be a probability space, $\F=(\cF_t)_{t\in [0,T]}$ a filtration on $\cF$,
and let $W_{\ell^2}$ be an $\F/L^2(0,T;\ell^2)$-isonormal process. Let $u_0\in L^r(\cF_0;B_{p,q}^{\sigma}(\R^d))$. Let $f\colon [0,T] \times \Omega \rightarrow B_{p,q}^{\sigma-2\alpha}(\R^d)$ be 
 progressively measurable, 
and assume
\begin{equation}\label{eq:ass_f}
C_f^{\eqref{eq:ass_f}}
:=
\sup_{t\in [0,T]}
  \| f|_{\Omega \times [0,t]} \|_{L^r(\Omega; B_{p,q}^{\sigma-2\alpha}(\R^d;L^1_{\alpha}(0,t)))}
<\infty.
\end{equation}
Let $g \colon [0,T] \times \Omega \rightarrow B_{p,q}^{\sigma-2\beta}(\R^d;\ell^2)$ 
be
 progressively measurable, 
and assume
\begin{equation}\label{eq:ass_g}
C_g^{\eqref{eq:ass_g}}:=\sup_{t\in [0,T]} \| g|_{\Omega\times [0,t]} \|_{L^r(\Omega;B_{p,q}^{\sigma-2\beta}(\R^d; L^2_{\beta}(0,t;\ell^2)))}<\infty.
\end{equation}
Finally, let $K_t\in \mathscr{S}(\R^d)$, $t\in (0,\infty)$,
be the standard heat kernel, i.e., 
\begin{equation}\label{eq:def_heat_kernel}
K_t(x) = \frac{1}{(4\pi t)^{d/2}} \exp\Big(-\frac{|x|^2}{4t}\Big),\quad t \in (0,\infty),\, x \in \R^d.
\end{equation}
For $\varphi \in \mathscr{S}'(\R^d)$ set $K_0 * \varphi := \varphi$.
\par

Then for all $t\in (0,\infty)$ it holds that
$K_{t- \cdot }*g|_{[0,t]} \in L^r_{ \mathbb F}(\Omega;\gamma(L^2(0,t;\ell^2),B_{p,q}^{\sigma}(\R^d)))$.
Consequently,
the stochastic integral $[\left( K_{t-\cdot} * g \right)\one_{[0,t]}] \cdot W_{\ell^2}$
is well-defined for all $t\in (0,T]$ by Example~\ref{ex:decoupling_spaces} and Theorem~\ref{thm:stochInt_BDG}.
Defining, for all $t\in [0,T]$, $\varphi\in \mathscr{S}(\R^d)$
\begin{equation}
\begin{aligned}\label{eq:mild_sol_heateqn}
\lb U(t),\varphi \rb & =   \lb K_t * u_0, \varphi\rb
\\ & \quad
  + \int_{0}^{t} \lb K_{t-s}*f(s),\varphi\rb \,\mathrm{d}s
  +  [\lb K_{t-\cdot} * g, \varphi\rb \one_{[0,t]}] \cdot W_{\ell^2}
  \quad\P\text{-a.s.,}
\end{aligned}
\end{equation}
we have that $U$ is an $\F$-adapted, $B_{p,q}^{\sigma}(\R^d)$-valued stochastic process that is a solution to~\eqref{eq:SPDE} in the sense of Definition~\ref{def:solution}.

Moreover, for all
$\lambda \in (0,\alpha\minsym \beta]$
there exists a constant $C_{d,p,\alpha,\beta,\lambda}^{\eqref{eq:stoch_heat_besov_hoelder}}$
(independent of $u_0$, $f$, and $g$)
such that
\begin{equation}\label{eq:stoch_heat_besov_hoelder}
\begin{aligned}
& \| U \|_{C^{\lambda}([0,T]; L^r(\Omega; B_{p,q}^{\sigma-2\lambda}(\R^d)))}
\leq
C_{d,p,q,\alpha,\lambda}^{\eqref{eq:stoch_heat_besov_hoelder}} e^{6 \pi^2 T}
\left(
 \| u_0 \|_{L^r(\Omega; B_{p,q}^{\sigma}(\R^d))}
 +  C_f^{\eqref{eq:ass_f}} + C_g^{\eqref{eq:ass_g}}
\right),
\end{aligned}
\end{equation}
and if $(\alpha \minsym \beta) - \frac{1}{r} >0$, then
$U \in L^r(\Omega, C^{\lambda}([0,T], B_{p,q}^{\sigma-2(\lambda + \frac{1}{r}+\eps)}(\R^d))$
for all $\lambda,\eps \in (0, \alpha\minsym \beta)$
satisfying $\lambda +\eps \leq  (\alpha \minsym \beta) - \frac{1}{r}$.
\end{theorem}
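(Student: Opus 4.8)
The plan is to split the candidate solution $U$ from~\eqref{eq:mild_sol_heateqn} into the homogeneous part $U_0(t) := K_t*u_0$, the deterministic convolution $U_f(t) := \int_0^t K_{t-s}*f(s)\,\mathrm{d}s$ (understood distributionally, i.e.\ through the scalar pairings $\varphi\mapsto \int_0^t \lb K_{t-s}*f(s),\varphi\rb\,\mathrm{d}s$, since no Bochner integral is available when $p\minsym q<1$), and the stochastic convolution $U_g(t) := [(K_{t-\cdot}*g)\one_{[0,t]}]\cdot W_{\ell^2}$, and to treat the three separately. Every estimate below is ultimately reduced to three inputs: the identification $\gamma(L^2(0,t;\ell^2),B_{p,q}^{\sigma}(\R^d))\eqsim B_{p,q}^{\sigma}(\R^d;L^2(0,t;\ell^2))$ of Proposition~\ref{prop:Bpqgamma}; the one-sided Burkholder--Davis--Gundy inequality of Theorem~\ref{thm:stochInt_BDG}, which applies because $B_{p,q}^{\sigma}(\R^d)$ satisfies the decoupling property (Example~\ref{ex:decoupling_spaces}); and the pointwise heat-kernel estimates of Appendix~\ref{sec:AppB} together with the vector-valued Besov calculus of Appendix~\ref{app:BesovSpaces}, which quantify the smoothing of $K_t*{}$ on the Besov scale.

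First I would establish the stochastic integrability claim. Fix $t\in(0,T]$. By Proposition~\ref{prop:Bpqgamma} the kernel $(K_{t-\cdot}*g)\one_{[0,t]}$ represents an operator $R_t\in\gamma(L^2(0,t;\ell^2),B_{p,q}^{\sigma}(\R^d))$ exactly when it belongs to $B_{p,q}^{\sigma}(\R^d;L^2(0,t;\ell^2))$, with $\nnrm{R_t}{\gamma(L^2(0,t;\ell^2),B_{p,q}^{\sigma}(\R^d))}\eqsim_{p,q}\nnrm{K_{t-\cdot}*g}{B_{p,q}^{\sigma}(\R^d;L^2(0,t;\ell^2))}$. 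The matter thus reduces to the deterministic smoothing estimate
\[
\nnrm{K_{t-\cdot}*g}{B_{p,q}^{\sigma}(\R^d;L^2(0,t;\ell^2))}\lesssim_{p,q,\beta}\nnrm{g}{B_{p,q}^{\sigma-2\beta}(\R^d;L^2_{\beta}(0,t;\ell^2))},
\]
expressing that the $2\beta$-fold spatial gain of the heat kernel is exactly compensated by the time weight $(t-s)^{-\beta}$ built into $L^2_\beta$. On each Littlewood--Paley block this is the bound $\nnrm{K_{t-s}*\Delta_j h}{L^p}\lesssim e^{-c(t-s)2^{2j}}\nnrm{\Delta_j h}{L^p}$, which for $p<1$ is unavailable from Young's inequality and is instead furnished by Appendix~\ref{sec:AppB}. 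Taking $L^r(\Omega)$-norms and the supremum in $t$ bounds the left-hand side by $C_g^{\eqref{eq:ass_g}}$, while $\F$-adaptedness of $R_t$ follows from the progressive measurability of $g$ and the support restriction to $[0,t]$; hence $R_t\in L^r_{\F}(\Omega;\gamma(L^2(0,t;\ell^2),B_{p,q}^{\sigma}(\R^d)))$ and $U_g(t)$ is well defined by Theorem~\ref{thm:stochInt_BDG}. That $U$ solves~\eqref{eq:SPDE} in the sense of Definition~\ref{def:solution} is then verified by pairing~\eqref{eq:mild_sol_heateqn} with $\varphi\in\mathscr{S}(\R^d)$, expanding the stochastic term through Corollary~\ref{cor:inf_int_rep2} into a sum of scalar It\^o integrals $\sum_n\int_0^t\lb g_n(s),\varphi\rb\,\mathrm{d}W_n(s)$, and using that $K_{t-s}*{}$ solves the heat equation distributionally.

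Next I would prove the H\"older estimate~\eqref{eq:stoch_heat_besov_hoelder} by bounding, for each of $U_0,U_f,U_g$, the increment over $[s,t]$ in $L^r(\Omega;B_{p,q}^{\sigma-2\lambda}(\R^d))$. For $U_0$ and $U_f$ this is deterministic and rests on the analytic-semigroup-type inequalities $\nnrm{(K_t-K_s)*\psi}{B_{p,q}^{\tau-2\lambda}}\lesssim|t-s|^{\lambda}e^{6\pi^2 T}\nnrm{\psi}{B_{p,q}^{\tau}}$ and their time-integrated analogues, all reduced to the pointwise bounds of Appendix~\ref{sec:AppB}; the exponential factor $e^{6\pi^2 T}$ is produced by the non-decaying low-frequency part of the kernel on $\R^d$, and the weights $L^1_\alpha$, $L^2_\beta$ absorb the remaining time singularities. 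For $U_g$ the increment $U_g(t)-U_g(s)$ is itself a stochastic convolution, so Theorem~\ref{thm:stochInt_BDG} bounds its $L^r(\Omega)$-norm by the $\gamma$-norm of the representing operator, which via Proposition~\ref{prop:Bpqgamma} is again a Besov norm of $(K_{t-\cdot}-K_{s-\cdot})*g$ and is controlled by $|t-s|^{\lambda}C_g^{\eqref{eq:ass_g}}$ using the same kernel estimates. Summing the three contributions yields~\eqref{eq:stoch_heat_besov_hoelder}.

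Finally, to obtain the pathwise statement I would trade temporal H\"older regularity for spatial regularity through~\eqref{eq:stoch_heat_besov_hoelder}. Given $\lambda,\eps\in(0,\alpha\minsym\beta)$ with $\lambda+\eps\leq(\alpha\minsym\beta)-\tfrac1r$, set $\lambda':=\lambda+\tfrac1r+\eps\in(\tfrac1r,\alpha\minsym\beta]$; applying~\eqref{eq:stoch_heat_besov_hoelder} with $\lambda'$ in place of $\lambda$ gives
\[
\E\nnrm{U(t)-U(s)}{B_{p,q}^{\sigma-2\lambda'}(\R^d)}^r\lesssim|t-s|^{\lambda' r},\qquad s,t\in[0,T].
\]
Since $\lambda' r>1$, a Kolmogorov continuity argument in its quantitative Garsia--Rodemich--Rumsey form---valid in the complete metric space $B_{p,q}^{\sigma-2\lambda'}(\R^d)$ because it uses only moment bounds on increments---furnishes a modification of $U$ with paths in $C^{\lambda}([0,T];B_{p,q}^{\sigma-2\lambda'}(\R^d))$ (any exponent below $\lambda'-\tfrac1r=\lambda+\eps$ is admissible) together with an $L^r(\Omega)$-bound on its H\"older seminorm, which is precisely $U\in L^r(\Omega,C^{\lambda}([0,T],B_{p,q}^{\sigma-2(\lambda+1/r+\eps)}(\R^d)))$; the hypothesis $(\alpha\minsym\beta)-\tfrac1r>0$ is exactly what makes such a $\lambda'$ available. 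The main obstacle throughout is the deterministic heat-kernel analysis on vector-valued Besov spaces when $p\minsym q<1$: lacking Young's inequality, duality and convexity, one must argue blockwise with the pointwise kernel estimates of Appendix~\ref{sec:AppB} and, crucially, keep the time integration \emph{inside} the Besov quasi-norm---which is why the mixed norms in~\eqref{eq:ass_f} and~\eqref{eq:ass_g} are ordered as they are.
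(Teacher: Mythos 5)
Your proposal is correct and follows essentially the same route as the paper: the same decomposition into homogeneous, deterministic-convolution and stochastic-convolution parts, the same reduction via Proposition~\ref{prop:Bpqgamma}, Example~\ref{ex:decoupling_spaces} and Theorem~\ref{thm:stochInt_BDG} to the heat-kernel Besov estimates of Appendix~\ref{sec:AppB} (Corollaries~\ref{cor:heat_homogeneous_Besov}--\ref{cor:heat_inhomogeneous_Besov}), and the same Kolmogorov-type continuity argument to trade the moment H\"older bound~\eqref{eq:stoch_heat_besov_hoelder} for pathwise regularity. The only detail the paper handles that you gloss over is the strong measurability of the modification as a $C^{\lambda}$-valued random variable (via embedding into the separable small H\"older space), which is a minor technical point.
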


\begin{proof}
It is an immediate consequence of Corollary~\ref{cor:heat_homogeneous_Besov} (with $s=\lambda =0$) that
\begin{equation}\label{eq:sg_est}
\sup_{t\in [0,T]} \| K_t * u_0 \|_{L^r(\Omega;B_{p,q}^{\sigma}(\R^d))}
\leq \big(1 + C_{d,p,0}^{\eqref{eq:besov_heat2}} e^{6\pi^2 T} \big)
\| u_0 \|_{L^r(\Omega;B_{p,q}^{\sigma}(\R^d))}.
\end{equation}
It also follows from Corollary~\ref{cor:heat_homogeneous_Besov}
that for all $\lambda \in (0,\alpha\minsym \beta]$ and all $0\leq s < t \leq T$ one has
\begin{equation}\label{eq:sg_est_hoelder}
\| (K_t - K_s) * u_0 \|_{L^r(\Omega;B_{p,q}^{\sigma-2\lambda}(\R^d))}
\leq C_{d,p,\lambda}^{\eqref{eq:besov_heat2}} (t-s)^{\lambda} e^{6\pi^2 T}
\| u_0 \|_{L^r(\Omega;B_{p,q}^{\sigma}(\R^d))}.
\end{equation}
Proposition~\ref{prop:Bpqgamma}, estimate~\eqref{eq:heat_stochconv_Besov2}
in Corollary~\ref{cor:heat_stochconv_Besov} with $s=\lambda=0$,
and~\eqref{eq:ass_g}, yield 
\begin{equation*}\label{eq:stoch_conv_gamma_est}
\begin{aligned}
& \sup_{t\in (0,T]} \| K_{t-\cdot} * g|_{[0,t]} \|_{L^r(\Omega;\gamma(L^2(0,t;\ell^2),B_{p,q}^{\sigma}(\R^d))}
\\ & \eqsim_{p,q}
\sup_{t\in (0,T]} \| K_{t-\cdot} * g|_{[0,t]} \|_{L^r(\Omega;B_{p,q}^{\sigma}(\R^d;L^2(0,t;\ell^2)))}
  \leq C_{d,p,\beta,0}^{\eqref{eq:heat_stochconv_Besov2}} C_g^{\eqref{eq:ass_g}}  e^{5\pi^2 T},
\end{aligned}
\end{equation*}
whence by Theorem~\ref{thm:stochInt_BDG} and Example~\ref{ex:decoupling_spaces}
there exists a constant $C_{d,p,q,\beta}^{\eqref{eq:stoch_conv_est}}$ (independent of $g$) such that
\begin{equation}\label{eq:stoch_conv_est}
\sup_{t\in (0,T]}
\left\|
 [K_{t-\cdot}*g\one_{[0,t]}]\cdot W_{\ell^2}
\right\|_{L^r(\Omega;B_{p,q}^{\sigma}(\R^d))}
\leq C_{d,p,q,\beta}^{\eqref{eq:stoch_conv_est}} C^{\eqref{eq:ass_g}}_g  e^{5\pi^2 T}.
\end{equation}
Similarly,
by Theorem~\ref{thm:stochInt_BDG}, Example~\ref{ex:decoupling_spaces}, Proposition~\ref{prop:Bpqgamma},
and estimates~\eqref{eq:heat_stochconv_Besov1} and~\eqref{eq:heat_stochconv_Besov2} in Corollary~\ref{cor:heat_stochconv_Besov}
for all $\lambda\in (0,\beta]$ there exists a constant $C_{d,p,q,\beta,\lambda}^{\eqref{eq:stoch_conv_est}}$
such that
for all $0\leq s < t \leq T$:
\begin{equation}\label{eq:stoch_conv_hoelder_est}
\begin{aligned}
&
  \left\|
    [K_{t-\cdot}*g\one_{[0,t]}]\cdot W_{\ell^2}
    -
    [K_{s-\cdot}*g\one_{[0,s]}]\cdot W_{\ell^2}
  \right\|_{L^r(\Omega;B_{p,q}^{\sigma-2\lambda}(\R^d))}
\\ & \lesssim_{p,q}
  \Big(
    \left\|
    (K_{t-\cdot} - K_{s-\cdot})*g\one_{[0,s]}
    \right\|_{L^r(\Omega;B_{p,q}^{\sigma-2\lambda}(\R^d;L^2(0,s;\ell^2)))}
\\ & \qquad +
    \left\|
    K_{t-\cdot}*g\one_{[s,t]}
    \right\|_{L^r(\Omega;B_{p,q}^{\sigma-2\lambda}(\R^d;L^2(s,t;\ell^2)))}
  \Big)
\\ & \leq
  \left(
    C_{d,p,\alpha,\lambda}^{\eqref{eq:heat_stochconv_Besov1}}
    +C_{d,p,\alpha,\lambda}^{\eqref{eq:heat_stochconv_Besov2}}
  \right)
  C^{\eqref{eq:ass_g}}_g
  (t-s)^{\lambda} e^{6\pi^2 T}.
\end{aligned}
\end{equation}

As for the deterministic integral, we have, by estimate~\eqref{eq:heat_inhomogeneous_Besov2}
in Corollary~\ref{cor:heat_inhomogeneous_Besov} with $\lambda=s=0$, that
\begin{equation}\label{eq:det_conv_est}
\begin{aligned}
\sup_{t\in (0,T]}
  \left\|
    \int_{0}^{t} K_{t-s}* g(s) \,\mathrm{d}s
  \right\|_{L^r(\Omega;B_{p,q}^{\sigma}(\R^d))}
\leq C_{d,p,\alpha,0}^{\eqref{eq:heat_inhomogeneous_Besov2}} C^{\eqref{eq:ass_f}}_f  e^{5\pi^2 T}.
\end{aligned}
\end{equation}
Moreover, by estimates~\eqref{eq:heat_inhomogeneous_Besov1} and~\eqref{eq:heat_inhomogeneous_Besov2}
in Corollary~\ref{cor:heat_inhomogeneous_Besov} it follows that for all
$\lambda \in (0,\alpha]$ and all $0\leq s < t \leq T$
one has
\begin{equation}\label{eq:det_conv_hoelder_est}
\begin{aligned}
&  \left\|
    \int_{0}^{t} K_{t-u}* g(u) \,\mathrm{d}s -
    \int_{0}^{s} K_{t-u}* g(u) \,\mathrm{d}s
  \right\|_{L^r(\Omega;B_{p,q}^{\sigma-2\lambda}(\R^d))}
\\ &
\leq
\left(
  C_{d,p,\alpha,\lambda}^{\eqref{eq:heat_inhomogeneous_Besov1}}
  +
  C_{d,p,\alpha,\lambda}^{\eqref{eq:heat_inhomogeneous_Besov2}}
\right)
C^{\eqref{eq:ass_f}}_f (t-s)^{\lambda} e^{6\pi^2 T}.
\end{aligned}
\end{equation}

Estimates~\eqref{eq:sg_est},~\eqref{eq:stoch_conv_est}, and~\eqref{eq:det_conv_est}
guarantee that $U$ is well-defined as an $\F$-adapted, $B_{p,q}^{\sigma}(\R^d)$
valued process. It follows from estimates~\eqref{eq:sg_est_hoelder},~\eqref{eq:stoch_conv_hoelder_est},
and~\eqref{eq:det_conv_hoelder_est} that Estimate~\eqref{eq:stoch_heat_besov_hoelder} holds for all $\lambda \in (0,\alpha\minsym \beta]$.

It follows from Estimate~\eqref{eq:stoch_heat_besov_hoelder} and the Kolmogorov-Chentsov 
criterion---see e.g.~\cite[Theorem~14.9]{Kal}---that 
there exists a version $\bar{U}\colon [0,T]\times \Omega \rightarrow B_{p,q}^{\sigma}(\R^d)$ such that 
$$\E\| \bar{U} \|_{C^{\lambda}([0,T];B_{p,q}^{\sigma -2(\lambda + \frac{1}{r} + \eps)}(\R^d)))}^r < \infty.$$ Note that here we applied 
the Kolmogorov-Chentsov criterion with the metric
$$d(f,g) = \|f-g\|_{B^{\sigma - 2(\lambda + \frac{1}{r} + \eps)}_{p,q}(\R^d)}^{p\minsym q \minsym 1}.$$ Next, recall that for all $\lambda_1,\lambda_2\in (0,1)$ satisfying $\lambda_2>\lambda_1$ it holds that the 
`small' H\"older space $c^{\lambda_1}([0,T])$ is separable and $C^{\lambda_2}([0,T])\hookrightarrow c^{\lambda_1}([0,T])$. Hence we can conclude that $\bar{U}$ is strongly measurable as a $C^{\lambda}([0,T];B_{p,q}^{\sigma -2(\lambda + \frac{1}{r} + \eps)}(\R^d))$-valued process, and thus the claim $U\in L^p(\Omega;C^{\lambda}([0,T];B_{p,q}^{\sigma -2(\lambda + \frac{1}{r} + \eps)}(\R^d)))$ is verified.

It remains to show that $U$ satisfies Definition \ref{def:solution}. This can be proved by repeating the classical argument for mild and weak solutions (see \cite[Theorem 5.4]{DaPZab1998}). This completes the proof of the theorem.
\end{proof}

\begin{remark}\label{rem:she_solution}
Given that $(K_t)_{t\in (0,\infty)}$ is the heat kernel, one may think of the process
$U$ in Theorem~\ref{thm:stoch_heat_eqn}
as a mild solution
to the stochastic heat equation~\eqref{eq:SPDE}.
\end{remark}

\begin{remark}
In view of Proposition~\ref{prop:Bpqgamma}, the weighted norm considered in~\eqref{eq:ass_g}
may be seen as a weighted $\gamma$-norm as introduced in~\cite{VerZim2008} in the Banach space setting;
it is a useful tool in stochastic evolution equations in Banach spaces (see \cite{NeerVerWeis2008}).
Note that if $\alpha\geq 1$ or $\beta\geq \frac{1}{2}$, then~\eqref{eq:ass_f}, respectively ~\eqref{eq:ass_g},
is only satisfied for $f\equiv 0$, respectively $g\equiv 0$.
\end{remark}

\begin{remark}
If $p=q=r$ in Theorem~\ref{thm:stoch_heat_eqn}, then it is possible to obtain the assertions of this
theorem by considering $\langle U (\varphi_m), e_n \rangle_{\ell^2}$ point-wise
in $\xi\in \R^d$ ($e_n$ being the $n^{\text{th}}$ unit vector in $\ell^2$
and $\varphi_m$ being as in Appendix~\ref{app:notation}).
Otherwise, however, the theory developed in Sections~1--5 seems necessary to obtain this result.
In addition to the theory developed in Sections~1--5,
a key element for the proof concerns a technical result on point-wise boundedness
of convolutions by maximal functions (see Lemma \ref{lem:pointwisetechnical}).
\end{remark}
\appendix

\section{Vector-valued Besov spaces}\label{app:BesovSpaces}

\subsection{Notation}\label{app:notation}

In this article we consider vector-valued Besov spaces
defined by means of a Littlewood-Paley type decomposition.
We recall some basic definitions and results below, and refer
to~\cite{Amann1997, Marumatu1974, Pruss1993, Schmeisser1987} for
more details on vector-valued Besov spaces defined via this approach
(these references `only' deal with the Banach-space parameter range, i.e.,
the spaces $B_{p,q}^{\sigma}(\R^d;E)$
with $p,q \in [1,\infty]$ and $\sigma \in \R$).
We refer to~\cite{Tri1983, Tri1992} for a general treatment of the real-valued
case (including the quasi-Banach space parameter range $p \in (0,1)$
or $q\in (0,1)$).\par

Let $E$ be a $\C$-Banach space. For every $d\in \bN$ let $\mathscr{S}(\R^d;E)$ denote the space of $E$-valued Schwartz functions, we set $\mathscr{S}(\R^d):=\mathscr{S}(\R^d;\C)$.
Let $\mathscr{S}'(\R^d;E)=\calL(\mathscr{S}(\R^d),E)$ denote the space of tempered distributions; we write $\mathscr{S}'(\R^d):= \mathscr{S}'(\R^d;\C)$. 
For $\varphi \in \mathscr{S}(\R^d)$ and $f\in \mathscr{S}'(\R^d;E)$ define $\varphi * f \in \mathscr{S}'(\R^d,E)$ by $(\varphi * f)(g) = f(\varphi(-\cdot ) * g)$,
where $(\varphi(-\cdot) * g)(\xi) = \int_{\R^d} \varphi( y-\xi) g(y)\,\mathrm{d}y$.

In what follows, we use a slightly different convention than in the main text and define the Fourier transform $\cF \colon \mathscr{S}(\R^d) \rightarrow \mathscr{S}(\R^d)$ on the Schwartz space to be given by $\cF(f)(s) = \int_{\R^d} e^{-2\pi i\langle s, x\rangle} f(x) \,\mathrm{d}x$ for $f\in \mathscr{S}(\R^d)$ and $s\in \R^d$ 
(this has the consequence that
$\cF^{-1}(f)(s)  = \int_{\R^d} e^{2\pi i\langle s, x\rangle} f(x) \,\mathrm{d}x$
and $\cF(\nabla f)(s)=2\pi i s\cF(f)(s)$).
We also use $\wh{f}$ to denote $\cF(f)$. For $\lambda\in (0,\infty)$,
$\alpha \in \R$ and $f \in \mathscr{S}(\R^d)$ 
set
\begin{align}
(1-\lambda \Delta)^{\alpha}f := \cF^{-1}((1+(2\pi)^2 \lambda |\cdot|^2)^{\alpha} \wh{f}) \in \mathscr{S}(\R^d);
\end{align}
note that for $\lambda = 1$
it corresponds with the Bessel potential operator (and for $\alpha \in \N$
this corresponds with the classical definition of the Laplacian $\Delta$).

In addition, let $\cF \colon \mathscr{S}'(\R^d;E) \rightarrow \mathscr{S}'(\R^d;E)$
denote the Fourier transform on the space of tempered distributions, i.e., for all $f\in \mathscr{S}'(\R^d;E)$
it holds that $\cF(f)\in \mathscr{S}'(\R^d;E)$ is defined by $\langle \cF(f), g\rangle = \langle f, \cF(g)\rangle$, $g\in \mathscr{S}(\bR^d)$.
Again we may write $\wh{f}$ instead of $\cF(f)$. Moreover, for $\lambda\in (0,\infty)$,
$\alpha \in \R$ and $f\in \mathscr{S}'(\R^d;E)$
we define
$(1-\lambda \Delta)^{\alpha} f \in \mathscr{S}'(\R^d;E)$ by setting
$\langle(1-\lambda \Delta)^{\alpha} f, g\rangle = \langle f , (1-\lambda \Delta)^{\alpha}g\rangle$ for $g\in \mathscr{S}(\R^d)$. \par

Let $\varphi\in \mathscr{S}(\R^d)$ be
such that $0\leq \wh{\varphi}(\xi)\leq 1$ for $\xi\in \R^d$, $\wh{\varphi}(\xi) = 1$ for $|\xi|\leq 1$,
and $\wh{\varphi}(\xi)=0$ for $|\xi|\geq \frac32$. Let $\wh{\varphi}_0 = \wh{\varphi}$,
and for $k\in \N$ define $\wh{\varphi}_k = \wh{\varphi}(2^{-k}\cdot) - \wh{\varphi}(2^{-k+1}\cdot)$.
Clearly, for all $n\in \N$ it holds that
$\sum_{k=0}^n \wh{\varphi}_k = {\wh \varphi}(2^{-n} \cdot)$ and hence $\sum_{k=0}^{\infty} {\wh\varphi}_k = 1$ point-wise, and
one may check that
\begin{align}
\supp \wh{\varphi}_k &\subseteq   \{\xi\in\R^d:2^{k-1}\leq|\xi|\leq 3\cdot 2^{k-1}\}, \quad  k\in \N.
\end{align}
For all $\sigma\in \R$, $p,q\in (0,\infty]$, let the Besov $(p\minsym q \minsym 1)$-norm
$\left\| \cdot\right\|_{B_{p,q}^{\sigma}(\R^d;E)}\colon \mathscr{S}'(\R^d;E)\rightarrow [0,\infty]$
be defined by
\begin{equation}\label{eq:defBesovNorm}
 \|f\|_{B^{\sigma}_{p,q}(\R^d;E)} := \Big\| \big( 2^{k\sigma}\varphi_k* f\big)_{k\in \N} \Big\|_{\ell^q(L^p(\R^d;E))},
 \quad f \in \mathscr{S}'(\R^d;E).
\end{equation}
The vector-valued Besov space $B^{\sigma}_{p,q}(\R^d;E)$ is given by
\begin{equation*}
 B^{\sigma}_{p,q}(\R^d;E) = \left\{ f \in \mathscr{S}'(\R^d;E) \colon \|f\|_{B^{\sigma}_{p,q}(\R^d;E)} < \infty \right\} .
\end{equation*}
One easily verifies that~\eqref{eq:defBesovNorm} indeed defines a $(p\minsym q \minsym 1)$-norm on $B^{\sigma}_{p,q}(\R^d;E)$,
and one may check that a different choice of $\varphi$ leads to an equivalent norm
(see e.g.~\cite[Theorem 2.3.3]{Tri1983} for a precise formulation of this result in the real-valued case---the
proof caries over to the vector valued case).

Note that $B^{\sigma}_{p,q}(\R^d;E)$ is a $(p\minsym q \minsym 1)$-Banach space, and that 
$\mathscr{S}(\R^d;E^*)\subseteq [B^{\sigma}_{p,q}(\R^d;E)]^*$. In particular, $[B^{\sigma}_{p,q}(\R^d;E)]^*$ 
contains a subspace that separates points of $B^{\sigma}_{p,q}(\R^d;E)$. A precise characterisation of $[B^{\sigma}_{p,q}(\R^d)]^*$ can be found in \cite[2.11.2 and 2.11.3]{Tri1983}.


\subsection{\texorpdfstring{$\gamma$}{gamma}-Fubini in Besov spaces}
\begin{proposition}\label{prop:Bpqgamma}
Let $H$ be a separable Hilbert space, $p,q\in (0,\infty)$, and $\sigma \in \R$.
Then for all finite rank operators $G\in \calL(H,B^{\sigma}_{p,q}(\R^d))$ one has
\[\|G\|_{\gamma(H,B^{\sigma}_{p,q}(\R^d))} \eqsim_{p,q} \|G\|_{B^{\sigma}_{p,q}(\R^d;H)}.\]
Moreover, $I \colon B^{\sigma}_{p,q}(\R^d;H) \rightarrow  \gamma(H,B^{\sigma}_{p,q}(\R^d))$;
$I(G)h = \langle G, h \rangle_H $, $G\in B^{\sigma}_{p,q}(\R^d;H)$, $h\in H$, defines an isomorphism.
\end{proposition}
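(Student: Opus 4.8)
The plan is to reduce the statement to the $r$-Banach \emph{function} space setting and then invoke the Gaussian Maurey--Khintchine inequality~\eqref{eq:MaureyKhintchine}. Set $r := p \wedge q \wedge 1$ and let $F := \ell^q(\N; L^p(\R^d))$ be the mixed-norm space over $\N \times \R^d$ equipped with the product of the counting and Lebesgue measures. By the lattice property of the mixed norm, $F$ is a separable $r$-Banach function space, and by the cotype facts recorded before Proposition~\ref{prop:HJKalternative} it has cotype $p \vee q \vee 2 < \infty$; hence the Gaussian version of~\eqref{eq:MaureyKhintchine} is available for $F$. The definition~\eqref{eq:defBesovNorm} of the Besov norm says precisely that $\iota \colon f \mapsto (2^{k\sigma}\varphi_k * f)_{k}$ is an isometry of $B^\sigma_{p,q}(\R^d)$ into $F$, and the same formula gives an isometry of $B^\sigma_{p,q}(\R^d; H)$ into $F(H) = \ell^q(L^p(\R^d;H))$.

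First I would prove the finite rank equivalence. Using~\eqref{eq:finite-rank} I write $G = \sum_{n=1}^N h_n \otimes g_n$ with $(h_n)_{n=1}^N$ orthonormal in $H$ and $g_n \in B^\sigma_{p,q}(\R^d)$; the element of $B^\sigma_{p,q}(\R^d; H)$ that $I$ pairs with $G$ is $\tilde G := \sum_n g_n \otimes h_n$, and indeed $I(\tilde G) = G$. Fix any $p_0 \in (0,\infty)$. Combining Proposition~\ref{prop:f-r} with the Kahane--Khintchine inequality (Theorem~\ref{thm:QBKahKhin}) gives
\[
\|G\|_{\gamma(H, B^\sigma_{p,q}(\R^d))}
\eqsim_{p,q}
\Big\| \sum_{n=1}^N \gamma_n g_n \Big\|_{L^{p_0}(\Omega; B^\sigma_{p,q}(\R^d))}
=
\Big\| \sum_{n=1}^N \gamma_n a_n \Big\|_{L^{p_0}(\Omega; F)},
\]
where $a_n := \iota(g_n)$ and the last equality uses that $\iota$ is isometric and convolution is linear. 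The Gaussian form of~\eqref{eq:MaureyKhintchine} in $F$ then yields
\[
\Big\| \sum_{n=1}^N \gamma_n a_n \Big\|_{L^{p_0}(\Omega; F)}
\eqsim_{p,q}
\Big\| \big(\textstyle\sum_{n=1}^N |a_n|^2\big)^{1/2} \Big\|_{F},
\]
the square function being taken pointwise on $\N \times \R^d$. At $(k,\xi)$ it equals $2^{k\sigma}(\sum_n |(\varphi_k * g_n)(\xi)|^2)^{1/2} = 2^{k\sigma}\|(\varphi_k * \tilde G)(\xi)\|_H$ by orthonormality of $(h_n)$, so its $F$-norm is exactly $\|\tilde G\|_{B^\sigma_{p,q}(\R^d; H)}$ by~\eqref{eq:defBesovNorm}. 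Chaining these equivalences proves $\|G\|_{\gamma(H, B^\sigma_{p,q}(\R^d))} \eqsim_{p,q} \|\tilde G\|_{B^\sigma_{p,q}(\R^d;H)}$.

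To obtain the isomorphism, note that $I$ is bounded from $B^\sigma_{p,q}(\R^d;H)$ into $\calL(H, B^\sigma_{p,q}(\R^d))$ (from $|\langle G'(\xi), h\rangle_H| \le \|h\|_H \|G'(\xi)\|_H$ and the lattice property) and injective, since $I(G') = 0$ forces $\langle G', h\rangle_H = 0$ in $B^\sigma_{p,q}(\R^d)$ for all $h$, hence $G' = 0$ upon testing against an orthonormal basis of $H$. Finite rank tensors are dense in $B^\sigma_{p,q}(\R^d;H)$: truncating $G'$ by the orthogonal projections onto $\operatorname{span}(h_1,\dots,h_N)$ converges in the Besov norm by dominated convergence in $F(H)$ (here $p,q < \infty$ is essential), and $I$ maps these onto the finite rank operators, which are dense in $\gamma(H, B^\sigma_{p,q}(\R^d))$ by Definition~\ref{def:g-rad}. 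Since $I$ is a norm-isomorphism on this dense subspace by the finite rank step, it extends to the asserted isomorphism $B^\sigma_{p,q}(\R^d;H) \cong \gamma(H, B^\sigma_{p,q}(\R^d))$.

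The step I expect to be the main obstacle is the honest verification that $F = \ell^q(L^p(\R^d))$ meets all the hypotheses under which~\eqref{eq:MaureyKhintchine} was proved---separable $r$-Banach function space of finite cotype, with the Gaussian replacement justified as in the text---and that the resulting constants depend only on $p$ and $q$. The accompanying density and dominated-convergence bookkeeping for finite rank tensors, needed to pass from the finite rank equivalence to the isomorphism when $p$ or $q$ is small, is the only other point requiring care.
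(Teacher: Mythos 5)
Your proof is correct, but it justifies the key square-function equivalence by a genuinely different route than the paper. The paper argues bare-hands: after writing $G=\sum_{n=1}^N h_n\otimes g_n$ and invoking Proposition~\ref{prop:f-r}, it applies the Kahane--Khintchine inequality three times and Fubini twice, so that the only square-function input is the \emph{scalar} Gaussian Khintchine inequality applied pointwise in $\xi$ --- exploiting that both norms in \eqref{eq:defBesovNorm} are integral norms whose moment exponents can be switched at will. You instead embed $B^{\sigma}_{p,q}(\R^d)$ isometrically into the mixed-norm lattice $F=\ell^q(L^p(\R^d))$ via the Littlewood--Paley map and quote the Gaussian Maurey--Khintchine inequality \eqref{eq:MaureyKhintchine} for $r$-Banach function spaces of finite cotype. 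That is legitimate: $F$ is a separable $r$-Banach function space ($r=p\wedge q\wedge 1$) over the $\sigma$-finite measure space $\N\times\R^d$, it has cotype $p\vee q\vee 2$ by the facts recorded before Proposition~\ref{prop:HJKalternative}, and the relevant convexity/concavity/cotype constants of $\ell^q(L^p)$ depend only on $p$ and $q$, so the constants come out as claimed. What each route buys: yours is shorter and makes a structural point --- it shows in what sense Proposition~\ref{prop:Bpqgamma} ``almost'' does follow from the lattice theory, despite the paper's warning that $B^{\sigma}_{p,q}(\R^d)$ is not a function space, by passing to the Littlewood--Paley image; but it leans on the deeper machinery behind \eqref{eq:MaureyKhintchine} (Kalton--Montgomery-Smith convexity results and the Gaussian replacement from Kalton), whereas the paper's computation is essentially self-contained. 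Your density step also differs slightly (and works): you truncate a general $G'\in B^{\sigma}_{p,q}(\R^d;H)$ by projections onto $\operatorname{span}(h_1,\dots,h_N)$ and use dominated convergence directly, where the paper first passes to Schwartz functions; just phrase the pointwise bounds in terms of the Littlewood--Paley blocks $\varphi_k * G'$ rather than ``$G'(\xi)$'', since elements of $B^{\sigma}_{p,q}(\R^d;H)$ are distributions, not functions.
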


The idea of the proof involves a Fubini argument that is classical for
characterizing $\gamma(H,L^q(S;E))$ with $E$ a Banach space, see e.g.~\cite[p.~5]{NeeVerWei2015},  and has already been used in the  proof of Theorem~\ref{thm:sq_fnc_lattice}---note however that $B^\sigma_{p,q}(\bR^d)$ is not a quasi-Banach function space, so that Proposition~\ref{prop:Bpqgamma} is not a straightforward consequence of Theorem~\ref{thm:sq_fnc_lattice}. 

\begin{proof}
Let $G \in \gamma(H,B^{\sigma}_{p,q}(\R^d))$ be a finite rank operator, i.e.,
$G = \sum_{n=1}^N h_n\otimes g_n$, where $N\in \N$, $(h_n)_{n=1}^N$ is an orthonormal system in $H$,
and $g_n\in B^{\sigma}_{p,q}(\R^d)$.
Let $(\gamma_n)_{n=1}^{N}$ be a sequence of i.i.d.\ standard $\R$-valued
Gaussian random variables, and let $(\varphi_k)_{k\in\N}$ be as in Appendix~\ref{app:notation}.
By Proposition \ref{prop:f-r}, the Kahane-Khintchine inequalities (applied thrice)
and Fubini's theorem (applied twice) we have
\begin{align*}
\|G\|_{\gamma(H,B^{\sigma}_{p,q}(\R^d))}
&
\eqsim_{p,q} \bigg\|\sum_{n=1}^{N} \gamma_{n} g_n \bigg\|_{L^2(\Omega;B^{\sigma}_{p,q}(\R^d))}
\eqsim_{q} \bigg\|\sum_{n=1}^{N} \gamma_{n} g_n \bigg\|_{L^q(\Omega;B^{\sigma}_{p,q}(\R^d))}
\\ & =
  \bigg(
    \sum_{k\in \N}
        2^{k\sigma q}
	\bigg\|
	  \sum_{n=1}^{N} \gamma_{n} (\varphi_k * g_n)
	\bigg\|_{L^q(\Omega;L^p(\R^d))}^q
  \bigg)^{\frac{1}{q}}
\\ & \eqsim_{p}
  \bigg(
    \sum_{k\in \N}
        2^{k\sigma q}
	\bigg\|
	  \sum_{n=1}^{N} \gamma_{n} (\varphi_k * g_n)
	\bigg\|_{L^p(\Omega;L^p(\R^d))}^q
  \bigg)^{\frac{1}{q}}
\\ & \eqsim_p
  \bigg(
    \sum_{k\in \N}
        2^{k\sigma q}
	\bigg\|
	  \bigg(
	    \sum_{n=1}^{N} | \varphi_k * g_n |^2
	  \bigg)^{\frac{1}{2}}
	\bigg\|_{L^p(\R^d)}^q
  \bigg)^{\frac{1}{q}}
\\ & = \|G\|_{B^{\sigma}_{p,q}(\R^d;H)}.
\end{align*}\par
In order to complete the proof of Proposition~\ref{prop:Bpqgamma}
it suffices to prove that the functions represented by finite rank operators are dense in $B^{\sigma}_{p,q}(\R^d;H)$
(they are dense in $\gamma(H,B^{\sigma}_{p,q}(\R^d))$ by definition).\par
Recall that $\mathscr{S}(\R^d;H)$ is dense in $B_{p,q}^{\sigma}(\R^d;H)$ (see e.g.~\cite[Theorem 2.3.3]{Tri1983},
the proof also applies to the vector-valued case).
Let $G\in \mathscr{S}(\R^d;H)$. 
Moreover, let
$(h_n)_{n\in I}$ be an orthonormal basis of $\operatorname{span}(\operatorname{Range}(G))$ for some $I\subseteq\bN$. For $n\in \N$
let $P_n \colon H \rightarrow H$ be the orthogonal projection onto $\operatorname{span}\{h_j\colon j\leq n\}$.
Observe that $P_n G$ represents a $B_{p,q}^{\sigma}(\R^d)$-valued finite rank operator, and
$\lim_{n\rightarrow \infty} \| (\operatorname{Id} - P_n) G \|_{B_{p,q}^{\sigma}(\R^d;H)}=0$
by the Dominated Convergence Theorem.
In conclusion, every $G\in \mathscr{S}(\R^d;H)$ can be approximated in $B_{p,q}^{\sigma}(\R^d;H)$
by a sequence of finite rank operators. This completes the proof of the proposition.
\end{proof}

\section{The Hardy-Littlewood maximal function and a technical lemma\label{sec:AppB}}

Lemma~\ref{lem:pointwisetechnical} below provides a point-wise estimate for the mapping $f \mapsto K_t* f$,
where $f\in \mathscr{S}(\R^d)$ is such that the Fourier transform $\wh{f}$ of $f$ has compact support, and $(K_t)_{t\in (0,\infty)}$
is the standard heat kernel (see~\eqref{eq:def_heat_kernel}). As a corollary, we obtain estimates in the
Besov-quasi-norm for the heat kernel applied to an element of a Besov space,
see Corollaries~\ref{cor:heat_homogeneous_Besov},~\ref{cor:heat_stochconv_Besov}, and~\ref{cor:heat_inhomogeneous_Besov}.
The proof is in spirit a Fourier multiplier argument closely related to~\cite[Section 1.5.2]{Tri1983}.

Recall the notation from Appendix~\ref{app:notation}.
For $f\colon \R^d \rightarrow \R$ measurable let $M(f) \colon \R^d \rightarrow [0,\infty]$ denote the
\emph{Hardy-Littlewood maximal function of $f$}, i.e., 
\begin{equation}\label{eq:maxfunc}
 M(f)(x)
 =
 \sup_{r\in (0,\infty)}
 \left(
  \tfrac{\Gamma(\frac{d}{2}+1)}{(\pi r)^d}
  \int_{|y|\leq r} | f(x+y) | \,\mathrm{d}y
 \right), \quad  x \in \R^d.
\end{equation}

\medskip


\begin{lemma}\label{lem:pointwisetechnical}
Let $M$ and $(K_t)_{t\in (0,\infty)}$ be as in~\eqref{eq:maxfunc} and~\eqref{eq:def_heat_kernel}, respectively,
and define
\begin{equation*}
 \cD(\R^d) = \left\{
 f\in \mathscr{S}(\R^d) \colon
  \substack{ \supp \wh{f}  \subseteq\{\xi\in \R^d \colon |\xi|\leq \frac{3}{2}\} \text { or } \\
  \exists n\in \N\colon \supp \wh{f}  \subseteq \{\xi\in \R^d \colon  2^{n-1}\leq |\xi|\leq 3\cdot 2^{n-1}\} }
 \right\}.
\end{equation*}
Then for all $\alpha\in \R$, $\lambda \in [0,1)$, $r\in (0,1]$ there
exist constants $C_{d,r,\alpha}^{\eqref{eq:Gconv}}, C_{d,r,\alpha,\lambda}^{\eqref{eq:Gconv2}} \in (0,\infty)$
such that for all $f\in \cD(\R^d)$, $0< s<t<\infty$, and $\xi \in \R^d$ it holds that
\begin{equation}\label{eq:Gconv}
  |(1- (2\pi)^{-2} \Delta)^{\alpha} ( K_t * f )(\xi)|
  \leq
  C_{d,r,\alpha}^{\eqref{eq:Gconv}} t^{-(\alpha\maxsym 0)} e^{5 \pi^2 t} (M(|f|^r)(\xi))^{\frac1r},
\end{equation}
and
\begin{equation}\label{eq:Gconv2}
\begin{aligned}
&
|(1- (2\pi)^{-2} \Delta)^{\alpha} (K_t * f - K_s *f)(\xi)|
\\ &
\leq C_{d,r,\alpha,\lambda}^{\eqref{eq:Gconv2}} s^{(-\alpha-\lambda)\minsym 0} (t-s)^{\lambda} e^{6\pi^2 t} (M (|f|^r)(\xi))^{\frac1r}.
\end{aligned}
\end{equation}
Moreover, if $\alpha\leq 0$ then~\eqref{eq:Gconv} holds for $t=0$, and
if $\alpha \leq - \lambda$ then~\eqref{eq:Gconv2} holds for $s=0$ (recall
that we set $K_0 * \varphi :=\varphi$ for all $\varphi \in \mathscr{S}'(\R^d)$).
\end{lemma}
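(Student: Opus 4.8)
The plan is to realise each operator $(1-(2\pi)^{-2}\Delta)^{\alpha}K_t*\,$, applied to a function $f\in\cD(\R^d)$, as convolution against an explicit Schwartz kernel, and then to bound that convolution pointwise by the maximal function using the band-limited structure of $f$. First I would fix a smooth cut-off $\wh\psi$ that equals $1$ on $\supp\wh f$ and is supported in a slightly larger ball (if $\supp\wh f\subseteq\{|\xi|\le 3/2\}$) or annulus (if $\supp\wh f\subseteq\{2^{n-1}\le|\xi|\le 3\cdot2^{n-1}\}$). Since $\wh\psi\,\wh f=\wh f$, and using $\wh{K_t}(\xi)=e^{-4\pi^2 t|\xi|^2}$ together with the symbol $(1+|\xi|^2)^\alpha$ of $(1-(2\pi)^{-2}\Delta)^\alpha$, one has
\[
(1-(2\pi)^{-2}\Delta)^{\alpha}(K_t*f)=G_{t}*f,\qquad \wh{G_{t}}(\xi)=(1+|\xi|^2)^\alpha e^{-4\pi^2 t|\xi|^2}\wh\psi(\xi).
\]
As $\wh{G_t}$ is smooth with compact support, $G_t$ is Schwartz. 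The passage to the maximal function rests on the Peetre--Nikolskii inequality for band-limited functions: for $0<r\le1$ and $a>d/r$ there is a constant such that $|f(\xi-y)|\le C(1+R|y|)^{a}\,(M(|f|^r)(\xi))^{1/r}$ whenever $\supp\wh f\subseteq\{|\xi|\le R\}$ (this is the maximal-function input underlying the Fourier-analytic description of Besov spaces, cf.\ the multiplier arguments in \cite{Tri1983}). Combining these gives
\[
|G_t*f(\xi)|\le\Big(\int_{\R^d}|G_t(y)|\,(1+R|y|)^{a}\,\dy\Big)\,(M(|f|^r)(\xi))^{1/r},
\]
so the whole estimate reduces to the weighted $L^1$-norm of $G_t$.

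I would estimate this weighted norm by rescaling to unit frequency scale. Writing $R\eqsim 2^n$ and $\wh\psi=\wh\Psi(2^{-n}\cdot)$ for a fixed profile $\wh\Psi$ supported in a fixed annulus, the substitution $y\mapsto 2^{-n}z$ reduces the integral to $\int|\cF^{-1}[m_{t,n}](z)|(1+|z|)^{a}\,\mathrm{d}z$ with $m_{t,n}(\eta)=(1+4^n|\eta|^2)^\alpha e^{-4\pi^2 t4^n|\eta|^2}\wh\Psi(\eta)$, which is controlled by finitely many Schwartz seminorms of $m_{t,n}$ (of order $\le a+d+1$). On $\supp\wh\Psi$ one has $(1+4^n|\eta|^2)^\alpha\eqsim 4^{n\alpha}$ and $e^{-4\pi^2 t4^n|\eta|^2}\le e^{-\pi^2 t4^n}$, while each derivative of the Gaussian contributes a factor $O(1+t4^n)$; hence the weighted norm is $\lesssim 4^{n\alpha}(1+t4^n)^{N}e^{-\pi^2 t4^n}$. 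Optimising $\sup_{n}4^{n\alpha}(1+t4^n)^{N}e^{-\pi^2 t4^n}$ over the scale yields the factor $t^{-(\alpha\maxsym 0)}$ (the maximum of $u^{\alpha}e^{-cu}$ occurring at $u\eqsim\alpha/(ct)$), whereas for the low-frequency ball there is no decay to exploit and the same bookkeeping produces only polynomial growth in $t$, which I absorb into $e^{5\pi^2 t}$. This proves \eqref{eq:Gconv}; when $\alpha\le0$ the symbol $(1+|\xi|^2)^\alpha\wh\psi$ is already a bounded multiplier with Schwartz kernel, so the argument persists at $t=0$, where $K_0*f=f$, giving the stated extension.

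For \eqref{eq:Gconv2} I would split off the Hölder factor at the symbol level. Using $e^{-4\pi^2 t|\xi|^2}-e^{-4\pi^2 s|\xi|^2}=e^{-4\pi^2 s|\xi|^2}(e^{-4\pi^2(t-s)|\xi|^2}-1)$ together with the elementary bound $|1-e^{-x}|\le x^{\lambda}$ for $x\ge0$, $\lambda\in[0,1]$, the kernel $G_{t,s}$ with $\wh{G_{t,s}}=(1+|\xi|^2)^\alpha(e^{-4\pi^2 t|\xi|^2}-e^{-4\pi^2 s|\xi|^2})\wh\psi$ has symbol of size $\lesssim(t-s)^{\lambda}(1+|\xi|^2)^\alpha|\xi|^{2\lambda}e^{-4\pi^2 s|\xi|^2}\wh\psi$. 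Repeating the scaling argument, now with the extra factor $|\xi|^{2\lambda}\eqsim 4^{n\lambda}$ and decay governed by $s$, and optimising $\sup_n 4^{n(\alpha+\lambda)}e^{-\pi^2 s4^n}\eqsim s^{-((\alpha+\lambda)\maxsym 0)}$, produces the claimed $(t-s)^{\lambda}s^{(-\alpha-\lambda)\minsym 0}$, the derivative and low-frequency bookkeeping now costing $e^{6\pi^2 t}$. When $\alpha\le-\lambda$ the exponent $s^{(-\alpha-\lambda)\minsym0}=1$ and the multiplier $(1+|\xi|^2)^\alpha|\xi|^{2\lambda}\wh\psi$ stays bounded, so the estimate extends to $s=0$ with $K_0*f=f$.

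The main obstacle I anticipate is the uniform-in-scale kernel estimate: one must track precisely the interplay between the polynomial growth $(1+t4^n)^N$ coming from differentiating the Gaussian symbol, the algebraic factor $4^{n\alpha}$ (respectively $4^{n(\alpha+\lambda)}$), and the Gaussian decay $e^{-\pi^2 t4^n}$, and then optimise over $n$ to extract exactly $t^{-(\alpha\maxsym0)}$ (respectively $(t-s)^{\lambda}s^{(-\alpha-\lambda)\minsym0}$) while keeping the residual $t$-dependence bounded by a clean exponential. Treating the two geometric cases (ball versus annulus) on an equal footing, and checking that the maximal-function exponent $a>d/r$ required for $r\in(0,1]$ is compatible with the available decay of $G_t$ uniformly after rescaling, are the remaining points requiring care.
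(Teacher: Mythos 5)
Your proposal is correct, and for \eqref{eq:Gconv} it is essentially the paper's own argument: both proofs multiply $\wh f$ by a cut-off $\wh\psi$ adapted to the ball or annulus, reduce matters to a compactly supported multiplier of the form $(1+|\xi|^2)^\alpha e^{-4\pi^2 t|\xi|^2}\wh\psi$, invoke the Peetre-type maximal inequality for band-limited functions (\cite[Theorem~1.3.1]{Tri1983}, exactly the inequality you call Peetre--Nikolskii), and bound the weighted $L^1$-norm of the resulting kernel by finitely many derivatives of the symbol. The only organizational difference there is that the paper parametrizes the symbol as $m_t=h\circ g_t$ with $h(x)=x^\alpha e^{-4\pi^2 x}$, $g_t(\xi)=t(1+|\xi|^2)$, and applies the Fa\`a di Bruno formula together with $\sup_{x\ge t}x^\beta h^{(\gamma)}(x)\lesssim 1+t^{(\alpha+\beta-\gamma)\minsym 0}$, which treats ball and annulus uniformly and replaces your explicit optimization of $4^{n\alpha}(1+t4^n)^N e^{-\pi^2 t 4^n}$ over the scale $n$; the two computations are equivalent. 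Where you genuinely diverge is \eqref{eq:Gconv2}: you stay at the symbol level, factoring $e^{-4\pi^2 t|\xi|^2}-e^{-4\pi^2 s|\xi|^2}=e^{-4\pi^2 s|\xi|^2}\big(e^{-4\pi^2(t-s)|\xi|^2}-1\big)$ and using $|1-e^{-x}|\le x^\lambda$, whereas the paper writes $K_t*f-K_s*f=\int_s^t\Delta(K_\tau*f)\,\mathrm{d}\tau$, estimates the integrand by two applications of the already proved \eqref{eq:Gconv} (at orders $\alpha+1$ and $\alpha$, after expressing $\Delta$ through $1-(2\pi)^{-2}\Delta$), and finishes with the elementary scalar bound $\int_s^t\tau^\beta\,\mathrm{d}\tau\lesssim_{\lambda,\beta}(t-s)^\lambda s^{(1-\lambda+\beta)\minsym 0}e^{\pi^2 t}$. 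The paper's route buys reusability: \eqref{eq:Gconv} is a black box and the Leibniz/Fa\`a di Bruno bookkeeping is done only once. Your route avoids the time integration but must redo the kernel estimate with three interacting scales $s$, $t-s$, $2^n$; for it to close you need every term of the Leibniz expansion to carry the factor $((t-s)4^n)^\lambda$ --- when a derivative hits $e^{-4\pi^2(t-s)|\xi|^2}-1$ this follows from $u e^{-cu}\lesssim_\lambda u^\lambda$, and when it does not, from $|1-e^{-x}|\le\min(1,x)\le x^\lambda$ --- which is precisely the bookkeeping you flag as the main obstacle, and it does go through, as do your endpoint cases $t=0$ (for $\alpha\le 0$) and $s=0$ (for $\alpha\le-\lambda$).
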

\begin{proof}
Fix $\alpha\in \R$ and $r\in (0,1]$. We first prove the statement involving estimate~\eqref{eq:Gconv}.
To this end we introduce $h\colon (0,\infty)\rightarrow \R$; $h(x)= x^{\alpha}e^{- 4\pi^2 x}$,
and $g_t, m_t\colon \R^d \rightarrow \R$, $t\in [0,\infty)$, given by $g_t(\xi) = t(1+|\xi|^2)$ and
$m_t(\xi) = h(g_t(\xi))$.
Using the Leibniz rule
one may verify that for all $\beta,\gamma \in \N_0$
there exists a constant $c_{\alpha,\beta,\gamma}^{\eqref{eq:h_bounded_deriv}}$ such that
for all $t\in (0,\infty)$ it holds that
\begin{equation}\label{eq:h_bounded_deriv}
  \sup_{x \in [t,\infty)}  x^{\beta} h^{(\gamma)}(x)
  \leq c_{\alpha,\beta,\gamma}^{\eqref{eq:h_bounded_deriv}} (1+ t^{(\alpha+\beta-\gamma)\minsym 0}).
\end{equation}

Let $\frac{\partial}{\partial \xi}$ denote the Fr\'echet derivative with respect to $\xi\in \R^d$.
Using that for $\xi, x^{(1)}, x^{(2)} \in \R^d$ we have
$\frac{\partial}{\partial \xi} g_t(\xi)(x^{(1)}) = 2t \langle \xi, x^{(1)} \rangle_{\R^d}$,
$\frac{\partial^2}{\partial \xi^2} g_t(\xi)(x^{(1)},x^{(2)}) = 2t \langle x^{(1)}, x^{(2)} \rangle_{\R^d}$,
and $\frac{\partial^k}{\partial \xi^k}g_t \equiv 0$ for all $k\geq 3$, it follows, by the Fa\`a di Bruno formula,
that for all $\gamma \in \N_0$, $t\in (0,\infty)$, $\xi,x^{(1)},\ldots, x^{(\gamma)}\in \R^d$ one has
\begin{equation}\label{eq:m_deriv}
\begin{aligned}
& \tfrac{\partial^{\gamma}}{\partial \xi^{\gamma}} m_t(\xi)\big(x^{(1)},\ldots,x^{(\gamma)}\big)
\\&   =
  \sum_{k =\lceil \gamma/2 \rceil}^{\gamma}
   2^{k} t^{k} h^{(k )}\left(g_t(\xi)\right)
   \sum_{\substack{B\subseteq \{1,\ldots,\gamma\};\\ |B| = 2k-\gamma}}
   \prod_{i\in B} \langle \xi , x^{(i)} \rangle_{\R^d}
   \sum_{\substack{(n_j, m_j)_{j=1}^{\gamma-k} \in \\  P_2(\{1,\ldots,n\}\setminus B)}}
   \prod_{j=1}^{\gamma -k} \langle x^{(n_j)},x^{(m_j)} \rangle_{\R^d},
\end{aligned}
\end{equation}
where for $A\subseteq \{1,\ldots,n\}$ a set with an even number of elements,
$P_2(A)$ is the set of all pairings of elements of $A$ (i.e., $P_2(A)$ itself
contains $\frac{|A|!}{2^{|A|/2} ( |A|/2)!}$ elements).
Note that $\operatorname{Range} g_t = [t,\infty)$. It thus
follows from~\eqref{eq:h_bounded_deriv} and~\eqref{eq:m_deriv} that for all $\gamma\in \N_0$
there exists a constant $c_{\alpha,\gamma}^{\eqref{eq:deriv_mt_est1}}$
such that for all $t\in (0,\infty)$, $\xi\in \R^d$ it holds that
\begin{equation}\label{eq:deriv_mt_est1}
\begin{aligned}
& \left\| \tfrac{\partial^{\gamma}}{\partial \xi^{\gamma}} m_t(\xi) \right\|_{L^{(\gamma)}(\R^d,\R)}
 \leq
 \sum_{k =\lceil \gamma/2 \rceil}^{\gamma}
   \tfrac{2^{2k-\gamma} \gamma!}{ (2k -\gamma )!(\gamma - k)! }
   t^{k} h^{(k )}\left(g_t(\xi)\right) |\xi|^{2k-\gamma}
\\& \leq
 \sum_{k =\lceil \gamma/2 \rceil}^{\gamma}
   \tfrac{2^{2k-\gamma}\gamma!}{ (2k -\gamma )!(\gamma - k)! }
   t^{\gamma/2} [g_t(\xi)]^{k-\gamma/2} h^{(k )}\left(g_t(\xi)\right)
 \leq
 c_{\alpha,\gamma}^{\eqref{eq:deriv_mt_est1}}(t^{\gamma/2}+t^{\alpha\minsym 0}).
\end{aligned}
\end{equation}
Moreover, by a similar argument there exists a constant $c_{\alpha,\gamma}^{\eqref{eq:deriv_mt_est2}}$
such that for all $t\in (0,\infty)$, $\xi\in \R^d$ it holds that
\begin{equation}\label{eq:deriv_mt_est2}
\begin{aligned}
 \left\|
  |\xi|^{\gamma}
  \tfrac{\partial^{\gamma}}{\partial \xi^{\gamma}} m_t(\xi)
 \right\|_{L^{(\gamma)}(\R^d,\R)}
&  \leq
  \sum_{k =\lceil \gamma/2 \rceil}^{\gamma}
   \tfrac{2^{2k-\gamma}\gamma!}{ (2k -\gamma )!(\gamma - k)!  }
   (g_t(\xi))^{k} h^{(k)}\left(g_t(\xi)\right)
\\ &  \leq c_{\alpha,\gamma}^{\eqref{eq:deriv_mt_est2}}(1+t^{\alpha \minsym 0}).
\end{aligned}
\end{equation}

Now fix $f\in \cD(\R^d)$. In the case that there exists an $n\in \N$ such that
$\supp \wh{f} \subseteq \{\xi\in \R^d \colon  2^{n-1}\leq |\xi|\leq 3\cdot 2^{n-1}\}$,
choose $\psi\in \mathscr{S}(\R^d)$ such that $\wh{\psi}(\xi) = 1$
for $1\leq |\xi|\leq 3$ and $\wh{\psi}(\xi) = 0$ if $|\xi|>4$ or $|\xi|\leq \frac{1}{2}$.
Otherwise we choose $\psi$ such that $\wh{\psi}(\xi) = 1$ for $|\xi|\leq \frac{3}{2}$
and $\wh{\psi}(\xi) = 0$ if $|\xi|\geq 2$ and set $n=1$ (so we have $n=1$ if
$\supp \wh{f} \subseteq \{\xi \in \R^d \colon  1\leq |\xi|\leq 3\}$, but also if
$\supp \wh{f} \subseteq \{\xi \in \R^d \colon |\xi| \leq \frac{3}{2} \}$).
We have the following straightforward estimate for all $n\in \N$, $\xi \in \operatorname{supp}\wh{\psi}$:
\begin{equation}\label{eq:simple_support_estimate}
 2^{\ell(n-1)} \leq 1 + 2^{\ell n}|\xi|^{\ell}.
\end{equation}

Note that $\wh{K}_t(\xi) = \exp(- 4 \pi^2  t |\xi|^2)$ and
$\wh{f}=\wh{\psi}(2^{-n+1}\cdot)\wh{f}$ (by assumption on $\operatorname{supp}f$),
i.e., for all $t\in (0,\infty)$, $\xi\in \R^d$ one has
\begin{align}\label{eq:fourier_aux1}
 e^{- 4 \pi^2 t} t^{\alpha} |(1- (2\pi)^{-2} \Delta)^{\alpha} ( K_t * f )|
 =
 |\cF^{-1} (m_t \wh{f})|
 =
 |\cF^{-1} ( m_t \wh{\psi}(2^{-n+1}\cdot)\wh{f})|.
\end{align}

Also note that for all
$\varphi \in L^1_{\mathrm{loc}}(\R^d)$, $c\in (0,\infty)$, and $\xi \in \R^d$ it holds that
$M(\varphi( c\, \cdot))(c^{-1} \xi) = M(\varphi)(\xi)$.
It follows from this observation and~\cite[Theorem 1.3.1 Equation (2)]{Tri1983}
that there exists a
constant $c_{d,r}^{\eqref{eq:fourier_aux2}}$ such that for all $g \in \mathscr{S}(\R^d)$ and $R\in (0,\infty)$
satisfying $\operatorname{supp} \wh{g} \subseteq B_{4R}(0)$ it holds that
\begin{equation}\label{eq:fourier_aux2}
 \forall \xi,y \in \R^d \colon\quad
 \frac{g(\xi - 2^{-R}y)}{ 1 + |y|^{\frac{d}{r}}}
 \leq c_{d,r}^{\eqref{eq:fourier_aux2}}
 \left[ M(|g|^r)(\xi) \right]^{\frac{1}{r}}.
\end{equation}

We thus have, using that for all $\varphi \in \mathscr{S}(\R^d)$, $c\in (0,\infty)$,
and $\xi \in \R^d$ it holds that $\cF^{-1}( \varphi)(\xi) = c^d \cF^{-1}( \varphi(c\,\cdot))(c \xi)$,
and then applying~\eqref{eq:fourier_aux2}, that for all $t\in (0,\infty)$, $\xi\in \R^d$ one has
\begin{equation}\label{eq:fourier_aux2b}
\begin{aligned}
 & | \cF^{-1}(m_t \wh{\psi}(2^{-n+1}\cdot) \wh{f} )(\xi)|
 =
 \left| \left[ \cF^{-1}\left(m_t \wh{\psi}(2^{-n+1}\cdot) \right)  * f \right] (\xi) \right|
 \\
 & =
 \int_{\R^d}
   \cF^{-1}\left(m_t \wh{\psi}(2^{-n+1}\cdot) \right)(y)
   f(\xi -y)
 \,\mathrm{d}y
\\
 & =
 \int_{\R^d}
   \cF^{-1}\left(m_t(2^{n-1}\cdot) \wh{\psi} \right) (u)
   f(\xi - 2^{-n+1}u)
 \,\mathrm{d}u
\\
 & \leq
 c_{d,r}^{\eqref{eq:fourier_aux2}}
 \left[ M(|f|^r)(\xi) \right]^{\frac{1}{r}}
 \int_{\R^d}
   (1+|u|^{\frac{d}{r}}) \cF^{-1}\left(m_t(2^{n-1}\cdot) \wh{\psi} \right) (u)
 \,\mathrm{d}u.
\end{aligned}
\end{equation}

Thus it remains to estimate the last integral in the expression above. To do so
let $j\in \N$ be the smallest even integer such that $j>d+\frac{d}{r}$,
then with $c_{d,r}^{\eqref{eq:fourier_aux3}}= \| (1 + | \cdot |^2)^{d/2r - j/2} \|_{L^1(\R^d)} < \infty$
for all $t\in (0,\infty)$ it
holds that
\begin{equation}
\begin{aligned}\label{eq:fourier_aux3}
& \left\|(1+|\cdot|^{d/r})\cF^{-1}(m_t(2^{n-1}\cdot)\wh{\psi} ) \right\|_{L^1(\R^d)}
\\ & \leq
2 \left\| (1+ |\cdot|^2)^{d/2r} \cF^{-1}(m_t(2^{n-1}\cdot)\wh{\psi} ) \right\|_{L^1(\R^d)}
\\ &
\leq
2 \|(1+ |\cdot|^2)^{d/2r -j/2} \|_{L^1(\R^d)}
\|\cF^{-1} [(1- (2\pi)^{-2} \Delta)^{j/2} ( m_t(2^{n-1}\cdot)\wh{\psi} )]\|_{L^\infty(\R^d)}
\\ &
\leq
2 c_{d,r}^{\eqref{eq:fourier_aux3}} \|(1- (2\pi)^{-2} \Delta)^{j/2} (m_t(2^{n-1}\cdot)\wh{\psi} )\|_{L^1(\R^d)}
\\ &
\leq
\frac{ 2 c_{d,r}^{\eqref{eq:fourier_aux3}} 4^d \pi^{d/2}}{\Gamma(d/2+1)}
\sup_{\xi\in \R^d} |(1- (2\pi)^{-2} \Delta)^{j/2} (m_t(2^{n-1} \cdot )\wh{\psi})(\xi) |,
\end{aligned}
\end{equation}
where we used that the inverse Fourier transform is contractive from $L^1$ into $L^\infty$ and the fact
that $\supp \wh\psi  = \{ \xi \in \R^d \colon |\xi| \leq 4\}$.

The first inequality below is obtained from the Binomium of Newton, the Leibniz rule and the triangle inequality,
the second inequality follows from~\eqref{eq:simple_support_estimate}.
In the third inequality we use the estimates on the derivative of $m_t$ obtained in~\eqref{eq:deriv_mt_est1} and~\eqref{eq:deriv_mt_est2}.
Altogether we obtain that there exist constant
$c_{d,r}^{\eqref{eq:fourier_aux4}}, c_{d,r,\alpha}^{\eqref{eq:fourier_aux4}}$ depending only on $d$ and $r$, respectively
$d$, $r$, and $\alpha$, such that
for all $t \in (0,\infty)$, $\xi\in \R^d$ it holds that
\begin{equation}
\begin{aligned} \label{eq:fourier_aux4}
& \left|(1-(2\pi)^{-2} \Delta)^{j/2}\left(m_t(2^{n-1} \cdot )\wh{\psi}\right) (\xi)\right|
\\ & \leq
c_{d,r}^{\eqref{eq:fourier_aux4}}
\sum_{k=0}^{j/2}
 \sum_{\ell=0}^{2k}
  2^{\ell (n-1)}
  \left\|
    \left( \tfrac{\partial^{\ell}}{\partial \xi^{\ell}} m_t\right)
    (2^{n-1} \xi)
  \right\|_{L^{(\ell)}(\R^d,\R)}
  \left\| \left( \tfrac{\partial^{2k-\ell}}{\partial \xi^{2k-\ell}} \wh{\psi} \right)(\xi)
  \right\|_{L^{(2k-\ell)}(\R^d,\R)}
\\ & \leq
c_{d,r}^{\eqref{eq:fourier_aux4}}
\sum_{k=0}^{j/2}
 \sum_{\ell=0}^{2k}
  (1+ 2^{\ell}| 2^{n-1}\xi |^{\ell})
  \left\|
    \left( \tfrac{\partial^{\ell}}{\partial \xi^{\ell}} m_t\right)(2^{n-1} \xi)
  \right\|_{L^{(\ell)}(\R^d,\R)}\\
&\qquad\qquad\qquad\qquad\qquad \qquad\qquad\qquad\qquad\qquad \times  \left\|
    \left( \tfrac{\partial^{2k-\ell}}{\partial \xi^{2k-\ell}}  \wh{\psi}\right)(\xi)
  \right\|_{L^{(2k-\ell)}(\R^d,\R)}
\\ & \leq
 c_{d,r,\alpha}^{\eqref{eq:fourier_aux4}}(1+t^{\alpha \minsym 0})e^{\pi^2 t}.
\end{aligned}
\end{equation}
Combining~\eqref{eq:fourier_aux1}, \eqref{eq:fourier_aux2b}, \eqref{eq:fourier_aux3}, and \eqref{eq:fourier_aux4}
completes the proof of~\eqref{eq:Gconv}.\par
\smallskip

We now turn to the statement involving estimate~\eqref{eq:Gconv2}.
Fix $f\in \cD(\R^d)$, $\lambda\in [0,1)$,
and $0< s<t<\infty$. Recall that for all $g\in \mathscr{S}(\R^d)$ it holds that
$\frac{\partial}{\partial t} (K_t * g) = \Delta (K_t * g)$. It follows that
\[
  (1-(2\pi)^{-2}\Delta)^{\alpha} (K_t * f - K_s * f)
  =
  \int_s^{t} (1-(2\pi)^{-2} \Delta)^{\alpha} \Delta ( K_\tau * f) \,\mathrm{d} \tau.
\]
Therefore, by~\eqref{eq:Gconv} we can estimate, for all $\xi\in \R^d$,
\begin{align*}
& |(1-(2\pi)^{-2}\Delta)^{\alpha} (K_t * f - K_s * f)(\xi)|
\\ & \leq (2\pi)^2
  \int_s^{t}
    |(1-(2\pi)^{-2}\Delta)^{\alpha+1}(K_\tau * f)(\xi)|
    +
    |(1-(2\pi)^{-2}\Delta)^{\alpha}(K_\tau * f)(\xi)|
    \,\mathrm{d} \tau
\\ & \leq
  (2\pi)^2 e^{5\pi^2 t}
  \int_s^t
    C_{d,r,\alpha+1}^{\eqref{eq:Gconv}}  \tau^{-((\alpha+1)\maxsym 0)}
    +
    C_{d,r,\alpha}^{\eqref{eq:Gconv}} \tau^{-(\alpha\maxsym 0)}
  \,\mathrm{d} \tau
  \ (M (|f|^r)(\xi))^{\frac1r}.
\end{align*}
Estimate~\eqref{eq:Gconv2} 
follows from the above and from the fact that by H\"older's inequality
one has, for all relevant $\beta \in \R$:
\begin{align*}
\int_s^t \tau^{\beta} \,\mathrm{d}\tau
\leq
(t-s)^{\lambda}
\big| \tfrac{1-\lambda}{1-\lambda+\beta} \big|^{1-\lambda}
s^{(1-\lambda+\beta)\minsym 0} t^{(1-\lambda + \beta)\maxsym 0}
\lesssim_{\lambda,\beta}
(t-s)^{\lambda}
s^{(1-\lambda+\beta)\minsym 0}
e^{\pi^2 t}.
\end{align*}
\par
The statement regarding $t=0$ in~\eqref{eq:Gconv} can be proven
e.g.\ by replacing $m_t$ in the proof above by $m\in \mathscr{S}(\R^d)$,
$m(\xi) = (1 + |\xi|^2)^{\alpha}$. The statement concerning $s=0$ in~\eqref{eq:Gconv2}
can then be proven entirely analogously to the above.
\end{proof}

For convenience we recall the following extension of the
Fefferman-Stein theorem~\cite[Theorem~1]{FeffermanStein:1971};
for a proof see~\cite[Theorem~2.6]{GallaratiEtAl:2016}.

\begin{proposition}\label{prop:fefferman_stein}
Let $p,q\in (0,\infty)$, let $r \in (0, p\minsym q)\cap(0,1] $,
and let $(S,\cA,\mu)$ be a $\sigma$-finite measure space. 
For every measurable $f \colon \R^d\times S \rightarrow \R$ define
$\overline{M}(f) \colon \R^d \times S \rightarrow [0,\infty]$ by
$\overline{M}(f)(x,s) = M(f(\cdot, s))(x)$.
Then there exists a constant $C_{p/r,q/r}^{\eqref{eq:FeffermanStein}}$ such that for all
$f \colon \R^d\times S \rightarrow \R$ measurable
it holds that
\begin{equation}\label{eq:FeffermanStein}
 \left\|
    [\overline{M}( |f|^{r} )]^{\frac{1}{r}}
  \right\|_{L^p(\R^d;L^{q}(S))}
 \leq
 C_{p/r,q/r}^{\eqref{eq:FeffermanStein}} \| f \|_{L^p(\R^d;L^q(S))}.
\end{equation}
\end{proposition}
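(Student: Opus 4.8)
The plan is to reduce the claimed inequality to the \emph{classical} vector-valued Fefferman--Stein maximal inequality by the standard rescaling-in-the-exponent trick. First I would introduce the nonnegative function $g:=|f|^r$, so that $\overline M(|f|^r)=\overline M(g)$, and rewrite both sides of~\eqref{eq:FeffermanStein} as mixed norms with the enlarged exponents $P:=p/r$ and $Q:=q/r$. A direct computation with the definition of the mixed norm (integrating first over $s\in S$, then over $x\in\R^d$) and the homogeneity $|f|^{r\cdot(q/r)}=|f|^q$ yields the two identities
\[
\bigl\| [\overline M(|f|^r)]^{1/r}\bigr\|_{L^p(\R^d;L^q(S))}
=
\bigl\| \overline M(g)\bigr\|_{L^{P}(\R^d;L^{Q}(S))}^{1/r},
\qquad
\|f\|_{L^p(\R^d;L^q(S))}
=
\|g\|_{L^{P}(\R^d;L^{Q}(S))}^{1/r}.
\]
Hence, after taking $r$-th powers, the proposition is equivalent to the single estimate
\[
\bigl\| \overline M(g)\bigr\|_{L^{P}(\R^d;L^{Q}(S))}
\leq
C_{P,Q}\,\|g\|_{L^{P}(\R^d;L^{Q}(S))} .
\]

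The crucial point is that the hypothesis $r\in(0,p\minsym q)$ forces $P=p/r>1$ \emph{and} $Q=q/r>1$. The displayed estimate is then precisely the Fefferman--Stein vector-valued maximal inequality in the mixed-norm space $L^{P}(\R^d;L^{Q}(S))$, with the Hardy--Littlewood maximal operator acting only in the $\R^d$-variable; this holds for every $1<P<\infty$ and $1<Q<\infty$ over a $\sigma$-finite measure space $(S,\cA,\mu)$, see~\cite{FeffermanStein:1971} and the references cited in the statement. I would simply invoke this as a black box (it is the genuinely analytic input, resting on Calder\'on--Zygmund theory and $A_1$-weight arguments). Tracking the constant through the two identities above gives the final bound with constant $\bigl(C_{P,Q}\bigr)^{1/r}$, depending only on $P=p/r$ and $Q=q/r$ (and on $r$), as asserted by the notation $C_{p/r,q/r}^{\eqref{eq:FeffermanStein}}$; if the right-hand side of~\eqref{eq:FeffermanStein} is infinite there is nothing to prove, and otherwise $g\in L^{P}(\R^d;L^{Q}(S))$, so the inequality applies.

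I do \emph{not} expect a real obstacle here, since the entire argument is the ``$r$-th power trick'' that pushes a sublinear maximal estimate below the natural exponent $1$, combined with a citation above it. The only steps requiring a line of care are (i) confirming that the choice of $r$ lands both $p/r$ and $q/r$ strictly above $1$, which is exactly the content of $r<p\minsym q$ (the restriction $r\le 1$ merely keeps us in the quasi-Banach regime of interest and is not needed for the reduction itself), and (ii) the mixed-norm bookkeeping establishing the two displayed identities, which is routine provided the integrations are carried out in the correct order. As the heavy analysis is imported rather than reproved, there is nothing deeper to overcome.
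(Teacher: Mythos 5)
Your proposal is correct, and in substance it coincides with what the paper does: the paper gives no proof of this proposition at all, but simply cites \cite[Theorem~2.6]{GallaratiEtAl:2016}, and the power-trick reduction you describe (set $g=|f|^r$, $P=p/r$, $Q=q/r$, note that $r< p\minsym q$ forces $P,Q>1$, and apply the maximal inequality at the exponents $(P,Q)$) is precisely the standard argument encapsulated by that citation. Your two mixed-norm identities are correct, since $P/p=Q/q=1/r$ makes both sides of~\eqref{eq:FeffermanStein} equal to the corresponding $(P,Q)$-norms raised to the power $1/r$. Two small points of bookkeeping. First, the black box you invoke is slightly more than \cite[Theorem~1]{FeffermanStein:1971}: Fefferman--Stein treat $\ell^{Q}$-valued functions (counting measure on $\N$), whereas here one needs the maximal inequality for $L^{Q}(S)$-valued functions over a general $\sigma$-finite measure space; this extension (obtained, e.g., by approximation with simple functions or by rerunning the weighted proof) is exactly what \cite[Theorem~2.6]{GallaratiEtAl:2016} supplies, so your appeal to ``the references cited in the statement'' is what actually carries that step rather than the 1971 paper alone. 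Second, as you note parenthetically, the constant produced by this reduction is $(C_{P,Q})^{1/r}$, which genuinely depends on $r$ in addition to $P$ and $Q$ (and it must: for fixed $P,Q$ the optimal constant in~\eqref{eq:FeffermanStein} is the $1/r$-th power of the optimal $(P,Q)$ Fefferman--Stein constant, which exceeds $1$, so it blows up as $r\downarrow 0$); the paper's subscript notation $C_{p/r,q/r}$ is thus a slight abuse, but this is immaterial for the applications in Corollaries~\ref{cor:heat_homogeneous_Besov}--\ref{cor:heat_inhomogeneous_Besov}, where the estimate is used for a fixed admissible $r$ before taking an infimum.
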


Combining Lemma~\ref{lem:pointwisetechnical}, Proposition~\ref{prop:fefferman_stein}, and the lifting
property for Besov spaces we obtain the following three corollaries.

\begin{corollary}\label{cor:heat_homogeneous_Besov}
Let $d\in \N$, $p,q\in (0,\infty)$, $\sigma\in \R$, and let $\lambda \in [0,1)$.
Then there exists a constant
$C_{d,p,\lambda}^{\eqref{eq:besov_heat2}}$ such that for all
$f \in B_{p,q}^{\sigma}(\R^d)$ and all $0\leq s<t<\infty$
it holds that
\begin{equation}\label{eq:besov_heat2}
\begin{aligned}
&  \| (K_t - K_s) * f \|_{B_{p,q}^{\sigma-2\lambda}(\R^d)}
 \leq
  C_{d,p,\lambda}^{\eqref{eq:besov_heat2}}
  (t-s)^{\lambda} e^{6\pi^2 t}
  \| f \|_{B_{p,q}^{\sigma}(\R^d)}.
\end{aligned}
\end{equation}
\end{corollary}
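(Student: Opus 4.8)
The plan is to reduce the claim to the pointwise kernel estimate~\eqref{eq:Gconv2} of Lemma~\ref{lem:pointwisetechnical}, applied to the Littlewood--Paley blocks of $f$, and then to convert the resulting Hardy--Littlewood maximal functions back into $L^p$-norms via Proposition~\ref{prop:fefferman_stein}. Since $\mathscr{S}(\R^d)$ is dense in $B^{\sigma}_{p,q}(\R^d)$ for $p,q<\infty$, I would first establish~\eqref{eq:besov_heat2} for $f\in\mathscr{S}(\R^d)$ and then extend to general $f$ by density (the estimate being exactly the continuity of $(K_t-K_s)*{\cdot}\colon B^{\sigma}_{p,q}\to B^{\sigma-2\lambda}_{p,q}$). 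Restricting to Schwartz $f$ at the outset is convenient because then each block $\varphi_k*f$ is itself a Schwartz function whose Fourier transform is supported in $\{|\xi|\le\tfrac32\}$ (for $k=0$) or in $\{2^{k-1}\le|\xi|\le 3\cdot 2^{k-1}\}$ (for $k\ge 1$), so that $\varphi_k*f\in\cD(\R^d)$ and Lemma~\ref{lem:pointwisetechnical} genuinely applies.

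Fix $r\in(0,p\minsym q)\cap(0,1]$. The algebraic heart of the argument is the lifting property for Besov spaces: the operator $(1-(2\pi)^{-2}\Delta)^{-\lambda}$, whose Fourier multiplier is $(1+|\cdot|^2)^{-\lambda}$, is an isomorphism from $B^{\sigma-2\lambda}_{p,q}(\R^d)$ onto $B^{\sigma}_{p,q}(\R^d)$, whence
\[
\|(K_t-K_s)*f\|_{B^{\sigma-2\lambda}_{p,q}(\R^d)}
\eqsim_{d,p,q,\sigma,\lambda}
\|(1-(2\pi)^{-2}\Delta)^{-\lambda}(K_t-K_s)*f\|_{B^{\sigma}_{p,q}(\R^d)}.
\]
Expanding the right-hand norm by definition~\eqref{eq:defBesovNorm} and commuting the (convolution) operators, the $k$-th block of $(1-(2\pi)^{-2}\Delta)^{-\lambda}(K_t-K_s)*f$ equals $(1-(2\pi)^{-2}\Delta)^{-\lambda}(K_t-K_s)*(\varphi_k*f)$. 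To this I apply~\eqref{eq:Gconv2} with $\alpha=-\lambda$: then $(-\alpha-\lambda)\minsym 0=0$, so the factor $s^{(-\alpha-\lambda)\minsym 0}$ is trivial, and since $\alpha=-\lambda\le-\lambda$ the estimate is valid including $s=0$, giving, uniformly in $k$ and in $0\le s<t$,
\[
|(1-(2\pi)^{-2}\Delta)^{-\lambda}(K_t-K_s)*(\varphi_k*f)(\xi)|
\le C\,(t-s)^{\lambda}e^{6\pi^2 t}\,\bigl(M(|\varphi_k*f|^r)(\xi)\bigr)^{1/r}.
\]

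It then remains to take norms. Multiplying by $2^{k\sigma}$ and using $2^{k\sigma}\bigl(M(|\varphi_k*f|^r)\bigr)^{1/r}=\bigl(M(|2^{k\sigma}\varphi_k*f|^r)\bigr)^{1/r}$, taking $L^p(\R^d)$-norms in $\xi$ and then $\ell^q$-norms in $k$, the right-hand side is bounded by $C(t-s)^{\lambda}e^{6\pi^2 t}$ times $\bigl\|\bigl(\bigl(M(|2^{k\sigma}\varphi_k*f|^r)\bigr)^{1/r}\bigr)_k\bigr\|_{\ell^q(L^p)}$. Applying Proposition~\ref{prop:fefferman_stein} for each fixed $k$ (i.e.\ with a one-point measure space $S$, for which~\eqref{eq:FeffermanStein} is the scalar maximal inequality in $L^{p/r}$, valid as $r<p$ and $r\le1$) replaces $\|(M(|2^{k\sigma}\varphi_k*f|^r))^{1/r}\|_{L^p}$ by $\lesssim 2^{k\sigma}\|\varphi_k*f\|_{L^p}$; taking $\ell^q$ and invoking~\eqref{eq:defBesovNorm} yields $\|f\|_{B^{\sigma}_{p,q}(\R^d)}$, proving~\eqref{eq:besov_heat2} for Schwartz $f$, and density finishes the proof.

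The main obstacle is bookkeeping rather than any single estimate: the Besov norm is $\ell^q$ of $L^p$-norms, so the maximal inequality must be invoked \emph{block-by-block} in $k$ (with trivial $S$) rather than through the full mixed-norm Fefferman--Stein inequality; the latter is essential only for the companion Corollaries~\ref{cor:heat_stochconv_Besov} and~\ref{cor:heat_inhomogeneous_Besov}, where an inner $L^2(0,t;\ell^2)$-norm appears. A secondary point needing care is that~\eqref{eq:Gconv2} is stated for $\cD(\R^d)\subseteq\mathscr{S}(\R^d)$; this is precisely why I would work with Schwartz $f$ first, where $\varphi_k*f\in\cD(\R^d)$ truly holds, and only then pass to arbitrary $f\in B^{\sigma}_{p,q}(\R^d)$ by approximation.
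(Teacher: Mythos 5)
Your proposal is correct and follows essentially the same route as the paper's proof: lifting via $(1-(2\pi)^{-2}\Delta)^{-\lambda}$, the pointwise estimate~\eqref{eq:Gconv2} with $\alpha=-\lambda$ applied to each Littlewood--Paley block $\varphi_k*f$, and then the maximal inequality in $L^{p/r}$ invoked block-by-block to recover $\|f\|_{B^{\sigma}_{p,q}(\R^d)}$. Your additional step of first treating Schwartz $f$ and then passing to general $f\in B^{\sigma}_{p,q}(\R^d)$ by density is a legitimate (and careful) way of handling the fact that~\eqref{eq:Gconv2} is stated only for $f\in\cD(\R^d)$, a technical point the paper's proof passes over silently.
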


\begin{proof}
%
Let $(\varphi_k)_{k\in\N}$ be as in Section~\ref{app:notation} and let $r \in (0,p)\cap (0, 1]$.
It follows from the lifting property (see~\cite[2.3.8]{Tri1983}),
Lemma~\ref{lem:pointwisetechnical}, and the Fefferman-Stein theorem that
for all $f \in B_{p,q}^{\sigma}(\R^d)$, $0\leq s<t<\infty$ we have
\begin{align*}
&
\| (K_t - K_s)*f \|_{B_{p,q}^{\sigma-2\lambda}(\R^d)}
\eqsim_{\lambda}
 \| (1-(2\pi)^{-2}\Delta)^{-\lambda} (K_t - K_s)*f \|_{B_{p,q}^{\sigma}(\R^d)}
\\ &=
 \left\| \left(
  2^{\sigma k}
  (1-(2\pi)^{-2}\Delta)^{-\lambda} (K_t - K_s) * f * \varphi_k
 \right)_{k\in \N} \right\|_{\ell^q(L^p(\R^d))}
\\ & \leq
  C_{d,r,-\lambda,\lambda}^{\eqref{eq:Gconv2}}
  (t-s)^{\lambda} e^{6\pi^2 t}
  \left\| \left(
    2^{\sigma k}
    [ M( |f * \varphi_k|^{r} )]^{\frac{1}{r}}
  \right)_{k\in \N} \right\|_{\ell^q(L^p(\R^d))}
\\ & \leq
  C^{\text{F.-S.}}_{p/r}
  C_{d,r,-\lambda,\lambda}^{\eqref{eq:Gconv2}}
  (t-s)^{\lambda} e^{5\pi^2 t}
  \left\| \left(
    2^{\sigma k}
    f * \varphi_k
  \right)_{k\in \N} \right\|_{\ell^q(L^p(\R^d))}
\\ & =
 C^{\text{F.-S.}}_{p/r}
  C_{d,r,-\lambda,\lambda}^{\eqref{eq:Gconv2}}
  (t-s)^{\lambda} e^{5\pi^2 t}
  \| f \|_{B_{p,q}^{\sigma}(\R^d;L^r(S))}.
\end{align*}
Taking the infimum over all $r \in  (0,p) \cap (0, 1]$ on the right-hand side above
above we obtain~\eqref{eq:besov_heat2}.
\end{proof}

\begin{corollary}\label{cor:heat_stochconv_Besov}
Let $d\in \N$, $p,q\in (0,\infty)$, $\sigma\in \R$, $\alpha \in [0,\infty)$ and $\lambda \in [0,\alpha]\cap [0,\frac{1}{2})$,
and recall the definition of $L_{\alpha}^{2}(0,t;\ell^2)$ from page~\pageref{def:weightedLr}.
Then there exists a constant $C_{d,p,\alpha,\lambda}^{\eqref{eq:heat_stochconv_Besov1}}$
such that for all $0 < s  < t < \infty$ and
all $f\colon [0,s] \rightarrow B_{p,q}^{\sigma}(\R^d;\ell^2)$ measurable
it holds that
\begin{equation}
\begin{aligned}\label{eq:heat_stochconv_Besov1}
& \left\| ( K_{t-\cdot}  - K_{s-\cdot} ) * f \right\|_{B_{p,q}^{\sigma+2(\alpha-\lambda)}(\R^d;L^2(0,s;\ell^2))}
\\ & \qquad \qquad\qquad\qquad\qquad \leq
 C_{d,p,\alpha,\lambda}^{\eqref{eq:heat_stochconv_Besov1}}
 (t-s)^{\lambda} e^{6 \pi^2 t}
 \left\| f \right\|_{B_{p,q}^{\sigma}(\R^d;L^2_{\alpha}(0,s;\ell^2))}.
\end{aligned}
\end{equation}
Moreover, there exists a constant $C_{d,p,\alpha,\lambda}^{\eqref{eq:heat_stochconv_Besov2}}$
such that for all $0\leq s < t <\infty$ and all $f\colon [s,t] \rightarrow B_{p,q}^{\sigma}(\R^d;\ell^2)$ measurable it holds that
\begin{equation}
\begin{aligned}\label{eq:heat_stochconv_Besov2}
 &\left\| K_{t-\cdot} * f \right\|_{B_{p,q}^{\sigma+2(\alpha-\lambda)}(\R^d;L^2(s,t;\ell^2))}
\\ & \qquad\qquad\qquad\qquad
 \leq
 C_{d,p,\alpha,\lambda}^{\eqref{eq:heat_stochconv_Besov2}}
 (t-s)^{\lambda} e^{5 \pi^2 t}
 \left\| f\one_{[s,t]} \right\|_{B_{p,q}^{\sigma}(\R^d;L^2_{\alpha}(0,t;\ell^2))}.
\end{aligned}
\end{equation}
\end{corollary}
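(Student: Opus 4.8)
The plan is to follow verbatim the template already established in the proof of Corollary~\ref{cor:heat_homogeneous_Besov}, combining the lifting property for Besov spaces, the pointwise bounds of Lemma~\ref{lem:pointwisetechnical}, and the vector-valued Fefferman--Stein inequality of Proposition~\ref{prop:fefferman_stein}; the only genuinely new ingredient is the bookkeeping of the time weights and of the $\ell^2$-direction. Throughout I would fix $r\in(0,p\wedge 2)\cap(0,1]$, so that Proposition~\ref{prop:fefferman_stein} applies with inner exponent $q=2$, and take the infimum over such $r$ at the end. To prove \eqref{eq:heat_stochconv_Besov2} I would first use the lifting property \cite[2.3.8]{Tri1983} (which acts only in the spatial variable and hence commutes with the time-integration and with the $\ell^2$-structure) to replace the target norm $\|\cdot\|_{B_{p,q}^{\sigma+2(\alpha-\lambda)}}$ by $\|(1-(2\pi)^{-2}\Delta)^{\alpha-\lambda}\,\cdot\,\|_{B_{p,q}^{\sigma}}$, and then expand this via the Littlewood--Paley blocks $\varphi_k$. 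Since each $\ell^2$-coordinate of $\varphi_k*f(\tau)$ has Fourier support in the relevant dyadic annulus (or ball, for $k=0$), it lies in $\cD(\R^d)$, so Lemma~\ref{lem:pointwisetechnical} applies to each coordinate separately.

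Applying estimate \eqref{eq:Gconv} coordinatewise, with Bessel exponent $\alpha-\lambda\ge 0$ and heat time $t-\tau$, gives, pointwise in $\xi\in\R^d$, $\tau\in(s,t)$ and in each $\ell^2$-coordinate,
\[
\bigl|(1-(2\pi)^{-2}\Delta)^{\alpha-\lambda}(K_{t-\tau}*\varphi_k*f(\tau))(\xi)\bigr|
\lesssim
(t-\tau)^{-(\alpha-\lambda)}e^{5\pi^2 t}\bigl[M(|\varphi_k*f(\tau)|^r)(\xi)\bigr]^{1/r}.
\]
The decisive elementary observation is that $(t-\tau)^{-(\alpha-\lambda)}=(t-\tau)^{\lambda}(t-\tau)^{-\alpha}\le (t-s)^{\lambda}(t-\tau)^{-\alpha}$, which produces both the prefactor $(t-s)^{\lambda}$ and precisely the weight $(t-\tau)^{-\alpha}$ defining $\|\cdot\|_{L^2_{\alpha}(0,t;\ell^2)}$. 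Taking the unweighted $L^2(s,t;\ell^2)$-norm in $(\tau,n)$ on the left then matches, on the right, the weighted norm of $[\overline{M}(|\varphi_k*f|^r)]^{1/r}$ over the $\sigma$-finite inner measure space $S=(s,t)\times\N$ equipped with $(t-\tau)^{-2\alpha}\,\mathrm d\tau\otimes(\text{counting})$. At this point I would invoke Proposition~\ref{prop:fefferman_stein} with $q=2$ to strip off the maximal function, and finally multiply by $2^{k\sigma}$ and take the $\ell^q$-norm over $k$ to recover $\|f\one_{[s,t]}\|_{B_{p,q}^{\sigma}(\R^d;L^2_{\alpha}(0,t;\ell^2))}$; taking the infimum over admissible $r$ yields \eqref{eq:heat_stochconv_Besov2}.

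For the difference estimate \eqref{eq:heat_stochconv_Besov1} the argument is identical, with \eqref{eq:Gconv} replaced by \eqref{eq:Gconv2}: applying the latter with Bessel exponent $\alpha-\lambda$, heat times $s-\tau<t-\tau$ (so that the increment appearing there equals $(t-\tau)-(s-\tau)=t-s$) and Hölder parameter $\lambda$, the factor $s^{(-\alpha-\lambda)\wedge 0}$ of \eqref{eq:Gconv2} specializes to $(s-\tau)^{(-\alpha)\wedge 0}=(s-\tau)^{-\alpha}$, while the power $(t-s)^{\lambda}$ now comes directly out of the lemma rather than from the interpolation of weights used above. This weight $(s-\tau)^{-\alpha}$ again matches $\|\cdot\|_{L^2_{\alpha}(0,s;\ell^2)}$ exactly, so the same Fefferman--Stein step (with $S=(0,s)\times\N$) concludes. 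The main point requiring care is not any individual estimate but the correct identification of the inner measure space in Proposition~\ref{prop:fefferman_stein}: the maximal function acts only in $\xi$, whereas the time weight and the $\ell^2$-summation must be transported unchanged through the inequality, which is legitimate precisely because the Fefferman--Stein bound is stated for an arbitrary $\sigma$-finite $S$.
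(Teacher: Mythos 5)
Your proposal is correct and follows essentially the same route as the paper's own proof: lifting property to absorb $(1-(2\pi)^{-2}\Delta)^{\alpha-\lambda}$, Littlewood--Paley expansion, coordinatewise application of \eqref{eq:Gconv} (with the weight split $(t-u)^{-(\alpha-\lambda)}\le (t-s)^{\lambda}(t-u)^{-\alpha}$) respectively \eqref{eq:Gconv2} (yielding $(s-u)^{-\alpha}(t-s)^{\lambda}$ directly), then Proposition~\ref{prop:fefferman_stein} with inner exponent $2$ over the time-$\times$-$\N$ measure space, and finally the infimum over admissible $r$. All parameter bookkeeping (Bessel exponent $\alpha-\lambda$, the exponent $(-\alpha)\wedge 0=-\alpha$, the choice of $S$) agrees with the paper.
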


\begin{proof}
Let $(\varphi_k)_{k\in\N}$ be as in Section~\ref{app:notation} and let $r \in (0,p)\cap (0, 1]$.
For brevity we introduce $J:=(1-(2\pi)^{-2}\Delta)$.
By the lifting property  (see~\cite[2.3.8]{Tri1983}, or~\cite[Theorem 6.1]{Amann1997}
for the vector-valued case), Lemma~\ref{lem:pointwisetechnical} and Proposition \ref{prop:fefferman_stein}
(with $q=2$ and $S=[0,T]\times \N$)
it follows for all $0 < s < t < \infty$ and
all measurable $f\colon [0,t] \rightarrow B_{p,q}^{\sigma}(\R^d;\ell^2)$ that
\begin{align*}\allowdisplaybreaks
&
\left\|
  ( K_{t-\cdot}  - K_{s-\cdot}) * f
\right\|_{B_{p,q}^{\sigma+2(\alpha-\lambda)}(\R^d;L^2(0,s;\ell^2))}
\\ & \eqsim_{\alpha-\lambda}
\left\| J^{\alpha-\lambda}
 \left( K_{t-\cdot}  - K_{s-\cdot} \right) * f
\right\|_{B_{p,q}^{\sigma}(\R^d;L^2(0,s;\ell^2))}
\\ & =
\bigg\|
  \left(
  2^{k\sigma}
	\left(
	\int_{0}^{s}
	  \left\|
	    J^{\alpha-\lambda}
	    \left( K_{t-u} - K_{s-u} \right) * f(u) *\varphi_k
	  \right\|_{\ell^2}^2
	\,\mathrm{d}u
	\right)^{\frac{1}{2}}
  \right)_{k\in \N}
\bigg\|_{\ell^q(L^p(\R^d))}
\\ & \leq
  C_{d,r,\alpha-\lambda,\lambda}^{\eqref{eq:Gconv2}}
  (t-s)^{\lambda} e^{6\pi^2 t}
\\ & \qquad \times
  \bigg\|
  \bigg(
  2^{k\sigma}
   \left(
      \int_{0}^{s}
	(s-u)^{-\alpha}
	\left\|
	  \left[ M\left( |f(u) * \varphi_k|^r \right) \right]^{\frac{1}{r}}
	\right\|_{\ell^2}^2
      \,\mathrm{d}u
   \right)^{\frac{1}{2}}
  \bigg)_{k\in \N}
\bigg\|_{\ell^q(L^p(\R^d))}
\\ & \leq
  C_{p/r,2/r}^{\eqref{eq:FeffermanStein}} C_{d,r,\alpha-\lambda,\lambda}^{\eqref{eq:Gconv2}}
  (t-s)^{\lambda} e^{6\pi^2 t}
\\ & \qquad \times
\bigg\|
  \bigg(
  2^{k\sigma}
    \left(
      \int_{0}^{s}
	(s-u)^{-\alpha}
	\left\|
	  (f(u) * \varphi_k)
	\right\|_{\ell^2}^2
      \,\mathrm{d}u
    \right)^{\frac{1}{2}}
   \bigg)_{k\in \N}
\bigg\|_{\ell^q(L^p(\R^d))}
\\ & =
  C_{p/r,2/r}^{\eqref{eq:FeffermanStein}} C_{d,r,\alpha-\lambda,\lambda}^{\eqref{eq:Gconv2}}
  (t-s)^{\lambda} e^{6\pi^2 t}
  \left\| f \right\|_{B_{p,q}^{\sigma}(\R^d;L^2_{\alpha}(0,s;\ell^2))}
  .
\end{align*}
Taking the infimum over all $r\in (0,p)\cap (0,1]$ on the right-hand side above
we obtain~\eqref{eq:heat_stochconv_Besov1}.\par
Similarly, by the lifting property, Lemma~\ref{lem:pointwisetechnical} and
Proposition~\ref{prop:fefferman_stein} 
(with $q=2$ and $S=[0,T]\times \N$)
it follows for all $0\leq s  < t < \infty$ and
all measurable $f\colon [s,t] \rightarrow B_{p,q}^{\sigma}(\R^d;\ell^2)$ that
\begin{align*}
&
\left\|
  K_{t-\cdot} * f
\right\|_{B_{p,q}^{\sigma+2(\alpha-\lambda)}(\R^d;L^2(s,t;\ell^2))}
\\ & \eqsim_{\alpha-\lambda}
\left\| J^{\alpha-\lambda}
 \left( K_{t-\cdot} * f \right)
\right\|_{B_{p,q}^{\sigma}(\R^d;L^2(s,t;\ell^2))}
\\ & \leq
  C_{d,r,\alpha-\lambda}^{\eqref{eq:Gconv}}
  e^{5\pi^2 t}
\\ & \qquad \times
  \bigg\|
  \bigg(
  2^{k\sigma}
   \left(
      \int_{s}^{t}
	(t-u)^{-\alpha+\lambda}
	\left\|
	  \left[ M\left( |f(u) * \varphi_k|^r \right) \right]^{\frac{1}{r}}
	\right\|_{\ell^2}^2
      \,\mathrm{d}u
   \right)^{\frac{1}{2}}
  \bigg)_{k\in \N}
\bigg\|_{\ell^q(L^p(\R^d))}
\\ & \leq
  C_{p/r,2/r}^{\eqref{eq:FeffermanStein}} C_{d,r,\alpha-\lambda}^{\eqref{eq:Gconv}}
  (t-s)^{\lambda} e^{5\pi^2 t}
\\ & \qquad \times
\bigg\|
  \bigg(
  2^{k\sigma}
    \left(
      \int_{s}^{t}
	(t-u)^{-\alpha}
	\left\|
	  (f(u) * \varphi_k)
	\right\|_{\ell^2}^2
      \,\mathrm{d}u
    \right)^{\frac{1}{2}}
   \bigg)_{k\in \N}
\bigg\|_{\ell^q(L^p(\R^d))}
\\ & =
  C_{p/r,2/r}^{\eqref{eq:FeffermanStein}} C_{d,r,\alpha-\lambda}^{\eqref{eq:Gconv}}
  (t-s)^{\lambda} e^{5\pi^2 t}
  \left\| f \one_{[s,t]} \right\|_{B_{p,q}^{\sigma}(\R^d;L^2_{\alpha}(0,t;\ell^2))}
  .
\end{align*}
Again taking the infimum over all $r\in (0,p)\cap [0,1]$ on the right-hand side
above we arrive at~\eqref{eq:heat_stochconv_Besov2}.
\end{proof}

\begin{corollary}\label{cor:heat_inhomogeneous_Besov}
Let $d\in \N$, $p,q\in (0,\infty)$, $\sigma\in \R$, $\alpha \in [0,\infty)$ and $\lambda \in [0,\alpha]\cap [0,1)$
and recall the definition of $L_{\alpha}^{1}(0,t)$ from page~\pageref{def:weightedLr}.
Then there exists a constant $C_{d,p,\alpha,\lambda}^{\eqref{eq:heat_inhomogeneous_Besov1}}$
such that for all $0 < s  < t < \infty$ and
all $f\colon [0,s] \rightarrow B_{p,q}^{\sigma}(\R^d)$ measurable
it holds that
\begin{equation}\label{eq:heat_inhomogeneous_Besov1}
\begin{aligned}
& \left\|
    \int_{0}^{s} (K_{t- u} - K_{s- u}) * f(u) \, \mathrm{d}u
 \right\|_{B_{p,q}^{\sigma+2(\alpha-\lambda)}(\R^d)}
\\&\qquad\qquad\qquad\qquad \leq
 C_{d,p,\alpha,\lambda}^{\eqref{eq:heat_inhomogeneous_Besov1}}
 (t-s)^{\lambda} e^{6 \pi^2 t}
 \| f \|_{B^{\sigma}_{p,q}(\R^d;L^1_{\alpha}(0,s))},
\end{aligned}
\end{equation}
where $\int_{0}^{s} K_{r-u} * f \,\mathrm{d}u \in \mathscr{S}'(\R^d)$, $r\in \{s,t\}$,
is defined by $\left\langle \int_{0}^{s} K_{r-u} * f \,\mathrm{d}u , \varphi \right \rangle 
= \int_{0}^{s} \langle K_{r-u} * f, \varphi\rangle \,\mathrm{d}u $,
$\varphi \in \mathscr{S}(\R^d)$.\par
Moreover, there exists a constant $C_{d,p,\alpha,\lambda}^{\eqref{eq:heat_inhomogeneous_Besov2}}$
such that for all $0 \leq  s  < t < \infty$ and
all $f\colon [s,t] \rightarrow B_{p,q}^{\sigma}(\R^d)$ measurable
it holds that
\begin{equation}
\begin{aligned}\label{eq:heat_inhomogeneous_Besov2}
& \left\|
    \int_{s}^{t} K_{t - u} * f(u) \, \mathrm{d}u
 \right\|_{B_{p,q}^{\sigma+2(\alpha-\lambda)}(\R^d)}\\
&\qquad\qquad\qquad\qquad \leq
 C_{d,p,\alpha,\lambda}^{\eqref{eq:heat_inhomogeneous_Besov2}}
 (t-s)^{\lambda} e^{5 \pi^2 t}
 \| f\one_{[s,t]} \|_{B^{\sigma}_{p,q}(\R^d;L^1_{\alpha}(0,t))}.
\end{aligned}
\end{equation}
\end{corollary}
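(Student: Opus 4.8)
The plan is to treat \eqref{eq:heat_inhomogeneous_Besov1} and \eqref{eq:heat_inhomogeneous_Besov2} as the $L^1$-in-time analogues of \eqref{eq:heat_stochconv_Besov1} and \eqref{eq:heat_stochconv_Besov2}, so I would follow the proof of Corollary~\ref{cor:heat_stochconv_Besov} almost verbatim, the only structural change being that the time variable now carries an $L^1_\alpha$-norm rather than an $L^2_\alpha$-norm. Fix $r\in(0,p)\cap(0,1)$ and write $J=(1-(2\pi)^{-2}\Delta)$. Starting from \eqref{eq:heat_inhomogeneous_Besov2}, I would first use the lifting property (see~\cite[2.3.8]{Tri1983}) to replace the $B^{\sigma+2(\alpha-\lambda)}_{p,q}$-norm by the $B^{\sigma}_{p,q}$-norm of $J^{\alpha-\lambda}\int_s^t K_{t-u}*f(u)\,\mathrm{d}u$, and then expand this $B^{\sigma}_{p,q}$-norm through the Littlewood-Paley decomposition $(\varphi_k)_{k\in\N}$ from Appendix~\ref{app:notation}. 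After commuting the Fourier multipliers $\varphi_k*$ and $J^{\alpha-\lambda}$ through the (weakly defined) time integral and using $|\int_s^t\,\cdot\,\mathrm{d}u|\leq\int_s^t|\,\cdot\,|\,\mathrm{d}u$ pointwise in $x$, the estimate is reduced to a pointwise bound for $J^{\alpha-\lambda}K_{t-u}*(f(u)*\varphi_k)$ for each fixed $u$ and $k$.

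This is where Lemma~\ref{lem:pointwisetechnical} enters: since $f(u)*\varphi_k$ is band-limited (Fourier support in the ball for $k=0$ and in the $k$-th dyadic annulus otherwise), \eqref{eq:Gconv} applies with the lemma's exponent parameter set to $\alpha-\lambda\geq 0$, giving
\[
|J^{\alpha-\lambda}K_{t-u}*(f(u)*\varphi_k)(x)|\leq C_{d,r,\alpha-\lambda}^{\eqref{eq:Gconv}}(t-u)^{-(\alpha-\lambda)}e^{5\pi^2 t}\,[M(|f(u)*\varphi_k|^r)(x)]^{1/r}.
\]
The crucial elementary step is then $(t-u)^{-(\alpha-\lambda)}=(t-u)^{-\alpha}(t-u)^{\lambda}\leq(t-s)^{\lambda}(t-u)^{-\alpha}$ for $u\in[s,t]$, which both pulls out the Hölder factor $(t-s)^{\lambda}$ and produces exactly the weight $(t-u)^{-\alpha}$ of $L^1_\alpha(0,t)$. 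Integrating in $u$ and recognising $\int_s^t(t-u)^{-\alpha}|\,\cdot\,|\,\mathrm{d}u$ as an $L^1_\alpha$-norm in time, I would finish by invoking the Fefferman-Stein theorem (Proposition~\ref{prop:fefferman_stein}) with $q=1$ and $S=(0,t)$ equipped with the $\sigma$-finite measure $(t-u)^{-\alpha}\,\mathrm{d}u$ to replace $[M(|f(u)*\varphi_k|^r)]^{1/r}$ by $|f(u)*\varphi_k|$, and then take the infimum over $r$. For \eqref{eq:heat_inhomogeneous_Besov1} the argument is identical except that $K_{t-u}$ is replaced by the difference $K_{t-u}-K_{s-u}$ and \eqref{eq:Gconv2} (with exponent parameter $\alpha-\lambda$, so that $(-(\alpha-\lambda)-\lambda)\wedge 0=-\alpha$) is used in place of \eqref{eq:Gconv}; here the Hölder factor $(t-s)^{\lambda}$ and the correct weight $(s-u)^{-\alpha}$ emerge directly from \eqref{eq:Gconv2}, with no need for the extra splitting of the power of $(t-u)$.

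The genuine point requiring care — and the main obstacle — is not any single inequality but the bookkeeping that legitimises the whole scheme in the quasi-Banach regime $p,q\in(0,1)$. There is no Bochner integral available in $B^{\sigma}_{p,q}(\R^d)$, so the time integrals must be read in the weak (tempered-distribution) sense given in the statement; I therefore have to justify that $\varphi_k*J^{\alpha-\lambda}$ applied to the weakly defined integral equals the scalar integral $\int\varphi_k*J^{\alpha-\lambda}K_{t-u}*f(u)\,\mathrm{d}u$ pointwise in $x$ (a Fubini argument exploiting that $\varphi_k*$ and $J^{\alpha-\lambda}$ are Fourier multipliers), and that Lemma~\ref{lem:pointwisetechnical}, stated for $f\in\cD(\R^d)\subseteq\mathscr S(\R^d)$, may be applied to the merely tempered, band-limited functions $f(u)*\varphi_k$. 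The latter is handled exactly as in the proof of Corollary~\ref{cor:heat_stochconv_Besov}, since the proof of Lemma~\ref{lem:pointwisetechnical} only uses band-limitedness and the pointwise maximal-function bound \eqref{eq:fourier_aux2}. The one substantive difference from the stochastic case is that applying Proposition~\ref{prop:fefferman_stein} with $q=1$ forces $r<1$ strictly, i.e.\ $r\in(0,p)\cap(0,1)$ rather than $r\in(0,p)\cap(0,1]$; this is harmless, since this set is nonempty for every $p>0$.
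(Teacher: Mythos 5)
Your proof follows essentially the same route as the paper's own: lifting property, Littlewood--Paley expansion, the pointwise bounds \eqref{eq:Gconv}/\eqref{eq:Gconv2} from Lemma~\ref{lem:pointwisetechnical} with exponent parameter $\alpha-\lambda$ (so that the weight $(s-u)^{-\alpha}$, resp.\ the splitting $(t-u)^{\lambda-\alpha}\leq (t-s)^{\lambda}(t-u)^{-\alpha}$, emerges), then Fefferman--Stein (Proposition~\ref{prop:fefferman_stein}) with $q=1$ and the weighted measure on the time interval, and finally the infimum over $r$ --- indeed the paper declares the proof ``entirely analogous'' to Corollary~\ref{cor:heat_stochconv_Besov} and writes out only \eqref{eq:heat_inhomogeneous_Besov1} in exactly this way. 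Your observation that the $q=1$ application of Fefferman--Stein forces $r\in(0,p)\cap(0,1)$ strictly is a point on which you are in fact slightly more careful than the paper, which writes $r\in(0,p)\cap(0,1]$; as you note, this changes nothing since the infimum is the same.
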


\begin{proof}
The proof is entirely analogous to the proof of Corollary~\ref{cor:heat_stochconv_Besov},
hence we only provide the argument for proving~\eqref{eq:heat_inhomogeneous_Besov1}.
Let $(\varphi_k)_{k\in\N}$ be as in Section~\ref{app:notation}, $J:=(1-(2\pi)^{-2}\Delta)$,
and let $r \in (0,p)\cap (0, 1]$.  By the lifting property, Lemma~\ref{lem:pointwisetechnical} and
Proposition~\ref{prop:fefferman_stein}
it follows for all $0 < s  < t < \infty$ and all $f\colon [0,s] \rightarrow B_{p,q}^{\sigma}(\R^d)$
it holds that
\begin{equation}
\begin{aligned}
&
\left\|
  \int_{0}^{s} (K_{t-u} - K_{s-u}) * f(u)
  \, \mathrm{d}u
\right\|_{B_{p,q}^{\sigma+2(\alpha-\lambda)}(\R^d)}
\\ &
  \eqsim_{\alpha-\lambda}
\left\|
  \int_{0}^{s} J^{\alpha-\lambda} (K_{t-u}  - K_{s-u} )* f(u)
  \, \mathrm{d}u
\right\|_{B_{p,q}^{\sigma}(\R^d)}
\\ & \leq C_{d,r,\alpha-\lambda, \lambda}^{\eqref{eq:Gconv2}} (t-s)^{\lambda} e^{6 \pi^2 t}
\bigg\|
\bigg( 2^{\sigma k}
  \int_{0}^{s} (s-u)^{-\alpha}  [M ( | ( f(u) * \varphi_k |^r )]^{\frac{1}{r}}
  \, \mathrm{d}u
\bigg)_{k\in \N}
\bigg\|_{\ell^q(L^p(\R^d))}
\\ & \leq C_{p/r,1/r}^{\eqref{eq:FeffermanStein}} C_{d,r,\alpha-\lambda, \lambda}^{\eqref{eq:Gconv2}}
(t-s)^{\lambda} e^{6 \pi^2 t} \| f \|_{B_{p,q}^{\sigma}(\R^d;L^1_{\alpha}(0,t))}
\end{aligned}
\end{equation}
Estimate~\eqref{eq:heat_inhomogeneous_Besov1} follows by taking the infimum over $r\in (0,p)\cap (0,1]$
on the right-hand side above.
\end{proof}

\bibliographystyle{plain}

%

\providecommand{\bysame}{\leavevmode\hbox to3em{\hrulefill}\thinspace}
\providecommand{\MR}{\relax\ifhmode\unskip\space\fi MR }
\providecommand{\MRhref}[2]{%
  \href{http://www.ams.org/mathscinet-getitem?mr=#1}{#2}
}
\providecommand{\href}[2]{#2}
\def\cprime{$'$} \def\cprime{$'$}

%
%
%

\end{document}